\pdfoutput = 1
\documentclass[leqno]{jnsao}

\usepackage[utf8]{inputenc}

\usepackage[nameinlink, capitalise]{cleveref}

\usepackage{tkz-euclide}
\usepackage{multicol}

\usepackage{enumitem}
\newlist{assumption}{enumerate}{1}
\setlist[assumption]{label=(\textsc{a}\arabic*)}
\crefname{assumptioni}{Assumption}{Assumptions}

\usepackage{booktabs}
\usepackage[exponent-product=\cdot]{siunitx}

\newcommand{\R}{\mathbb{R}}
\newcommand{\N}{\mathbb{N}}
\newcommand{\1}{\mathbb{1}}
\newcommand{\Linop}{\mathbb{L}}

\DeclareMathOperator{\dive}{\mathrm{div}}

\DeclareMathOperator{\meas}{\mathrm{meas}}
\DeclareMathOperator{\Proj}{\mathrm{Proj}}

\newcommand{\dx}{\,\mathrm{d}x}

\newcommand{\dH}{\,\mathrm{d}\mathcal{H}}
\newcommand{\Ha}{\mathcal{H}}
\newcommand{\Cu}{\mathcal{C}}

\newcommand{\norm}[1]{\|#1\|}

\renewcommand{\epsilon}{\varepsilon}

\providecommand{\orcid}[1]{\mbox{\scshape\sffamily orcid:}\,\href{https://orcid.org/#1}{\detokenize{#1}}}

\title{Numerical analysis of a nonsmooth quasilinear elliptic control problem: II. Finite element discretization and error estimates}
\author{Christian Clason\thanks{Faculty of Mathematics, Universit\"at Duisburg-Essen, 45117 Essen, Germany; \emph{current address:} Institute of Mathematics and Scientific Computing, University of Graz, Heinrichstrasse 36, 8010 Graz, Austria (\email{c.clason@uni-graz.at}, \orcid{0000-0002-9948-8426})}
    \and Vu Huu Nhu\thanks{Faculty of Fundamental Sciences, PHENIKAA University, Yen Nghia, Ha Dong, Hanoi 12116, Vietnam (\email{nhu.vuhuu@phenikaa-uni.edu.vn}, \orcid{0000-0003-4279-3937})}
    \and Arnd Rösch\thanks{Faculty of Mathematics, University of Duisburg-Essen, Thea-Leymann-Strasse 9, 45127 Essen, Germany
        (\email{arnd.roesch@uni-due.de})
    }
}
\shortauthor{Clason, Nhu, Rösch}
\manuscripteprinttype{arxiv}
\manuscripteprint{2303.03060v2}
\manuscriptlicense{}
\manuscriptcopyright{}
\date{2024-02-22}
\begin{document}
\maketitle

\begin{abstract}
    In this paper, we carry out the numerical analysis of a nonsmooth quasilinear elliptic optimal control problem, where the coefficient in the divergence term of the corresponding state equation is not differentiable with respect to the state variable. Despite the lack of differentiability of the nonlinearity in the quasilinear elliptic equation, the corresponding control-to-state operator is of class $C^1$ but not of class $C^2$. Analogously, the discrete control-to-state operators associated with the approximated control problems are proven to be of class $C^1$ only. By using an explicit second-order sufficient optimality condition, we prove a priori error estimates for a variational approximation, a piecewise constant approximation, and a continuous piecewise linear approximation of the continuous optimal control problem. The numerical tests confirm these error estimates.

    \medskip

    \noindent\textcolor{structure}{Key words}\quad Optimal control, nonsmooth optimization, quasilinear elliptic equation, piecewise differentiable function, sufficient optimality condition, error estimate, finite element approximation
\end{abstract}

\section{Introduction}
We investigate the nonsmooth quasilinear elliptic optimal control problem
\begin{equation}
    \label{eq:P}
    \tag{P}
    \left\{
        \begin{aligned}
            \min_{u\in L^\infty(\Omega)} & j(u) := \int_\Omega L(x,y_u(x)) \dx + \frac{\nu}{2} \norm{u}_{L^2(\Omega)}^2 \\
            \text{s.t.} \quad &-\dive [(b + a(y_u)) \nabla y_u] = u \quad \text{in } \Omega, \quad y_u = 0 \, \text{on } \partial\Omega, \\
            & \alpha \leq u(x) \leq \beta  \quad \text{a.e. } x \in \Omega.
        \end{aligned}
    \right.
\end{equation}
Here $\Omega$ is a two-dimensional bounded, convex and polygonal domain; $L: \Omega \times \R \to\R$ is a Carath\'{e}odory function that is of class $C^2$ with respect to (w.r.t.) the second variable; $b: \overline \Omega \to \R$ is a Lipschitz continuous function;  $a: \R \to \R$ is Lipschitz continuous but not differentiable; and constants $\alpha, \beta, \nu \in \R$ satisfying $\beta > \alpha$ and $\nu > 0$.  We refer to \cref{sec:assumption} for the precise assumptions on the data of \eqref{eq:P}.

The state equation in the optimal control problem \eqref{eq:P} occurs, for instance, in models of heat conduction in which the coefficient in the divergence term acts as the heat conductivity and is a function of two variables: the temperature $y$ variable and the spatial coordinate $x$ variable; see, e.g. \cite{Bejan2013,ZeldovichRaizer1966}.
When the data belong to class $C^2$, the numerical analysis of the discrete approximation of  optimal control problems governed by such state equations were studied by Casas et al. in \cite{CasasTroltzsch2011,CasasTroltzsch2012} for distributed control  and in \cite{CasasDhamo2012} for Neumann control.

Let us briefly comment on other works concerning the error analysis of optimal control problems governed by partial differential equations (PDEs), in particular by elliptic PDEs. For control-constrained elliptic problems, we refer to the early papers \cite{Falk1973,Geveci1979} for linear elliptic control problems; to \cite{AradaCasasTroltzsch2002,CasasMateosTroltzsch2005} for semilinear elliptic problems. For state-constrained control problems, we mention only the recent contributions \cite{DeckelnickHinze2007,NeitzelPfeffererRosch2015} and refer to the survey paper \cite{HinzeTroltzsch2010} for further references. Although the error analysis for smooth PDE-constrained problems has been intensively investigated, there are very few contributions  on this topic for nonsmooth
PDE-constrained optimal control. Here we want to mention the work \cite{MaritaRosch2021} concerning error analysis for optimal control of a coupled PDE--ODE system, where the nonsmooth nonlinearity acts on a semilinear ODE.
For works related to optimal control of obstacle problems, we refer to \cite{MeyerThoma2013,ChristofMeyer2018} and the references therein.
Based on a quadratic growth condition, a priori error estimates were established in \cite{MaritaRosch2021,MeyerThoma2013}.
To the authors’ best knowledge, this is the first work that exploits a second-order sufficient optimality condition to show a priori error estimates for the discretization of optimal control problems governed by nonsmooth PDEs.

In this paper, our main aim is to derive the convergence analysis and error estimates of the discretization of \eqref{eq:P} under the explicit second-order sufficient optimality conditions established in \cite{ClasonNhuRosch_levelset} (first derived in general form in \cite{ClasonNhuRosch_os2nd}).
As the coefficient $a$ in the state equation is Lipschitz continuous but not differentiable, there are two major difficulties in deriving the error analysis. The first issue arises in studying error estimates of the discretization of the adjoint state equation; the other is the lack of the second-order differentiability of the  cost functional.
We therefore cannot apply an abstract theorem on error estimates shown in \cite{CasasTroltzsch2012} and in \cite{CasasMateosRosch2021}. To deal with the first issue, we introduce the function $Z_{y, \hat y}$ defined in \eqref{eq:Z-func} that measures pointwise  the difference between the gradients of the superposition mappings of $a$ associated with two distinct states $y$ and $\hat y$.
This allows us to derive thereafter both $L^2$- and an $H^1_0$- error estimates for the approximation of the adjoint state equation.
Let us emphasize that the solutions to the linearized state equation can only be shown to be in a $W^{1,p}$-space (see \cref{rem:regularity-derivative-S}) and thus we cannot directly employ the standard duality argument based on the Aubin--Nitsche trick; see, e.g. \cite{CasasMateos2002CAM,ReyesDhamo2016,BrennerScott2008}. In order to derive $L^2$-error estimates, we will instead introduce an \emph{adjusted} linearized state equation in \eqref{eq:adjusted-deri} below. Under the assumption that the jump functional $\Sigma$ defined in \eqref{eq:E-functional} to be finite at the optimal state $\bar y$, we show that the solutions to this adjusted equation belong to a fractional Sobolev space of order greater than $1$. We can then apply standard interpolation error estimates to obtain the desired result.
For handling the second issue, we will exploit an structural assumption on the optimal state and employ an explicit formula of a second-order generalized  derivative  of the objective functional, formulated in \cite{ClasonNhuRosch_levelset}. Based on the second-order sufficient optimality conditions for \eqref{eq:P} from \cite{ClasonNhuRosch_os2nd,ClasonNhuRosch_levelset}, we then prove general error estimates for variational, piecewise constant, and  continuous piecewise linear approximations of the optimal control, which generalize those of Theorem 2.14 in \cite{CasasTroltzsch2012} and of of Lemma 5.2 in \cite{CasasMateosRosch2021}, and can be applied for the case where the cost functional $j$ is of class $C^1$ but not necessarily $C^2$; see \cref{thm:error-esti-general}.

The plan of the paper is as follows. This section ends with our notation. In the next section, we make the assumptions for \eqref{eq:P} and provide some preliminary results from \cite{ClasonNhuRosch_os2nd}  and \cite{ClasonNhuRosch_levelset}.
\cref{sec:state-equ-error} is devoted to the numerical approximation of the state equation by finite elements and the local well-posedness and differentiability of the discrete counterpart of the control-to-state operator. In \cref{sec:adjoint-state-equ-error}, the error analysis of the adjoint state equation is investigated. Finally, the main results of the paper are presented in \cref{sec:P-descretized}. There, the convergence and error estimates of local minima of discrete optimal control problems are, respectively, stated in \cref{sec:convergence} and \cref{sec:error-estimate}.
The numerical tests illustrating the obtained results are given in \cref{sec:numerical-confirmation}.
Finally, the verification of a structural assumption and a computation of the jump functional for a specific situation are carried out in \cref{sec:structural-jump-func}, while the regularity of solutions in a fractional Sobolev space to the adjusted linearized state equation is shown in \cref{sec:linearized-regularity}.

\medskip

\paragraph*{Notation.}
We denote by $B_X(u,\rho)$ and $\overline B_X(u,\rho)$ the open and closed balls in a Banach space $X$ of radius $\rho >0$ centered at $u \in X$, respectively.
For Banach spaces $X$ and $Y$, the notation $X \hookrightarrow (\Subset) Y$ is understood that $X$ is continuously (compactly) embedded in $Y$. Let $X$ be a Banach space with its dual $X^*$, the symbol $\langle\cdot, \cdot \rangle_{X^*, X}$ stands for the dual product of $X$ and $X^*$.
For a given function $g:\overline\Omega\to \R$ and a subset $A \subset \R$, $\{ g \in A \}$ denotes the set of all points $x \in \overline \Omega$ for which $g(x) \in A$. For functions $g_1,g_2$ and subsets $A_1, A_2 \subset \R$, we set $\{ g_1 \in A_1, g_2 \in A_2 \} := \{ g_1 \in A_1 \} \cap \{ g_2 \in A_2\}$.
For any set $\omega \subset \overline \Omega$, we denote by $\1_{\omega}$
the characteristic function
of $\omega$, i.e., $\1_\omega(x) = 1$ if $x \in \omega$ and $\1_\omega(x) =0$ otherwise.
We write the symbol $C$ for a generic positive constant, which may be different at different places of occurrence and the notation, e.g. $C_\xi$ for a constant depending only on the parameter $\xi$.
For a measurable two-dimensional subset $M$, by $\meas_{\R^2}(M)$, we denote the two-dimensional Lebesgue measure of $M$.
Finally, the symbol $\Ha^1$ denotes the one-dimensional Hausdorff measure on $\R^2$ that is scaled as in  \cite{Evans1992}, Def.~2.1.

\section{Main assumptions and preliminary results}\label{sec:assumption}
In this section we first  present assumptions, which will be used in the whole paper, and then state some preliminary results on the state equation, the adjoint state equation, as well as the first-order and explicit second-order sufficient optimality conditions from \cite{ClasonNhuRosch_os2nd} and \cite{ClasonNhuRosch_levelset}.

\medskip

We first address the salient point, which is the structure of the nondifferentiable nonlinearity $a$. In this work, we assume that $a$ is defined by
\begin{equation}
    \label{eq:PC1-rep}
    a(t) := \1_{(-\infty, \bar t]}(t) a_0(t) + \1_{(\bar t, \infty)} a_1(t)  \quad \text{for all } t \in \R,
\end{equation}
for a given number $\bar t \in \R$ and given functions $a_0 \in C^2((-\infty, \bar t])$ and $a_1 \in C^2([\bar t, \infty))$ with $a_{0}(\bar t) = a_{1}(\bar t)$.
Obviously, $a$ is Lipschitz continuous on $\R$ and twice continuously differentiable on  $(-\infty, \bar t) \cup (\bar t, \infty)$, but not even of class $C^1$ in general.
However, $a$ is directionally differentiable and its directional derivative for all $t\in \R$ in direction $s\in \R$ is given by
\begin{equation} \label{eq:directional-der-a1}
    a'(t; s) = \1_{(-\infty, \bar t)}(t)a_0'(t)s + \1_{( \bar t, \infty)}(t)a_1'(t)s
    + \1_{\{\bar t\}}(t) [\1_{(0, \infty)}(s) a_{1}'(\bar t)s + \1_{(-\infty, 0)}(s) a_{0}'(\bar t)s].
\end{equation}
\begin{remark}
    Let us emphasize that the results and the underlying analysis in this paper can be applied to the situation in which the function $a$ is continuous and is twice continuously differentiable on finitely many intervals (i.e., a finitely $PC^2$ function; see \cite{ClasonNhuRosch_os2nd} for a precise definition).
    However, in order to keep the presentation concise and to be able to focus on the main arguments, we restrict the presentation to the simplest such situation given by \eqref{eq:PC1-rep}.
\end{remark}

The following assumptions shall hold throughout the following.
\begin{assumption}
    \item \label{ass:domain}
        $\Omega \subset \R^2$ is an open bounded convex polygonal.
    \item \label{ass:b_func}
        The Lipschitz continuous function $b: \overline\Omega \to \R$ satisfies  $b(x) \geq \underline{b} > 0$ for all $x \in \overline\Omega$.
    \item \label{ass:PC1-func}
        $a: \R \to \R$ is nonnegative and given by \eqref{eq:PC1-rep}.
    \item \label{ass:cost_func}
        $L: \Omega \times \R \to \R$ is a Carath\'{e}odory function that is of class $C^2$ w.r.t. the second variable with $L(\cdot,0) \in L^1(\Omega)$. Besides, for any $M>0$, there exist $C_M >0$ and $\psi_M \in L^{\bar p}(\Omega)$ ($\bar p > 2$) such that
        $|\frac{\partial L}{\partial y}(x,y)|  \leq \psi_M(x) \, \text{and} \, |\frac{\partial^2 L}{\partial y^2}(x,y)| \leq C_M$
        for all $y \in \R$ with $|y| \leq M$, and a.e. $x \in \Omega$.
\end{assumption}
In the remainder of this subsection, we state some known results for the state equation, the adjoint state equation, and the optimality conditions for  \eqref{eq:P}; see, e.g. \cite{ClasonNhuRosch_os2nd} and \cite{ClasonNhuRosch_levelset}.
Let us first study the state equation
\begin{equation} \label{eq:state}
    -\dive [(b + a(y) )\nabla y ] = u \, \text{in } \Omega, \quad y =0 \, \text{on } \partial\Omega.
\end{equation}
\begin{theorem}[{\cite{ClasonNhuRosch_levelset}, Thm.~3.2 and cf. \cite{ClasonNhuRosch_os2nd}, Thms.~3.1 and 3.5}]
    \label{thm:control2state-oper}
    Assume that \crefrange{ass:domain}{ass:PC1-func} are verified. Then, the control-to-state mapping $S: W^{-1,p}(\Omega) \ni u \mapsto y_u \in W^{1,p}_0(\Omega)$ is of class $C^1$, where $y_u$ is the unique solution to \eqref{eq:state}. Moreover, for any $u,v \in W^{-1,p}(\Omega)$ with $p > 2$ and $y_u:= S(u)$, $z_v:= S'(u)v$ uniquely satisfies
    \begin{equation} \label{eq:deri}
        -\dive [(b + a(y_u)) \nabla z_v + \1_{\{ y_u \neq \bar t \}}a'(y_u) z_v\nabla y_u ]  = v \, \text{in } \Omega,  \quad z_v =0 \, \text{on } \partial\Omega.
    \end{equation}
    Moreover, a number $p_* >2$ exists and satisfies that for any $p \in [2,p_*)$ and for any bounded set $U \subset L^p(\Omega)$, there hold $S(u) \in W^{1,p}_0(\Omega) \cap W^{2,p}(\Omega)$ and
    $\norm{S(u)}_{W^{2,p}(\Omega)} \leq C_U$.
\end{theorem}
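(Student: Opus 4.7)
The plan is to treat the state equation first, then deduce the $C^1$-regularity of $S$ via the implicit function theorem, with the main subtlety being the Fr\'echet differentiability of the superposition $y\mapsto a(y)\nabla y$ despite $a$ failing to be $C^1$ at $\bar t$.

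For existence, I would construct $y_u \in H^1_0(\Omega)$ for $u\in H^{-1}(\Omega)$ via a Browder--Minty or Schauder fixed-point argument, exploiting $b+a(y)\geq \underline b > 0$ together with the continuity of the superposition $y\mapsto a(y)$ from $H^1_0(\Omega)$ into $L^\infty(\Omega)$. Uniqueness follows by subtracting two solutions,
\begin{equation*}
    -\dive\bigl[(b+a(y_1))\nabla(y_1-y_2)\bigr] = \dive\bigl[(a(y_1)-a(y_2))\nabla y_2\bigr],
\end{equation*}
testing with $y_1 - y_2$, and combining the Lipschitz bound on $a$ with an $L^\infty$-estimate for $\nabla y_2$ drawn from the $W^{2,p}$-regularity proved below. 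The $W^{1,p}_0\cap W^{2,p}$-regularity and the uniform bound on $\norm{S(u)}_{W^{2,p}(\Omega)}$ for bounded $U\subset L^p(\Omega)$ with $p\in[2,p_*)$ then follow from convex-polygonal $W^{2,p}$-regularity applied to the nondivergence form
\begin{equation*}
    -(b+a(y_u))\Delta y_u = u + \bigl(\nabla b + \1_{\{y_u\neq \bar t\}}a'(y_u)\nabla y_u\bigr)\cdot \nabla y_u,
\end{equation*}
after bootstrapping from a Meyers-type integrability improvement $\nabla y_u \in L^q$ for some $q > 2$.

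For the $C^1$-statement, the plan is to apply the implicit function theorem to $F:(W^{1,p}_0\cap W^{2,p})\times W^{-1,p}\to W^{-1,p}$ defined by $F(y,u):=-\dive[(b+a(y))\nabla y]-u$. The decisive observation is that although $a'$ has a jump at $\bar t$, Stampacchia's lemma gives $\nabla y = 0$ almost everywhere on the level set $\{y=\bar t\}$, so the candidate derivative $h\mapsto \1_{\{y\neq \bar t\}}a'(y)h\nabla y + a(y)\nabla h$ is well-defined, and a dominated convergence argument identifies it as the Fr\'echet derivative of $y\mapsto a(y)\nabla y$. This yields \eqref{eq:deri} as the linearization, whose unique solvability then follows from Lax--Milgram applied to the coercive principal part together with Fredholm theory absorbing $\1_{\{y_u\neq \bar t\}}a'(y_u)\nabla y_u$ as a compact lower-order coefficient.

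The hardest step will be verifying \emph{continuity} of the partial derivative $u\mapsto F_y(y_u,u)$ in operator norm, which is what upgrades G\^ateaux to Fr\'echet differentiability of $S$. As $u_n\to u$ in $L^p$ and hence $y_{u_n}\to y_u$ in $W^{1,p}(\Omega)\cap C(\overline\Omega)$, the possibly oscillating indicators $\1_{\{y_{u_n}\neq \bar t\}}$ could in principle destabilize $a'(y_{u_n})\nabla y_{u_n}$ in $L^p$; however, on the set where $\nabla y_u\neq 0$ these indicators converge pointwise, while on $\{y_u=\bar t\}$ the gradient vanishes, so that the jump $a_1'(\bar t)-a_0'(\bar t)$ is not seen in the limit. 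Dominated convergence then closes the estimate and delivers the $C^1$-regularity of $S$.
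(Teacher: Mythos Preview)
The paper does not prove this theorem; it is quoted without proof as a preliminary result from \cite{ClasonNhuRosch_levelset,ClasonNhuRosch_os2nd}. Your outline is close in spirit to what those references do, and you have correctly identified the decisive mechanism: since $\nabla y=0$ a.e.\ on $\{y=\bar t\}$, the jump of $a'$ at $\bar t$ is invisible in the linearization, which is precisely what the paper recalls as \cref{lem:T-decomposition}.

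That said, your uniqueness argument has a genuine gap. Subtracting two solutions and testing with $y_1-y_2$ yields
\[
    \underline b\,\norm{\nabla(y_1-y_2)}_{L^2}^2 \le L_a \norm{\nabla y_2}_{L^\infty}\norm{y_1-y_2}_{L^2}\norm{\nabla(y_1-y_2)}_{L^2},
\]
and after Poincar\'e this forces $y_1=y_2$ only under the smallness condition $L_a C_P\norm{\nabla y_2}_{L^\infty}<\underline b$, which is not available in general. Uniqueness for this class of nonmonotone quasilinear equations is obtained instead via a Kirchhoff-type transformation (writing $(b+a(y))\nabla y$ through the primitive $A(t)=\int_0^t a(s)\ds$) combined with a comparison argument; see, e.g., \cite{CasasDhamo2011}, Thm.~2.1, and the references to Hlav\'a\v{c}ek--K\v{r}\'i\v{z}ek--Mal\'y therein. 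A smaller point: your IFT is set up on $W^{1,p}_0\cap W^{2,p}$, but the $C^1$-claim is for $S:W^{-1,p}\to W^{1,p}_0$, where no $W^{2,p}$-regularity is available. The correct setting is $F:W^{1,p}_0\times W^{-1,p}\to W^{-1,p}$, with the $W^{2,p}$-bound established separately (and only for $u\in L^p$) by the bootstrapping you describe.
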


We now investigate the adjoint state equation
\begin{equation} \label{eq:adjoint-state}
    -\dive [(b + a(y_u)) \nabla \varphi]  + \1_{\{ y_u \neq \bar t \}}a'(y_u) \nabla y_u \cdot \nabla \varphi  = v \, \text{in } \Omega,  \quad \varphi =0 \, \text{on } \partial\Omega
\end{equation}
for $u \in W^{-1,p}(\Omega)$, $p >2$, $v\in H^{-1}(\Omega)$, and $y_u:= S(u)$.
\begin{theorem}[{\cite{ClasonNhuRosch_levelset}, Thm.~3.3 and cf. \cite{ClasonNhuRosch_os2nd}, Lem.~4.1}]
    \label{thm:adjoint-equation}
    Assume that \crefrange{ass:domain}{ass:PC1-func} are satisfied. Let $p, q> 2$ be arbitrary. Then, for any $u \in W^{-1,p}(\Omega), v \in H^{-1}(\Omega)$, there exists a unique $\varphi \in H^1_0(\Omega)$ which solves \eqref{eq:adjoint-state}.
    Moreover, if $U$ is a bounded subset in $L^p(\Omega)$, then for any $u \in U$ and any $v \in L^q(\Omega)$, the solution $\varphi$ to \eqref{eq:adjoint-state} is an element of $H^2(\Omega) \cap W^{1,\infty}(\Omega)$ and there holds
    $\norm{\varphi}_{H^{2}(\Omega)} + \norm{\varphi}_{W^{1,\infty}(\Omega)} \leq C_{U} \norm{v}_{L^{q}(\Omega)}$.
    Furthermore, if $u \in L^p(\Omega)$ and $v \in L^r(\Omega)$ with $r \in (2, p_*)$, then $\varphi \in W^{2,r}(\Omega)$, where $p_*$ is given as in \cref{thm:control2state-oper}.
\end{theorem}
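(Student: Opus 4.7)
The plan is to handle the three assertions in sequence: well-posedness in $H^1_0(\Omega)$ via identification with the adjoint of the linearization \eqref{eq:deri}, the $H^2\cap W^{1,\infty}$ regularity via a bootstrap of Grisvard-type elliptic regularity on the convex polygonal domain, and finally the $W^{2,r}$ regularity by recycling the resulting $L^\infty$ gradient bound.

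For Stage 1, I would identify the left-hand side of \eqref{eq:adjoint-state} as the Banach-space adjoint $A_u^*$ of the linearization
\[
A_u z := -\dive\bigl[(b+a(y_u))\nabla z + \1_{\{y_u\neq\bar t\}}a'(y_u)\, z\,\nabla y_u\bigr]
\]
from \eqref{eq:deri}. By \cref{thm:control2state-oper}, $y_u\in W^{2,p}(\Omega)\cap H^1_0(\Omega)\hookrightarrow W^{1,\infty}(\Omega)$ for $p>2$ in two dimensions, so that $\1_{\{y_u\neq\bar t\}}a'(y_u)\nabla y_u\in L^\infty(\Omega)^2$, both bilinear forms are continuous on $H^1_0(\Omega)\times H^1_0(\Omega)$, and integration by parts gives $\langle A_u z,\varphi\rangle=\langle A_u^*\varphi,z\rangle$. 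A Gårding inequality, using $b+a(y_u)\ge\underline b$ together with Young's inequality on the drift term, combined with the Fredholm alternative, reduces existence to showing $\ker A_u^*=\{0\}$ in $H^1_0(\Omega)$: for any element of this kernel, a Meyers-type self-improvement of integrability upgrades it to $W^{1,p}(\Omega)\cap H^1_0(\Omega)$ for some $p>2$, where injectivity is inherited from the $W^{1,p}_0\to W^{-1,p}$ isomorphism supplied by \cref{thm:control2state-oper} for $A_u$ (and hence, by duality, for $A_u^*$).

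For Stage 2, I would rewrite \eqref{eq:adjoint-state} in pure divergence form as
\[
-\dive[(b+a(y_u))\nabla\varphi] = v - \1_{\{y_u\neq\bar t\}}a'(y_u)\nabla y_u\cdot\nabla\varphi =: F.
\]
Boundedness of $U$ in $L^p(\Omega)$ yields $\|\nabla y_u\|_{L^\infty(\Omega)}\le C_U$ via \cref{thm:control2state-oper}, and Stage 1 gives $\|\varphi\|_{H^1_0(\Omega)}\le C_U\|v\|_{L^q(\Omega)}$, so $F\in L^2(\Omega)$ with a uniform bound. The coefficient $b+a(y_u)$ is Lipschitz on $\overline\Omega$ (composition of Lipschitz functions with $y_u\in C^{0,1}(\overline\Omega)$) and bounded below by $\underline b>0$, so Grisvard's $H^2$ regularity on the convex polygonal domain delivers $\varphi\in H^2(\Omega)$. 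The 2D embedding $H^1(\Omega)\hookrightarrow L^s(\Omega)$ for every finite $s$ then upgrades $F$ to $L^q(\Omega)$, and a further $W^{2,q}$ elliptic estimate yields $\varphi\in W^{2,q}(\Omega)\hookrightarrow W^{1,\infty}(\Omega)$ for $q>2$, together with the asserted combined estimate. Stage 3 is then a single bootstrap: with $\nabla\varphi\in L^\infty(\Omega)$ and $\nabla y_u\in L^\infty(\Omega)$, one has $F\in L^r(\Omega)$ whenever $v\in L^r(\Omega)$, and the $W^{2,r}$ regularity on convex polygonal domains (valid for $r<p_*$ by the same Meyers-type mechanism underlying the choice of $p_*$ in \cref{thm:control2state-oper}) gives $\varphi\in W^{2,r}(\Omega)$.

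The main obstacle is Stage 1: the bilinear form associated with $A_u^*$ is neither symmetric nor coercive on $H^1_0(\Omega)$, so neither Lax--Milgram nor a direct energy argument suffices. Combining the Gårding inequality with a Fredholm-type injectivity argument is the natural route, and the injectivity step leans critically on a Meyers-type self-improvement together with the $W^{1,p}_0$ isomorphism of \cref{thm:control2state-oper}; without this link back to the linearized primal problem, one would be forced into an independent uniqueness proof for the adjoint equation.
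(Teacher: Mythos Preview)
The paper does not contain its own proof of this theorem; it is quoted verbatim as a preliminary result from the companion papers \cite{ClasonNhuRosch_levelset} and \cite{ClasonNhuRosch_os2nd}, so there is no in-paper argument to compare against. Your overall strategy (adjoint identification plus G\aa rding/Fredholm for well-posedness, then a bootstrap through Grisvard-type regularity on the convex polygon) is the standard one and is consistent with how the paper invokes these facts elsewhere (e.g., the reference to Theorem~2.6 in \cite{CasasDhamo2011} and Lemma~4.1 in \cite{ClasonNhuRosch_os2nd} in the proof of \cref{lem:adjoint-disrete-half}).

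Two small gaps, however, should be fixed. First, in Stage~1 you invoke $y_u\in W^{2,p}(\Omega)\hookrightarrow W^{1,\infty}(\Omega)$, but \cref{thm:control2state-oper} delivers $W^{2,p}$-regularity only for $u\in L^p(\Omega)$; the first assertion of the theorem allows $u\in W^{-1,p}(\Omega)$, in which case you only know $y_u\in W^{1,p}_0(\Omega)$. The bilinear form is still continuous on $H^1_0(\Omega)\times H^1_0(\Omega)$ because $\nabla y_u\in L^p(\Omega)$, $\nabla\varphi\in L^2(\Omega)$, and $w\in L^{2p/(p-2)}(\Omega)$ combine via H\"older, but your justification as written is incorrect. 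Second, in Stage~2 the step ``$F\in L^q(\Omega)\Rightarrow\varphi\in W^{2,q}(\Omega)$'' is not available for arbitrary $q>2$ on a convex polygon: $W^{2,q}$-regularity is limited precisely by the exponent $p_*$ tied to the corner angles. To obtain $\varphi\in W^{1,\infty}(\Omega)$ you should bootstrap to $W^{2,s}(\Omega)$ for some $s\in(2,\min\{q,p_*\})$ and then embed; this also explains cleanly why Stage~3 requires $r<p_*$ rather than merely $r>2$.
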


\begin{remark}
    \label{rem:regularity-derivative-S}
    In spite of the $W^{2,p}$-regularity of the state and adjoint state, the function $z_v := S'(u)v$ determined in \eqref{eq:deri} belongs to  $W^{1,p}(\Omega)$ only. This fact is due to the nondifferentiability of the function $a$.
\end{remark}

The optimal control problem \eqref{eq:P} can be transferred in the following form
\begin{equation}\label{eq:P2}
    \tag{P}
    \min_{u\in \mathcal{U}_{ad}}  j(u)  = \int_\Omega L(x,S(u)(x))\dx + \frac\nu2\norm{u}_{L^2(\Omega)}^2,
\end{equation}
where the admissible set is defined as
\[
    \mathcal{U}_{ad} := \{u \in L^\infty(\Omega) \mid  \alpha \leq u(x) \leq \beta \qquad \text{for a.e. } x \in \Omega  \}.
\]
Thanks to \crefrange{ass:domain}{ass:cost_func}, the cost functional $j: L^2(\Omega) \to \R$ is first-order continuously differentiable and satisfies
\begin{equation} \label{eq:obje-deri}
    j'(u)v = \int_{\Omega} (\varphi_u + \nu u )v \dx \quad \text{for } u, v \in L^2(\Omega)
\end{equation}
with $\varphi_u \in H^{1}_0(\Omega)$ solving \eqref{eq:adjoint-state} corresponding to the right-hand side term $v$ substituted by $\frac{\partial L}{\partial y}(\cdot, S(u))$; see \cite{ClasonNhuRosch_os2nd}, Thm.~4.2.
We have the following first-order necessary optimality conditions from Theorem 4.3 in \cite{ClasonNhuRosch_os2nd}.
\begin{theorem}[{\cite{ClasonNhuRosch_os2nd}, Thm.~4.3}]
    \label{thm:1st-OC}
    Let \crefrange{ass:domain}{ass:cost_func} hold.
    Then \eqref{eq:P2} admits at least one local minimizer $\bar u$. Furthermore, an adjoint state $\bar\varphi \in H^1_0(\Omega)$ exists and fulfills the following first-order optimality conditions
    \begin{subequations}
        \label{eq:1st-OS}
        \begin{align}
            &-\dive [(b + a(\bar y)) \nabla \bar y] = \bar u \quad \text{in } \Omega, \quad \bar y = 0 \, \text{on } \partial\Omega, \label{eq:state_OS} \\
            &-\dive  [(b +a(\bar y) ) \nabla \bar\varphi ] + \1_{\{ \bar y \neq \bar t \} } a'(\bar y) \nabla \bar y \cdot \nabla \bar \varphi = \frac{\partial L}{\partial y}(x, \bar y) \, \text{in } \Omega, \quad \bar \varphi =0 \, \text{on } \partial\Omega,
            \label{eq:adjoint_OS} \\
            &\int_\Omega (\bar \varphi + \nu \bar u )( u - \bar u ) \dx \geq 0 \quad \text{for all } u \in \mathcal{U}_{ad},  \label{eq:normal_OS}
        \end{align}
    \end{subequations}
    with  $\bar y:=S(\bar u)$.
    Moreover, $\bar y \in W^{2,p}(\Omega)$ and $\bar \varphi \in W^{2,r}(\Omega)$ for any $p, r \in (2, p_*)$ and $r \leq \bar p$ with $\bar p$ and $p_*$, respectively, defined in \cref{ass:cost_func} and \cref{thm:control2state-oper}. Consequently, $\bar y, \bar \varphi \in C^1(\overline\Omega)$ and $\bar u \in C^{0,1}(\overline\Omega)$.
\end{theorem}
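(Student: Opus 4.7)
The plan is to proceed in three stages: existence by the direct method; derivation of the KKT system via the $C^1$-differentiability of $S$ together with an adjoint trick; and a regularity bootstrap starting from the pointwise projection formula. For existence I would take a minimizing sequence $\{u_n\}\subset\mathcal{U}_{ad}$. It is bounded in $L^\infty(\Omega)$, so along a subsequence $u_n \weakto \bar u$ weakly${}^*$ in $L^\infty$ and weakly in $L^2$ for some $\bar u\in\mathcal{U}_{ad}$ (the admissible set being convex and closed). By \cref{thm:control2state-oper} the map $S$ sends $L^p(\Omega)$ continuously into $W^{1,p}_0(\Omega)$ for $p>2$, and Rellich compactness gives $y_{u_n}\to S(\bar u)$ strongly in $L^2$ along a further subsequence. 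Dominated convergence against the bound in \cref{ass:cost_func} passes the tracking term to the limit, while $\frac{\nu}{2}\norm{\cdot}_{L^2(\Omega)}^2$ is weakly lower semicontinuous; hence $j(\bar u)\leq \liminf j(u_n)$.

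For the necessary conditions, \cref{thm:control2state-oper} gives $S\in C^1$ and \cref{ass:cost_func} makes the chain rule applicable, so $j\in C^1(L^2(\Omega))$ with $j'(\bar u)v = \int_\Omega \frac{\partial L}{\partial y}(x,\bar y)\,z_v \dx + \nu\int_\Omega \bar u v\dx$, where $z_v$ solves \eqref{eq:deri}. I would then introduce $\bar\varphi\in H^1_0(\Omega)$ as the unique solution of \eqref{eq:adjoint_OS}, whose existence and uniqueness is supplied by \cref{thm:adjoint-equation}. Testing \eqref{eq:deri} against $\bar\varphi$ and \eqref{eq:adjoint_OS} against $z_v$ and subtracting produces $\int_\Omega \frac{\partial L}{\partial y}(x,\bar y) z_v\dx = \int_\Omega \bar\varphi v\dx$, which yields representation \eqref{eq:obje-deri}. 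The variational inequality \eqref{eq:normal_OS} then follows from local optimality of $\bar u$ on the convex set $\mathcal{U}_{ad}$ by taking $u-\bar u$ as a test direction.

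For the regularity part, \eqref{eq:normal_OS} is equivalent to the pointwise projection formula $\bar u(x) = \Proj_{[\alpha,\beta]}(-\bar\varphi(x)/\nu)$ almost everywhere on $\Omega$. Since \cref{thm:adjoint-equation} provides $\bar\varphi\in W^{1,\infty}(\Omega)\hookrightarrow C^{0,1}(\overline\Omega)$ and the projection onto an interval is Lipschitz, $\bar u\in C^{0,1}(\overline\Omega)$. In particular $\bar u\in L^\infty(\Omega)\subset L^p(\Omega)$ for every $p>2$, so \cref{thm:control2state-oper} yields $\bar y\in W^{2,p}(\Omega)$ for all $p\in(2,p_*)$. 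The right-hand side of \eqref{eq:adjoint_OS} lies in $L^{\bar p}(\Omega)$ by \cref{ass:cost_func}, and the last clause of \cref{thm:adjoint-equation} then gives $\bar\varphi\in W^{2,r}(\Omega)$ for every $r\in(2,p_*)$ with $r\leq \bar p$. The two-dimensional embedding $W^{2,p}(\Omega)\hookrightarrow C^1(\overline\Omega)$ for $p>2$ closes the argument.

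The main obstacle I expect is the legitimacy of the adjoint duality, since the coefficient $\1_{\{\bar y\neq \bar t\}}a'(\bar y)$ appearing in both \eqref{eq:deri} and \eqref{eq:adjoint_OS} is only bounded measurable, not continuous, so the symmetry of the bilinear forms generated by cross-testing is not automatic. The resolution is Stampacchia's lemma: $\nabla \bar y = 0$ almost everywhere on the level set $\{\bar y = \bar t\}$, so the indicator truncation contributes nothing to any integral, and the two bilinear forms do coincide, rendering the adjoint identity rigorous.
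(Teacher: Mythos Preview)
The paper does not prove this theorem; it is quoted verbatim from \cite{ClasonNhuRosch_os2nd}, Thm.~4.3, and only the regularity statement $\bar y,\bar\varphi\in W^{2,p}$ with $\bar u\in C^{0,1}$ is added as a trivial consequence of \cref{thm:control2state-oper,thm:adjoint-equation}. So there is no in-paper argument to compare against; your outline is the standard proof and is correct.

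One remark on the ``main obstacle'' you flag at the end: it is not actually an obstacle. Writing the weak formulations of \eqref{eq:deri} and \eqref{eq:adjoint_OS} and testing the first with $\bar\varphi$ and the second with $z_v$, the two left-hand sides are literally the same integral,
\[
\int_\Omega (b+a(\bar y))\nabla z_v\cdot\nabla\bar\varphi\dx + \int_\Omega \1_{\{\bar y\neq\bar t\}}a'(\bar y)\,z_v\,\nabla\bar y\cdot\nabla\bar\varphi\dx,
\]
by commutativity of pointwise multiplication; no symmetry argument or Stampacchia-type cancellation is required for the duality. The observation that $\nabla\bar y=0$ a.e.\ on $\{\bar y=\bar t\}$ is used elsewhere in the theory (for well-posedness of the linearized and adjoint equations), but here the adjoint identity is immediate.
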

Assume now that $\bar\varphi \in H^1_0(\Omega)$ fulfills \eqref{eq:1st-OS}.
The \emph{critical cone} of the problem \eqref{eq:P2} at $\bar u$ is defined by
\begin{equation} \label{eq:critical-cone}
    \mathcal{C}({\mathcal{U}_{ad};\bar u}) := \{ v \in L^2(\Omega)  \mid  v \geq 0 \, \text{if } \bar u = \alpha,  v \leq 0 \, \text{if } \bar u = \beta,  v  = 0 \, \text{if } \bar \varphi + \nu \bar u \neq 0 \ \text{a.e. in } \Omega
    \}.
\end{equation}

In the rest of this subsection, we shall provide second-order
sufficient
optimality conditions for \eqref{eq:P}. On that account, we need introduce the curvature functional of $j$, which can be separated into three contributions. For any $(u,y,\varphi)\in L^2(\Omega)\times H^1(\Omega)\times W^{1,\infty}(\Omega)$, the smooth part and the first-order nonsmooth part of the curvature in direction $(v_1,v_2)\in L^2(\Omega)^2$ are defined by
\begin{align*}
    & Q_s(u,y,\varphi; v_1,v_2) := \frac{1}{2} \int_\Omega \frac{\partial^2 L}{\partial y^2}(\cdot,y)z_{v_1} z_{v_2} \dx + \frac{\nu}{2} \int_\Omega v_1v_2 \dx
    - \frac{1}{2} \int_\Omega\1_{ \{ y \neq \bar t \}} a''(y)z_{v_1}z_{v_2} \nabla y \cdot \nabla \varphi \dx, \\
    &Q_1(u,y,\varphi; v_1,v_2) :=
    - \frac{1}{2} \int_\Omega [a'(y; z_{v_1}) \nabla z_{v_2} +a'(y; z_{v_2})\nabla z_{v_1} ]\cdot  \nabla\varphi  \dx
\end{align*}
with $z_{v_i} := S'(u)v_i$, $i=1,2$.
The critical part of the curvature is of course the second-order nonsmooth part involving some additional notation.
Let $\delta >0$ be arbitrary but fixed and set
\begin{equation}
    \label{eq:Omega-123-sets}
    \left\{
        \begin{aligned}
            \Omega_{y, \hat y }^{2} & :=  \{\hat y \in (\bar t, \bar t + \delta), y \in (\bar t-\delta, \bar t] \},\\
            \Omega_{y, \hat y }^{3} & := \{ \hat y \in (\bar t- \delta, \bar t), y \in [\bar t,\bar t + \delta) \}
        \end{aligned}
    \right.
\end{equation}
for  given functions $y, \hat y \in C(\overline\Omega)$.
For any $s \in \R$, $u, v \in L^2(\Omega)$, $y \in C(\overline\Omega) \cap H^1(\Omega)$, and $\varphi \in W^{1,\infty}(\Omega)$, we define
\begin{equation}  \label{eq:zeta-func}
    \left\{
        \begin{aligned}
            \zeta_0(u,y;s,v) &:= - \{a'\}_{\bar t+0}^{\bar t-0} (\bar t - S(u+sv))    \1_{\Omega_{S(u+sv), y }^{3}}, \\
            \zeta_1(u,y;s,v) &:= \{a'\}_{\bar t+0}^{\bar t-0} (\bar t - S(u+sv))    \1_{\Omega_{S(u+sv), y }^{2}}, \\
            \zeta(u,y;s,v)& := \zeta_0(u,y;s,v) + \zeta_1(u,y;s,v) = \{a'\}_{\bar t+0}^{\bar t-0}  (\bar t - S(u+sv))\left[ \1_{\Omega_{S(u+sv), y }^{2}} -  \1_{\Omega_{S(u+sv), y }^{3}} \right].
        \end{aligned}
    \right.
\end{equation}
Here $\{a'\}_{\bar t+0}^{\bar t-0}$ denotes  the difference between the one-sided derivatives of $a$ at $\bar t$ from left and right, i.e.,
\[
    \{a'\}_{\bar t+0}^{\bar t-0} : = \lim\limits_{t \to \bar t^{-}} a'(t) - \lim\limits_{t \to \bar t^{+}}  a'(t) =  a'_{0}(\bar t) - a'_{1}(\bar t).
\]
We then determine for any $\{s_n\} \in c_0^+:= \{ \{s_n\}\subset (0,\infty) \mid s_n \to 0 \}$ and $v \in L^2(\Omega)$ the term
\begin{multline}
    \label{eq:key-term-sn}
    \tilde{Q}(u,y,\varphi;\{s_n\}, v) := \liminf\limits_{n \to \infty} \frac{1}{s_n^2} \int_\Omega  \sum_{i =0 }^1 \zeta_i(u,y;s_n,v)  \nabla y \cdot \nabla \varphi \dx \\
    = \{a'\}_{\bar t+0}^{\bar t-0} \liminf\limits_{n \to \infty} \frac{1}{s_n^2}   \int_\Omega (\bar t - S(u+s_nv))     \left[\1_{\Omega_{S(u+s_n v), y }^{2}}  -\1_{\Omega_{S(u+s_nv), y }^{3}}\right] \nabla y \cdot \nabla \varphi \dx.
\end{multline}
The second-order nonsmooth part of the curvature in direction $v \in L^2(\Omega)$ is thus defined as
\begin{equation*}
    Q_2(u,y,\varphi; v) := \inf \{ \tilde{Q}(u,y,\varphi; \{s_n\}, v) \mid \{s_n\} \in c_0^+ \}.
\end{equation*}
We finally identify the total curvature in direction $v$ as
\begin{equation}
    \label{eq:curvature}
    Q(u,y,\varphi;v) := Q_s(u,y, \varphi; v,v) + Q_1(u,y, \varphi; v,v) +Q_2(u,y, \varphi; v).
\end{equation}
\begin{remark}
    \label{rem:curvature-term-simplification}
    The definitions of the sets $\Omega_{y, \hat y }^{2}$ and $\Omega_{y, \hat y }^{3}$ in \eqref{eq:Omega-123-sets} are derived from the ones for $\Omega_{y, \hat y }^{1,2}$ and $\Omega_{y, \hat y }^{0,3}$, respectively, in Lemma 3.3 in \cite{ClasonNhuRosch_os2nd} for the situation in which $K:=1$, $t_0 := -\infty$, $t_1 := \bar t$, and $t_2 := \infty$. Analogously, the definitions of $Q_s$, $Q_1$, and  $Q_2$ in this section can be obtained from those in \S\,5.1 in \cite{ClasonNhuRosch_os2nd}.
\end{remark}

Thanks to Proposition 5.6 and Lemma 5.7 in \cite{ClasonNhuRosch_os2nd}, we have the weak lower semicontinuity of  $Q_2$ in the last variable and there holds
\begin{equation*}
    |Q_2(u,S(u),\varphi; v) | \leq \Sigma(S(u)) \norm{\nabla \varphi}_{L^\infty(\Omega)} \norm{S'(u)v}_{L^\infty(\Omega)}^2
    \quad\text{for all }u, v \in L^2(\Omega)\text{ and } \varphi \in W^{1,\infty}(\Omega).
\end{equation*}
Here $\Sigma: C(\overline\Omega) \cap W^{1,1}(\Omega) \to \R \cup \{\infty \}$ is  the \emph{jump functional} and defined by
\begin{equation}
    \label{eq:E-functional}
    \Sigma(y) := \sigma_0 \limsup\limits_{r \to 0^+} \frac{1}{r} \sum_{m= 1}^{2}  \int_\Omega [ \1_{\{ 0 < |y - \bar t | \leq r \}} | \partial_{x_m} y | ]\dx, \quad y \in W^{1,1}(\Omega) \cap C(\overline\Omega)
\end{equation}
with
\begin{equation}
    \label{eq:sigma-i}
    \sigma_0 := |\{a'\}_{\bar t+0}^{\bar t-0}| = |a'_{0}(\bar t) - a'_{1}(\bar t)|.
\end{equation}
Moreover, if follows from Corollary 5.5 in \cite{ClasonNhuRosch_os2nd} that, for any $u \in L^2(\Omega)$,  $\{s_n\} \in c_0^+$ and $v_n \rightharpoonup v$ in $L^2(\Omega)$, there holds
\begin{equation} \label{eq:sigma-tilde}
    \liminf\limits_{n \to \infty} \frac{1}{s_n^2} \int_\Omega \sum_{i =0}^1 \zeta_i(u,S(u);s_n,v_n) \nabla S(u) \cdot \nabla \varphi \dx = \tilde{Q}(u,S(u),\varphi;\{s_n\}, v)  \geq Q_2(u,S(u),\varphi;v),
\end{equation}
provided that $\Sigma(S(u)) < \infty$.

\begin{theorem}[explicit second-order sufficient optimality conditions, {\cite{ClasonNhuRosch_levelset}, Thm.~3.23}]
    \label{thm:2nd-OS-suf}
    Assume that \crefrange{ass:domain}{ass:cost_func} are fulfilled. Let $\bar u$ be  an admissible control of \eqref{eq:P2} such that $\{ \bar y = \bar t\}$ decomposes into finitely many connected components and that on each such connected component $\Cu$, either
    \begin{equation}
        \label{eq:main-hypothesis-nonvanishing-gradient}
        \nabla \bar y(x) \neq 0\quad \text{for all} \quad x \in \Cu
    \end{equation}
    or
    \begin{equation} \label{eq:main-hypothesis-y-vanish-structure}
        \left\{
            \begin{aligned}
                & \nabla \bar y = 0 \, \text{on } \Cu,\\
                & \meas_{\R^2}( \{0 < | \bar y - \bar t| < r \} \cap \Cu^{\epsilon_0}) \leq c_s r, \quad \text{for all } r \in (0, r_0), \, \text{for some constants } {\epsilon_0}, r_0 >0
            \end{aligned}
        \right.
    \end{equation}
    holds
    with $\bar y:= S(\bar u)$
    and
    \[
        \Cu^{\epsilon_0} := \{ x \in \overline\Omega \mid \mathrm{dist}(x, \Cu) < \epsilon_0 \}.
    \]
    Assume further that  there exists a $\bar\varphi \in W^{1,\bar p}_0(\Omega) \cap W^{1,\infty}(\Omega)$, with $\bar p$ defined in \cref{ass:cost_func}, that together with $\bar u, \bar y$ fulfills \eqref{eq:1st-OS}
    and
    \begin{equation} \label{eq:2nd-OC-suff-explicit}
        \begin{aligned}[t]
            Q(\bar u, \bar y, \bar \varphi; v) &= \frac{1}{2} \int_\Omega \frac{\partial^2 L}{\partial y^2}(\cdot, \bar y)z_{v}^2 \dx + \frac{\nu}{2} \int_\Omega v^2 \dx
            - \frac{1}{2} \int_\Omega\1_{ \{\bar y \neq \bar t \}} a''(\bar y)z_{v}^2 \nabla \bar y \cdot \nabla \bar \varphi \dx \\
            \MoveEqLeft[-1] -  \int_\Omega a'(\bar y; z_{v}) \nabla z_{v} \cdot  \nabla \bar \varphi  \dx \\
            \MoveEqLeft[-1] + \frac{1}{2}  [a'_{0}(\bar t) - a'_{1}(\bar t)]   \int_{\{\bar y=\bar t\}} \1_{\{ |\nabla \bar y| > 0 \}} z_v^2  \frac{\nabla \bar y \cdot \nabla\bar  \varphi}{|\nabla \bar y|} \dH^{1}(x) \\
            &> 0 \qquad \text{for all } v\in \mathcal{C}({\mathcal{U}_{ad};\bar u}) \setminus \{0\}
        \end{aligned}
    \end{equation}
    with $z_v := S'(\bar u)v$.
    Then  constants $c_0, \rho_0 >0$ exist and fulfill
    \begin{equation*}
        j(\bar u) + c_0 \norm{u - \bar u}_{L^2(\Omega)}^2 \leq j(u)
        \qquad\text{for all } u \in \mathcal{U}_{ad} \cap \overline B_{L^2(\Omega)}(\bar u, \rho_0).
    \end{equation*}
\end{theorem}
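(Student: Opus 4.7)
The plan is the standard contradiction argument for second-order sufficient conditions, adapted to the present setting in which the objective $j$ is only of class $C^1$. Suppose the claimed quadratic growth fails; then for every $n \in \N$ there exists $u_n \in \mathcal{U}_{ad}$ with $0 < t_n := \norm{u_n - \bar u}_{L^2(\Omega)} \leq 1/n$ and $j(u_n) < j(\bar u) + \frac{1}{n} t_n^2$. Setting $v_n := (u_n - \bar u)/t_n$ gives a sequence on the $L^2$-unit sphere, and, after extracting a subsequence, $v_n \weakto v$ in $L^2(\Omega)$ for some $v \in L^2(\Omega)$.

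The first step is to show $v \in \mathcal{C}(\mathcal{U}_{ad};\bar u)$. The sign conditions $v \geq 0$ on $\{\bar u = \alpha\}$ and $v \leq 0$ on $\{\bar u = \beta\}$ pass from $v_n$ to the weak limit. For the third condition, dividing the defining inequality by $t_n$ and using $j \in C^1(L^2(\Omega))$ yields $\limsup_{n} j'(\bar u)v_n \leq 0$, while $j'(\bar u)v_n = \frac{1}{t_n}\int_\Omega(\bar\varphi + \nu \bar u)(u_n - \bar u)\dx \geq 0$ by \eqref{eq:normal_OS}; hence $\int_\Omega(\bar\varphi + \nu \bar u)v \dx = 0$. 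Since the first-order system forces the integrand to be non-negative once the sign conditions on $v$ hold, $v$ must vanish on $\{\bar\varphi + \nu \bar u \neq 0\}$.

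The second step, which I expect to be the main obstacle, is a second-order expansion of $j(u_n) - j(\bar u)$ along $v_n$. Because $j$ is not $C^2$, no Taylor remainder is available; instead one expands the state equation and the integrand of $j$ to second order on $\{\bar y \neq \bar t\}$ (where $a$ is smooth) and collects the remaining contributions on the level set $\{\bar y = \bar t\}$ into the functionals $\zeta_i$ of \eqref{eq:zeta-func}. Invoking the explicit second-order derivative formula from \cite{ClasonNhuRosch_levelset}, this gives
\begin{equation*}
    j(u_n) - j(\bar u) - t_n \, j'(\bar u) v_n = t_n^2 \bigl[ Q_s(\bar u,\bar y,\bar\varphi;v_n,v_n) + Q_1(\bar u,\bar y,\bar\varphi;v_n,v_n)\bigr] + \int_\Omega \sum_{i=0}^{1}\zeta_i(\bar u,\bar y;t_n,v_n) \nabla \bar y \cdot \nabla \bar \varphi \dx + o(t_n^2).
\end{equation*}
The structural hypothesis \eqref{eq:main-hypothesis-nonvanishing-gradient}--\eqref{eq:main-hypothesis-y-vanish-structure} is precisely what ensures $\Sigma(\bar y) < \infty$ and identifies, through \eqref{eq:sigma-tilde}, the $\liminf$ of the nonsmooth integral with the Hausdorff-measure term written explicitly in \eqref{eq:2nd-OC-suff-explicit}; this identification is the delicate point and is imported from the companion analysis.

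The third step is to divide by $t_n^2$ and take $\liminf_{n\to\infty}$. The left-hand side tends to a nonpositive limit since $j'(\bar u)v_n \geq 0$ and $j(u_n) - j(\bar u) < t_n^2/n$. Compactness of $S'(\bar u) : L^2(\Omega) \to H^1_0(\Omega) \cap C(\overline\Omega)$ implies $z_{v_n} \to z_v$ strongly, so the contributions of $Q_s$ (apart from the Tikhonov term) and of $Q_1$ converge to their values at $v$; combined with the weak lower semicontinuity $\liminf_{n}\frac{\nu}{2}\norm{v_n}^2_{L^2(\Omega)} \geq \frac{\nu}{2}\norm{v}^2_{L^2(\Omega)}$ and with \eqref{eq:sigma-tilde} applied to the nonsmooth remainder, one arrives at $0 \geq Q(\bar u,\bar y,\bar\varphi;v)$. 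If $v \neq 0$, this contradicts \eqref{eq:2nd-OC-suff-explicit}. If $v = 0$, strong convergence $z_{v_n} \to 0$ drives every term except the Tikhonov one to zero, yielding $0 \geq \frac{\nu}{2} \liminf_{n}\norm{v_n}_{L^2(\Omega)}^2 = \nu/2 > 0$, again a contradiction. The constants $c_0, \rho_0$ are then obtained by choosing any $c_0 < \min\{\nu/4, \tfrac{1}{2}Q(\bar u,\bar y,\bar\varphi;v)\}$ along the extracted subsequence and a sufficiently small $\rho_0$.
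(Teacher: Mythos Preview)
The paper does not prove this theorem at all: it is quoted verbatim from the companion work \cite{ClasonNhuRosch_levelset}, Thm.~3.23, as part of the preliminaries in \cref{sec:assumption}, and no proof is given here. So there is nothing in the present paper to compare your argument against line by line.

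That said, your outline is the standard contradiction scheme for second-order sufficient conditions and is the right shape for this result; the machinery that this paper imports from \cite{ClasonNhuRosch_os2nd,ClasonNhuRosch_levelset} (the curvature pieces $Q_s,Q_1,Q_2$, the functions $\zeta_i$, \cref{prop:explicit-formular-Q2}, \cref{prop:E-func-finite}, and the lower bound \eqref{eq:sigma-tilde}) is exactly what one needs to make your Step~2 rigorous. Two small comments: (i) your displayed ``second-order expansion'' of $j(u_n)-j(\bar u)$ is only heuristic as written; in the actual argument one does not produce a clean Taylor identity but rather bounds the relevant difference quotient from below by a $\liminf$ and then invokes \eqref{eq:sigma-tilde} and \cref{prop:explicit-formular-Q2} --- compare the closely related computation in \cref{lem:limit-key} and the contradiction at the end of the proof of \cref{thm:error-esti-general}, where one obtains $2Q(\bar u,\bar y,\bar\varphi;v)+\nu(1-\norm{v}_{L^2}^2)\le 0$ and concludes exactly as you do. (ii) Your last sentence is off: in a proof by contradiction the constants $c_0,\rho_0$ are not ``chosen'' a posteriori; their existence is what the contradiction establishes. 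Drop that sentence.
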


\medskip
The following will be used later to show the error estimates for the approximation of \eqref{eq:P}.
\begin{proposition}[{\cite{ClasonNhuRosch_levelset}, Thm.~3.19 }]
    \label{prop:explicit-formular-Q2}
    Let $\bar u \in L^2(\Omega)$ be arbitrary and let $\bar y:= S(\bar u)$. Assume that
    $\{ \bar y = \bar t\}$ decomposes into finitely many connected components and that on each such connected component $\Cu$, either \eqref{eq:main-hypothesis-nonvanishing-gradient} or \eqref{eq:main-hypothesis-y-vanish-structure} is fulfilled.
    Let $v \in L^2(\Omega)$ and
    $\bar \varphi \in C^1(\overline\Omega) \cap W^{2,1}(\Omega)$.
    Then, for any $\{s_n\} \in c_0^+$ and $\{v_n\} \subset L^2(\Omega)$ such that $v_n \rightharpoonup v$ in $L^2(\Omega)$, $y_n:= S(\bar u + s_n v_n)  \to \bar y$ in $C^1(\overline\Omega)$,
    and $(y_n - \bar y)/s_n \to w$ in $W^{1,p}_0(\Omega)$ for some $p>2$, there hold
    \begin{equation}
        \label{eq:explicit-formular}
        \tilde Q(\bar u,\bar y,\bar \varphi;\{s_n\}, v) = \frac{1}{2}   [a'_{0}(\bar t) - a'_{1}(\bar t)]\int_{\{\bar y=\bar t\}}   \1_{\{ |\nabla \bar y| > 0 \}} w^2 \frac{\nabla \bar y \cdot \nabla\bar  \varphi}{|\nabla \bar y|} \dH^{1}(x)
    \end{equation}
    and
    \begin{equation}
        \label{eq:Pn-auxi}
        \lim\limits_{n \to \infty} \frac{1}{s_n^2} \int_\Omega (2 \bar t - \bar y - y_n) [\1_{ \Omega_{y_n, \bar y }^{2} }  -\1_{ \Omega_{y_n, \bar y }^{3}}] \nabla \bar y \cdot \nabla \bar \varphi \dx = 0.
    \end{equation}
\end{proposition}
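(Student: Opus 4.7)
My plan is to evaluate both limits by a coarea-type change of variables in a shrinking tubular neighborhood of the level set $\{\bar y = \bar t\}$. Setting $w_n := (y_n - \bar y)/s_n$, the Sobolev embedding $W^{1,p}_0(\Omega) \hookrightarrow C(\overline\Omega)$ for $p>2$ together with the hypotheses yields $w_n \to w$ uniformly on $\overline\Omega$ and $y_n \to \bar y$ in $C^1(\overline\Omega)$; in particular, on $\Omega_{y_n,\bar y}^{2}\cup\Omega_{y_n,\bar y}^{3}$ one has $|\bar y - \bar t| \leq |\bar y - y_n| \leq C s_n$, so this set shrinks into any prescribed neighborhood of $\{\bar y = \bar t\}$. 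I will fix $\epsilon \in (0,\epsilon_0)$ small enough that the tubular neighborhoods $\mathcal{C}_k^{\epsilon}$ of the finitely many components $\mathcal{C}_1,\ldots,\mathcal{C}_K$ are pairwise disjoint and compactly contained in $\Omega$, and then treat each component separately.

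For a regular component $\mathcal{C}_k$ satisfying \eqref{eq:main-hypothesis-nonvanishing-gradient}, the implicit function theorem gives $|\nabla \bar y| \geq c_k > 0$ on $\mathcal{C}_k^{\epsilon}$ (shrinking $\epsilon$ if necessary) and shows $\mathcal{C}_k$ to be a finite disjoint union of $C^1$ simple closed curves. I will apply the coarea formula to rewrite
\begin{equation*}
    \frac{1}{s_n^2}\int_{\mathcal{C}_k^{\epsilon}}(\bar t - y_n)\bigl[\1_{\Omega_{y_n,\bar y}^{2}} - \1_{\Omega_{y_n,\bar y}^{3}}\bigr]\nabla \bar y \cdot \nabla \bar \varphi\dx = \frac{1}{s_n^2}\int_{\R}\!dt\int_{\{\bar y=t\}\cap\mathcal{C}_k^{\epsilon}}(\bar t - y_n)\bigl[\1_{\Omega_{y_n,\bar y}^{2}} - \1_{\Omega_{y_n,\bar y}^{3}}\bigr]\frac{\nabla \bar y\cdot\nabla \bar \varphi}{|\nabla \bar y|}\dH^1.
\end{equation*}
After substituting $t=\bar t + s_n\tau$ and using $y_n = \bar y + s_n w_n$, the identities $\bar t - y_n = -s_n(\tau + w_n)$,
\[
    \Omega_{y_n,\bar y}^{2}\cap\{\bar y=\bar t+s_n\tau\}=\{w_n<0,\ 0<\tau\leq -w_n\},\qquad \Omega_{y_n,\bar y}^{3}\cap\{\bar y=\bar t+s_n\tau\}=\{w_n>0,\ -w_n\leq\tau<0\},
\]
cause all prefactors of $s_n$ to cancel. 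Uniform boundedness of $\norm{w_n}_{L^\infty}$ confines $\tau$ to a bounded interval, and dominated convergence---justified by $w_n\to w$ in $C(\overline\Omega)$ together with Hausdorff convergence of the $C^1$ slices $\{\bar y=\bar t+s_n\tau\}\cap\mathcal{C}_k^{\epsilon}$ to $\mathcal{C}_k$---permits passing to the limit. The inner $\tau$-integral evaluates to $w(x)^2/2$ at every $x\in\mathcal{C}_k$ (the two signs of $w$ both give the same value), producing the limit $\int_{\mathcal{C}_k}\frac{w^2}{2}\frac{\nabla \bar y\cdot\nabla \bar \varphi}{|\nabla \bar y|}\dH^1$. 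Summing over regular components and multiplying by $\{a'\}_{\bar t+0}^{\bar t-0}=a_0'(\bar t)-a_1'(\bar t)$ produces \eqref{eq:explicit-formular}.

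For a degenerate component satisfying \eqref{eq:main-hypothesis-y-vanish-structure}, I will show the contribution vanishes. On $E_n:=\{0<|\bar y-\bar t|<Cs_n\}\cap\mathcal{C}_k^{\epsilon_0}$ the measure estimate gives $\meas_{\R^2}(E_n)\leq c_s C s_n$; a continuity/compactness argument (points of $E_n$ must approach $\mathcal{C}_k$, where $\nabla \bar y=0$, as $n\to\infty$) combined with the uniform continuity of $\nabla \bar y$ forces $\norm{\nabla \bar y}_{L^\infty(E_n)}\to 0$. Together with $|\bar t-y_n|\leq Cs_n$ this yields
\[
    \frac{1}{s_n^2}\int_{E_n}|\bar t - y_n|\,|\nabla \bar y\cdot\nabla \bar \varphi|\dx \leq \frac{Cs_n}{s_n^2}\norm{\nabla \bar y}_{L^\infty(E_n)}\norm{\nabla \bar \varphi}_{L^\infty}\meas_{\R^2}(E_n) \leq C\norm{\nabla \bar y}_{L^\infty(E_n)}\to 0.
\]
The auxiliary formula \eqref{eq:Pn-auxi} will follow by the identical scheme with $(\bar t - y_n)$ replaced by $(2\bar t-\bar y - y_n)=-s_n(2\tau+w_n)$: the corresponding inner $\tau$-integral equals $\int_0^{-w(x)}(2\tau+w(x))\,d\tau=0$ when $w(x)<0$ (and vanishes similarly for $w(x)>0$), so every regular-component contribution is zero, while the degenerate-component contribution vanishes by the same bound.

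The main obstacle is the degenerate case: on components where $\nabla \bar y\equiv 0$ the set $\{\bar y=\bar t\}$ may have positive two-dimensional Lebesgue measure and $|\nabla \bar y|^{-1}$ fails to be locally integrable, so the coarea change of variables is unavailable. The critical input is that the linear-in-$r$ control of \eqref{eq:main-hypothesis-y-vanish-structure} supplies exactly the scaling needed to balance the $1/s_n^2$ prefactor, while uniform continuity of $\nabla \bar y$ on the shrinking transition layer $E_n$ provides the additional small factor that produces the vanishing limit.
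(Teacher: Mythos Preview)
The paper does not prove this proposition: it is quoted verbatim from \cite{ClasonNhuRosch_levelset}, Thm.~3.19, and is invoked here only as a tool (in the proof of \cref{lem:limit-key}). There is therefore no proof in the present paper to compare your proposal against.

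That said, your approach is essentially the natural one and is correct in outline. The coarea change of variables on the regular components, followed by the substitution $t=\bar t+s_n\tau$, does make all factors of $s_n$ cancel, and the inner $\tau$-integrals evaluate exactly as you claim (to $w^2/2$ for \eqref{eq:explicit-formular} and to $0$ for \eqref{eq:Pn-auxi}). The treatment of degenerate components via the measure bound in \eqref{eq:main-hypothesis-y-vanish-structure} together with $\norm{\nabla\bar y}_{L^\infty(E_n)}\to 0$ is also the right mechanism. Two points deserve more care if you write this out in full: first, the phrase ``Hausdorff convergence of the $C^1$ slices together with dominated convergence'' hides the real work---to pass to the limit rigorously one typically parametrizes all nearby level curves $\{\bar y=\bar t+s_n\tau\}$ over the fixed curve $\mathcal C_k$ via the normal flow (or the implicit function theorem), reducing the surface integrals to integrals over a fixed parameter domain where dominated convergence applies directly; second, $\tilde Q$ is defined in \eqref{eq:key-term-sn} as a $\liminf$, so you should remark explicitly that your computation produces an actual limit, hence $\liminf=\lim$.
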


Finally, the wellposedness of $\Sigma$, defined in \eqref{eq:E-functional}, is guaranteed by either \eqref{eq:main-hypothesis-nonvanishing-gradient} or \eqref{eq:main-hypothesis-y-vanish-structure} as stated below.
\begin{proposition}[{\cite{ClasonNhuRosch_levelset}, Prop.~3.20}]
    \label{prop:E-func-finite}
    Let  $\bar y \in C^1(\overline{\Omega})$ be such that
    $\{ \bar y = \bar t\}$ decomposes into finitely many connected components and that on each such connected component $\Cu$, either \eqref{eq:main-hypothesis-nonvanishing-gradient} or \eqref{eq:main-hypothesis-y-vanish-structure} is fulfilled.
    Then $\Sigma(\bar y) < \infty$.
\end{proposition}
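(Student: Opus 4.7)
The plan is to control $\Sigma(\bar y)$ by splitting the integration domain according to the finitely many connected components $\Cu_1,\dots,\Cu_N$ of $\{\bar y=\bar t\}$ and bounding each contribution by a constant multiple of $r$ for small $r>0$, so that dividing by $r$ yields a finite limsup.

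\textbf{Step 1 (localization).} First, I would show that for every $\epsilon>0$ there is $r_\epsilon>0$ with $\{0<|\bar y-\bar t|\leq r\}\subset\{\bar y=\bar t\}^\epsilon$ for all $r\leq r_\epsilon$; this follows from a contradiction argument using the compactness of $\overline\Omega$ and the continuity of $\bar y$. Since $\{\bar y=\bar t\}$ has finitely many components, I can then pick $\epsilon>0$ small enough that the sets $\Cu_i^\epsilon$ are pairwise disjoint and satisfy $\epsilon\leq\epsilon_0$ on every component of type \eqref{eq:main-hypothesis-y-vanish-structure}. For all sufficiently small $r$ this decomposes the relevant set as the disjoint union $\bigsqcup_{i}\{0<|\bar y-\bar t|\leq r\}\cap \Cu_i^\epsilon$, so it suffices to estimate each piece.

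\textbf{Step 2 (non-vanishing gradient components).} On a $\Cu_i$ satisfying \eqref{eq:main-hypothesis-nonvanishing-gradient}, I would use the compactness of $\Cu_i$ and the continuity of $\nabla \bar y$ to shrink $\epsilon$ further and ensure $|\nabla \bar y|\geq c_1>0$ on $\Cu_i^\epsilon$. An implicit function argument then covers $\Cu_i^\epsilon$ by finitely many boxes in which $\bar y$ serves as one of the $C^1$ coordinates, giving a uniform length bound $\Ha^1(\{\bar y=\bar t+s\}\cap\Cu_i^\epsilon)\leq L$ for all $|s|$ sufficiently small. Combining this with the coarea formula
\begin{equation*}
\int_{\Cu_i^\epsilon\cap\{0<|\bar y-\bar t|\leq r\}}|\partial_{x_m}\bar y|\dx\leq \int_{\Cu_i^\epsilon\cap\{0<|\bar y-\bar t|\leq r\}}|\nabla\bar y|\dx=\int_{-r}^{r}\Ha^1(\{\bar y=\bar t+s\}\cap\Cu_i^\epsilon)\ds\leq 2Lr
\end{equation*}
produces the desired $O(r)$ bound on this piece.

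\textbf{Step 3 (flat-gradient components).} On a $\Cu_i$ of type \eqref{eq:main-hypothesis-y-vanish-structure}, the structural measure estimate is built into the hypothesis. Applying it with $2r$ in place of $r$ (so that the strict inequality in \eqref{eq:main-hypothesis-y-vanish-structure} covers the non-strict one in the definition of $\Sigma$) yields, for $r<r_0/2$,
\begin{equation*}
\int_{\Cu_i^{\epsilon_0}\cap\{0<|\bar y-\bar t|\leq r\}}|\partial_{x_m}\bar y|\dx\leq \norm{\nabla\bar y}_{L^\infty(\Omega)}\cdot \meas_{\R^2}(\{0<|\bar y-\bar t|<2r\}\cap\Cu_i^{\epsilon_0})\leq 2c_s\norm{\nabla\bar y}_{L^\infty(\Omega)}\,r,
\end{equation*}
where $\norm{\nabla\bar y}_{L^\infty(\Omega)}$ is finite since $\bar y\in C^1(\overline\Omega)$.

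\textbf{Step 4 (conclusion).} Summing the bounds from Steps 2 and 3 over the finite family of components $\Cu_i$ and over $m\in\{1,2\}$, then dividing by $r$ and passing to $\limsup_{r\to 0^+}$ gives $\Sigma(\bar y)\leq C<\infty$. The only technical obstacle is Step 2, where the uniform bound on $\Ha^1(\{\bar y=\bar t+s\}\cap\Cu_i^\epsilon)$ as $s\to 0$ must be justified; this is handled by the implicit function theorem together with the compactness of $\Cu_i$ and the lower bound on $|\nabla\bar y|$ on a neighborhood. The type-\eqref{eq:main-hypothesis-y-vanish-structure} case is considerably easier since the measure estimate is assumed outright.
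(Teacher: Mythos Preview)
The paper does not supply a proof of this proposition; it is quoted verbatim from \cite{ClasonNhuRosch_levelset}, Prop.~3.20, and used as a black box (e.g.\ at the start of the proof of \cref{thm:error-esti-general}). There is therefore nothing in the present paper to compare your argument against.

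That said, your proposal is sound and is exactly the kind of argument one would expect. The localization in Step~1 is justified because the finitely many components are compact and pairwise disjoint, so a uniform $\epsilon$ separating them exists; your treatment of the two cases is correct. For components of type \eqref{eq:main-hypothesis-nonvanishing-gradient}, the coarea formula reduces the estimate to a uniform bound on $\Ha^1(\{\bar y=\bar t+s\}\cap\Cu_i^\epsilon)$ for small $|s|$, and your implicit-function/compactness argument delivers this; note that since $\bar y\in C^1(\overline\Omega)$ and $|\nabla\bar y|\geq c_1$ on the closure of $\Cu_i^\epsilon$, a finite cover by boxes in which one coordinate can be replaced by $\bar y$ exists, and in each box the level curves are $C^1$ graphs of uniformly bounded length. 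For components of type \eqref{eq:main-hypothesis-y-vanish-structure}, your use of the built-in measure bound together with $\|\nabla\bar y\|_{L^\infty(\Omega)}<\infty$ is straightforward; the passage from $\Cu_i^\epsilon$ to $\Cu_i^{\epsilon_0}$ is harmless because $\epsilon\leq\epsilon_0$ gives $\Cu_i^\epsilon\subset\Cu_i^{\epsilon_0}$ and you only need an upper bound. The proof is complete as written.
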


\section{Analysis of the discrete state equation} \label{sec:state-equ-error}

In this section, we study the discrete version of the state equation \eqref{eq:state} and show error estimates of solutions to the discrete state equation \eqref{eq:state-discrete}, local uniqueness of these solutions, and local differentiability of the solution operators of \eqref{eq:state-discrete}. To this end, we introduce a family of regular triangulations $\{ \mathcal{T}_h \}_{h >0}: \overline\Omega = \bigcup_{T \in \mathcal{T}_h} T$ for all $h>0$.  For each element $T \in \mathcal{T}_h$, we denote by $\varrho(T)$ and $\delta(T)$ the diameter of $T$ and the diameter of the largest ball contained in $T$, respectively. The mesh size of $\mathcal{T}_h$ will be denoted by $h:= \max_{T \in \mathcal{T}_h} \varrho(T)$.  This triangulation is assumed to be \emph{regular} in the sense that there exist $\bar \varrho, \bar \delta >0$ such that $\frac{\varrho(T)}{\delta(T)} \leq \bar \delta$ and $\frac{h}{\varrho(T)} \leq \bar \varrho$ for all $T \in \mathcal{T}_h$ and $h > 0$; see, e.g. \cite{Ciarlet2002}.

We will employ the standard continuous piecewise linear finite elements for the state $y$ and set
\begin{equation*}
    V_h := \left\{ v_h \in C(\overline\Omega)  \mid   v_{h|_T} \in \mathcal{P}_1 \, \text{for all } T \in \mathcal{T}_h,v_h = 0 \, \text{on } \partial \Omega \right\},
\end{equation*}
where $\mathcal{P}_1$ stands for the space of polynomials of degree equal at most $1$.
The discrete approximation of the state equation \eqref{eq:state} for $y_h\in V_h$ is then
\begin{equation}
    \label{eq:state-discrete}
    \int_{\Omega} (b + a(y_h) ) \nabla y_h \cdot \nabla v_h \dx = \int_{\Omega} u v_h \dx\qquad\text{for all }v_h\in V_h.
\end{equation}
While the existence of solutions to \eqref{eq:state-discrete} follows from Theorem 3.1 in \cite{CasasDhamo2011}, the uniqueness of solutions is still an open problem. However, if $a: \R \to R$ is assumed to be bounded, then we have uniqueness provided that $h$ is small enough; see Theorem 4.1 in \cite{CasasDhamo2011}.
Below, we provide some error estimates for solutions to \eqref{eq:state-discrete} that are sufficiently close to the solutions of \eqref{eq:state}.

In what follows, we fix $\bar u \in L^2(\Omega)$ and set $\bar y := S(\bar u)$.
From \cref{thm:control2state-oper} and the continuous embedding $L^2(\Omega) \hookrightarrow W^{-1,p}(\Omega)$ for any $p > 1$, we then have $\bar y \in W^{1,p}_0(\Omega) \cap H^2(\Omega)$.
\begin{theorem}[{\cite{CasasTroltzsch2011}, Thm.~3.1}]
    \label{thm:error-state}
    Let  $\rho_0 >0$ be arbitrary but fixed,   $U := \overline B_{L^2(\Omega)}(\bar u, \rho_0)$,  and let $p \geq 2$. Assume that \crefrange{ass:domain}{ass:PC1-func} are fulfilled. Then there exists a constant $h_0 >0$ such that for any $u \in U$ and $h < h_0$, there exists at least one solution $y_h(u)$ to \eqref{eq:state-discrete} satisfying for $y_u:=S(u)$
    \begin{align}
        & \norm{y_u - y_h(u)}_{L^2(\Omega)} + h \norm{y_u - y_h(u)}_{H^1_0(\Omega)} +  h\norm{y_u - y_h(u)}_{L^\infty(\Omega)} \leq C_{U} h^2, \label{eq:error-state-l2} \\
        & \norm{y_u - y_h(u)}_{W^{1,p}_0(\Omega)}   \leq C_{U,p} h^{2/p}. \label{eq:error-state-w1p}
    \end{align}
\end{theorem}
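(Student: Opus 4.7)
The plan is to combine a Brouwer fixed-point argument on the finite-dimensional space $V_h$ with standard linear finite element estimates for a frozen-coefficient auxiliary problem. By \cref{thm:control2state-oper} and the embedding $L^2(\Omega) \hookrightarrow W^{-1,p}(\Omega)$, the family $\{S(u) \mid u \in U\}$ is bounded in $W^{2,p}(\Omega) \cap C^{0,\alpha}(\overline\Omega)$ for some $p > 2$ and $\alpha \in (0,1)$; this uniform regularity underlies the whole argument and yields in particular a uniform $W^{1,\infty}$-bound on $y_u = S(u)$.

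First, for each $w_h \in V_h$ I would define $T_h(w_h) \in V_h$ as the unique solution of
\begin{equation*}
    \int_\Omega (b + a(w_h))\,\nabla T_h(w_h) \cdot \nabla v_h \dx = \int_\Omega u v_h \dx \quad \text{for all } v_h \in V_h,
\end{equation*}
which is well-posed since $b + a(w_h) \geq \underline b > 0$ by \cref{ass:b_func} and \cref{ass:PC1-func}. Fixed points of $T_h$ are exactly the solutions of \eqref{eq:state-discrete}, and $T_h$ is continuous in $V_h$ by stability of the linear discrete problem and Lipschitz dependence of $a(w_h)$ on $w_h$. Second, I would introduce the linear frozen-coefficient continuous problem with coefficient $A_u := b + a(y_u)$, whose solution is $y_u$ itself, and let $\hat y_h \in V_h$ denote its Galerkin approximation. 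Since $A_u \in C^{0,\alpha}(\overline\Omega)$ with a uniform positive lower bound and $y_u \in W^{2,p}(\Omega)$ uniformly for $u \in U$, classical linear finite element theory on convex polygons (C\'ea, Aubin--Nitsche duality, and Schatz--Wahlbin $L^\infty$ estimates) gives
\begin{equation*}
    \norm{y_u - \hat y_h}_{L^2(\Omega)} + h \norm{y_u - \hat y_h}_{H^1_0(\Omega)} + h \norm{y_u - \hat y_h}_{L^\infty(\Omega)} \leq C_U h^2,
\end{equation*}
along with a uniform bound $\norm{\hat y_h}_{W^{1,\infty}(\Omega)} \leq C_U$; the $W^{1,p}$-bound of order $h^{2/p}$ then follows from interpolation between $W^{1,2}$ and $W^{1,\infty}$.

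Third, to transfer these estimates to the nonlinear discrete problem I would compare $T_h(w_h)$ with $\hat y_h$: subtracting the two defining equations, testing with $v_h = T_h(w_h) - \hat y_h$, and using the Lipschitz property of $a$ together with the uniform bound on $\norm{\nabla \hat y_h}_{L^\infty(\Omega)}$ gives an estimate of the form
\begin{equation*}
    \norm{T_h(w_h) - \hat y_h}_{H^1_0(\Omega)} \leq C_U \norm{w_h - y_u}_{L^\infty(\Omega)}.
\end{equation*}
Combined with the frozen estimate and an inverse inequality on $V_h$, this shows that the closed $L^\infty$-ball $\overline B_{V_h}(\hat y_h, C_U h)$ is mapped into itself by $T_h$ for $h$ sufficiently small. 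Brouwer's fixed-point theorem then yields an element $y_h(u) = T_h(y_h(u))$ in this ball, and the triangle inequality with $y_u - \hat y_h$ delivers \eqref{eq:error-state-l2} and \eqref{eq:error-state-w1p}. Uniformity in $u \in U$ is preserved throughout because all regularity bounds on $y_u$ and $\hat y_h$ depend only on $U$.

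The main obstacle will be the pointwise estimate $\norm{y_u - \hat y_h}_{L^\infty(\Omega)} = O(h)$ for the frozen linear problem, since its coefficient $A_u$ is only H\"older continuous on the convex polygon $\Omega$: this requires invoking Schatz--Wahlbin-type weighted $L^\infty$ finite element estimates and carefully checking that the constants can be chosen uniformly over $u \in U$. A secondary difficulty is preventing the contraction-type constant from blowing up as $h \to 0$, which is why it is essential to derive the uniform $W^{1,\infty}$ bound on $\hat y_h$ rather than just an $H^1$ stability bound.
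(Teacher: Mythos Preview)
The paper gives no independent proof here: it simply refers to Theorem~3.1 of \cite{CasasTroltzsch2011} for the $L^2$, $H^1_0$, and $W^{1,p}$ estimates and remarks that the $L^\infty$ bound is obtained as in estimate~(3.11) of that reference. Your overall scheme---freeze the coefficient at $y_u$, study the linear Galerkin approximation $\hat y_h$, and produce a discrete solution as a Brouwer fixed point of $T_h$---is exactly the route taken in that reference, and your identification of the Schatz--Wahlbin pointwise estimate for the frozen problem as the main technical ingredient is correct.

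There is, however, a real gap in the self-mapping step as written. From the energy identity you only get
\[
\norm{T_h(w_h)-\hat y_h}_{H^1_0(\Omega)}\le C_U\,\norm{w_h-y_u}_{L^\infty(\Omega)},
\]
and in two dimensions any inverse inequality from $L^2$ or $H^1$ back to $L^\infty$ costs a factor $h^{-1}$. Hence for $w_h$ in an $L^\infty$-ball of radius $Mh$ around $\hat y_h$ you obtain only $\norm{T_h(w_h)-\hat y_h}_{L^\infty(\Omega)}=O(1)$, not $O(h)$; the ball is not mapped into itself, and making $h$ smaller makes this worse, not better. In \cite{CasasTroltzsch2011} the self-mapping is closed without passing through an inverse inequality: the distance $\norm{T_h(w_h)-\hat y_h}_{L^\infty(\Omega)}$ is estimated directly by going through the continuous auxiliary solution $\tilde y(w_h)\in H^2(\Omega)$ of $-\dive[(b+a(w_h))\nabla\tilde y]=u$, using the Schatz--Wahlbin bound on $T_h(w_h)-\tilde y(w_h)$ together with a continuous $L^\infty$ estimate (via $H^2$ regularity of the difference equation, which needs only Lipschitz continuity of $a$) for $\tilde y(w_h)-y_u$. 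That is the step you should replace; once it is in place, the rest of your outline---the triangle inequality with $y_u-\hat y_h$ and the $W^{1,p}$ interpolation---goes through as you describe.
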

\begin{proof}
    The estimates for the norms in $L^2$, $H^1_0$, and $W^{1,p}$ are shown in Theorem 3.1 in \cite{CasasTroltzsch2011}, while the estimate for the $L^\infty$ norm can be obtained similar to estimate (3.11) in \cite{CasasTroltzsch2011}.
\end{proof}

The following theorem guarantees the local uniqueness of solutions to \eqref{eq:state-discrete}. Its proof is similar to that of Theorem 4.2 in \cite{CasasDhamo2012} with slight modifications and
is thus omitted here.
\begin{theorem}
    \label{thm:uniqueness-state}
    Let $p >2$ be arbitrary and let $h_0$ be defined in
    \cref{thm:error-state}. Under \crefrange{ass:domain}{ass:PC1-func}, there exist $h_1 \in (0, h_0)$, $\rho >0$, and $\kappa_\rho >0$ such that for any $h < h_1$ and any $u \in \overline B_{L^2(\Omega)}(\bar u, \rho)$,  \eqref{eq:state-discrete} admits a unique solution in $\overline B_{W^{1,p}_0(\Omega)}(\bar y, \kappa_\rho) \cap V_h$.
\end{theorem}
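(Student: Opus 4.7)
The plan is to follow the strategy of the proof of \cite{CasasDhamo2012}, Thm.~4.2, namely a direct difference estimate between two hypothetical solutions, with the only substantive modification being the replacement of the mean-value identity for the $C^2$-nonlinearity used there by a global Lipschitz bound for the $PC^1$-function $a$ defined by \eqref{eq:PC1-rep}. First I would assume, toward proving uniqueness, that for some $u \in \overline B_{L^2(\Omega)}(\bar u, \rho)$ and some $h < h_1$ (with $h_1 \in (0, h_0)$, $\rho$ and $\kappa_\rho$ to be fixed) there exist two solutions $y_h^1, y_h^2 \in \overline B_{W^{1,p}_0(\Omega)}(\bar y, \kappa_\rho) \cap V_h$ of \eqref{eq:state-discrete}. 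Setting $e_h := y_h^1 - y_h^2 \in V_h$, subtracting the two instances of \eqref{eq:state-discrete}, and testing the resulting identity with the admissible choice $v_h = e_h$ yields
\begin{equation*}
    \int_\Omega (b + a(y_h^1))|\nabla e_h|^2 \dx = -\int_\Omega \bigl(a(y_h^1) - a(y_h^2)\bigr)\nabla y_h^2 \cdot \nabla e_h \dx,
\end{equation*}
whose left-hand side is bounded below by $\underline{b}\,\|\nabla e_h\|_{L^2(\Omega)}^2$ owing to \cref{ass:b_func,ass:PC1-func}.

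Next, I would estimate the right-hand side by combining the global Lipschitz continuity of $a$ (with constant $L_a$, inherited from the $C^2$-smoothness of $a_0$ and $a_1$ on bounded intervals in \eqref{eq:PC1-rep}) with the two-dimensional Sobolev embedding $W^{1,p}_0(\Omega) \hookrightarrow L^\infty(\Omega)$, which is continuous for every $p > 2$. From the ball inclusion this yields $\|e_h\|_{L^\infty(\Omega)} \leq C_{\mathrm{emb}} \|e_h\|_{W^{1,p}_0(\Omega)} \leq 2 C_{\mathrm{emb}}\kappa_\rho$, while the $W^{2,p}$-regularity of $\bar y = S(\bar u)$ from \cref{thm:control2state-oper} bounds $\|\nabla \bar y\|_{L^p(\Omega)}$, and hence $\|\nabla y_h^2\|_{L^p(\Omega)} \leq \|\nabla \bar y\|_{L^p(\Omega)} + \kappa_\rho$, by a constant $C_{\bar u}$ depending only on $\bar u$. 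Hölder's inequality then gives the bound
\begin{equation*}
    \biggl|\int_\Omega \bigl(a(y_h^1)-a(y_h^2)\bigr)\nabla y_h^2 \cdot \nabla e_h \dx\biggr| \leq L_a\, \|e_h\|_{L^\infty(\Omega)}\, \|\nabla y_h^2\|_{L^2(\Omega)}\, \|\nabla e_h\|_{L^2(\Omega)},
\end{equation*}
in which the factor $\|e_h\|_{L^\infty(\Omega)}$ can be made arbitrarily small by decreasing $\kappa_\rho$.

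The delicate point --- which I expect to be the main obstacle --- is that the estimate above by itself only produces a conditional stability bound of the form $\|\nabla e_h\|_{L^2(\Omega)} \leq \tilde C \kappa_\rho$, which is not yet uniqueness. To upgrade this to $e_h\equiv 0$, one bootstraps, exactly as in the proof of \cite{CasasDhamo2012}, Thm.~4.2: iterating the difference inequality (possibly after inserting the reference solution $y_h(u)$ from \cref{thm:error-state} as a pivot and using the $L^\infty$-estimate $\|y_u - y_h(u)\|_{L^\infty(\Omega)} \leq C_U h$ together with the continuity of $S\colon L^2(\Omega) \to W^{1,p}_0(\Omega)\cap L^\infty(\Omega)$) shows that the effective prefactor of $\|\nabla e_h\|_{L^2(\Omega)}^2$ on the right-hand side is bounded by a quantity of the form $\tilde C_1(\rho,\kappa_\rho,h)$ that tends to zero as $\rho, \kappa_\rho, h \to 0^+$. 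Choosing first $\rho$, then $\kappa_\rho=\kappa_\rho(\rho)$, and finally $h_1 \in (0,h_0)$ small enough so that $\tilde C_1(\rho,\kappa_\rho,h) < \underline b$ produces $\|\nabla e_h\|_{L^2(\Omega)} = 0$, hence $e_h = 0$ in view of the Dirichlet boundary condition, which proves uniqueness. The only modification relative to \cite{CasasDhamo2012} is that the $C^2$-mean-value expansion of $a(y_h^1)-a(y_h^2)$ used there is here replaced by the global Lipschitz bound $|a(y_h^1)-a(y_h^2)| \leq L_a|e_h|$, which is enough because every other factor entering the estimate depends only on $y_h^2$ (not on the nonsmooth structure of $a$).
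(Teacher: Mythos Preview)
The paper omits this proof, citing \cite{CasasDhamo2012}, Thm.~4.2, with ``slight modifications''. Your outline is in the right spirit, but the closing ``bootstrap'' step contains a genuine gap. After testing with $e_h$ and applying the Lipschitz bound, the sharpest inequality obtainable (via H\"older with exponents $2,p,\tfrac{2p}{p-2}$ and the 2D embedding $H^1_0(\Omega)\hookrightarrow L^{2p/(p-2)}(\Omega)$) is $\underline b\,\|\nabla e_h\|_{L^2}^2 \leq L_a C_p\,\|\nabla y_h^2\|_{L^p(\Omega)}\,\|\nabla e_h\|_{L^2}^2$, whose prefactor tends to the \emph{fixed positive} number $L_a C_p\|\nabla\bar y\|_{L^p(\Omega)}$ as $\rho,\kappa_\rho,h\to 0$; there is no reason for this to be smaller than $\underline b$. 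Your alternative of bounding $\|e_h\|_{L^\infty}\leq 2C_{\mathrm{emb}}\kappa_\rho$ leaves only one power of $\|\nabla e_h\|_{L^2}$ on the right, and recovering the second power by a discrete or continuous Sobolev inequality in two dimensions introduces a factor that does not vanish (at best a logarithm of $h$). Iterating or pivoting through the reference solution $y_h(u)$ does not repair this: the obstruction is that the quasilinear perturbation $(a(y_h^1)-a(y_h^2))\nabla y_h^2$ is exactly of first order in $e_h$, so an energy estimate alone cannot absorb it.

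The argument that actually closes --- and which the paper carries out verbatim for the closely related \cref{lem:Dh-isomorphism} --- is a contradiction/compactness argument: assume sequences $h_n\to 0$, $u_n\to\bar u$, and two distinct solutions $y_{h_n}^1\neq y_{h_n}^2$ in shrinking $W^{1,p}_0$-balls around $\bar y$; normalize $\hat e_n := (y_{h_n}^1-y_{h_n}^2)/\|y_{h_n}^1-y_{h_n}^2\|_{L^{2p/(p-2)}(\Omega)}$, use the energy identity to bound $\|\nabla\hat e_n\|_{L^2}$ uniformly, and extract a weak $H^1_0$-limit $\hat e$ (strong in $L^{2p/(p-2)}$, so $\hat e\neq 0$). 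Writing $a(y_{h_n}^1)-a(y_{h_n}^2)=g_n(y_{h_n}^1-y_{h_n}^2)$ with $\|g_n\|_{L^\infty}\leq L_a$ and passing to a weak-$*$ limit $g$, one obtains that $\hat e$ solves $-\dive[(b+a(\bar y))\nabla\hat e + g\,\hat e\,\nabla\bar y]=0$ in $H^1_0(\Omega)$, hence $\hat e=0$ by \cite{CasasDhamo2011}, Thm.~2.6 --- the contradiction. The only place nonsmoothness of $a$ enters is that $g_n$ is an $L^\infty$-bounded difference quotient rather than $a'(\theta_n)$; this is precisely the ``slight modification'' the paper alludes to, and it is harmless for this compactness argument (unlike for a direct energy estimate).
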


From now on, let us fix $\tilde{p} \geq 4$ and let $h_0, h_1, \rho$, and $\kappa_\rho$ be the constants defined in
\cref{thm:error-state,thm:uniqueness-state} for $p = \tilde{p}$.
In the rest of this section, we shall investigate the differentiability of the discrete solution operator
\begin{equation}
    \label{eq:discrete-operator}
    S_h: B_{L^2(\Omega)}(\bar u, \rho) \ni u \mapsto y_h(u) \in \overline B_{W^{1,\tilde{p}}_0(\Omega)}(\bar y, \kappa_\rho) \cap V_h,
\end{equation}
where $y_h(u)$ is the unique solution to \eqref{eq:state-discrete} in $\overline B_{W^{1,\tilde{p}}_0(\Omega)}(\bar y, \kappa_\rho)$ from \cref{thm:uniqueness-state}.

For any $y, \hat y \in C(\overline\Omega) \cap W^{1,1}(\Omega)$, we define functions $T_{y, \hat y}$ and $Z_{y, \hat y}$ on $\Omega$ via
\begin{equation}
    \label{eq:T-func}
    T_{y, \hat y} := \1_{\{\hat y \neq \bar t\} }[ a(y) - a(\hat y) - a'(\hat y)(y -\hat y) ]
\end{equation}
and
\begin{equation}
    \label{eq:Z-func}
    Z_{y, \hat y} := \1_{\{y \neq \bar t\}} a'(y) \nabla y - \1_{\{\hat y \neq \bar t\}} a'(\hat y) \nabla \hat y.
\end{equation}
In order to prove the differentiability of $S_h$, we need the following lemmas.
\begin{lemma}[{\cite{ClasonNhuRosch_os2nd}, Lem.~3.3}] \label{lem:T-decomposition}
    Let \cref{ass:PC1-func} be fulfilled. Assume that $y_n \to y$ in $W^{1,p}_0(\Omega)$ as $n \to \infty$ with $p >2$. Then
    \begin{equation*}
        \frac{1}{\norm{y_n - y}_{W^{1,p}_0(\Omega)}} \norm{ T_{y_n, y} \nabla y}_{L^p(\Omega)} \to 0 \quad \text{as } n \to \infty.
    \end{equation*}
\end{lemma}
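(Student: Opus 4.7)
The plan is to split $\Omega$ into a region where the state $y$ stays bounded away from the kink $\bar t$ (where a quadratic Taylor remainder is available) and a thin tubular neighbourhood of $\{y=\bar t\}$ (where only Lipschitz control is available, but whose measure can be made arbitrarily small). Throughout, set $M := \sup_{n}(\norm{y_n}_{L^\infty(\Omega)} + \norm{y}_{L^\infty(\Omega)})$, which is finite by the Sobolev embedding $W^{1,p}_0(\Omega) \hookrightarrow C(\overline\Omega)$ for $p>2$, and denote by $L_a$ the Lipschitz constant of $a$, by $K_1 := \sup_{|t|\leq M, t \neq \bar t} |a'(t)|$ (finite by \cref{ass:PC1-func}), and, for any $\epsilon>0$, by $K_2(\epsilon) := \sup\{|a''(t)| : |t|\leq M,\ |t-\bar t|\geq \epsilon/2\}$, which is finite thanks to the $C^2$ regularity of $a_0$ and $a_1$ on their respective half-lines. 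The pointwise bound $|T_{y_n,y}(x)|\leq (L_a+K_1)|y_n(x)-y(x)|$ always holds.

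For $\epsilon>0$, set $A_\epsilon := \{x \in \Omega : |y(x)-\bar t|>\epsilon\}$ and $B_\epsilon := \{x\in\Omega : 0 < |y(x)-\bar t| \leq \epsilon\}$, so that (up to the null contribution from $\{y=\bar t\}$, where $T_{y_n,y}=0$ by definition) $\Omega = A_\epsilon\cup B_\epsilon$. Using $\norm{y_n-y}_{L^\infty(\Omega)} \leq C_{\mathrm{emb}}\norm{y_n-y}_{W^{1,p}_0(\Omega)}$, there exists $n_0=n_0(\epsilon)$ such that for all $n\geq n_0$ we have $\norm{y_n-y}_{L^\infty(\Omega)} < \epsilon/2$. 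For $x\in A_\epsilon$, this implies that $y(x)$ and $y_n(x)$ lie on the same side of $\bar t$ and both stay at distance at least $\epsilon/2$ from $\bar t$; hence $a$ is $C^2$ on the segment joining them with $|a''|\leq K_2(\epsilon)$, and Taylor's theorem gives
\begin{equation*}
    |T_{y_n,y}(x)| \leq \tfrac{1}{2} K_2(\epsilon) |y_n(x)-y(x)|^2 \leq \tfrac{1}{2} K_2(\epsilon) \norm{y_n-y}_{L^\infty(\Omega)} |y_n(x)-y(x)|.
\end{equation*}
Consequently,
\begin{equation*}
    \norm{T_{y_n,y}\nabla y}_{L^p(A_\epsilon)} \leq \tfrac{1}{2}K_2(\epsilon)\norm{y_n-y}_{L^\infty(\Omega)}^2 \norm{\nabla y}_{L^p(\Omega)},
\end{equation*}
so dividing by $\norm{y_n-y}_{W^{1,p}_0(\Omega)}$ and using the Sobolev embedding once more yields a bound of the form $C(\epsilon)\norm{y_n-y}_{W^{1,p}_0(\Omega)}$, which tends to $0$ as $n\to\infty$.

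On $B_\epsilon$, we use only the Lipschitz estimate, giving
\begin{equation*}
    \norm{T_{y_n,y}\nabla y}_{L^p(B_\epsilon)} \leq (L_a+K_1)\norm{y_n-y}_{L^\infty(\Omega)} \norm{\nabla y}_{L^p(B_\epsilon)},
\end{equation*}
so the ratio is bounded by a constant times $\norm{\nabla y}_{L^p(B_\epsilon)}$. The family $\{B_\epsilon\}_{\epsilon>0}$ is nested decreasing with empty intersection, hence $\meas_{\R^2}(B_\epsilon)\to 0$ as $\epsilon\to 0^+$, and the absolute continuity of the Lebesgue integral applied to the integrable function $|\nabla y|^p$ yields $\norm{\nabla y}_{L^p(B_\epsilon)}\to 0$. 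Combining both estimates by the standard $\eta/2$--$\eta/2$ argument (choose $\epsilon$ first to handle the $B_\epsilon$ contribution, then $n_0$ large depending on $\epsilon$) completes the proof.

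The only delicate point is the second one: near the kink set $\{y=\bar t\}$ we lose the quadratic Taylor bound and must instead rely on the fact that $|\nabla y|^p$ is uniformly integrable near this set. This is not an issue here because the set is excluded from $B_\epsilon$ by definition and the outer exclusion radius $\epsilon$ can be sent to zero \emph{before} sending $n\to\infty$.
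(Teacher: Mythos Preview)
The paper does not supply its own proof of this lemma; it is quoted verbatim from \cite{ClasonNhuRosch_os2nd}, Lem.~3.3, so there is nothing to compare against here. Your argument is correct and self-contained: the decomposition into $A_\epsilon=\{|y-\bar t|>\epsilon\}$ (where the $C^2$-regularity of $a_0,a_1$ yields a genuine quadratic Taylor remainder) and $B_\epsilon=\{0<|y-\bar t|\leq\epsilon\}$ (where only the Lipschitz bound is used, but $\norm{\nabla y}_{L^p(B_\epsilon)}\to 0$ by absolute continuity of the integral) is precisely the right idea, and the Sobolev embedding $W^{1,p}_0(\Omega)\hookrightarrow C(\overline\Omega)$ for $p>2$ provides the needed $L^\infty$ control throughout.

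One small wording issue: in the final paragraph you say ``$\epsilon$ can be sent to zero \emph{before} sending $n\to\infty$,'' which reverses the order of the actual $\eta/2$--$\eta/2$ argument you (correctly) describe just above. In that argument one first fixes $\epsilon$ small so that the $B_\epsilon$ contribution is below $\eta/2$ uniformly in $n$, and \emph{then} chooses $n_0=n_0(\epsilon)$ large enough that $\norm{y_n-y}_{L^\infty(\Omega)}<\epsilon/2$ and the $A_\epsilon$ contribution (which involves the $\epsilon$-dependent constant $K_2(\epsilon)$) is below $\eta/2$. This is a cosmetic point; the proof is sound.
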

\begin{lemma}
    \label{lem:Z-decomposition}
    Let \cref{ass:PC1-func} be fulfilled and let $y, \hat y \in C(\overline\Omega) \cap W^{1,1}(\Omega)$ and $M>0$ be arbitrary such that $\norm{y - \hat y}_{C(\overline\Omega)} < \delta$ with $\delta$ defined in \eqref{eq:Omega-123-sets} and $\norm{y}_{C(\overline\Omega)}, \norm{\hat y}_{C(\overline\Omega)} \leq M$. Then
    \begin{equation}
        \label{eq:Z-decomposition}
        Z_{y,\hat y} = Z_{y,\hat y}^{(1)} + Z_{y,\hat y}^{(2)}+ Z_{y,\hat y}^{(3)} +Z_{y,\hat y}^{(4)},
    \end{equation}
    for
    \begin{align*}
        Z_{y,\hat y}^{(1)} &:= \1_{\{ \hat y \in (-\infty, \bar t), y \in (-\infty, \bar t) \}} [ a_0'(y) \nabla y - a_0'(\hat y) \nabla \hat y ] + \1_{\{ \hat y \in (\bar t, \infty), y \in (\bar t, \infty) \}} [ a_1'(y) \nabla y - a_1'(\hat y) \nabla \hat y ],\\
        Z_{y,\hat y}^{(2)} &:= \1_{\{ \hat y = \bar t \}}\1_{\{ y \neq \bar t \}} a'(y) \nabla y, \qquad   Z_{y,\hat y}^{(3)}  :=  [a_0'(\bar t) -a_1'(\bar t)][ \1_{\Omega_{y,\hat y}^{2}}    -  \1_{\Omega_{y,\hat y}^{3}} ]  \nabla \hat y,
    \end{align*}
    and
    \begin{multline*}
        Z_{y, \hat y}^{(4)} :=  \1_{ \Omega_{y,\hat y}^{2} } [ a_{0}'(y) \nabla (y - \hat y) + (a_{0}'(y) - a_{0}'(\bar t)) \nabla \hat y + (a_{1}'(\bar t) - a_1'(\hat y)) \nabla \hat y  ] \\
        + \1_{ \Omega_{y,\hat y}^{3} } [ a_{1}'(y) \nabla (y - \hat y) + (a_{1}'(y) - a_{1}'(\bar t)) \nabla \hat y + (a_{0}'(\bar t) - a_0'(\hat y)) \nabla \hat y  ],
    \end{multline*}
    with  the sets $\Omega_{y, \hat y }^{2}$  and $\Omega_{y, \hat y }^{3}$ defined as in \eqref{eq:Omega-123-sets}. Moreover, there exists a constant $C_M>0$ such that a.e. in $\Omega$,
    \begin{equation}
        \label{eq:Z-esti}
        \left\{
            \begin{aligned}
                | Z_{y, \hat y}^{(1)} | + | Z_{y, \hat y}^{(2)} |&\leq C_M [ |y - \hat y|| \nabla \hat y| + |\nabla(y - \hat y)| ],\\
                | Z_{y, \hat y}^{(4)} | &\leq C_M [ |y - \hat y|| \nabla \hat y| + |\nabla(y - \hat y)| ]  (  \1_{\Omega_{y,\hat y}^{2}} + \1_{\Omega_{y,\hat y}^{3}} ).
            \end{aligned}
        \right.
    \end{equation}
    Consequently, $Z_{y,\hat y} \to 0$ in  $L^p(\Omega)$ as $y \to \hat y$ in $W^{1,p}(\Omega) \cap C(\overline\Omega)$ for any $p \geq 1$.
\end{lemma}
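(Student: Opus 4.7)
The plan is to establish the identity \eqref{eq:Z-decomposition} by a pointwise case analysis, then derive \eqref{eq:Z-esti} by first-order Taylor-type expansions, and finally assemble the convergence statement from the bounds combined with a dominated-convergence argument for the "jump" piece $Z^{(3)}$.

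First I would partition $\Omega$ according to the position of the pair $(\hat y(x), y(x))$ with respect to $\bar t$. The hypothesis $\|y-\hat y\|_{C(\overline\Omega)}<\delta$ guarantees that whenever $y$ and $\hat y$ lie on opposite sides of $\bar t$ (or one equals $\bar t$ while the other does not), the point $x$ belongs either to $\Omega_{y,\hat y}^{2}$ or to $\Omega_{y,\hat y}^{3}$. On each piece of the partition I would simply plug in the definitions and verify the identity directly: on $\{\hat y<\bar t,\,y<\bar t\}$ and $\{\hat y>\bar t,\,y>\bar t\}$ only $Z^{(1)}$ contributes; on $\{\hat y=\bar t\}$ only $Z^{(2)}$ contributes; and on $\Omega_{y,\hat y}^{2}$ (resp.\ $\Omega_{y,\hat y}^{3}$) a telescoping of $\pm a_0'(\bar t)\nabla\hat y$ and $\pm a_1'(\bar t)\nabla\hat y$ shows that $Z^{(3)}+Z^{(4)}$ reproduces $a_0'(y)\nabla y - a_1'(\hat y)\nabla\hat y$ (resp.\ $a_1'(y)\nabla y - a_0'(\hat y)\nabla\hat y$). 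The only subcase that needs an extra word is $\{y=\bar t\}\cap\Omega_{y,\hat y}^{2}$ (and its counterpart on $\Omega_{y,\hat y}^{3}$): here the indicator $\1_{\{y\neq\bar t\}}$ kills the first term of $Z_{y,\hat y}$, but in the decomposition we still carry $a_0'(y)\nabla y = a_0'(\bar t)\nabla y$. This is reconciled by the Stampacchia-type identity $\nabla y=0$ a.e.\ on $\{y=\bar t\}$, which holds since $y\in W^{1,1}(\Omega)$.

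For the bounds \eqref{eq:Z-esti}, I would write, on each same-sign region,
\[
a_i'(y)\nabla y - a_i'(\hat y)\nabla\hat y = a_i'(y)\nabla(y-\hat y) + \bigl(a_i'(y)-a_i'(\hat y)\bigr)\nabla\hat y,
\]
and use that $a_0,a_1\in C^2$ restricted to the compact intervals intersecting $[-M,M]$ makes $|a_i'|$ and $|a_i''|$ bounded by some $C_M$; an $L^\infty$ bound on $a_i'$ together with the mean value theorem then gives the claimed bound on $Z^{(1)}$. For $Z^{(2)}$ I would note that $\nabla\hat y=0$ a.e.\ on $\{\hat y=\bar t\}$, so $\nabla y = \nabla(y-\hat y)$ a.e.\ there, whence $|Z^{(2)}|\le C_M|\nabla(y-\hat y)|$. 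For $Z^{(4)}$ the same Taylor-type expansion applies, after observing that on $\Omega_{y,\hat y}^{2}$ one has $|y-\bar t|\le |y-\hat y|$ and $|\hat y-\bar t|\le |y-\hat y|$ (and analogously on $\Omega_{y,\hat y}^{3}$), which upgrades the Lipschitz factors $|a_j'(y)-a_j'(\bar t)|$ and $|a_j'(\bar t)-a_j'(\hat y)|$ to $C_M|y-\hat y|$.

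For the concluding convergence, the bounds on $Z^{(1)}, Z^{(2)}, Z^{(4)}$ yield
\[
\|Z^{(1)}\|_{L^p}+\|Z^{(2)}\|_{L^p}+\|Z^{(4)}\|_{L^p} \le C_M\bigl(\|\nabla(y-\hat y)\|_{L^p}+\|y-\hat y\|_{L^\infty}\|\nabla\hat y\|_{L^p}\bigr)\to 0
\]
as $y\to\hat y$ in $W^{1,p}(\Omega)\cap C(\overline\Omega)$. The piece $Z^{(3)}$ requires a separate dominated-convergence argument: if $y_n\to\hat y$ uniformly, then for every $x$ with $\hat y(x)>\bar t$ one has $y_n(x)>\bar t$ eventually, so $x\notin\Omega_{y_n,\hat y}^{2}$ eventually; the analogous statement holds for $\Omega_{y_n,\hat y}^{3}$. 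Hence $\1_{\Omega_{y_n,\hat y}^{2}}+\1_{\Omega_{y_n,\hat y}^{3}}\to 0$ a.e.\ in $\Omega$, and since $|Z^{(3)}|\le \sigma_0|\nabla\hat y|\in L^p(\Omega)$, dominated convergence yields $Z^{(3)}\to 0$ in $L^p(\Omega)$.

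The only genuine obstacle is the bookkeeping required in the case analysis for \eqref{eq:Z-decomposition}, in particular ensuring that the subcases $y=\bar t$ or $\hat y=\bar t$ inside $\Omega_{y,\hat y}^{2}\cup\Omega_{y,\hat y}^{3}$ are handled consistently; all the rest is routine once the pointwise identity is in place.
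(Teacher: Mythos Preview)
Your proposal is correct and follows essentially the same route as the paper: a pointwise case analysis on the position of $(\hat y,y)$ relative to $\bar t$, the Stampacchia identity $\nabla y=0$ a.e.\ on level sets to handle the boundary cases, the telescoping $\pm a_i'(\bar t)\nabla\hat y$ on the cross-sign regions, the key inequality $|y-\bar t|,|\hat y-\bar t|\le |y-\hat y|$ on $\Omega_{y,\hat y}^{2}\cup\Omega_{y,\hat y}^{3}$ for the estimate on $Z^{(4)}$, and dominated convergence for $Z^{(3)}$. The paper organizes the decomposition by first splitting on $\hat y$ and then on $y$, whereas you split on the pair directly, but the content is the same.
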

\begin{proof}
    Clearly, we deduce from the fact $\nabla \hat y = 0$ a.e. in $\{\hat y = \bar t\}$  (see \cite{Chipot2009}) that
    \begin{multline} \label{eq:Z-sum}
        Z_{y,\hat y} =  \1_{ \{ \hat y \in (-\infty, \bar t) \}} [ \1_{\{y \neq \bar t \} }  a'(y) \nabla y - a_0'(\hat y) \nabla \hat y]  + \1_{ \{ \hat y \in ( \bar t, \infty) \}} [ \1_{\{y \neq \bar t \} }  a'(y) \nabla y - a_1'(\hat y) \nabla \hat y]\\
        + \1_{ \{ \hat y = \bar t  \} } \1_{\{y \neq \bar t \} }  a'(y) \nabla y  = Z_0^1 + Z_1^1 + Z_{y,\hat y}^{(2)}
    \end{multline}
    with
    \begin{equation*}
        Z_0^1  :=   \1_{ \{ \hat y \in (-\infty, \bar t) \}} [ \1_{\{y \neq \bar t \} }  a'(y) \nabla y - a_0'(\hat y) \nabla \hat y] \quad \text{and} \quad
        Z_1^1 :=   \1_{ \{ \hat y \in ( \bar t, \infty) \}} [ \1_{\{y \neq \bar t \} }  a'(y) \nabla y - a_1'(\hat y) \nabla \hat y].
    \end{equation*}
    Since $\norm{y - \hat y}_{C(\overline\Omega)} < \delta$ and $\nabla y = 0$ a.e. in $\{y=\bar t \}$, we can write
    \begin{equation*}
        \begin{aligned}
            Z_0^1 &=  \1_{ \{ \hat y \in (-\infty, \bar t), y \in (-\infty, \bar t) \} } [ a_{0}'(y) \nabla y - a_0'(\hat y) \nabla \hat y   ]
            + \1_{ \{ \hat y \in (\bar t -\delta, \bar t),  y \in [\bar t , \bar t+ \delta) \} } [  a_{1}'(y) \nabla y - a_0'(\hat y) \nabla \hat y ] \\
            &=: Z_0^{1,2}+Z_0^{1,1}.
        \end{aligned}
    \end{equation*}
    and
    \begin{equation*}
        \begin{aligned}
            Z_1^1 &= \1_{ \{ \hat y \in (\bar t, \bar t + \delta),  y \in (\bar t- \delta, \bar t] \} } [ a_{0}'(y) \nabla y - a_1'(\hat y) \nabla \hat y   ]
            + \1_{ \{ \hat y \in (\bar t, \infty), y \in (\bar t,\infty ) \} } [ a_{1}'(y) \nabla y - a_1'(\hat y) \nabla \hat y   ] \\
            &=: Z_1^{1,1}+Z_1^{1,2}.
        \end{aligned}
    \end{equation*}
    Thus, we have  from the definition of $Z_{y, \hat y}^{(1)}$ that
    \begin{equation}
        \label{eq:Z-1}
        \begin{aligned}[t]
            Z_0^{1,2} + Z_1^{1,2} &= \1_{ \{ \hat y \in (-\infty, \bar t), y \in (-\infty, \bar t) \} } [ a_{0}'(y) \nabla y - a_0'(\hat y) \nabla \hat y   ] + \1_{ \{ \hat y \in (\bar t, \infty), y \in (\bar t,\infty ) \} } [ a_{1}'(y) \nabla y - a_1'(\hat y) \nabla \hat y   ] \\
            &= Z_{y, \hat y}^{(1)}.
        \end{aligned}
    \end{equation}
    By using the definition of  $\Omega_{y, \hat y }^{3}$ in \eqref{eq:Omega-123-sets}, we now write
    \begin{equation*}
        \begin{aligned}
            Z_0^{1,1} & = \1_{ \{ \hat y \in (\bar t -\delta, \bar t),  y \in [\bar t , \bar t+ \delta) \} } [  a_{1}'(y) \nabla y - a_0'(\hat y) \nabla \hat y ] =\1_{ \Omega_{y, \hat y }^{3}}[  a_{1}'(y) \nabla y - a_0'(\hat y) \nabla \hat y ] \\
            & = \1_{\Omega_{y, \hat y }^{3}} [ a_{1}'(\bar t)  - a_0'(\bar t)    ] \nabla  \hat y + \tilde{Z}_0^{1,1}
        \end{aligned}
    \end{equation*}
    with
    \[
        \tilde{Z}_0^{1,1} :=  \1_{ \Omega_{y,\hat y}^{3} } [ a_{1}'(y) \nabla (y - \hat y) + (a_{1}'(y) - a_{1}'(\bar t)) \nabla \hat y + (a_{0}'(\bar t) - a_0'(\hat y)) \nabla \hat y  ].
    \]
    Similarly, the definition of $\Omega_{y, \hat y }^{2}$ in \eqref{eq:Omega-123-sets} implies that
    \begin{equation*}
        \begin{aligned}
            Z_1^{1,1} & = \1_{ \{ \hat y \in (\bar t, \bar t + \delta),  y \in (\bar t- \delta, \bar t] \} } [ a_{0}'(y) \nabla y - a_1'(\hat y) \nabla \hat y   ] \\
            & = \1_{ \Omega_{y,\hat y}^{2} } [ a_{0}'(\bar t)  - a_1'(\bar t)    ] \nabla  \hat y  + \tilde{Z}_1^{1,1}
        \end{aligned}
    \end{equation*}
    with
    \[
        \tilde{Z}_1^{1,1}  := \1_{ \Omega_{y,\hat y}^{2} } [ a_{0}'(y) \nabla (y - \hat y) + (a_{0}'(y) - a_{0}'(\bar t)) \nabla \hat y + (a_{1}'(\bar t) - a_1'(\hat y)) \nabla \hat y  ].
    \]
    Obviously, we have
    \begin{equation*}
        \tilde{Z}_0^{1,1} +\tilde{Z}_1^{1,1} =Z_{y, \hat y}^{(4)}
    \end{equation*}
    and there then holds
    \[
        Z_{y, \hat y}^{(3)} + Z_{y, \hat y}^{(4)} = Z_0^{1,1} + Z_1^{1,1}.
    \]
    From this and \eqref{eq:Z-sum}--\eqref{eq:Z-1}, we derive \eqref{eq:Z-decomposition}. Moreover, \eqref{eq:Z-esti} is derived by combining the definition of $Z_{y, \hat y}^{(k)}$, \cref{ass:PC1-func}, the estimates
    \[
        |y(x) - \bar t|, |\hat y(x) - \bar t| \leq |y(x) - \hat y(x)| \quad \text{for a.e. } x \in \Omega_{y,\hat y}^{2} \cup \Omega_{y,\hat y}^{3}
    \]
    due to the definition of $\Omega_{y,\hat y}^{2}$ and $\Omega_{y,\hat y}^{3}$, and the fact that $\nabla \hat y = 0$ a.e. in $\{\hat y = t_i \}$ (see; e.g. Remark 2.6 in \cite{Chipot2009}). Finally, the claimed convergence follows from \eqref{eq:Z-decomposition}, \eqref{eq:Z-esti}, the fact that
    $
    \1_{\Omega_{y, \hat y }^{2}}, \1_{\Omega_{y, \hat y }^{3}} \to 0$ a.e. in $\Omega$ as $y \to \hat y$ in $C(\overline\Omega)$,
    and Lebesgue's dominated convergence theorem.
\end{proof}

For any $h \in (0, h_1)$ and $y_h \in V_h$, we now define the operator $D_{h, y_h}: V_h \to V_h^*$ via
\begin{equation}
    \label{eq:Dh-oper}
    \langle D_{h, y_h} w_h, z_h \rangle := \int_\Omega [ (b + a(y_h) ) \nabla w_h +  \1_{\{y_h \neq \bar t \}} a'(y_h) \nabla y_h w_h ] \cdot \nabla z_h \dx, \, w_h,z_h \in V_h.
\end{equation}
\begin{lemma}
    \label{lem:Dh-continuity}
    Let all assumptions of \cref{thm:error-state} hold. Then for any $h \in (0,h_1)$ and any $\{y_h^{k}\} \subset V_h$ converging to $y_h \in V_h$ in $H^1_0(\Omega)$ as $k\to \infty$, there holds
    $\norm{D_{h, y_h^k} - D_{h, y_h} }_{\Linop(V_h,V_h^*)} \to 0$.
\end{lemma}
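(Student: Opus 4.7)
The plan is to bound the operator norm by decomposing the difference $\langle (D_{h, y_h^k} - D_{h, y_h}) w_h, z_h \rangle$ into a zero-order term and a first-order term, and then to apply \cref{lem:Z-decomposition} to control the latter. Concretely, for arbitrary $w_h, z_h \in V_h$ I would write
\begin{equation*}
    \langle (D_{h, y_h^k} - D_{h, y_h}) w_h, z_h \rangle = I_1^k + I_2^k,
\end{equation*}
where
\begin{equation*}
    I_1^k := \int_\Omega (a(y_h^k) - a(y_h)) \nabla w_h \cdot \nabla z_h \dx, \qquad I_2^k := \int_\Omega Z_{y_h^k, y_h}\, w_h \cdot \nabla z_h \dx
\end{equation*}
with $Z_{y_h^k, y_h}$ as in \eqref{eq:Z-func}.

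The first step is to exploit that $V_h$ is finite-dimensional, so all norms are equivalent on $V_h$. In particular, $y_h^k \to y_h$ in $H^1_0(\Omega)$ implies $y_h^k \to y_h$ in $C(\overline\Omega)$ and in $W^{1,p}(\Omega)$ for every $p \geq 1$. The zero-order piece $I_1^k$ is then immediately handled by the Lipschitz continuity of $a$ (\cref{ass:PC1-func}): Hölder's inequality gives
\begin{equation*}
    |I_1^k| \leq L_a \norm{y_h^k - y_h}_{C(\overline\Omega)} \norm{\nabla w_h}_{L^2(\Omega)} \norm{\nabla z_h}_{L^2(\Omega)},
\end{equation*}
so $\sup_{\norm{w_h}_{H^1_0} = \norm{z_h}_{H^1_0}=1} |I_1^k| \to 0$.

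The more delicate piece is $I_2^k$. By Hölder's inequality with exponents $(4,4,2)$ and the two-dimensional Sobolev embedding $H^1_0(\Omega) \hookrightarrow L^4(\Omega)$, we have
\begin{equation*}
    |I_2^k| \leq \norm{Z_{y_h^k, y_h}}_{L^4(\Omega)}\norm{w_h}_{L^4(\Omega)}\norm{\nabla z_h}_{L^2(\Omega)} \leq C \norm{Z_{y_h^k, y_h}}_{L^4(\Omega)}\norm{w_h}_{H^1_0(\Omega)}\norm{z_h}_{H^1_0(\Omega)}.
\end{equation*}
To conclude, it suffices to show $\norm{Z_{y_h^k, y_h}}_{L^4(\Omega)} \to 0$ as $k \to \infty$. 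For sufficiently large $k$, the uniform convergence $y_h^k \to y_h$ makes $\norm{y_h^k - y_h}_{C(\overline\Omega)} < \delta$ so that \cref{lem:Z-decomposition} applies with $y = y_h^k$ and $\hat y = y_h$: combined with the $W^{1,4}$-convergence obtained from norm equivalence in $V_h$, the lemma delivers $Z_{y_h^k, y_h} \to 0$ in $L^4(\Omega)$.

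The main obstacle is that the hypothesis only provides $H^1_0$-convergence, whereas \cref{lem:Z-decomposition} demands convergence in $W^{1,p}(\Omega) \cap C(\overline\Omega)$; this is precisely bridged by the finite-dimensionality of $V_h$, which is the key (and essentially only nontrivial) ingredient of the argument. Combining the two bounds yields $\norm{D_{h, y_h^k} - D_{h, y_h}}_{\Linop(V_h, V_h^*)} \leq C(\norm{y_h^k - y_h}_{C(\overline\Omega)} + \norm{Z_{y_h^k, y_h}}_{L^4(\Omega)}) \to 0$, as claimed.
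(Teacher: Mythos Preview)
Your proof is correct and follows essentially the same route as the paper: the same two-term decomposition, the upgrade from $H^1_0$-convergence to $W^{1,p}\cap C(\overline\Omega)$-convergence via the finite-dimensionality of $V_h$ (the paper phrases this as an inverse inequality), and the appeal to \cref{lem:Z-decomposition} for the $Z$-term. The only cosmetic difference is your H\"older split $(4,4,2)$ versus the paper's $(\tilde p,\,2\tilde p/(\tilde p-2),\,2)$, which coincide when $\tilde p=4$.
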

\begin{proof}
    Let $w_h, v_h \in V_h$ be arbitrary such that $\norm{w_h}_{H^1_0(\Omega)}, \norm{v_h}_{H^1_0(\Omega)} \leq 1$ and $h \in (0,h_1)$ be arbitrary but fixed. Assume that $\{y_h^{k}\} \subset V_h$ converges to $y_h \in V_h$ in $H^1_0(\Omega)$ as $k \to \infty$. By virtue of the inverse inequality \cite{Ciarlet2002}, Thm.~3.2.6,
    we deduce that $y_h^k \to y_h$ in $W^{1,\tilde{p}}_0(\Omega)$ and hence in $C(\overline\Omega)$ as $k \to \infty$. We can therefore assume that $\norm{y_h^k - y_h}_{C(\overline\Omega)} < \delta$ for all $k \in \N$ large enough.
    On the other hand, we have
    \begin{equation*}
        \langle (D_{h, y_h^k} - D_{h, y_h} )w_h, v_h   \rangle =   \int_\Omega [  (a(y_h^k) - a(y_h) ) \nabla w_h + Z_{y_h^k,y_h} w_h ]\cdot \nabla v_h \dx.
    \end{equation*}
    Together with the Hölder inequality, this yields that
    \begin{equation*}
        \begin{aligned}
            \norm{D_{h, y_h^k} - D_{h, y_h} }_{\Linop(V_h,V_h^*)}   & \leq \norm{a(y_n^k) - a(y_h)}_{L^\infty(\Omega)} + \norm{ Z_{y_h^k, y_h}}_{L^{\tilde{p}}(\Omega)} \norm{w_h}_{L^{2\tilde{p}/(\tilde{p}-2)}(\Omega)} \\
        & \leq \norm{a(y_h^k) - a(y_h)}_{L^\infty(\Omega)} + C \norm{ Z_{y_h^k, y_h}}_{L^{\tilde{p}}(\Omega)},\end{aligned}
    \end{equation*}
    where we have employed the continuous embedding $H^1_0(\Omega) \hookrightarrow L^{2\tilde{p}/(\tilde{p}-2)}(\Omega)$ and the fact that $\norm{w_h}_{H^1_0(\Omega)} \leq 1$ to obtain the last inequality.
    The first term on the right-hand side of the last estimate tends to zero as $k\to \infty$ since $y_h^k \to y_h$ in $C(\overline\Omega)$ as $k \to \infty$. Moreover, the second term tends to zero as a result of \cref{lem:Z-decomposition}.
\end{proof}

\begin{lemma}
    \label{lem:Dh-isomorphism}
    Let all assumptions of \cref{thm:error-state} hold. Then there exists a constant $h_2 \in (0, h_1)$ such that for any $h \in (0, h_2)$ and any $y_h \in \overline B_{W^{1,\tilde{p}}_0(\Omega)}(\bar y, \kappa_\rho) \cap V_h$, the operator $D_{h,y_h}: V_h \to V_h^*$ is an isomorphism.
\end{lemma}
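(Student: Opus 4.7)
Since $V_h$ is finite dimensional, $D_{h,y_h}$ is an isomorphism if and only if it is injective, and I will establish injectivity by a compactness-and-contradiction argument relying on the invertibility of the continuous linearization. Suppose the conclusion fails. Then there exist sequences $h_n \downarrow 0$, $y_n \in \overline B_{W^{1,\tilde p}_0(\Omega)}(\bar y, \kappa_\rho) \cap V_{h_n}$, and $w_n \in V_{h_n}$ with $\norm{w_n}_{H^1_0(\Omega)} = 1$ and $D_{h_n,y_n} w_n = 0$ in $V_{h_n}^*$. Using the compact embedding of $W^{1,\tilde p}(\Omega)$ into $C(\overline\Omega)$ (available since $\tilde p \geq 4 > 2$) together with the Rellich--Kondrachov theorem for $H^1_0(\Omega)$, extract subsequences (not relabeled) with $y_n \weakto y^*$ in $W^{1,\tilde p}_0(\Omega)$ and $y_n \to y^*$ in $C(\overline\Omega)$, where $y^* \in \overline B_{W^{1,\tilde p}_0(\Omega)}(\bar y, \kappa_\rho)$; and $w_n \weakto w$ in $H^1_0(\Omega)$ with $w_n \to w$ in $L^q(\Omega)$ for every $q < \infty$.

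Next, I would identify the equation satisfied by $w$ by testing $\langle D_{h_n,y_n} w_n, I_{h_n}\phi\rangle = 0$ against the Lagrange interpolants of $\phi \in C_c^\infty(\Omega)$, recalling that $I_{h_n}\phi \to \phi$ in $W^{1,\infty}(\Omega)$. The principal term $\int (b + a(y_n)) \nabla w_n \cdot \nabla I_{h_n}\phi \dx$ converges by weak-strong pairing using $a(y_n) \to a(y^*)$ in $L^\infty(\Omega)$. The delicate convective term $\int \1_{\{y_n \neq \bar t\}} a'(y_n) w_n \nabla y_n \cdot \nabla I_{h_n}\phi \dx$ is analyzed by splitting on $\{y^* \neq \bar t\}$ and $\{y^* = \bar t\}$: on the former, $\1_{\{y_n \neq \bar t\}} a'(y_n) \to \1_{\{y^* \neq \bar t\}} a'(y^*)$ a.e.\ by the uniform convergence of $y_n$ and hence strongly in every $L^q$ by dominated convergence, yielding the expected limit; on the latter, $\nabla y^* = 0$ a.e.\ by Stampacchia's theorem, so restricting to this set the weak $L^{\tilde p}$-convergence of $\nabla y_n$ to $0$ paired with the strong $L^{\tilde p/(\tilde p-1)}$-convergence of $w_n \nabla I_{h_n}\phi$ gives vanishing contribution. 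The resulting identity states that $w$ weakly solves $\tilde D_{y^*} w = 0$, where $\tilde D_{y^*} \colon H^1_0(\Omega) \to H^{-1}(\Omega)$ is the continuous operator obtained from $D_{h,y_h}$ by replacing $V_h$ with $H^1_0(\Omega)$ and $y_h$ with $y^*$.

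To conclude $w = 0$, I would observe that $\tilde D_{\bar y}$ is precisely the linearization \eqref{eq:deri} of the state equation at $\bar y$ and hence an isomorphism by \cref{thm:control2state-oper}; combined with the continuity of $y \mapsto \tilde D_y$ from $W^{1,\tilde p}_0(\Omega)$-strong into $\Linop(H^1_0(\Omega), H^{-1}(\Omega))$, which follows from the Lipschitz continuity of $a$ and the $L^{\tilde p}$-convergence of $Z_{y,\bar y}$ established in \cref{lem:Z-decomposition}, a standard Banach perturbation argument shows that $\tilde D_{y^*}$ remains an isomorphism provided $y^*$ is sufficiently close to $\bar y$ in $W^{1,\tilde p}_0(\Omega)$---which is enforced by reducing $\kappa_\rho$ if necessary (this does not affect the validity of \cref{thm:uniqueness-state}). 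Hence $w = 0$. The desired contradiction is then obtained by testing $\langle D_{h_n,y_n} w_n, w_n\rangle = 0$: the $\underline{b}$-coercivity of the principal part combined with Hölder's inequality and the continuous embedding $H^1_0(\Omega) \hookrightarrow L^{2\tilde p/(\tilde p-2)}(\Omega)$ yields $\underline b \leq C\norm{w_n}_{L^{2\tilde p/(\tilde p-2)}(\Omega)}$, whose right-hand side vanishes by the strong convergence $w_n \to 0$, contradicting $\norm{w_n}_{H^1_0(\Omega)} = 1$. The main obstacle I anticipate is the passage to the limit on $\{y^* = \bar t\}$ in the convective term, where $\1_{\{y_n \neq \bar t\}} a'(y_n)$ need not converge pointwise; this is circumvented precisely by exploiting $\nabla y^* = 0$ on this set and treating $\nabla y_n$ as the weakly convergent factor against a strongly convergent cofactor.
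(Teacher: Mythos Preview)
Your overall strategy coincides with the paper's: argue by contradiction, normalize the kernel element, extract weak limits of $y_n$ and $w_n$, pass to the limit in the discrete equation tested against interpolants, show the limit $w$ vanishes, and obtain the contradiction from strong $L^{2\tilde p/(\tilde p-2)}$-convergence of $w_n$ to $0$. The substantive difference is in how you conclude $w=0$, and this is where your argument has a gap. On $\{y^* = \bar t\}$ you claim that pairing the weak $L^{\tilde p}$-convergence $\nabla y_n \rightharpoonup 0$ with the strong $L^{\tilde p'}$-convergence of $w_n\,\nabla I_{h_n}\phi$ yields vanishing contribution; but the integrand still carries the factor $\1_{\{y_n\neq \bar t\}}a'(y_n)$, which on $\{y^*=\bar t\}$ is merely bounded and need not converge even a.e. Hence the cofactor $\1_{\{y_n\neq \bar t\}}a'(y_n)\,w_n\,\nabla I_{h_n}\phi$ is not known to converge strongly in $L^{\tilde p'}$, and the weak--strong pairing is unjustified as written. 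The gap is repairable: by the chain rule for Lipschitz superpositions one has $\1_{\{y_n\neq \bar t\}}a'(y_n)\nabla y_n = \nabla(a(y_n))$, and since $\{a(y_n)\}$ is bounded in $W^{1,\tilde p}(\Omega)$ and converges uniformly to $a(y^*)$, it follows that $\nabla(a(y_n))\rightharpoonup \nabla(a(y^*)) = \1_{\{y^*\neq \bar t\}}a'(y^*)\nabla y^*$ in $L^{\tilde p}(\Omega)^2$; pairing \emph{this} weak limit with $w_n\,\nabla I_{h_n}\phi \to w\,\nabla\phi$ in $L^{\tilde p'}$ then gives the identification $\tilde D_{y^*}w=0$ you want.

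The paper sidesteps the whole issue by \emph{not} identifying the weak limit of $\1_{\{y_n\neq\bar t\}}a'(y_n)\nabla y_n$: it merely extracts some weak $L^{\tilde p}$-limit $\mathbb b$, passes to the limit to obtain $\int_\Omega[(b+a(y^*))\nabla w + w\,\mathbb b]\cdot\nabla v\,dx=0$ for all $v\in H^1_0(\Omega)$, and then invokes a general uniqueness result (Theorem~2.6 in \cite{CasasDhamo2011}) valid for \emph{any} lower-order coefficient $\mathbb b\in L^{\tilde p}(\Omega)^2$ to conclude $w=0$. This is both shorter and more robust: it requires neither the delicate identification above nor your Neumann-series perturbation from $\tilde D_{\bar y}$, and in particular it works for the given $\kappa_\rho$ from \cref{thm:uniqueness-state} without having to shrink it.
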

\begin{proof}
    Since $V_h$ is finite-dimensional and $D_{h, y_h}$ is linear, it suffices to prove that there exists an $h_2 \in (0, h_1)$ such that for any $h \in (0, h_2)$ and $y_h \in \overline B_{W^{1,\tilde{p}}_0(\Omega)}(\bar y, \kappa_\rho) \cap V_h$, the equation
    \begin{equation}
        \label{eq:Dh-injective}
        D_{h,y_h}w_h = 0
    \end{equation}
    admits the unique solution $w_h = 0$. We argue by contradiction. Assume for any $k \geq 1$ that there exist $h_k \in (0, h_1)$, $y_{h_k} \in \overline B_{W^{1,\tilde{p}}_0(\Omega)}(\bar y, \kappa_\rho) \cap V_{h_k}$, and $ w_{h_k} \in V_h \backslash \{0\}$ such that $h_k \to 0^+$ and $w_{h_k}$ solves \eqref{eq:Dh-injective} for $h = h_k$ and $y_h = y_{h_k}$. By setting $\hat w_{h_k} := \frac{w_{h_k}}{\norm{w_{h_k}}_{L^{2\tilde{p}/(\tilde{p}-2)}(\Omega)}}$, we deduce that
    \begin{equation}
        \label{eq:Dh-iso-auxi}
        \norm{\hat w_{h_k}}_{L^{2\tilde{p}/(\tilde{p}-2)}(\Omega)} = 1 \quad\text{and} \quad D_{h_k, y_{h_k}}\hat w_{h_k} = 0.
    \end{equation}
    Furthermore, as a result of the embedding $W^{1,\tilde{p}}_0(\Omega) \Subset C(\overline\Omega)$, there hold that $\norm{y_{h_k}}_{C(\overline\Omega)} \leq M$ for all $k \geq 1$ and some constant $M>0$ independent of $k$ and that
    \begin{equation}
        \label{eq:Dh-iso-yh-limit}
        y_{h_k} \to y \, \text{in } C(\overline\Omega) \quad \text{for some } y \in W^{1,\tilde{p}}_0(\Omega).
    \end{equation}
    Testing the second equation in \eqref{eq:Dh-iso-auxi} by $\hat w_{h_k}$, Hölder's inequality thus gives
    \begin{equation*}
        \underline{b} \norm{\nabla \hat w_{h_k}}_{L^2(\Omega)} \leq \norm{\nabla y_{h_k}}_{L^{\tilde{p}}(\Omega)}  \norm{\1_{\{y_{h_k} \neq \bar t \}} a'(y_{h_k})}_{L^\infty(\Omega)} \norm{\hat w_{h_k}}_{L^{2\tilde{p}/(\tilde{p}-2)}(\Omega)} \leq C_{M,\rho}
    \end{equation*}
    for some constant $C_{M,\rho}>0$. From this and the compact embedding $H^1_0(\Omega) \Subset L^{2\tilde{p}/(\tilde{p}-2)}(\Omega)$, a subsequence argument shows that we can assume that
    \begin{equation} \label{eq:Dh-iso-wh-limit}
        \hat w_{h_k} \rightharpoonup \hat w \, \text{in } H^{1}_0(\Omega) \quad \text{and} \quad   \hat w_{h_k} \to \hat w \, \text{in } L^{2\tilde{p}/(\tilde{p}-2)}(\Omega)
    \end{equation}
    for some $\hat w \in H^1_0(\Omega)$. Moreover, there exist an element $\mathbb{b} \in L^{\tilde  p}(\Omega)^2$ and a subsequence of $\{\mathbb b_k\}$ with $\mathbb b_k := \1_{\{y_{h_k} \neq \bar t \}} a'(y_{h_k}) \nabla y_{h_k}$, denoted in the same way, such that $\mathbb b_k \rightharpoonup \mathbb b$ weakly in $L^{\tilde{p}}(\Omega)^2$.
    By fixing any $v \in H^2(\Omega) \cap H^1_0(\Omega)$ and testing the last equation in \eqref{eq:Dh-iso-auxi} with $v_{h_k} := \Pi_{h_k} v \in V_{h_k}$, where $\Pi_{h_k}$ is the interpolation operator, we have
    \begin{equation*}
        \int_\Omega [ (b + a(y_{h_k}) ) \nabla \hat w_{h_k}  + \hat w_{h_k} \mathbb b_k ] \cdot \nabla v_{h_k} \dx = 0 \quad \text{for all } k \geq 1.
    \end{equation*}
    Letting $k \to \infty$ and exploiting the limits \eqref{eq:Dh-iso-yh-limit}, \eqref{eq:Dh-iso-wh-limit}, $\mathbb b_k \rightharpoonup \mathbb b$ in  $L^{\tilde{p}}(\Omega)^N$, and $v_{h_k} \to v$ in $H^1_0(\Omega)$, we can conclude that
    $\int_\Omega [ (b + a(y) ) \nabla \hat w  + \hat  w \mathbb b ] \cdot \nabla v \dx = 0$.
    From this, the density of $H^2(\Omega) \cap H^1_0(\Omega)$ in $H^1_0(\Omega)$, and Theorem 2.6 in \cite{CasasDhamo2011}, we conclude that $\hat w = 0$, contradicting the fact that $\norm{\hat w}_{L^{2\tilde{p}/(\tilde{p}-2)}(\Omega)} = \lim\limits_{k \to \infty} \norm{\hat w_{h_k}}_{L^{2\tilde{p}/(\tilde{p}-2)}(\Omega)} = 1$.
\end{proof}

As a consequence of \cref{lem:Dh-continuity,lem:Dh-isomorphism} and the implicit function theorem, we obtain the differentiability of $S_h$.
\begin{theorem}
    \label{thm:diff-discrete-state}
    Let all assumptions of \cref{thm:error-state} hold. Then, for any $h \in (0, h_2)$, the operator $S_h$ defined in \eqref{eq:discrete-operator} is of class $C^1$. Moreover, for any $u \in B_{L^2(\Omega)}(\bar u, \rho)$, let $y_h(u) := S_h(u)$. Then for any $v \in L^2(\Omega)$, the Fréchet derivative $S_h'(u)v=:z_h$ is the unique solution to
    \begin{equation}
        \label{eq:diff-state-discrete}
        \int_{\Omega} [ (b + a(y_h(u)) ) \nabla z_h + \1_{\{y_h(u) \neq \bar t \}} a'(y_h(u)) z_h \nabla y_h(u) ] \cdot \nabla w_h \dx = \int_{\Omega} v w_h \dx\quad\text{for all }w_h\in V_h.
    \end{equation}
\end{theorem}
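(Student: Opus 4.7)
The plan is to apply the implicit function theorem to the mapping
\[
  F: B_{L^2(\Omega)}(\bar u, \rho) \times \bigl(\overline B_{W^{1,\tilde p}_0(\Omega)}(\bar y, \kappa_\rho) \cap V_h\bigr) \to V_h^*,\qquad
  \langle F(u, y_h), v_h \rangle := \int_\Omega (b+a(y_h))\nabla y_h \cdot \nabla v_h \dx - \int_\Omega u v_h \dx,
\]
whose zero set describes \eqref{eq:state-discrete}. By \cref{thm:uniqueness-state}, $S_h(u)$ is the unique zero of $F(u,\cdot)$ in the considered ball, so $F(u, S_h(u)) = 0$ identifies $S_h$. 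The partial derivative $\partial_u F(u,y_h)v = -\int_\Omega v(\cdot)\dx \in V_h^*$ is constant in $(u,y_h)$ and therefore smooth. The bulk of the work is to show that $F$ is Fréchet differentiable in $y_h$ with derivative $D_{h,y_h}$ from \eqref{eq:Dh-oper} and that this derivative depends continuously on $y_h$; once these are in hand, continuity in $y_h$ is exactly \cref{lem:Dh-continuity} and invertibility for $h\in(0,h_2)$ is \cref{lem:Dh-isomorphism}.

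The main obstacle is therefore the Fréchet differentiability of $y_h \mapsto F(u,y_h)$, which is delicate because $a$ is only piecewise $C^1$ at $\bar t$. The direct expansion
\begin{align*}
  \langle F(u, y_h + w_h) - F(u, y_h) - D_{h, y_h} w_h, v_h\rangle
  &= \int_\Omega (a(y_h + w_h) - a(y_h))\,\nabla w_h \cdot \nabla v_h \dx \\
  &\quad + \int_\Omega \bigl[a(y_h + w_h) - a(y_h) - \1_{\{y_h \neq \bar t\}} a'(y_h)\, w_h\bigr]\nabla y_h \cdot \nabla v_h \dx
\end{align*}
splits the remainder into two contributions. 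The first is controlled via the Lipschitz bound on $a$ by $L_a \norm{w_h}_{L^\infty(\Omega)}\norm{\nabla w_h}_{L^2(\Omega)}\norm{\nabla v_h}_{L^2(\Omega)}$, which is $O(\norm{w_h}_{V_h}^2)$ since all norms on the finite-dimensional space $V_h$ are equivalent, and hence $o(\norm{w_h}_{V_h})\norm{v_h}_{V_h}$. For the second, the key observation is that $\nabla y_h = 0$ a.e.\ on $\{y_h = \bar t\}$ (on any triangle on which $y_h$ attains $\bar t$ on a set of positive measure, $y_h$ is the constant $\bar t$), so the integrand coincides with $T_{y_h+w_h,y_h}\nabla y_h \cdot \nabla v_h$ as defined in \eqref{eq:T-func}; \cref{lem:T-decomposition} then yields $\norm{T_{y_h + w_h, y_h}\nabla y_h}_{L^{\tilde p}(\Omega)} = o(\norm{w_h}_{W^{1,\tilde p}_0(\Omega)})$, which by norm equivalence on $V_h$ is again $o(\norm{w_h}_{V_h})\norm{v_h}_{V_h}$. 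Together these establish Fréchet differentiability with $\partial_{y_h}F(u,y_h) = D_{h,y_h}$.

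With $F\in C^1$ and $\partial_{y_h}F$ an isomorphism for $h\in(0,h_2)$, the implicit function theorem produces, for every $u_0 \in B_{L^2(\Omega)}(\bar u, \rho)$, a $C^1$ solution map defined on a neighborhood of $u_0$; by \cref{thm:uniqueness-state} this local map must agree with $S_h$, giving $S_h \in C^1(B_{L^2(\Omega)}(\bar u, \rho); V_h)$. Finally, differentiating the identity $F(u, S_h(u)) = 0$ at $u$ in direction $v$ yields
\[
  D_{h, S_h(u)}\, S_h'(u)v = -\partial_u F(u, S_h(u))v = \int_\Omega v(\cdot)\dx \quad\text{in } V_h^*,
\]
and unwinding the definition \eqref{eq:Dh-oper} of $D_{h,y_h(u)}$ with $z_h := S_h'(u)v$ gives precisely \eqref{eq:diff-state-discrete}; uniqueness of $z_h$ is a consequence of the isomorphism property of $D_{h,y_h(u)}$.
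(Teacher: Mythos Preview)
Your proof is correct and follows essentially the same route as the paper: apply the implicit function theorem to the discrete residual map, split the remainder into the $(a(y_h+w_h)-a(y_h))\nabla w_h$ term (handled by Lipschitz continuity of $a$ and norm equivalence/inverse estimates on $V_h$) and the $T_{y_h+w_h,y_h}\nabla y_h$ term (handled by \cref{lem:T-decomposition}), then invoke \cref{lem:Dh-continuity,lem:Dh-isomorphism}. The only cosmetic difference is that the paper defines $F_h$ on all of $V_h$ rather than on the closed ball $\overline B_{W^{1,\tilde p}_0(\Omega)}(\bar y,\kappa_\rho)\cap V_h$, which avoids having to argue that $S_h(u)$ lies in the interior when applying the implicit function theorem.
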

\begin{proof}
    We first consider for any $h \in (0, h_2)$ the mapping $F_h: B_{L^2(\Omega)}(\bar u, \rho) \times V_h \to V_h^*$ defined via
    \begin{equation} \label{eq:Fh-mapping}
        \left\langle F_h(u,y_h), v_h  \right\rangle = \int_\Omega ( b + a(y_h) ) \nabla y_h \cdot \nabla v_h - uv_h \dx, \quad u \in B_{L^2(\Omega)}(\bar u, \rho), y_h, v_h \in V_h.
    \end{equation}
    Clearly, $F_h(u,y_h(u)) =0$ and $F_h$ is continuously partially differentiable in $u$. We now prove that $F_h$ is partially differentiable in $y_h$ with $\frac{\partial F_h}{\partial y_h}(u,y_h) = D_{h, y_h}$, where $D_{h,y_h}$ is defined in \eqref{eq:Dh-oper}. We thus  derive the differentiability of $S_h$ according to \cref{lem:Dh-continuity,lem:Dh-isomorphism} as well as a simple computation. To this end, by taking any $v_h \in V_h$ and $\{w_h^k \} \subset V_h $ with $\norm{w_h^k}_{H^1_0(\Omega)} \to 0$ as $k \to \infty$ and $\norm{v_h}_{H^1_0(\Omega)} \leq 1$, we deduce from a straightforward computation that
    \[
        \langle F_h(u, y_h  + w_h^k) - F_h(u, y_h) - D_{h,y_h} w_h^k, v_h  \rangle = \int_\Omega [ T_{y_h^k, y_h} \nabla y_h  + ( a(y_h^k ) - a(y_h) ) \nabla w_h^k ] \cdot \nabla v_h \dx,
    \]
    where $y_h^k := y_h  + w_h^k$ and $T_{y, \hat y}$ is defined in \eqref{eq:T-func}.
    This gives
    \begin{equation*}
        \norm{ F_h(u, y_h  + w_h^k) - F_h(u, y_h) - D_{h,y_h} w_h^k}_{\Linop(V_h, V_h^*)}
        \leq \norm{T_{y_h^k, y_h} \nabla y_h}_{L^2(\Omega)} + \norm{ a(y_h^k) - a(y_h)}_{L^\infty(\Omega)} \norm{w_h^k}_{H^1_0(\Omega)}.
    \end{equation*}
    Moreover, in view of inverse estimates \cite{Ciarlet2002}, Thm.~3.2.6, we have
    $y_h^k \to y_h$ in $W^{1,\tilde{p}}_0(\Omega)$ and hence in $C(\overline\Omega)$ as $k \to \infty$. Then \cref{lem:T-decomposition} and the embedding $W^{1,\tilde{p}}_0(\Omega) \hookrightarrow H^1_0(\Omega)$ imply that
    \begin{equation*}
        \frac{1}{\norm{w_h^k}_{H^1_0(\Omega)}}\norm{ F_h(u, y_h  + w_h^k) - F_h(u, y_h) - D_{h,y_h} w_h^k}_{\Linop(V_h, V_h^*)} \to 0,
    \end{equation*}
    which gives that $\frac{\partial F_h}{\partial y_h}(u,y_h) = D_{h, y_h}$.
    We have shown that $F_h(u,S_h(u)) =0$ and $\frac{\partial F_h}{\partial y_h}(u,y_h) = D_{h, y_h}$. We then deduce from the Implicit Function Theorem and \cref{lem:Dh-continuity,lem:Dh-isomorphism} that $S_h$ is of class $C^1$. Finally, \eqref{eq:diff-state-discrete} follows from \eqref{eq:Dh-oper} and \eqref{eq:Fh-mapping}.
\end{proof}

\section{Numerical analysis of the adjoint state equation} \label{sec:adjoint-state-equ-error}
In this section, we will carry out the numerical analysis of the adjoint equation \eqref{eq:adjoint-state}.
For any $h \in (0, h_2)$, $u \in B_{L^2(\Omega)}(\bar u, \rho)$, $v \in L^2(\Omega)$ and $y_h := S_h(u)$,  we approximate \eqref{eq:adjoint-state} using the triangulation $\mathcal{T}_h$ by
\begin{equation}
    \label{eq:adjoint-state-discrete}
    \int_{\Omega}  (b + a(y_h) ) \nabla \varphi_h \cdot \nabla w_h + \1_{\{y_h \neq \bar t \}}a'(y_h) w_h \nabla y_h  \cdot \nabla \varphi_h \dx = \int_\Omega v w_h \dx \quad\text{for all }w_h\in V_h.
\end{equation}

From the bijectivity of $D_{h,y_h}$ shown in \cref{lem:Dh-isomorphism}, we deduce the existence and uniqueness of solutions to \eqref{eq:adjoint-state-discrete}.
\begin{theorem}
    \label{thm:exist-sol-adjoint-discrete}
    Let all assumptions of \cref{thm:diff-discrete-state} hold. Then for all $h \in (0, h_2)$, $u \in B_{L^2(\Omega)}(\bar u, \rho)$, and $v \in L^2(\Omega)$, there exists a unique solution $\varphi_h\in V_h$ to \eqref{eq:adjoint-state-discrete}.
\end{theorem}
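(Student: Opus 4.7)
The plan is to reduce the assertion to the isomorphism property of $D_{h,y_h}$ from \cref{lem:Dh-isomorphism}, by recognizing that \eqref{eq:adjoint-state-discrete} is the \emph{transposed} linear system at the level of the finite-dimensional space $V_h$. Concretely, introducing the bilinear form
\begin{equation*}
    a_h(w_h, \psi_h) := \int_\Omega \bigl[(b + a(y_h)) \nabla w_h + \1_{\{y_h \neq \bar t\}} a'(y_h) (\nabla y_h)\, w_h\bigr] \cdot \nabla \psi_h \dx,
    \quad w_h, \psi_h \in V_h,
\end{equation*}
the definition \eqref{eq:Dh-oper} reads $\langle D_{h,y_h} w_h, \psi_h\rangle = a_h(w_h, \psi_h)$, whereas a direct comparison shows that \eqref{eq:adjoint-state-discrete} requires finding $\varphi_h \in V_h$ with $a_h(w_h, \varphi_h) = \int_\Omega v w_h \dx$ for every $w_h \in V_h$, i.e., with $\varphi_h$ entering as the second rather than the first argument of $a_h$.

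I would first verify that \cref{lem:Dh-isomorphism} applies: from $h \in (0, h_2)$, $u \in B_{L^2(\Omega)}(\bar u, \rho)$ and \eqref{eq:discrete-operator}, the state $y_h := S_h(u)$ lies in $\overline B_{W^{1,\tilde p}_0(\Omega)}(\bar y, \kappa_\rho) \cap V_h$, so $D_{h,y_h}: V_h \to V_h^*$ is an isomorphism. Next, define the transposed operator $D_{h,y_h}^\top: V_h \to V_h^*$ by $\langle D_{h,y_h}^\top \varphi_h, w_h\rangle := a_h(w_h, \varphi_h)$. Relative to any basis of $V_h$ and its associated dual basis of $V_h^*$, $D_{h,y_h}^\top$ is represented by the transpose of the matrix representing $D_{h,y_h}$; since invertibility is preserved under transposition, $D_{h,y_h}^\top$ is also an isomorphism. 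Thus \eqref{eq:adjoint-state-discrete} is equivalent to $D_{h,y_h}^\top \varphi_h = \ell_v$ in $V_h^*$, where $\ell_v$ denotes the continuous linear functional $w_h \mapsto \int_\Omega v w_h \dx$ on $V_h$, and therefore admits a unique solution $\varphi_h \in V_h$ for every $v \in L^2(\Omega)$.

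There is essentially no obstacle; the proof is a routine finite-dimensional transposition combined with the already-proved \cref{lem:Dh-isomorphism}. The only point deserving care is the bookkeeping check that the bilinear form underlying \eqref{eq:adjoint-state-discrete} is indeed obtained from the one in \eqref{eq:Dh-oper} by interchanging its two arguments, which is what forces the appearance of the transpose rather than $D_{h,y_h}$ itself. No new estimates on $\varphi_h$ are required at this stage; the quantitative $L^2$- and $H^1_0$-error bounds for $\varphi_h$ announced in the introduction will be developed in the subsequent analysis.
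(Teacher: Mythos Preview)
Your proposal is correct and follows exactly the paper's approach: the paper simply states that existence and uniqueness of $\varphi_h$ follow from the bijectivity of $D_{h,y_h}$ established in \cref{lem:Dh-isomorphism}, and your argument spells out the (implicit) transposition step in the finite-dimensional space $V_h$ that makes this deduction precise.
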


In order to derive error estimates for the full approximation \eqref{eq:adjoint-state-discrete} of \eqref{eq:adjoint-state}, we first consider the continuous problem \eqref{eq:adjoint-state} with $y_h(u)$ in place of $y_u$.
\begin{lemma}
    \label{lem:adjoint-disrete-half}
    Let all assumptions of \cref{thm:diff-discrete-state} hold. Then  for any $h \in (0, h_2)$, $u \in B_{L^2(\Omega)}(\bar u, \rho)$, $y_h :=S_h(u)$, and $v \in L^2(\Omega)$, the equation
    \begin{equation} \label{eq:adjoint-discrete-half}
        -\dive [(b + a(y_h)) \nabla \tilde  \varphi]  + \1_{\{ y_h \neq \bar t \}}a'(y_h) \nabla y_h \cdot \nabla \tilde \varphi  = v \quad\text{ in } \Omega, \quad \tilde \varphi =0 \quad \text{on } \partial\Omega,
    \end{equation}
    has a unique solution $\tilde{\varphi}$ in $H^2(\Omega) \cap H^1_0(\Omega)$. Moreover,
    \begin{equation}
        \label{eq:adjoint-discrete-half-H1-L2error}
        \norm{\varphi - \tilde{\varphi}}_{H^2(\Omega)} \leq C_{\rho} h \norm{v}_{L^2(\Omega)} \quad \text{and} \quad \norm{\varphi - \tilde{\varphi}}_{L^2(\Omega)}  \leq C_{\rho} h^2 \norm{v}_{L^2(\Omega)}
    \end{equation}
    for some constant $C_{\rho}$ independent of $u, v$, and $h$, where $\varphi$ is the unique solution to \eqref{eq:adjoint-state}.
\end{lemma}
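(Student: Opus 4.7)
My plan is to first establish well-posedness and $H^2$-regularity of $\tilde\varphi$, then derive a clean PDE for the error $\eta := \tilde\varphi - \varphi$ in which the problematic $Z$-contributions cancel, and finally extract the two estimates via elliptic regularity and an Aubin--Nitsche duality argument.

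\textbf{Well-posedness of $\tilde\varphi$ and the equation for $\eta$.} Since $y_h \in V_h$ is continuous and piecewise affine, the leading coefficient $b + a(y_h)$ is Lipschitz with $b + a(y_h) \geq \underline{b} > 0$ by \cref{ass:b_func,ass:PC1-func}, and the drift $\mathbf{c}_h := \1_{\{y_h \neq \bar t\}} a'(y_h) \nabla y_h \in L^\infty(\Omega)^2$ is uniformly bounded on $U$. Applying \cref{thm:adjoint-equation} with $y_h$ in place of $y_u$---legitimately, since its proof uses only pointwise regularity of $y_u$ and not its origin as $S(u)$---gives a unique $\tilde\varphi \in H^1_0(\Omega)$; standard elliptic regularity on the convex polygon $\Omega$ then furnishes $\tilde\varphi \in H^2(\Omega)$ with $\norm{\tilde\varphi}_{H^2(\Omega)} \leq C_\rho \norm{v}_{L^2(\Omega)}$, and analogously $\norm{\varphi}_{H^2(\Omega)} \leq C_\rho \norm{v}_{L^2(\Omega)}$. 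Subtracting \eqref{eq:adjoint-state} from \eqref{eq:adjoint-discrete-half} yields
\begin{equation*}
    -\dive[(b + a(y_h))\nabla \eta] + \mathbf{c}_h \cdot \nabla \eta = \dive[(a(y_h) - a(y_u))\nabla \varphi] - (\mathbf{c}_h - \mathbf{c}_u) \cdot \nabla \varphi.
\end{equation*}
The crucial observation is that, since $a$ is Lipschitz and $\nabla y = 0$ a.e.\ on $\{y = \bar t\}$, the chain rule for Lipschitz compositions yields $\nabla(a(y_h) - a(y_u)) = Z_{y_h, y_u}$, while $\mathbf{c}_h - \mathbf{c}_u = Z_{y_h, y_u}$ directly from \eqref{eq:Z-func}. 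Expanding the divergence, the two $Z$-contributions cancel and the equation for $\eta$ reduces to the clean form
\begin{equation*}
    -\dive[(b + a(y_h))\nabla \eta] + \mathbf{c}_h \cdot \nabla \eta = (a(y_h) - a(y_u))\Delta \varphi \quad \text{in } \Omega, \qquad \eta = 0 \text{ on } \partial\Omega.
\end{equation*}

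\textbf{Error estimates and main obstacle.} The Lipschitz continuity of $a$ together with \cref{thm:error-state} gives $\norm{(a(y_h) - a(y_u))\Delta \varphi}_{L^2(\Omega)} \leq C \norm{y_h - y_u}_{L^\infty(\Omega)} \norm{\varphi}_{H^2(\Omega)} \leq C_\rho h \norm{v}_{L^2(\Omega)}$. Testing the weak form with $\eta$, combined with Gårding's inequality and the uniqueness already established, yields $\norm{\eta}_{H^1_0(\Omega)} \leq C_\rho h \norm{v}_{L^2(\Omega)}$; elliptic $H^2$-regularity on the convex polygon (the right-hand side $(a(y_h) - a(y_u))\Delta\varphi - \mathbf{c}_h \cdot \nabla\eta$ is in $L^2$) then gives the first inequality of \eqref{eq:adjoint-discrete-half-H1-L2error}. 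For the sharper $L^2$-bound I would run an Aubin--Nitsche argument: let $\chi \in H^1_0(\Omega)$ solve the formal adjoint
\begin{equation*}
    \int_\Omega (b + a(y_h)) \nabla \chi \cdot \nabla \psi \dx + \int_\Omega \mathbf{c}_h \cdot \nabla \psi \, \chi \dx = \int_\Omega \eta \, \psi \dx \quad \text{for all } \psi \in H^1_0(\Omega),
\end{equation*}
which inherits the same well-posedness and, by standard $L^\infty$-estimates for 2D elliptic equations with bounded coefficients, satisfies $\norm{\chi}_{L^\infty(\Omega)} \leq C_\rho \norm{\eta}_{L^2(\Omega)}$. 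Setting $\psi = \eta$ and inserting the clean equation for $\eta$ produces
\begin{equation*}
    \norm{\eta}_{L^2(\Omega)}^2 = \int_\Omega (a(y_h) - a(y_u)) \Delta \varphi \, \chi \dx \leq \norm{a(y_h) - a(y_u)}_{L^2(\Omega)} \norm{\Delta \varphi}_{L^2(\Omega)} \norm{\chi}_{L^\infty(\Omega)},
\end{equation*}
and since $\norm{a(y_h) - a(y_u)}_{L^2(\Omega)} \leq C \norm{y_h - y_u}_{L^2(\Omega)} \leq C_\rho h^2$ by the $L^2$-part of \cref{thm:error-state}, dividing through yields $\norm{\eta}_{L^2(\Omega)} \leq C_\rho h^2 \norm{v}_{L^2(\Omega)}$. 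The main technical obstacle is the $Z$-cancellation identified above: without it, the equation for $\eta$ would retain a term $Z_{y_h, y_u} \cdot \nabla \varphi$, and because of the $Z^{(3)}$-piece from \cref{lem:Z-decomposition} concentrated on the $O(h)$-tube $\{|y_u - \bar t| \leq C h\}$, its $L^2$-norm is generically only $O(\sqrt{h})$, which would destroy the optimal $h$ and $h^2$ rates.
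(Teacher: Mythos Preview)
Your argument is correct and mirrors the paper's proof: both hinge on the same $Z$-cancellation (expanding $\dive[(a(y_h)-a(y_u))\nabla\varphi]$ and subtracting the drift difference leaves only $(a(y_h)-a(y_u))\Delta\varphi$), followed by elliptic regularity for the $H^2$-bound and a duality argument for the $L^2$-bound. The one structural difference is that you write the error equation with the $y_h$-coefficient operator and run Aubin--Nitsche through an explicit dual $\chi$, whereas the paper writes it with the $y_u$-coefficient operator and uses $z_{u,\psi}=S'(u)\psi$ as the dual object; the paper's choice is slightly more economical because the mapping properties $\norm{S'(u)}_{\Linop(W^{-1,\tilde p},W^{1,\tilde p}_0)}\leq C_\rho$ (hence $\norm{z_{u,\psi}}_{L^\infty}\leq C_\rho\norm{\psi}_{L^2}$) are already available from \cref{thm:control2state-oper}, so no separate regularity analysis of an $h$-dependent dual problem is needed.

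One small imprecision: you assert that $\mathbf{c}_h=\1_{\{y_h\neq\bar t\}}a'(y_h)\nabla y_h$ is \emph{uniformly} bounded in $L^\infty$. For $u$ merely in $B_{L^2}(\bar u,\rho)$ this is not immediate (one only has $y_u\in H^2$, not $W^{1,\infty}$), and the paper in fact only uses the uniform $L^{\tilde p}$-bound $\norm{\mathbf{c}_h}_{L^{\tilde p}}\leq C_\rho$ that follows from \eqref{eq:error-state-w1p}. Fortunately $L^{\tilde p}$ with $\tilde p>2$ is all that is required at every step of your argument (G\aa rding, $H^2$-regularity on the convex polygon, and the $L^\infty$-bound for $\chi$ via Meyers/Stampacchia), so the proof stands once you weaken that claim.
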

\begin{proof}
    From \cref{thm:control2state-oper}, the continuous embedding $W^{1,\tilde{p}}_0(\Omega) \hookrightarrow C(\overline\Omega)$, and \eqref{eq:error-state-w1p} for $p:= \tilde{p}\geq 4$, there holds
    \begin{equation*}
        \norm{\1_{\{ y_h \neq \bar t \}}a'(y_h) \nabla y_h}_{L^{\tilde{p}}(\Omega)} + \norm{b + a(y_h(u))}_{W^{1,\tilde{p}}(\Omega)}  \leq C_{\rho} \quad \text{for all } h \in (0, h_2), u \in \overline B_{L^2(\Omega)}(\bar u, \rho).
    \end{equation*}
    A standard argument then proves the existence of solutions $\tilde{\varphi}$ to \eqref{eq:adjoint-discrete-half} in $H^2(\Omega) \cap H^1_0(\Omega)$; see. e.g. Theorem 2.6 in \cite{CasasDhamo2011} and the proof of Lemma 4.1 in \cite{ClasonNhuRosch_os2nd}.
    Moreover, we have
    \begin{equation}
        \label{eq:adjoint-state-half-estiH2}
        \norm{\tilde\varphi}_{H^2(\Omega)} \leq C_{\rho} \norm{v}_{L^2(\Omega)}.
    \end{equation}
    Setting $\psi := \varphi - \tilde{\varphi}$ and subtracting the equations corresponding to $\varphi$ and $\tilde{\varphi}$ yields
    \begin{equation} \label{eq:adjoint-discrete-subtract}
        -\dive [(b + a(y_u)) \nabla \psi]  + \1_{\{ y_u \neq \bar t \}}a'(y_u) \nabla y_u \cdot \nabla \psi  = g_{u,h} \quad \text{in } \Omega, \quad \psi =0 \quad \text{on } \partial\Omega,
    \end{equation}
    with
    \[
        g_{u,h} := \dive [(a(y_u) - a(y_h) ) \nabla \tilde{\varphi} ]+ Z_{y_h, y_u}  \cdot \nabla \tilde{\varphi}.
    \]
    By the chain rule \cite{Gilbarg_Trudinger}, Thm.~7.8 and the fact that $y_u, y_h \in W^{1,\tilde{p}}_0(\Omega)$ and that $\tilde{\varphi} \in H^2(\Omega)$, we can write
    \begin{equation} \label{eq:g-uh}
        g_{u,h} = (a(y_u) - a(y_h)) \Delta \tilde{\varphi}.
    \end{equation}
    Similar to \eqref{eq:adjoint-state-half-estiH2}, there holds
    \[
        \norm{\psi}_{H^2(\Omega)} \leq C_\rho \norm{g_{u,h}}_{L^2(\Omega)} \leq C_\rho \norm{a(y_u) - a(y_h)}_{L^\infty(\Omega)} \norm{\Delta \tilde{\varphi}}_{L^2(\Omega)}.
    \]
    Combining this with the Lipschitz continuity of $a$ on bounded sets, the $L^\infty$-estimate in \eqref{eq:error-state-l2}, and \eqref{eq:adjoint-state-half-estiH2} yields the first estimate in \eqref{eq:adjoint-discrete-half-H1-L2error}.
    To show the second estimate, set $z_{u, \psi} := S'(u)\psi$ and note that $\psi = S'(u)^*g_{u,h}$. We then deduce from  \eqref{eq:g-uh} that
    \begin{equation}
        \label{eq:l2-esti-auxi}
        \norm{\psi}_{L^2(\Omega)}^2  = \int_\Omega g_{u,h} z_{u,\psi} \dx    \leq C_\rho \norm{z_{u, \psi}}_{L^\infty(\Omega)} \norm{\Delta \tilde{\varphi}}_{L^2(\Omega)} \norm{y_u - y_h}_{L^2(\Omega)}.
    \end{equation}
    By \cref{thm:control2state-oper} and the compact embedding $L^2(\Omega) \Subset W^{-1,\tilde{p}}(\Omega)$, we have
    \begin{equation} \label{eq:S-der-bound}
        \sup\{ \norm{S'(u)}_{\Linop(W^{-1,\tilde{p}}(\Omega), W^{1,\tilde{p}}_0(\Omega) ) }\mid u \in \overline B_{L^2(\Omega)}(\bar u, \rho) \}  \leq C_{\rho}.
    \end{equation}
    The continuous embeddings $W^{1,\tilde{p}}_0(\Omega) \hookrightarrow L^\infty(\Omega)$ and $L^2(\Omega) \hookrightarrow W^{-1,\tilde{p}}(\Omega)$ therefore yield
    \begin{equation*}
        \norm{z_{u, \psi}}_{L^\infty(\Omega)}  \leq C\norm{z_{u, \psi}}_{W^{1,\tilde{p}}_0(\Omega)} \leq C_{\rho}  \norm{\psi}_{W^{-1,\tilde{p}}(\Omega)} \leq C_{\rho} \norm{\psi}_{L^2(\Omega)}.
    \end{equation*}
    The inequality
    \eqref{eq:l2-esti-auxi} thus yields
    \[
        \norm{\psi}_{L^2(\Omega)}  \leq C_{\rho}\norm{\Delta \tilde{\varphi}}_{L^2(\Omega)} \norm{y_u - y_h}_{L^2(\Omega)}.
    \]
    This, \eqref{eq:error-state-l2} and \eqref{eq:adjoint-state-half-estiH2} yield the last estimate in \eqref{eq:adjoint-discrete-half-H1-L2error}.
\end{proof}

Below, we shall estimate the term $Z_{y_h,y}$ defined in \eqref{eq:Z-func}. We first observe from the $L^\infty$-error estimate in \eqref{eq:error-state-l2} that
\begin{equation} \label{eq:state-error-infty}
    \norm{S(u) - S_h(u)}_{L^\infty(\Omega)} \leq C_{\infty} h \quad \text{for all } u \in \overline B_{L^2(\Omega)}(\bar u, \rho) \cap \mathcal{U}_{ad}, h \in (0,h_2)
\end{equation}
for some positive constant  $C_{\infty}$.
For any $y \in W^{1,1}(\Omega) \cap C(\overline\Omega)$ and $r >0$, let
\begin{equation} \label{eq:KE-func}
    V(y,r) := \sigma_0 \sum_{m= 1}^{2}   \1_{\{ 0 < |y - \bar t | \leq  r \}} | \partial_{x_m} y | \quad \text{and} \quad \Sigma_r(y) :=  \frac{1}{r}V(y,r)
\end{equation}
with $\sigma_0$ determined as in \eqref{eq:sigma-i}.
\begin{proposition}
    \label{prop:K-E-esti}
    Let $r>0$, $y  \in W^{1,1}(\Omega) \cap C(\overline\Omega)$, and $\hat y \in W^{1,\infty}(\Omega)$  be arbitrary and let $\kappa :=r+ \norm{y - \hat y}_{C(\overline\Omega)}$.  Then
    \begin{enumerate}[label=(\roman*)]
        \item $V(y,r) \leq V(\hat y, \kappa ) + \sigma_0 \sum_{m= 1}^{2}  |\partial_{x_m} y - \partial_{x_m} \hat y|$ for a.e. in $\Omega$;
        \item $\norm{V(\hat y,r)}_{L^2(\Omega)}^2 \leq 2r \sigma_0 \norm{\nabla \hat y}_{L^\infty(\Omega)} \norm{\Sigma_r(\hat y)}_{L^1(\Omega)} $.
    \end{enumerate}
\end{proposition}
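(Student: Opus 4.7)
The plan for part (i) is a pointwise case distinction carried out at almost every $x \in \Omega$. If $V(y,r)(x) = 0$, the inequality is trivial since both summands on its right-hand side are nonnegative. Otherwise $0 < |y(x) - \bar t| \leq r$, and the triangle inequality combined with the definition of $\kappa$ gives
\begin{equation*}
    |\hat y(x) - \bar t| \leq |\hat y(x) - y(x)| + |y(x) - \bar t| \leq \norm{y - \hat y}_{C(\overline\Omega)} + r = \kappa.
\end{equation*}
If additionally $\hat y(x) \neq \bar t$, then $\1_{\{0 < |\hat y - \bar t| \leq \kappa\}}(x) = 1$, and the termwise estimate $|\partial_{x_m} y(x)| \leq |\partial_{x_m} \hat y(x)| + |\partial_{x_m} y(x) - \partial_{x_m} \hat y(x)|$, summed over $m = 1,2$ and multiplied by $\sigma_0$, yields the claim. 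If instead $\hat y(x) = \bar t$, one has $\nabla \hat y(x) = 0$ for a.e.\ such $x$ by the standard vanishing of gradients on level sets of Sobolev functions (Remark 2.6 in \cite{Chipot2009}), so the inequality reduces to the identity $|\partial_{x_m} y(x)| = |\partial_{x_m} y(x) - \partial_{x_m} \hat y(x)|$.

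For part (ii), the key observation is that the indicator in the definition of $V(\hat y, r)$ does not depend on the index $m$. Hence
\begin{equation*}
    V(\hat y, r) = \sigma_0 \1_{\{ 0 < |\hat y - \bar t | \leq r\}} \bigl( |\partial_{x_1} \hat y| + |\partial_{x_2} \hat y| \bigr) \leq 2 \sigma_0 \norm{\nabla \hat y}_{L^\infty(\Omega)} \1_{\{ 0 < |\hat y - \bar t| \leq r \}}
\end{equation*}
a.e.\ in $\Omega$, where we used $|\partial_{x_m} \hat y| \leq |\nabla \hat y| \leq \norm{\nabla \hat y}_{L^\infty(\Omega)}$ for $m = 1,2$. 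Since the indicator is $\{0,1\}$-valued, multiplying both sides by $V(\hat y,r)$ (equivalently, squaring and absorbing one factor of the indicator) yields the pointwise bound $V(\hat y, r)^2 \leq 2 \sigma_0 \norm{\nabla \hat y}_{L^\infty(\Omega)} V(\hat y, r)$. Integrating over $\Omega$ and using the identity $\int_\Omega V(\hat y, r) \dx = r \norm{\Sigma_r(\hat y)}_{L^1(\Omega)}$ coming directly from the definition of $\Sigma_r$ then delivers the assertion.

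No serious obstacle is anticipated. Part (i) is purely a pointwise triangle-inequality argument, the only delicate point being the vanishing of $\nabla \hat y$ on $\{\hat y = \bar t\}$, which is resolved by the cited classical result. Part (ii) relies entirely on the index-independence of the indicator, which allows one factor of $|\nabla \hat y|$ in $V(\hat y, r)^2$ to be absorbed into the $L^\infty$ norm while the other factor is retained to reconstruct $\Sigma_r(\hat y)$.
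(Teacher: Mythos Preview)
Your proof is correct and follows essentially the same approach as the paper. For part (i), both arguments rest on the inclusion $\{0<|y-\bar t|\leq r\}\subset\{|\hat y-\bar t|\leq\kappa\}$, the triangle inequality for the partial derivatives, and the vanishing of $\nabla\hat y$ a.e.\ on $\{\hat y=\bar t\}$; you merely make the case distinction at $\hat y(x)=\bar t$ explicit, whereas the paper absorbs it into a single line. For part (ii), the paper calls the estimate ``straightforward'', and your argument---pulling out one factor of $|\nabla\hat y|$ in $L^\infty$ and recognising the remaining integral as $r\norm{\Sigma_r(\hat y)}_{L^1(\Omega)}$---is exactly the intended computation.
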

\begin{proof}
    The proof of the second claim is straightforward.
    It remains to prove the first assertion. To this end, we now observe that  $\{ 0 < |y - \bar t | \leq r\} \subset \{  |\hat y - \bar t | \leq \kappa\}$  and $| \partial_{x_m} y | \leq | \partial_{x_m} \hat y |  + |\partial_{x_m} y - \partial_{x_m} \hat y|$. There
    thus holds
    \begin{align*}
        V(y,r) & \leq   \sum_{m= 1}^{2} \sigma_0 \1_{\{  |\hat y - \bar t | \leq \kappa\}}[ | \partial_{x_m} \hat y |  + |\partial_{x_m} y - \partial_{x_m} \hat y|] \\
        & =   \sum_{m= 1}^{2} \sigma_0 \1_{\{  |\hat y - \bar t | \leq \kappa\}} | \partial_{x_m} \hat y |  +    \sum_{m= 1}^{2}  \sigma_0 \1_{\{  |\hat y - \bar t | \leq \kappa\}} |\partial_{x_m} y - \partial_{x_m} \hat y|\\
        & = V(\hat y, \kappa ) + \sigma_0 \sum_{m= 1}^{2}   \1_{\{  |\hat y - \bar t | \leq \kappa\}} |\partial_{x_m} y - \partial_{x_m} \hat y|,
    \end{align*}
    where  we have employed the fact that $\nabla \hat y$ vanishes a.e. in $\{ \hat y = \bar t \}$ in order to obtain the last identity.  This yields the first claim.
\end{proof}
\begin{lemma}
    \label{lem:delta}
    There exist an $h_3 \in (0, h_2]$ and  a constant $L_{\rho} >0$ such that for all
    $u \in \overline B_{L^2(\Omega)}(\bar u, \rho) \cap \mathcal{U}_{ad}$
    and $h \in (0, h_3)$, there hold
    \begin{align} \label{eq:nonsmooth}
        \norm{ Z_{y_h,y}}_{L^2(\Omega)} &\leq L_{\rho} h +   \norm{V(y, \norm{y_h - y}_{L^\infty(\Omega)})}_{L^2(\Omega)}
        \shortintertext{and}
        \norm{ Z_{y,\bar y}}_{L^2(\Omega)} &\leq L_{\rho}\norm{u - \bar u}_{L^2(\Omega)} + \norm{V(\bar y, \norm{y - \bar y}_{L^\infty(\Omega)})}_{L^2(\Omega)}
        \label{eq:nonsmooth-bary}
    \end{align}
    with $y :=S(u)$ and $y_h := S_h(u)$.

\end{lemma}
\begin{proof}
    By \cref{thm:control2state-oper} (also, see, Theorems 3.1 and 3.5 in \cite{ClasonNhuRosch_os2nd}), there exists a constant $M_{1,\rho}$ such that
    \begin{equation} \label{eq:C1-bound}
        \norm{S(u)}_{W^{1,\infty}(\Omega)} \leq M_{1,\rho} \quad \text{for all }u \in \overline B_{L^2(\Omega)}(\bar u, \rho) \cap \mathcal{U}_{ad}.
    \end{equation}
    Setting $h_3 := \min\{h_2,\delta 2^{-1} C_{\infty}^{-1}  \}$ and exploiting \eqref{eq:state-error-infty} shows that $\norm{y - y_h}_{C(\overline{\Omega})} < \delta$  for any $u \in \overline B_{L^2(\Omega)}(\bar u, \rho) \cap \mathcal{U}_{ad}$ and $h \in (0, h_3)$.
    From the definition of $Z_{y_h,y}$ in \eqref{eq:Z-func} and \cref{lem:Z-decomposition}, we arrive at
    \begin{equation} \label{eq:nonsmooth-decom}
        Z_{y_h,y} = Z_{y_h,y}^{(1)} + Z_{y_h,y}^{(2)} + Z_{y_h,y}^{(3)} +Z_{y_h,y}^{(4)}.
    \end{equation}
    By \eqref{eq:Z-esti}, \eqref{eq:C1-bound}, and \cref{thm:error-state}, we have
    \begin{equation}
        \label{eq:error-nonsmooth1}
        \norm{Z_{y_h,y}^{(k)}}_{L^2(\Omega)} \leq L_{\rho} h \quad \text{for } k=1,2,4.
    \end{equation}
    On the other hand, we have
    \begin{equation}
        \label{eq:Omega-23-esti}
        \left\{
            \begin{aligned}
                \Omega_{y_h,y}^{2} &= \{ y \in (\bar t, \bar t + \delta), y_h \in (\bar t -\delta, \bar t ] \} \subset  \{ 0 < y - \bar t  \leq \norm{y_h - y}_{L^\infty(\Omega)} \},\\
                \Omega_{y_h,y}^{3} &= \{ y \in (\bar t -\delta, \bar t ), y_h \in [\bar t ,\bar t +\delta)\} \subset  \{ 0 < \bar t  - y \leq \norm{y_h - y}_{L^\infty(\Omega)} \},
            \end{aligned}
        \right.
    \end{equation}
    which together with the definitions of $Z_{y_h,y}^{(3)}$ in \cref{lem:Z-decomposition} and of $V$ in \eqref{eq:KE-func}, and \eqref{eq:sigma-i}, show that
    \begin{equation}
        \label{eq:Z3-esti}
        |Z_{y_h,y}^{(3)}| \leq V(y, \norm{y_h - y}_{L^\infty(\Omega)})
    \end{equation}
    a.e. in $\Omega$.
    Combining this with \eqref{eq:nonsmooth-decom} and \eqref{eq:error-nonsmooth1}, we obtain \eqref{eq:nonsmooth}.

    For \eqref{eq:nonsmooth-bary}, we first see that
    \[
        \norm{Z_{y,\bar y}^{(k)}}_{L^2(\Omega)} \leq C \norm{y - \bar y}_{H^1_0(\Omega)} \leq C\norm{u - \bar u}_{L^2(\Omega)} \quad \text{for } k=1,2,4,
    \]
    which is similar to \eqref{eq:error-nonsmooth1} and is derived by using \eqref{eq:Z-esti}, \eqref{eq:C1-bound},  \cref{thm:control2state-oper}, and the continuity of the mapping $S: L^2(\Omega) \hookrightarrow W^{-1,\tilde{p}}(\Omega) \to W^{1,\tilde{p}}_0(\Omega) \hookrightarrow H^1_0(\Omega)$. Finally, similar to \eqref{eq:Z3-esti}, one has
    \begin{equation*}
        |Z_{y,\bar y}^{(3)}| \leq V(y, \norm{y - \bar y}_{L^\infty(\Omega)})
    \end{equation*}
    a.e. in $\Omega$. We thus obtain \eqref{eq:nonsmooth-bary}.
\end{proof}

\medskip

From now on, let us fix constants $\gamma, p_0$, and $p_1$ satisfying
\begin{equation}
    \label{eq:gamma-exponent}
    1 - \frac{2}{p_0} \leq \gamma < \min\left\{ \frac{1}{p_0}, 1 - \frac{2}{p_1} \right\} < \frac{1}{2}, \quad p_0 > 2 = N, \quad \text{and} \quad p_1 \in (2,p_*),
\end{equation}
where $p_*$ is given in \cref{thm:control2state-oper}.

In order to derive an $L^2$-error estimate of $\tilde{\varphi} - \varphi_h$, we cannot directly employ a duality argument based on the Aubin--Nitsche trick since the linearized state equation \eqref{eq:deri} admits solutions belonging to $W^{1,p}(\Omega)$ only due to the nondifferentiability of the function $a$; see \cref{rem:regularity-derivative-S}. To overcome this difficulty, we now consider the following \emph{adjusted} linearized state equation:
\begin{equation} \label{eq:adjusted-deri}
    -\dive [(b + a(y_u)) \nabla \tilde z_{u,v} + \1_{\{ \bar y \neq \bar t \}}a'(\bar y) \tilde z_{u,v} \nabla \bar y ]  = v \, \text{in } \Omega,  \quad  \tilde z_{u,v} =0 \, \text{ on } \partial\Omega
\end{equation}
for all $u \in  \mathcal{U}_{ad}$ and $v$ belonging to $W^{-1+\gamma,2}(\Omega)$, the dual space  of a Sobolev space of fractional order.
Compared to \eqref{eq:deri}, we have here partly linearized the state equation \eqref{eq:state} at the optimal state $\bar y$ by replacing the vector-valued function $\1_{\{ y_u \neq \bar t \}}a'(y_u) \nabla y_u$ in \eqref{eq:deri} by $\1_{\{ \bar y \neq \bar t \}}a'(\bar y) \nabla \bar y$.
We can therefore exploit assumptions imposed on $\bar y$ to derive the necessary regularity of solutions $\tilde{z}_{u,v}$ to \eqref{eq:adjusted-deri}.
Indeed, as we will see later in \cref{prop:fu-regularity-origin} in \cref{sec:linearized-regularity}, the finiteness of $\Sigma(\bar y)$ implies that
\[
    \1_{\{ \bar y \neq \bar t \}}a'(\bar y) \nabla \bar y \in (W^{\gamma,p_0}(\Omega))^2 = (W^{\gamma,p_0}_0(\Omega))^2,
\]
where the constants $\gamma$ and $p_0$ are fixed and satisfy \eqref{eq:gamma-exponent}.
From this and the $W^{1 + \gamma,2}$-regularity of solutions to \eqref{eq:adjusted-deri}, we can then show that $\tilde z_{u,v} \in W^{1 + \gamma,2}(\Omega)$ whenever $v \in W^{-1+\gamma,2}(\Omega)$. This regularity of solutions to  \eqref{eq:adjusted-deri} will play an important role in establishing a priori error estimates for the discretization of the adjoint state equation \eqref{eq:adjoint-state}.

\begin{theorem}
    \label{thm:adjoint-discrete}
    If $\Sigma(\bar y) < \infty$, then there exist constants  $\bar h := \bar h(\bar u) \in (0,h_3)$,
    $\bar \rho := \bar\rho(\bar u) \leq \min\{ \rho, \hat{\rho}, \rho_{\tilde{p}} \}$
    (with  $\hat\rho$ and $\rho_{\tilde p}$ being the constants in \cref{prop:adjusted-deri-regularity} associated with $p:= \tilde{p}$),
    and $C_{\bar u}>0$ such that
    \begin{equation*}
        \norm{\varphi - \varphi_h}_{L^2(\Omega)} \leq C_{\bar u} \epsilon_h^{u} \norm{v}_{L^2(\Omega)} \quad \text{and} \quad \norm{\varphi - \varphi_h}_{H^1_0(\Omega)} \leq C_{\bar u}h\norm{v}_{L^2(\Omega)}
    \end{equation*}
    for all $h \in (0, \bar h)$,
    $u \in  \overline B_{L^2(\Omega)}(\bar u, \bar\rho) \cap \mathcal{U}_{ad}$,
    and $v \in L^2(\Omega)$, where
    \begin{equation}
        \label{eq:epsilon-h}
        \epsilon_h^{u} :=
        h^{1+\gamma} + h \norm{Z_{S(u), \bar y}}_{L^2(\Omega)}
        + h \norm{V(S(u), \norm{S_h(u) - S(u)}_{L^\infty(\Omega)})}_{L^2(\Omega)},
    \end{equation}
    and $\varphi$ and $\varphi_h$ are the unique solutions to \eqref{eq:adjoint-state} and \eqref{eq:adjoint-state-discrete}, respectively.
\end{theorem}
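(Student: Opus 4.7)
The plan is to introduce the continuous auxiliary function $\tilde\varphi\in H^2(\Omega)\cap H^1_0(\Omega)$ from \cref{lem:adjoint-disrete-half}, which solves the continuous adjoint equation but with $y_h:=S_h(u)$ in place of $y_u$, and split
\begin{equation*}
    \varphi - \varphi_h = (\varphi-\tilde\varphi) + (\tilde\varphi-\varphi_h).
\end{equation*}
\cref{lem:adjoint-disrete-half} directly supplies $\norm{\varphi-\tilde\varphi}_{H^1_0(\Omega)}\le C_\rho h\norm{v}_{L^2(\Omega)}$ and $\norm{\varphi-\tilde\varphi}_{L^2(\Omega)}\le C_\rho h^2\norm{v}_{L^2(\Omega)}$; since $\bar q=1-2/\tilde p\in[1/2,1)$ for $\tilde p\ge 4$, one has $h^2\le h^{1+\bar q}\le \epsilon_h^u$, so this contribution is absorbed into the target estimate and the task reduces to controlling $\tilde\varphi-\varphi_h$.

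For the $H^1$-estimate I would note that $\varphi_h$ is the Galerkin approximation of $\tilde\varphi$ associated with the bilinear form
\begin{equation*}
    B(w,\psi) := \int_\Omega (b+a(y_h))\nabla\psi\cdot\nabla w\dx + \int_\Omega \1_{\{y_h\ne\bar t\}}a'(y_h)w\,\nabla y_h\cdot\nabla\psi\dx.
\end{equation*}
Because the operator $D_{h,y_h}$ is a uniform isomorphism from \cref{lem:Dh-isomorphism}, $B$ satisfies a uniform discrete Babu\v{s}ka inf-sup bound on $V_h$, so a Céa-type argument with $w_h=\Pi_h\tilde\varphi$ combined with $\norm{\tilde\varphi}_{H^2(\Omega)}\le C_\rho\norm{v}_{L^2(\Omega)}$ from \eqref{eq:adjoint-state-half-estiH2} yields $\norm{\tilde\varphi-\varphi_h}_{H^1_0(\Omega)}\le Ch\norm{v}_{L^2(\Omega)}$, and hence the $H^1$-bound for $\varphi-\varphi_h$.

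The $L^2$-estimate is the main difficulty. The standard Aubin--Nitsche trick fails to give $O(h^2)$ here because $B$ is non-symmetric and the associated dual problem---the linearized state equation at $y_h$---has solutions only in $W^{1,\tilde p}_0(\Omega)$ by \cref{rem:regularity-derivative-S}, not in $H^2(\Omega)$. Given $g\in L^2(\Omega)$, I would let $z\in W^{1,\tilde p}_0(\Omega)$ solve this dual equation and set $z_h:=\Pi_h z\in V_h$; the embedding $W^{1,\tilde p}\hookrightarrow C(\overline\Omega)$ gives the Lagrange-interpolation bound $\norm{z-z_h}_{L^\infty(\Omega)}\le Ch^{\bar q}\norm{z}_{W^{1,\tilde p}(\Omega)}$ (with $\bar q=1-2/\tilde p$), while dual regularity yields $\norm{z}_{W^{1,\tilde p}(\Omega)}\le C_\rho\norm{g}_{L^2(\Omega)}$. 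Combining the Galerkin orthogonality $B(w_h,\tilde\varphi-\varphi_h)=0$ with an integration-by-parts rearrangement of the non-symmetric term in $B$ (so as to expose the $L^\infty$-approximation of $z-z_h$ rather than the $H^1$-approximation, for which no better rate than $O(1)$ is available) reduces $\int_\Omega(\tilde\varphi-\varphi_h)g\dx$ to two contributions: a ``smooth'' one bounded via the $H^1$-rate $Ch\norm{v}_{L^2(\Omega)}$ of the previous step and $\norm{z-z_h}_{L^\infty(\Omega)}$, producing the principal term $h^{1+\bar q}\norm{v}_{L^2(\Omega)}\norm{g}_{L^2(\Omega)}$; and a ``jump'' one in which I would replace $y_h$ by $y:=S(u)$ in the jumping coefficient $\1_{\{y_h\ne\bar t\}}a'(y_h)\nabla y_h$, whose discrepancy is exactly $Z_{y_h,y}$. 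The $L^2$-bound for $Z_{y_h,y}$ supplied by \cref{lem:delta} (which relies on the decomposition of \cref{lem:Z-decomposition} and the measure bound of \cref{prop:K-E-esti}) then introduces the extra contribution $h\,\norm{V(S(u),\norm{S_h(u)-S(u)}_{L^\infty(\Omega)})}_{L^2(\Omega)}$ to $\epsilon_h^u$. Setting $g=\tilde\varphi-\varphi_h$ and dividing by $\norm{\tilde\varphi-\varphi_h}_{L^2(\Omega)}$ yields the desired estimate. The hypothesis $\Sigma(\bar y)<\infty$, transferred to a small $L^2$-neighbourhood of $\bar u$ via continuity of $\Sigma\circ S$, keeps this $V$-contribution finite and decaying as $h\to 0$; this interplay between the reduced dual regularity (forcing the $h^{1+\bar q}$ rate) and the level-set control (yielding the $V$-term) is the technical heart of the proof.
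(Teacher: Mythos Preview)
Your splitting via $\tilde\varphi$ and the overall duality strategy match the paper, but there is a genuine gap in your $H^1_0$-step. You claim that \cref{lem:Dh-isomorphism} supplies a uniform discrete inf--sup bound for $B$, from which a C\'ea estimate would follow. It does not: that lemma is proved by contradiction and only shows that $D_{h,y_h}$ is bijective for each $h<h_2$; no $h$-independent lower bound on the inf--sup constant is established. Since $B$ satisfies only a G\aa rding inequality (the transport coefficient $\1_{\{y_h\ne\bar t\}}a'(y_h)\nabla y_h$ is merely uniformly bounded in $L^{\tilde p}$, not in $L^\infty$), a direct C\'ea argument is not available, and you cannot obtain the $H^1_0$-bound independently of the $L^2$-bound as you propose.

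The paper instead derives the two estimates as a coupled pair and closes them by a bootstrap. Step~2 shows $\norm{\tilde\varphi-\varphi_h}_{H^1_0}\le C\bigl(h\norm{v}_{L^2}+\norm{\tilde\varphi-\varphi_h}_{L^2}\bigr)$ by testing the coercive symmetric part and controlling the transport remainder via interpolation in $L^4$; Step~1 is the duality step and shows $\norm{\tilde\varphi-\varphi_h}_{L^2}\le C\bigl(h^{\bar q}+\norm{V(y_u,\norm{y_h-y_u}_{L^\infty})}_{L^2}\bigr)\norm{\tilde\varphi-\varphi_h}_{H^1_0}$. The hypothesis $\Sigma(\bar y)<\infty$ is not a side remark about finiteness of the $V$-term: via \eqref{eq:K-E-bary} and \cref{prop:K-E-esti} it is precisely what forces the prefactor in Step~1 below $\tfrac{1}{2C_{2,h_3,\rho}}$ for $h<\bar h$ and $\norm{u-\bar u}_{L^2}<\bar\rho$, so that inserting Step~1 into Step~2 allows the $H^1_0$-norm to be absorbed. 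This determines $\bar h$ and $\bar\rho$ and is the technical core your outline misses.

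A second discrepancy: the paper takes the dual problem at the \emph{continuous} state $y_u$ (setting $\hat z:=S'(u)(\tilde\varphi-\varphi_h)$), because the uniform $W^{1,\tilde p}$-bound \eqref{eq:S-der-bound} is available there. Galerkin orthogonality, however, holds for $B_{u,h}$ at $y_h$, so one must write $B_u(\Pi_h\hat z,\tilde\varphi-\varphi_h)=[B_u-B_{u,h}](\Pi_h\hat z,\tilde\varphi-\varphi_h)$; the difference $B_u-B_{u,h}$ is exactly where $a(y_u)-a(y_h)$ and $Z_{y_h,y_u}$ enter, and \cref{lem:delta} then produces the $V$-term. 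In your outline the dual is taken at $y_h$, so $B$ already matches the Galerkin form and no such mismatch arises; your subsequent insertion of $Z_{y_h,y}$ by ``replacing $y_h$ by $y$'' therefore has no clear role and does not recover the structure of $\epsilon_h^u$.
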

\begin{proof}
    Let $\tilde \varphi$ be the solution of \eqref{eq:adjoint-discrete-half}. To simplify the notation, set $y_u := S(u)$ and $y_h := S_h(u)$ for any $h \in (0, h_3)$ and
    $u \in  \overline B_{L^2(\Omega)}(\bar u, \min\{ \rho, \hat{\rho}, \rho_{\tilde{p}}\}) \cap \mathcal{U}_{ad}$.
    We divide the proof into three steps.

    \medskip

    \noindent\emph{%
        Step 1: Existence of a constant $C_{1, h_3, \rho}$ such that
        \begin{equation}
            \label{eq:step1}
            \norm{\tilde \varphi - \varphi_h }_{L^2(\Omega)} \leq C_{1, h_3, \rho} (
            h^{\gamma} + \norm{Z_{y_u, \bar y}}_{L^2(\Omega)}
            + \norm{V(y_u, \norm{y_h - y_u}_{L^\infty(\Omega)})}_{L^2(\Omega)} ) \norm{\tilde \varphi - \varphi_h }_{H^1_0(\Omega)}
        \end{equation}
        for all $h \in (0, h_3)$,
        $u \in  \overline B_{L^2(\Omega)}(\bar u, \min\{ \rho, \hat{\rho},\rho_{\tilde{p}}\}) \cap \mathcal{U}_{ad}$,
    and $v \in L^2(\Omega)$.}%

    To prove \eqref{eq:step1}, first
    let $\hat z$ be the unique solution to the adjusted linearized state equation \eqref{eq:adjusted-deri} corresponding to $v:= \tilde \varphi -\varphi_h$.
    Since $\tilde \varphi -\varphi_h \in H^1_0(\Omega)$, one has $\tilde \varphi -\varphi_h \in W^{-1+\gamma,2}(\Omega)$ due to the embeddings  $H^1_0(\Omega) \hookrightarrow L^2(\Omega) \hookrightarrow W^{-1+\gamma,2}(\Omega)$.
    From \cref{prop:adjusted-deri-regularity} and the finiteness of $\Sigma(\bar y)$, we have that
    $\hat z \in W^{1+\gamma,2}(\Omega)$ and that
    \begin{equation}
        \label{eq:Wgamma-hatz-esti}
        \norm{\hat z}_{W^{1+\gamma,2}(\Omega)} \leq C\norm{\tilde \varphi -\varphi_h}_{W^{-1+\gamma,2}(\Omega)} \leq C\norm{\tilde \varphi -\varphi_h}_{L^2(\Omega)}.
    \end{equation}
    Moreover, we also have that
    \begin{equation}
        \label{eq:hatz-deri-form}
        \hat z = S'(u)(\tilde \varphi -\varphi_h - \dive( Z_{y_u, \bar y}\hat z));
    \end{equation}
    see the identity \eqref{eq:zuv-deri-form} in the proof of \cref{prop:adjusted-deri-regularity} for $v := \tilde \varphi - \varphi_h$.
    Testing  \eqref{eq:deri} for $v:= \tilde \varphi -\varphi_h - \dive( Z_{y_u, \bar y}\hat z)$ by $\tilde{\varphi}- \varphi_h$ and using \eqref{eq:hatz-deri-form} thus yields
    \begin{multline} \label{eq:hatz-varphi}
        \norm{\tilde \varphi -\varphi_h}_{L^2(\Omega)}^2 + \int_\Omega Z_{y_u, \bar y} \cdot \nabla (\tilde \varphi -\varphi_h ) \hat z \dx \\
        = \int_\Omega (b+a(y_u)) \nabla \hat z \cdot \nabla (\tilde \varphi -\varphi_h ) + \1_{\{ y_u \neq \bar t\}} a'(y_u) \hat z \nabla y_u \cdot \nabla (\tilde \varphi -\varphi_h ) \dx.
    \end{multline}
    Furthermore, applying assertion \ref{item:zuv-W1p-esti} in \cref{prop:adjusted-deri-regularity} for $p := \tilde{p}$ and using the embedding $L^2(\Omega) \hookrightarrow W^{-1,\tilde{p}}(\Omega)$ shows that
    \begin{equation} \label{eq:z-apriori}
        \norm{\hat z}_{W^{1,\tilde{p}}_0(\Omega)} \leq C_{\rho} \norm{\tilde \varphi - \varphi_h}_{L^2(\Omega)} \quad \text{for all } h \in (0, h_3),
        u \in B_{L^2(\Omega)}(\bar u, \rho_{\tilde{p}})
        \cap \mathcal{{U}}_{ad}, v \in L^2(\Omega).
    \end{equation}
    Consider for any
    $u \in  \overline B_{L^2(\Omega)}(\bar u, \min\{ \rho, \hat{\rho},\rho_{\tilde{p}}\}) \cap \mathcal{U}_{ad}$
    and $h \in (0,h_3)$ the bilinear operators $B_u: H^1_0(\Omega) \times H^1_0(\Omega) \to \R$ and $B_{u,h}: H^1_0(\Omega) \times H^1_0(\Omega) \to \R$ defined via
    \begin{align*}
        B_u(z,w) &:= \int_\Omega [ ( b + a(y_u) ) \nabla z + \1_{\{y_u \neq \bar t \}}a'(y_u) z \nabla y_u ] \cdot \nabla w \dx, \\
        B_{u,h}(z,w) &:= \int_\Omega [ ( b + a(y_h) ) \nabla z + \1_{\{y_h \neq \bar t \}}a'(y_h) z \nabla y_h ] \cdot \nabla w \dx.
    \end{align*}
    From this
    and \eqref{eq:hatz-varphi},
    we obtain
    for any $w_h \in V_h$ that
    \begin{equation*}
        \begin{aligned}
            \norm{\tilde \varphi - \varphi_h}_{L^2(\Omega)}^2
            +
            \int_\Omega Z_{y_u, \bar y} \cdot \nabla (\tilde \varphi -\varphi_h ) \hat z \dx
            &= B_u(\hat z, \tilde \varphi - \varphi_h) = B_u(\hat z - w_h, \tilde \varphi - \varphi_h) + B_u(w_h, \tilde \varphi - \varphi_h) \\
            & = B_u(\hat z - w_h, \tilde \varphi - \varphi_h) + [ B_u(w_h, \tilde \varphi - \varphi_h) - B_{u,h}(w_h, \tilde \varphi - \varphi_h) ],
        \end{aligned}
    \end{equation*}
    where we have used the fact that $B_{u,h}(w_h, \tilde \varphi - \varphi_h) = 0$ which follows from combining \eqref{eq:adjoint-state-discrete} with \eqref{eq:adjoint-discrete-half}.
    We now estimate the first integral in the left-hand side and two  summands in the right-hand side of the above identity.
    From Hölder's inequality, there holds
    \begin{multline} \label{eq:varphi-auxi}
        \norm{\tilde \varphi - \varphi_h}_{L^2(\Omega)}^2
        -
        \norm{Z_{y_u, \bar y}}_{L^2(\Omega)} \norm{\hat z}_{L^\infty(\Omega)}   \norm{\tilde \varphi -  \varphi_h}_{H^1_0(\Omega)}
        \\
        \begin{aligned}[t]
            &\leq \norm{b+ a(y_u)}_{L^\infty(\Omega)} \norm{\hat z - w_h}_{H^1_0(\Omega)} \norm{\tilde \varphi -  \varphi_h}_{H^1_0(\Omega)} \\
            \MoveEqLeft[-1]  +  \norm{\1_{\{y_u \neq \bar t \}}a'(y_u)\nabla y_u}_{L^\infty(\Omega)}  \norm{\hat z - w_h}_{L^2(\Omega)} \norm{\tilde \varphi -  \varphi_h}_{H^1_0(\Omega)} \\
            \MoveEqLeft[-1]  + \norm{a(y_u) - a(y_h)}_{L^4(\Omega)} \norm{w_h}_{W^{1,4}_0(\Omega)}   \norm{\tilde \varphi -  \varphi_h}_{H^1_0(\Omega)}  \\
            & \quad  + \norm{Z_{y_h,y_u}}_{L^2(\Omega)} \norm{w_h}_{L^\infty(\Omega)}   \norm{\tilde \varphi -  \varphi_h}_{H^1_0(\Omega)}
        \end{aligned}
    \end{multline}
    for all $w_h \in V_h$.
    Moreover, we have from \cref{ass:PC1-func}, the continuous embedding $H^{N/4}(\Omega) \hookrightarrow L^4(\Omega)$ with $N=2$ (see, e.g. \cite{Grisvard1985}, Thm.~1.4.4.1), interpolation theory \cite{BrennerScott2008}, Thm.~14.2.7, and \eqref{eq:error-state-l2} that
    \begin{equation} \label{eq:interpolation-theory}
        \begin{aligned}[t]
            \norm{a(y_u) - a(y_h)}_{L^4(\Omega)}
            & \leq C\norm{y_u -y_h}_{L^4(\Omega)} \leq  C \norm{y_u -y_h}_{H^{N/4}(\Omega)}  \\
            & \leq C \norm{y_u -y_h}_{L^2(\Omega)}^{1 - \frac{2}{4}} \norm{y_u -y_h}_{H^1_0(\Omega)}^{\frac{2}{4}}  \leq C h^{\frac{3}{2}}.
        \end{aligned}
    \end{equation}
    We then deduce from this, \eqref{eq:varphi-auxi}, \eqref{eq:C1-bound}, the assumptions on $b$ and $a$, and \eqref{eq:nonsmooth} that
    \begin{multline} \label{eq:step1-esti1}
        \norm{\tilde \varphi - \varphi_h}_{L^2(\Omega)}^2 \leq C_{\rho}\norm{\tilde \varphi -  \varphi_h}_{H^1_0(\Omega)} \Big[ \norm{\hat z - w_h}_{H^1_0(\Omega)} + h^{\frac{3}{2}} \norm{w_h}_{W^{1,4}_0(\Omega)}\\
            + (h + \norm{V(y_u, \norm{y_h - y_u}_{L^\infty(\Omega)})}_{L^2(\Omega)})\norm{w_h}_{L^\infty(\Omega)} +
            \norm{Z_{y_u, \bar y}}_{L^2(\Omega)} \norm{\hat z}_{L^\infty(\Omega)}
        \Big]
    \end{multline}
    for all $w_h \in V_h$.
    Moreover, from standard interpolation error estimates (see, e.g. Theorem 4.4.20 in \cite{BrennerScott2008}) and applying the estimates (4.4.21) for $(p,s,m) := (4,1,1)$ and (4.4.22) for $(p,s,m,n) := (\tilde{p}, 0,1,2)$, we have that
    \begin{equation} \label{eq:interpolation-error}
        \left\{
            \begin{aligned}
                & \norm{\hat z - \Pi_h \hat z}_{W^{1,4}_0(\Omega)} \leq C_1  \norm{\hat z}_{W^{1,4}_0(\Omega)} \leq C_1  \norm{\hat z}_{W^{1,\tilde{p}}_0(\Omega)},
                \\
                & \norm{\hat z - \Pi_h \hat z}_{L^\infty(\Omega)} \leq C_1 h^{1 -\frac{2}{\tilde{p}}} \norm{\hat z}_{W^{1,\tilde{p}}_0(\Omega)},
            \end{aligned}
        \right.
    \end{equation}
    where we have used the fact that
    $\tilde{p} \geq 4$
    to derive the first line in \eqref{eq:interpolation-error}.
    On the other hand, from Theorem 6.1 in \cite{DupontScott1980} and arguments similar to the ones in Example~3 in \cite{DupontScott1980}, we also obtain that
    \begin{equation} \label{eq:interpolation-error-gamma}
        \norm{\hat z - \Pi_h \hat z}_{H^1_0(\Omega)} \leq C_2 h^{\gamma} \norm{\hat z}_{W^{1+\gamma,2}(\Omega)};
    \end{equation}
    see also the error estimates for the interpolation operator $\Pi_h: W^{1+\gamma,2}(\Omega) \to V_h$ in the proof of Lemma 3.1 in \cite{ReyesDhamo2016}.
    Combining \eqref{eq:interpolation-error} with \eqref{eq:interpolation-error-gamma}, the triangle inequality, and  the embedding $W^{1,\tilde{p}}_0(\Omega) \hookrightarrow L^\infty(\Omega) \cap W^{1,4}_0(\Omega)$ (due to $\tilde{p} \geq 4$) as well as estimates \eqref{eq:Wgamma-hatz-esti} and \eqref{eq:z-apriori}  gives
    \begin{equation*}
        \norm{\hat z - \Pi_h \hat z}_{H^1_0(\Omega)} \leq C_3 h^{\gamma}\norm{\tilde \varphi - \varphi_h}_{L^2(\Omega)}, \quad \norm{\hat z}_{L^\infty(\Omega)} + \norm{\Pi_h \hat z}_{L^\infty(\Omega)} + \norm{\Pi_h \hat z}_{W^{1,4}_0(\Omega)} \leq C_4 \norm{\tilde \varphi - \varphi_h}_{L^2(\Omega)}.
    \end{equation*}
    By choosing $w_h := \Pi_h \hat z$ in    \eqref{eq:step1-esti1} and using these above estimates and the fact that $\gamma < \frac{1}{2}$, we obtain
    \eqref{eq:step1}.

    \medskip

    \noindent\emph{Step 2: Existence of a constant $C_{2, h_3, \rho}$ such that
        \begin{equation}
            \label{eq:step2}
            \norm{\tilde \varphi - \varphi_h }_{H^1_0(\Omega)} \leq C_{2, h_3, \rho} ( h \norm{v}_{L^2(\Omega)} +  \norm{\tilde \varphi - \varphi_h }_{L^2(\Omega)} )
        \end{equation}
    for all $h \in (0, h_3)$, $u \in  \overline B_{L^2(\Omega)}(\bar u, \rho) \cap \mathcal{U}_{ad}$, and $v \in L^2(\Omega)$}.

    To show this, we first consider for any $u \in  \overline B_{L^2(\Omega)}(\bar u, \rho) \cap \mathcal{U}_{ad}$ and $h \in (0, h_3)$, the bilinear mapping $S_{u, h}: H^1_0(\Omega) \times H^1_0(\Omega) \to \R$ defined by
    \begin{equation*}
        S_{u,h}(\omega, \psi) := \int_\Omega ( b + a(y_h) ) \nabla \omega \cdot \nabla \psi \dx.
    \end{equation*}
    From \cref{ass:b_func,ass:PC1-func}, we obtain
    \begin{equation} \label{eq:step2-auxi1}
        \underline{b} \norm{\tilde \varphi - \varphi_h}_{H^1_0(\Omega)}^2  \leq S_{u,h}(\tilde \varphi - \varphi_h,\tilde \varphi - \varphi_h) = S_{u,h}(\tilde \varphi - \varphi_h,\tilde \varphi - \Pi_h \tilde \varphi) + S_{u,h}(\tilde \varphi - \varphi_h,\Pi_h \tilde \varphi - \varphi_h).
    \end{equation}
    Moreover, the Cauchy--Schwarz inequality, the uniform boundedness of $\{y_h\}$ on $C(\overline\Omega)$, and \cref{ass:b_func,ass:PC1-func} yield that
    \begin{equation} \label{eq:step2-auxi2}
        \begin{aligned}[t]
            S_{u,h}(\tilde \varphi - \varphi_h,\tilde \varphi - \Pi_h \tilde \varphi) & \leq C_{\rho} \norm{\tilde \varphi - \varphi_h }_{H^1_0(\Omega)}\norm{\tilde \varphi - \Pi_h \tilde \varphi }_{H^1_0(\Omega)} \\
            & \leq C_{\rho} h \norm{\tilde \varphi}_{H^2(\Omega)} \norm{\tilde \varphi - \varphi_h }_{H^1_0(\Omega)} \leq C_{\rho} h \norm{v}_{L^2(\Omega)}\norm{\tilde \varphi - \varphi_h }_{H^1_0(\Omega)},
        \end{aligned}
    \end{equation}
    where we have exploited the interpolation error \cite{Ciarlet2002} and \eqref{eq:adjoint-state-half-estiH2} in order to obtain the last two estimates. Now using \eqref{eq:adjoint-state-discrete} and \eqref{eq:adjoint-discrete-half}, we deduce from Hölder's inequality  that
    \begin{equation*}
        \begin{aligned}
            S_{u,h}(\tilde \varphi - \varphi_h,\Pi_h \tilde \varphi - \varphi_h) & = - \int_\Omega \1_{\{y_h \neq \bar t\}} a'(y_h) \nabla y_h \cdot \nabla (\tilde{\varphi} - \varphi_h) (\Pi_h \tilde \varphi - \varphi_h )\dx \\
            &\leq \norm{\1_{\{y_h \neq \bar t \}} a'(y_h)}_{L^\infty(\Omega)} \norm{ \nabla y_h }_{L^4(\Omega)} \norm{\tilde{\varphi} - \varphi_h}_{H^{1}_0(\Omega)} \norm{\Pi_h \tilde \varphi - \varphi_h}_{L^4(\Omega)}.
        \end{aligned}
    \end{equation*}
    Combing this with the uniform boundedness in $C(\overline\Omega)$ of $\{y_h\}$ and the embedding $W^{1,\tilde{p}}_0(\Omega) \hookrightarrow W^{1,4}_0(\Omega)$, we obtain that
    \begin{equation*}
        S_{u,h}(\tilde \varphi - \varphi_h,\Pi_h \tilde \varphi - \varphi_h) \leq C_{\rho}( \norm{\bar y}_{W^{1,\tilde{p}}_0(\Omega)} + \kappa_\rho ) \norm{\tilde{\varphi} - \varphi_h}_{H^{1}_0(\Omega)} \norm{\Pi_h \tilde \varphi - \varphi_h}_{L^4(\Omega)}.
    \end{equation*}
    The combination of a triangle inequality and the embedding $H^1_0(\Omega) \hookrightarrow L^4(\Omega)$ with Theorem 3.1.6 in \cite{Ciarlet2002} further implies that
    \begin{equation*}
        \begin{aligned}
            \norm{\Pi_h \tilde \varphi - \varphi_h}_{L^4(\Omega)}  \leq \norm{\Pi_h \tilde \varphi -\tilde \varphi}_{L^4(\Omega)} + \norm{ \tilde \varphi - \varphi_h}_{L^4(\Omega)}
            &\leq
            C\norm{\Pi_h \tilde \varphi -\tilde \varphi}_{H^1_0(\Omega)} + \norm{ \tilde \varphi - \varphi_h}_{L^4(\Omega)}
            \\
            &\leq C h \norm{\tilde{\varphi}}_{H^2(\Omega)} +  \norm{ \tilde \varphi - \varphi_h}_{L^4(\Omega)}\\
            &\leq C h \norm{v}_{L^2(\Omega)}  + \norm{ \tilde \varphi - \varphi_h}_{L^4(\Omega)},
        \end{aligned}
    \end{equation*}
    where we have used  \eqref{eq:adjoint-state-half-estiH2} to obtain the last inequality.
    Similar to \eqref{eq:interpolation-theory}, we find that
    \begin{equation*}
        \norm{ \tilde \varphi - \varphi_h}_{L^4(\Omega)} \leq C \norm{ \tilde \varphi - \varphi_h}_{L^2(\Omega)}^{1- \frac{2}{4}}\norm{ \tilde \varphi - \varphi_h}_{H^1_0(\Omega)}^{\frac{2}{4}}.
    \end{equation*}
    We then have
    \begin{equation*}
        S_{u,h}(\tilde \varphi - \varphi_h,\Pi_h \tilde \varphi - \varphi_h) \leq C_{\rho} \norm{\tilde{\varphi} - \varphi_h}_{H^{1}_0(\Omega)} \left( h  \norm{v}_{L^2(\Omega)} + \norm{ \tilde \varphi - \varphi_h}_{L^2(\Omega)}^{\frac{1}{2}}\norm{ \tilde \varphi - \varphi_h}_{H^1_0(\Omega)}^{\frac{1}{2}} \right),
    \end{equation*}
    which, together with \eqref{eq:step2-auxi1} and \eqref{eq:step2-auxi2}, yields
    \begin{equation*}
        \norm{\tilde \varphi - \varphi_h}_{H^1_0(\Omega)} \leq C_{\rho} \left( h \norm{v}_{L^2(\Omega)} +  \norm{ \tilde \varphi - \varphi_h}_{L^2(\Omega)}^{\frac{1}{2}}\norm{ \tilde \varphi - \varphi_h}_{H^1_0(\Omega)}^{\frac{1}{2}}\right).
    \end{equation*}
    Applying the Cauchy--Schwarz inequality then gives \eqref{eq:step2}.

    \medskip

    \noindent\emph{Step 3: Existence of constants $\bar h \in (0,h_3)$ and
        $\bar \rho := \bar\rho(\bar u) \leq \min\{ \rho, \hat{\rho}, \rho_{\tilde{p}} \}$%
    }.

    To show this, we first obtain from the definition of $\Sigma(\bar y)$ in \eqref{eq:E-functional} and of $\Sigma_r$ in \eqref{eq:KE-func} the existence of a $r_* >0$ such that
    $\norm{\Sigma_r(\bar y)}_{L^1(\Omega)} \leq \Sigma(\bar y) +1$ for all $r \in (0, r_*)$.
    This together with (ii) in \cref{prop:K-E-esti} yields
    \begin{equation} \label{eq:K-E-bary}
        \norm{V(\bar y,r)}_{L^2(\Omega)} \leq C_{\bar y}r^{1/2}(\Sigma(\bar y) +1)^{1/2} \quad \text{for all } r \in (0 ,r_*).
    \end{equation}
    Moreover, thanks to \cref{thm:control2state-oper} and the embeddings $L^2(\Omega) \hookrightarrow W^{-1,p}(\Omega)$ and $W^{1,p}_0(\Omega) \hookrightarrow C(\overline\Omega) \cap H^1_0(\Omega)$ for some $p>2$, one has
    \[
        \norm{y_u - \bar y}_{L^\infty(\Omega)} + \norm{y_u - \bar y}_{H^1_0(\Omega)} \leq C_\rho \norm{u - \bar u}_{L^2(\Omega)}
    \]
    for all $u \in  \overline B_{L^2(\Omega)}(\bar u, \rho) \cap \mathcal{U}_{ad}$ and some constant $C_{\rho}$.
    Now \cref{prop:K-E-esti}\,(i), \eqref{eq:state-error-infty},  and the monotonic growth of $V(y,\cdot)$ imply that
    \begin{equation} \label{eq:esti1}
        \begin{aligned}[t]
            \norm{V(y_u, \norm{y_h - y_u}_{L^\infty(\Omega)})}_{L^2(\Omega)}
            & \leq \norm{V(\bar y, \norm{y_h - y_u}_{L^\infty(\Omega)} + \norm{y_u - \bar y}_{L^\infty(\Omega)} )}_{L^2(\Omega)} + C_{\bar y, \rho}\norm{y_u - \bar y}_{H^1_0(\Omega)} \\
            & \leq \norm{V(\bar y, C_\infty h + C_{\rho} \norm{u - \bar u}_{L^2(\Omega)})}_{L^2(\Omega)} + C_{\bar y, \rho}C_{\rho} \norm{u - \bar u}_{L^2(\Omega)}
        \end{aligned}
    \end{equation}
    for some constant $ C_{\bar y, \rho}$ and for all $h \in (0, h_3)$ and $u \in  \overline B_{L^2(\Omega)}(\bar u, \rho) \cap \mathcal{U}_{ad}$.
    Besides, from \eqref{eq:nonsmooth-bary} and the monotonic growth of $V(\bar y,\cdot)$, there holds
    \begin{align*}
        \norm{Z_{y_u, \bar y}}_{L^2(\Omega)} & \leq L_\rho \norm{u - \bar u}_{L^2(\Omega)}+ \norm{V(\bar y,  \norm{y_u - \bar y}_{L^\infty(\Omega)} )}_{L^2(\Omega)} \\
        &  \leq L_\rho \norm{u - \bar u}_{L^2(\Omega)}+ \norm{V(\bar y,  C_\rho \norm{u - \bar u}_{L^2(\Omega)})}_{L^2(\Omega)}
    \end{align*}
    for all $u \in  \overline B_{L^2(\Omega)}(\bar u, \rho) \cap \mathcal{U}_{ad}$. We then have
    \begin{equation*}
        \label{eq:Z-V-esti}
        \norm{Z_{y_u, \bar y}}_{L^2(\Omega)} + \norm{V(y_u, \norm{y_h - y_u}_{L^\infty(\Omega)})}_{L^2(\Omega)} \leq C_{1, \rho} \norm{u - \bar u}_{L^2(\Omega)} + 2\norm{V(\bar y, C_\infty h + C_{\rho} \norm{u - \bar u}_{L^2(\Omega)})}_{L^2(\Omega)}
    \end{equation*}
    for all $h \in (0, h_3)$ and $u \in  \overline B_{L^2(\Omega)}(\bar u, \rho) \cap \mathcal{U}_{ad}$ with $C_{1,\rho} := (C_{\bar y, \rho}C_{\rho} + L_\rho)$.

    Next, fix $\bar h \in (0, h_3)$ and
    $\bar \rho \leq \min\{ \rho, \hat{\rho}, \rho_{\tilde{p}} \}$
    such that
    \[
        \bar{h}^{\gamma} + 2C_{\bar y} [ C_\infty \bar{h} + C_{\rho}\bar \rho  ]^\frac{1}{2}(\Sigma(\bar y) +1)^{\frac{1}{2}} +
        C_{1,\rho} \bar\rho
        \leq \frac{1}{2 C_{1,h_3,\rho} C_{2,h_3,\rho}} \quad \text{and} \quad    C_\infty \bar{h} + C_{\rho}\bar \rho < r_*,
    \]
    where $ C_{1,h_3,\rho}$ and $C_{2,h_3,\rho}$ are defined in \eqref{eq:step1} and \eqref{eq:step2}, respectively. From this, \eqref{eq:K-E-bary}, and \eqref{eq:esti1}, we conclude that
    \begin{equation*}
        \bar{h}^{\gamma} +\norm{Z_{y_u, \bar y}}_{L^2(\Omega)} + \norm{V(y_u, \norm{y_h - y_u}_{L^\infty(\Omega)})}_{L^2(\Omega)}   \leq \frac{1}{2 C_{1,h_3,\rho} C_{2,h_3,\rho}}
    \end{equation*}
    for all  $h \in (0, \bar h), u \in  \overline B_{L^2(\Omega)}(\bar u, \bar \rho) \cap \mathcal{U}_{ad}$.

    The combination of this with \eqref{eq:step1} and \eqref{eq:step2} yields
    $\norm{\tilde{\varphi} - \varphi_h}_{H^1_0(\Omega)}  \leq 2C_{2,h_3,\rho}h \norm{v}_{L^2(\Omega)}$,
    and together with \eqref{eq:step1}, we obtain
    $\norm{\tilde{\varphi} - \varphi_h}_{L^2(\Omega)}  \leq C\epsilon_h^{u} \norm{v}_{L^2(\Omega)}$.
    Combining the last two estimates with \cref{lem:adjoint-disrete-half} and the triangle inequality, we arrive at the desired conclusion.
\end{proof}

\section{Discretization of the control problem} \label{sec:P-descretized}
In this section, we discretize the control problem \eqref{eq:P}, show convergence of the discretizations, and derive error estimates of the discrete optimal solutions.
In the following, we will consider three different discretizations of the control:
\begin{enumerate}[label=(\roman*)]
    \item \emph{variational discretization}: $\mathcal{U}_h = L^\infty(\Omega)$ (see, e.g., \cite{Hinze2005});
    \item \emph{piecewise constant discretization}:
        \begin{equation*}
            \mathcal{U}_h = \mathcal{U}_h^0 := \left\{ u \in L^\infty(\Omega)  \mid  u_{|T} \in \R  \text{ for all } T \in \mathcal{T}_h \right\};
        \end{equation*}
    \item
        \emph{continuous piecewise linear discretization}:
        \begin{equation*}
            \mathcal{U}_h = \mathcal{U}_h^1 := \left\{ u \in L^\infty(\Omega)  \mid  u_{|T}  \in \mathcal{P}_1 \text{ for all } T \in \mathcal{T}_h \right\}.
        \end{equation*}
\end{enumerate}
Unless specified, any claim for $\mathcal{U}_h$ should be understood to hold for all three cases.
For any $h >0$, we now set $\mathcal{U}_{ad,h} := \mathcal{U}_{ad} \cap \mathcal{U}_h$.
If $\mathcal{U}_h = \mathcal{U}_h^0$, then by $\mathcal{I}_h$ we denote the  linear projection from $L^2(\Omega)$ onto $\mathcal{U}_h^0$. If $\mathcal{U}_h = \mathcal{U}_h^1$, then $\mathcal{I}_h: L^2(\Omega) \to \mathcal{U}_h^1$ denotes the Carstensen quasi-interpolation operator \cite{Carstensen1999}. In both situations, we have $\mathcal{I}_h u \to u$ strongly in $L^2(\Omega)$ as $h \to 0$ for all $u \in L^2(\Omega)$ and $\mathcal{I}_h u \in \mathcal{U}_{ad,h}$ for all $u \in \mathcal{U}_{ad}$; see, e.g. \cite{ReyesMeyerVexler2008}.

\medskip
For any $h >0$, we define the discretized optimal control problem
\begin{equation}
    \label{eq:discrete-P-nonreduced}
    \tag{$P_h$}
    \min\{ J(y_h,u_h) : u_h \in \mathcal{U}_{ad,h} \, \text{and } y_h \in V_h    \, \text{satisfies \eqref{eq:state-discrete} for $u:= u_h$} \}
\end{equation}
with
\[
    J(y,u) := \int_\Omega L(x,y(x)) \dx + \frac{\nu}{2} \norm{u}^2_{L^2(\Omega)}.
\]
Note that the discrete operator $S_h$ defined in \eqref{eq:discrete-operator} does not appear in \eqref{eq:discrete-P-nonreduced}, since this operator is well-defined only locally by \cref{thm:uniqueness-state}.

\subsection{Convergence of discrete minimizers} \label{sec:convergence}
We first have the convergence of minimizers of \eqref{eq:discrete-P-nonreduced}.
\begin{theorem}[{ cf. Theorem 4.1 in \cite{CasasMateosRosch2021}}]
    \label{thm:convergence-discrete-solutions}
    Assume that $a$ satisfies the growth condition
    \begin{equation}
        \label{eq:growth-condition-a}
        |a(t)| \leq C_0 + C_1 |t|^m \quad \text{for all } t \in \R
    \end{equation}
    for some positive constants $C_0, C_1$ and $m \geq 1$.
    Then there exists an $\tilde{h}_0 >0$ such that \eqref{eq:discrete-P-nonreduced} admits at least one global minimizer $(\bar y_h, \bar u_h)$ for all $0 < h < \tilde{h}_0$.
    Moreover, if $\{(\bar y_h,\bar u_h)\}_{0 < h < \tilde{h}_0}$ is a sequence of solutions to \eqref{eq:discrete-P-nonreduced}, then there exists a subsequence that converges strongly in $H^1_0(\Omega) \times L^2(\Omega)$ to some $(\bar{y}, \bar{u})$ as $h \to 0$, where $\bar {u}$ is a global solution to \eqref{eq:P}.
\end{theorem}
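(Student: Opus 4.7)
The plan is to adapt the direct method of the calculus of variations to the discrete setting and then pass to the joint limit $h\to 0$, reusing the a priori bounds from \cref{thm:error-state} and the comparison tool supplied by the quasi-interpolant $\mathcal{I}_h$.

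For the existence claim, I would first observe that for any fixed $h<h_0$ the set $\mathcal{U}_{ad,h}$ is non-empty: any constant in $[\alpha,\beta]$ lies in $\mathcal{U}_h^0\cap \mathcal{U}_h^1$ and trivially in $L^\infty(\Omega)$. Given any $u_h \in \mathcal{U}_{ad,h}$, existence of at least one $y_h\in V_h$ solving \eqref{eq:state-discrete} is Theorem 3.1 of \cite{CasasDhamo2011}. Testing \eqref{eq:state-discrete} with $v_h=y_h$ and using \cref{ass:b_func,ass:PC1-func} together with the Poincaré inequality yields $\norm{y_h}_{H^1_0(\Omega)}\le C\max(|\alpha|,|\beta|)$, uniformly in $h$ and $u_h$. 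With the growth hypothesis \eqref{eq:growth-condition-a} and \cref{ass:cost_func}, $J$ is bounded below on the feasible pairs, so a minimizing sequence $(y_h^k,u_h^k)$ exists. Since $V_h$ is finite dimensional, $y_h^k\to \bar y_h$ strongly along a subsequence; $u_h^k$ converges strongly in the finite-dimensional cases $\mathcal{U}_h=\mathcal{U}_h^0,\mathcal{U}_h^1$ and weakly-$*$ in $L^\infty(\Omega)$ in the variational case. Continuity of $a$ and \eqref{eq:growth-condition-a} let us pass to the limit in \eqref{eq:state-discrete}, while weak lower semicontinuity of $\norm{\cdot}_{L^2(\Omega)}^2$ and continuity of $\int L(\cdot,y)\dx$ give the minimization property.

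For the convergence claim, let $\{(\bar y_h,\bar u_h)\}$ be discrete minimizers. Uniform boundedness in $L^\infty(\Omega)$ and the above $H^1_0$-bound yield a subsequence with $\bar u_h\rightharpoonup \bar u$ weakly in $L^2(\Omega)$, $\bar u\in\mathcal{U}_{ad}$ by weak closedness, and $\bar y_h\rightharpoonup \bar y$ weakly in $H^1_0(\Omega)$ and strongly in $L^p(\Omega)$ for every $p<\infty$. To identify $\bar y=S(\bar u)$, I pass to the limit in \eqref{eq:state-discrete} tested against $\Pi_h v$ for $v\in C_c^\infty(\Omega)$: the strong $L^p$-convergence of $\bar y_h$ (hence of $a(\bar y_h)$ by continuity and \eqref{eq:growth-condition-a}), the weak convergence of $\nabla \bar y_h$, and $\Pi_h v\to v$ in $H^1_0(\Omega)$ produce the continuous state equation for $(\bar y,\bar u)$; uniqueness in \cref{thm:control2state-oper} then gives $\bar y=S(\bar u)$. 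Global optimality of $\bar u$ is obtained by comparison: for any global minimizer $u^\ast$ of \eqref{eq:P2}, set $u_h^\ast:=\mathcal{I}_h u^\ast\in\mathcal{U}_{ad,h}$; by the properties of $\mathcal{I}_h$ recalled at the start of this section, $u_h^\ast\to u^\ast$ strongly in $L^2(\Omega)$, and \cref{thm:error-state} supplies a discrete state $y_h(u_h^\ast)\to S(u^\ast)$ in $H^1_0(\Omega)\cap L^\infty(\Omega)$. Discrete optimality gives $J(\bar y_h,\bar u_h)\le J(y_h(u_h^\ast),u_h^\ast)$, and combining $\limsup J(\bar y_h,\bar u_h)\le j(u^\ast)$ with the lower bound $j(\bar u)\le \liminf J(\bar y_h,\bar u_h)$ forces $j(\bar u)=j(u^\ast)$ and $J(\bar y_h,\bar u_h)\to j(\bar u)$. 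In particular $\norm{\bar u_h}_{L^2(\Omega)}^2\to \norm{\bar u}_{L^2(\Omega)}^2$ (since $\int_\Omega L(\cdot,\bar y_h)\dx\to \int_\Omega L(\cdot,\bar y)\dx$), which together with weak convergence and $\nu>0$ upgrades to $\bar u_h\to \bar u$ strongly in $L^2(\Omega)$; strong $H^1_0$-convergence of $\bar y_h$ then follows by applying \cref{thm:error-state} at $\bar u$ and using $\bar u_h\to \bar u$ in $L^2(\Omega)$.

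The main technical obstacle is the absence of global uniqueness for \eqref{eq:state-discrete}, which prevents direct use of the discrete control-to-state map $S_h$ (defined only locally in \cref{thm:uniqueness-state}); I circumvent this by arguing throughout on the bilinear-form formulation \eqref{eq:state-discrete} and using \eqref{eq:growth-condition-a} to keep $a(\bar y_h)$ controlled during limit passage. A secondary point is that the comparison must be carried out against $\mathcal{I}_h u^\ast$ rather than $u^\ast$ itself, so the preservation of the bound constraints and strong $L^2$-convergence of the quasi-interpolant (from \cite{Carstensen1999,ReyesMeyerVexler2008}) have to be invoked for both the piecewise constant and the continuous piecewise linear discretizations.
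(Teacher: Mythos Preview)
Your argument is essentially the paper's up to and including the strong $L^2$-convergence of $\bar u_h$, but the very last step has a genuine gap. You write that ``strong $H^1_0$-convergence of $\bar y_h$ then follows by applying \cref{thm:error-state} at $\bar u$ and using $\bar u_h\to \bar u$ in $L^2(\Omega)$.'' \Cref{thm:error-state} only asserts that \emph{some} discrete solution $y_h(\bar u_h)$ is close to $S(\bar u_h)$; it does not say that every solution of \eqref{eq:state-discrete} for $u=\bar u_h$ enjoys this estimate. Since global uniqueness is exactly what you have (correctly) flagged as unavailable, there is no reason why the minimizing pair's state component $\bar y_h$ should coincide with the particular $y_h(\bar u_h)$ produced by \cref{thm:error-state}. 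The local uniqueness result in \cref{thm:uniqueness-state} does not rescue this either: it singles out a solution inside a $W^{1,\tilde p}_0$-ball around $\bar y$, but at this stage you only know $\bar y_h\rightharpoonup \bar y$ weakly in $H^1_0(\Omega)$, which does not place $\bar y_h$ in that ball.

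The paper closes this gap by a direct energy argument that uses only the already-established weak $H^1_0$-convergence and the strong $L^2$-convergence of $\bar u_h$. Testing \eqref{eq:state-discrete} with $\bar y_h$ gives $\int_\Omega (b+a(\bar y_h))|\nabla \bar y_h|^2\dx=\int_\Omega \bar u_h\bar y_h\dx\to\int_\Omega \bar u\,\bar y\dx$. One then splits $b+a(\bar y_h)=(b-\tfrac{\underline b}{2})+(\tfrac{\underline b}{2}+a(\bar y_h))$ and uses weak lower semicontinuity (Theorem 1.3 in \cite{Dacorogna}) of the two resulting convex integrands separately to squeeze $\int_\Omega (b-\tfrac{\underline b}{2})|\nabla \bar y_h|^2\dx\to\int_\Omega (b-\tfrac{\underline b}{2})|\nabla \bar y|^2\dx$, which together with $b-\tfrac{\underline b}{2}\ge \tfrac{\underline b}{2}>0$ upgrades the weak convergence of $\nabla\bar y_h$ to strong $L^2$-convergence. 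This step genuinely requires the quasilinear structure and cannot be replaced by invoking the a priori error estimates.
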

\begin{proof}
    The existence of discrete solutions to \eqref{eq:discrete-P-nonreduced} is proven similarly to Claim 1 in the proof of Theorem 4.1 in \cite{CasasMateosRosch2021}.
    Moreover, there exists a constant $\tilde{h}_0 >0$ such that \eqref{eq:discrete-P-nonreduced} admits at least one minimizer $(\bar y_h, \bar u_h)$ for all $0 < h < \tilde{h}_0$.
    The remainder of this proof is now divided into three steps as follows: \\
    \noindent\emph{$\bullet$ Claim 1: Weak convergence of $\{(\bar y_h, \bar u_h)\}$ to $(\bar y, \bar u)$ satisfying \eqref{eq:state}.} Indeed, the boundedness of $\{\bar u_h\}$ in $L^\infty(\Omega)$ and thus in $L^2(\Omega)$ is due to the $L^\infty$-boundedness of $\mathcal{U}_{ad}$. Since $(\bar y_h, \bar u_h)$ satisfies \eqref{eq:state-discrete}, there holds
    \begin{equation}
        \label{eq:state-discrete-for-uh}
        \int_{\Omega} (b + a(\bar y_h) ) \nabla \bar y_h \cdot \nabla v_h \dx = \int_{\Omega} \bar u_h v_h \dx\qquad\text{for all } v_h\in V_h.
    \end{equation}
    In particular, one has
    \[
        \int_\Omega (b + a(\bar y_h)) \nabla \bar y_h \cdot \nabla \bar y_h \dx = \int_\Omega \bar y_h \bar u_h \dx.
    \]
    Combining this with \cref{ass:PC1-func} and \cref{ass:b_func}, we deduce from the Cauchy--Schwarz and Poincar\'{e} inequalities the boundedness of $\{\bar y_h\}$ in $H^1_0(\Omega)$. From this and the compact embedding $H^1_0(\Omega) \Subset L^p(\Omega)$ for any $p \geq 1$, we can take a subsequence, denoted in the same way, of $\{(\bar y_h,\bar u_h)\}_{0 < h < \tilde{h}_0}$ that satisfies
    \begin{equation}
        \label{eq:yh-uh-limit}
        (\bar y_h, \bar u_h) \rightharpoonup (\bar y, \bar u) \, \text{in } H^1_0(\Omega) \times L^2(\Omega), \quad \bar y_h(x) \to \bar y(x) \, \text{for a.e. } x \in \Omega, \text{and} \quad \bar y_h \to \bar y \, \text{in } L^p(\Omega)
    \end{equation}
    for some $(\bar y, \bar u) \in H^1_0(\Omega) \times \mathcal{U}_{ad}$ and for any $p \geq 1$, for instance, $p :=4m$ with constant $m$ defined in \eqref{eq:growth-condition-a}. Thanks to the growth condition \eqref{eq:growth-condition-a}, we conclude from the last limit and the generalized Lebesgue Dominated Convergence Theorem that
    \begin{equation}
        \label{eq:ay-limit}
        a(\bar y_h) \to a(\bar y) \quad \text{strongly in } L^4(\Omega).
    \end{equation}
    Now take any $v \in H^1_0(\Omega) \cap W^{2,4}(\Omega)$ and choose $v_h := \Pi_hv \in V_h$. Then $v_h \to v$ in $W^{1,4}(\Omega)$ as $h\to 0$; see, e.g. \cite{Ciarlet2002}. Letting $h \to 0$ in the equation \eqref{eq:state-discrete-for-uh} and exploiting the first limit in \eqref{eq:yh-uh-limit} as well as the limit in \eqref{eq:ay-limit} then yields
    \[
        \int_\Omega (b+ a(\bar y)) \nabla \bar y \cdot \nabla v \dx = \int_\Omega \bar u v \dx \quad \text{for all } v \in H^1_0(\Omega) \cap H^2(\Omega),
    \]
    which, together with the density of $H^1_0(\Omega) \cap W^{2,4}(\Omega)$ in $H^1_0(\Omega)$, implies that $\bar y = S(\bar u)$. \\
    \noindent\emph{$\bullet$ Claim 2: Optimality of $\bar u$.} Let us show that $\bar u$ is a global solution of \eqref{eq:P2}. For that purpose, we first observe from the inclusion $\mathcal{U}_{ad,h} \subset \mathcal{U}_{ad}$ and from the first limit in \eqref{eq:yh-uh-limit} that $\bar u \in \mathcal{U}_{ad}$.
    Take $u \in \mathcal{U}_{ad}$ arbitrarily and choose $u_h :=u$ if $\mathcal{U}_h = L^\infty(\Omega)$ and $u_h := \mathcal{I}_h u$ if $\mathcal{U}_h = \mathcal{U}_h^i$ with $i=0,1$. One has $u_h \in \mathcal{U}_{ad,h}$ and $u_h \to u$ strongly in $L^2(\Omega)$ as $h \to 0$.
    In view of \cref{thm:error-state}, for $h$ small enough there exists at least one solution $y_h(u_h)$ of \eqref{eq:state-discrete} such that $y_h(u_h) \to y_u := S(u)$ strongly in $H^{1}_0(\Omega) \cap C(\overline\Omega)$.
    From this and the optimality of $(\bar y_h, \bar u_h)$, we have
    \begin{equation} \label{eq:optimality-u-esti}
        \liminf\limits_{h \to 0} J(\bar y_h, \bar u_h) \leq \liminf\limits_{h \to 0} J( y_h(u_h),  u_h) \leq \limsup\limits_{h \to 0} J( y_h(u_h),  u_h) = J(y_u,u) = j(u).
    \end{equation}
    On the other hand, it follows from the  limits in \eqref{eq:yh-uh-limit} and the weak lower semicontinuity of the $L^2$-norm that
    \begin{equation}
        \label{eq:optimality-u-esti2}
        \liminf\limits_{h \to 0} J(\bar y_h, \bar u_h) \geq J(\bar y, \bar u) = j(\bar u),
    \end{equation}
    which together with \eqref{eq:optimality-u-esti} gives $j(\bar u) \leq j(u)$. Since $u \in \mathcal{U}_{ad}$ was arbitrary, $\bar u$ is a global optimal solution to \eqref{eq:P2}.  \\
    \noindent\emph{$\bullet$ Claim 3: Strong convergence in $H^1_0(\Omega) \times L^2(\Omega)$.} In fact, by plugging $u:= \bar u$ into \eqref{eq:optimality-u-esti} and \eqref{eq:optimality-u-esti2} and using the limits for $\bar y_h$ in \eqref{eq:yh-uh-limit}, we can conclude that
    $\norm{\bar u_h}_{L^2(\Omega)}^2 \to \norm{\bar u}_{L^2(\Omega)}^2$. Combining this with the limit for $\bar u_h$ in \eqref{eq:yh-uh-limit} yields
    \begin{equation}
        \label{eq:strong-limit-u}
        \bar u_h \to \bar u \quad \text{strongly in } L^2(\Omega).
    \end{equation}
    It remains to prove the strong convergence of $\{\bar y_h\}$ in $H^1_0(\Omega)$. To this end, by the weak lower semicontinuity of the functional $H^{1}_0(\Omega) \ni y    \mapsto  \int_\Omega (b(x) + a(y(x))) \nabla y(x) \cdot \nabla y(x) \dx \in \R \cup \{ \infty \}$; see, e.g. Theorem 1.3 in \cite{Dacorogna}, we deduce from the first limit in \eqref{eq:yh-uh-limit} that
    \begin{equation}
        \label{eq:weak-lower-of-a}
        \int_\Omega (b+ a(\bar y)) \nabla \bar y \cdot \nabla \bar y \dx \leq \liminf\limits_{h \to 0} \int_\Omega (b+ a(\bar y_h)) \nabla \bar y_h \cdot \nabla \bar y_h \dx.
    \end{equation}
    Moreover, \eqref{eq:state-discrete-for-uh} and \eqref{eq:strong-limit-u} imply that
    \begin{align*}
        \liminf\limits_{h \to 0} \int_\Omega (b+ a(\bar y_h)) \nabla \bar y_h \cdot \nabla \bar y_h \dx & \leq \limsup \limits_{h \to 0} \int_\Omega (b+ a(\bar y_h)) \nabla \bar y_h \cdot \nabla \bar y_h \dx \\
        & = \limsup \limits_{h \to 0} \int_\Omega \bar u_h \bar y_h \dx = \int_\Omega \bar y \bar u\dx \\
        & = \int_\Omega (b+ a(\bar y)) \nabla \bar y \cdot \nabla \bar y \dx.
    \end{align*}
    Combining this with \eqref{eq:weak-lower-of-a}, one has
    \begin{equation}
        \label{eq:yh-strong-limit-esti1}
        \lim\limits_{h \to 0} \int_\Omega (b+ a(\bar y_h)) \nabla \bar y_h \cdot \nabla \bar y_h \dx = \int_\Omega (b+ a(\bar y)) \nabla \bar y \cdot \nabla \bar y \dx.
    \end{equation}
    Moreover, thanks to \cref{ass:b_func} and by the nonnegativity of $a$, there hold $b(x) - \frac{\underline{b}}{2} \geq \frac{\underline{b}}{2}>0$ and $\frac{\underline{b}}{2} + a(\bar y_h(x)) \geq \frac{\underline{b}}{2} >0$ for all $x \in \Omega$. We then deduce from the  limit in \eqref{eq:strong-limit-u}, the weak limit of $\{\bar y_h \}$ in $H^1_0(\Omega)$, and Theorem 1.3 in \cite{Dacorogna} that
    \begin{align*}
        \int_\Omega (b(x) - \frac{\underline{b}}{2})|\nabla \bar  y|^2 \dx & \leq \liminf\limits_{h \to 0} \int_\Omega (b(x) - \frac{\underline{b}}{2}) \nabla \bar y_h \cdot \nabla \bar y_h \dx \\
        & \leq \limsup\limits_{h \to 0} \int_\Omega (b(x) - \frac{\underline{b}}{2}) \nabla \bar y_h \cdot \nabla \bar y_h \dx \\
        & = \limsup\limits_{h \to 0} \left[ \int_\Omega \bar u_h \bar y_h \dx - \int_\Omega (\frac{\underline{b}}{2} + a(\bar y_h)) \nabla \bar y_h \cdot \nabla \bar y_h \dx \right] \\
        & = \int_\Omega \bar u \bar y \dx - \liminf \limits_{h \to 0}\int_\Omega (\frac{\underline{b}}{2} + a(\bar y_h)) \nabla \bar y_h \cdot \nabla \bar y_h \dx \\
        & \leq \int_\Omega \bar u \bar y \dx - \int_\Omega (\frac{\underline{b}}{2}+ a(\bar y)) \nabla \bar y \cdot \nabla \bar y \dx \\
        & = \int_\Omega (b(x) - \frac{\underline{b}}{2})|\nabla \bar y|^2 \dx,
    \end{align*}
    where we have employed the equations for $\bar y_h$ and $\bar y$ to derive the first and the last identities. We therefore have
    \begin{equation*}
        \lim\limits_{h \to 0} \int_\Omega \left(b(x) - \frac{\underline{b}}{2}\right) \nabla \bar y_h \cdot \nabla \bar y_h \dx  = \int_\Omega \left(b(x) - \frac{\underline{b}}{2}\right)|\nabla \bar  y|^2 \dx,
    \end{equation*}
    which, along with the weak limit, yields, the strong convergence of $\{\bar y_h\}$ in $H^1_0(\Omega)$.
\end{proof}

Next, we prove a kind of converse theorem. More precisely, we assume that $\bar u \in  \mathcal{U}_{ad}$
is a \emph{strict local minimum} of \eqref{eq:P} with associated state $\bar y$, i.e. there exists a constant $\bar \epsilon >0$ such that
\begin{equation*}
    j(\bar u) < j(u) \quad \text{for all } u \in \overline B_{L^2(\Omega)}(\bar u,\bar \epsilon) \cap \mathcal{U}_{ad} \, \text{with } u \neq \bar u.
\end{equation*}
We can obviously assume that $\bar \epsilon < \rho$. Here $\rho$ is defined in \cref{thm:uniqueness-state}. We therefore can put the discrete operator $S_h$ into \eqref{eq:discrete-P-nonreduced}. Then, for any $h \in (0, h_2)$, we consider the discretized optimal control problem defined via
\begin{equation}
    \label{eq:discrete-P}
    \tag{$P_h^{\bar \epsilon}$}
    \min_{ u_h \in \mathcal{U}_{ad,h} \cap \overline B_{L^2(\Omega)}(\bar u,\bar \epsilon)} j_h(u_h)
\end{equation}
and the discretized cost functional given by
\begin{equation*}
    j_h : \overline B_{L^2(\Omega)}(\bar u, \rho)\to\R,\qquad j_h(u) := \int_\Omega L(x, (S_h(u))(x)) \dx + \frac{\nu}{2} \norm{u}_{L^2(\Omega)}^2.
\end{equation*}
Using \cref{thm:diff-discrete-state} and \cref{ass:cost_func}, we can show differentiability of $j_h$. The proof of the following result is straightforward and therefore omitted.
\begin{theorem}
    \label{thm:diff-cost-discrete}
    For any $h \in (0, h_2)$, the discrete cost functional $j_h: \overline B_{L^2(\Omega)}(\bar u, \rho) \to \R$ is of class $C^1$, and its derivative at $u \in B_{L^2(\Omega)}(\bar u, \rho)$ is given by
    \begin{equation*}
        j_h'(u)w = \int_\Omega ( \varphi_h(u) + \nu u ) w \dx \qquad \text{for all }w\in L^2(\Omega),
    \end{equation*}
    where $\varphi_h(u) \in V_h$ is the unique solution to \eqref{eq:adjoint-state-discrete} with $v :=\frac{\partial L}{\partial y}(\cdot, S_h(u))$.
\end{theorem}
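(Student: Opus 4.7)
The plan is to combine the chain rule with the standard discrete adjoint-state argument. I would decompose $j_h = j_h^{(1)} + \tfrac{\nu}{2}\|\cdot\|_{L^2(\Omega)}^2$, where $j_h^{(1)}(u) := \int_\Omega L(x, S_h(u)(x))\dx$. The quadratic term is obviously $C^1$ on $L^2(\Omega)$ with derivative $w \mapsto \nu\int_\Omega u w\dx$. For $j_h^{(1)}$, I would use that by \cref{thm:diff-discrete-state} the map $S_h : B_{L^2(\Omega)}(\bar u,\rho) \to V_h$ is of class $C^1$, and that by \cref{ass:cost_func} the Nemytskii operator associated with $\tfrac{\partial L}{\partial y}$ is continuous as a mapping from bounded sets of $C(\overline\Omega)$ into $L^{\bar p}(\Omega) \subset L^2(\Omega)$. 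Since $S_h(u)$ takes values in the finite-dimensional space $V_h$ (so all norms are equivalent and $S_h$ is continuous into $C(\overline\Omega)$), the chain rule applies and yields $j_h^{(1)} \in C^1$ with
\[
(j_h^{(1)})'(u)w = \int_\Omega \frac{\partial L}{\partial y}(x, S_h(u)(x))\, z_h(w)\dx, \qquad z_h(w) := S_h'(u) w,
\]
where $z_h(w)$ solves \eqref{eq:diff-state-discrete}.

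Next I would invoke the discrete adjoint identity. Let $\varphi_h(u) \in V_h$ be the unique solution of \eqref{eq:adjoint-state-discrete} with datum $v = \tfrac{\partial L}{\partial y}(\cdot, S_h(u))$, whose existence and uniqueness are provided by \cref{thm:exist-sol-adjoint-discrete}. Choosing $w_h = z_h(w) \in V_h$ as test function in \eqref{eq:adjoint-state-discrete} and $w_h = \varphi_h(u) \in V_h$ as test function in \eqref{eq:diff-state-discrete} produces exactly the same bilinear expression in $(z_h(w), \varphi_h(u))$ on the left-hand side, so equating right-hand sides gives
\[
\int_\Omega \frac{\partial L}{\partial y}(x, S_h(u))\, z_h(w)\dx = \int_\Omega \varphi_h(u)\, w\dx.
\]
Adding the contribution from the quadratic term produces the asserted formula
\[
j_h'(u)w = \int_\Omega (\varphi_h(u) + \nu u)\, w\dx.
\]

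It remains to show that $j_h' : B_{L^2(\Omega)}(\bar u,\rho) \to L^2(\Omega)$ is continuous, i.e., that $u \mapsto \varphi_h(u)$ is continuous as a map into $V_h \subset L^2(\Omega)$. From \cref{thm:diff-discrete-state}, $u \mapsto S_h(u) =: y_h(u)$ is continuous into $V_h$; by the inverse inequality it is continuous into $C(\overline\Omega)$, so the coefficients $b + a(y_h(u))$ and $\mathbb{1}_{\{y_h(u)\neq \bar t\}}a'(y_h(u))\nabla y_h(u)$ depend continuously on $u$ (the discontinuity set $\{y_h = \bar t\}$ has piecewise-affine structure, and \cref{lem:Z-decomposition} already controls the resulting gradient coefficient in $L^p$). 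Combined with the Lipschitz continuity of the datum $\tfrac{\partial L}{\partial y}(\cdot, S_h(u))$ and with the uniform invertibility of $D_{h,y_h(u)}$ on $V_h$ provided by \cref{lem:Dh-isomorphism}, this yields continuity of $u \mapsto \varphi_h(u)$ in $V_h$, and hence continuity of $j_h'$.

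No step is genuinely difficult: the only point requiring a little care is the passage through the coefficient $\mathbb{1}_{\{y_h(u)\neq\bar t\}}a'(y_h(u))$, which is not continuous pointwise in $y_h$; however, because we work in the finite-dimensional space $V_h$ and use \cref{lem:Dh-continuity,lem:Dh-isomorphism}, this is handled in exactly the same way as in the proof of \cref{thm:diff-discrete-state}. This is presumably why the authors regard the proof as straightforward enough to omit.
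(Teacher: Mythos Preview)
Your argument is correct and matches exactly what the paper intends: the authors explicitly omit the proof as ``straightforward'' after noting that it follows from \cref{thm:diff-discrete-state} and \cref{ass:cost_func}, which is precisely your chain-rule-plus-discrete-adjoint strategy. The only (minor) point worth making explicit is that the adjoint equation \eqref{eq:adjoint-state-discrete} corresponds to $D_{h,y_h}^*$ rather than $D_{h,y_h}$, but since $V_h$ is finite-dimensional the invertibility and continuous dependence carry over immediately, so your appeal to \cref{lem:Dh-continuity,lem:Dh-isomorphism} is justified.
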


Since $\mathcal{I}_h \bar u \to \bar u$ strongly in $L^2(\Omega)$ as $h \to 0$, there exists a constant $\tilde{h} = \tilde{h}(\bar \epsilon) >0$ such that the admissible set of \eqref{eq:discrete-P} is nonempty for all $0 < h < \tilde{h}$.
We now provide a result on the existence of global minimizers and the associated optimality conditions of \eqref{eq:discrete-P}. Its proof is elementary and is thus omitted.
\begin{theorem}
    \label{thm:1st-OS-discrete}
    There exists a constant $h_* \in (0, h_2)$ such that for any $h \in (0,h_*)$,  \eqref{eq:discrete-P} admits at least one global minimizer $\bar u_h\in\mathcal{U}_{ad,h} \cap \overline B_{L^2(\Omega)}(\bar u,\bar \epsilon)$. Moreover,  there exists a function $\bar \varphi_h \in V_h$ that together with $\bar u_h$ and $\bar y_h := S_h(\bar u_h)$ satisfies
    \begin{subequations}
        \label{eq:1st-OS-discrete}
        \begin{align}
            & \int_{\Omega}  (b + a(\bar y_h) ) \nabla \bar \varphi_h \cdot \nabla w_h + \1_{\{\bar y_h \neq \bar t \}}a'(\bar y_h) w_h \nabla \bar y_h  \cdot \nabla \bar  \varphi_h \dx = \int_\Omega \frac{\partial L}{\partial y}(x, \bar y_h)w_h \dx, \label{eq:OS1st-adjoint-discrete} \\
            & \int_\Omega ( \bar \varphi_h + \nu \bar u_h )(u_h - \bar u_h) \dx \geq 0 \quad \text{for all } w_h \in V_h, u_h \in \mathcal{U}_{ad,h} \cap \overline B_{L^2(\Omega)}(\bar u,\bar \epsilon). \label{eq:OS1st-normal-discrete}
        \end{align}
    \end{subequations}
\end{theorem}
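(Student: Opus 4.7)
My plan is to combine a direct-method argument for existence with the $C^1$-regularity of $j_h$ given by \cref{thm:diff-cost-discrete}. As a first step, I would choose $h_* \in (0,h_2)$ small enough so that, for all $h \in (0,h_*)$, the interpolant $\mathcal{I}_h \bar u$ (or $\bar u$ itself, in the variational case) lies in $\overline B_{L^2(\Omega)}(\bar u,\bar\epsilon)$; this guarantees that the admissible set of \eqref{eq:discrete-P} is nonempty and is contained in $B_{L^2(\Omega)}(\bar u,\rho)$, where $S_h$ is defined and of class $C^1$ by \cref{thm:diff-discrete-state}.

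For existence, I would take a minimizing sequence $\{u_n\}$ in the admissible set of \eqref{eq:discrete-P}. Since this set is bounded in $L^2(\Omega)$ and either convex and closed in the variational case or finite-dimensional in the piecewise constant/linear cases, it is weakly closed in $L^2(\Omega)$, so a subsequence satisfies $u_n \rightharpoonup \bar u_h$ in $L^2(\Omega)$ for some admissible $\bar u_h$. The key step is to pass to the limit in the discrete state equation. Setting $y_n := S_h(u_n)$, \cref{thm:uniqueness-state} ensures $y_n \in \overline B_{W^{1,\tilde p}_0(\Omega)}(\bar y,\kappa_\rho) \cap V_h$, a bounded subset of the finite-dimensional space $V_h$. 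Extracting a further subsequence, $y_n \to y^*$ strongly in $V_h$ and hence uniformly on $\overline\Omega$; passing to the limit in \eqref{eq:state-discrete} using the continuity of $a$, the uniform convergence of $y_n$ and of $\nabla y_n$, and the weak convergence of $u_n$ tested against any fixed $v_h\in V_h$, I identify $y^*$ as a solution of the discrete state equation corresponding to $\bar u_h$ lying in the closed set $\overline B_{W^{1,\tilde p}_0(\Omega)}(\bar y,\kappa_\rho) \cap V_h$. The local uniqueness in \cref{thm:uniqueness-state} then forces $y^* = S_h(\bar u_h)$. Since $L(\cdot,\cdot)$ is continuous in its second argument and $\norm{\cdot}_{L^2(\Omega)}^2$ is weakly lower semicontinuous, this yields $j_h(\bar u_h) \leq \liminf_{n\to\infty} j_h(u_n)$, so $\bar u_h$ is a global minimizer.

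To derive \eqref{eq:1st-OS-discrete}, I would invoke \cref{thm:diff-cost-discrete}: the functional $j_h$ is $C^1$ on $B_{L^2(\Omega)}(\bar u,\rho)$ with $j_h'(u)w = \int_\Omega (\varphi_h(u) + \nu u)w \dx$, where $\varphi_h(u) \in V_h$ solves \eqref{eq:adjoint-state-discrete} with right-hand side $\frac{\partial L}{\partial y}(\cdot, S_h(u))$. Setting $\bar\varphi_h := \varphi_h(\bar u_h)$, identity \eqref{eq:OS1st-adjoint-discrete} holds by construction. Since $\mathcal{U}_{ad,h} \cap \overline B_{L^2(\Omega)}(\bar u,\bar\epsilon)$ is convex and $\bar u_h$ is a minimizer, the classical first-order necessary condition $j_h'(\bar u_h)(u_h - \bar u_h) \geq 0$ yields \eqref{eq:OS1st-normal-discrete}. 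The principal difficulty I anticipate is precisely the weak-to-strong continuity of $S_h$ used above: this step relies essentially on the finite-dimensionality of $V_h$ (to upgrade boundedness to strong compactness) together with the local uniqueness from \cref{thm:uniqueness-state} (to pin down the limit uniquely); without either ingredient, one could not pass to the limit in the nonlinear term $a(y_n)\nabla y_n$ given only weak convergence of $u_n$.
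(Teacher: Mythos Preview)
The paper omits this proof entirely, stating only that it is elementary. Your proposal is correct and supplies precisely the kind of direct-method argument combined with \cref{thm:diff-cost-discrete} that the authors had in mind; the use of finite-dimensionality of $V_h$ to upgrade boundedness of $\{S_h(u_n)\}$ to strong convergence and then invoke the local uniqueness of \cref{thm:uniqueness-state} is the natural way to obtain weak-to-strong continuity of $S_h$, and the derivation of \eqref{eq:1st-OS-discrete} from the $C^1$-regularity of $j_h$ over the convex admissible set is standard.
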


We now state a convergence result in $L^2(\Omega)$, whose proof is similar to that of Theorem 4.2 in \cite{ReyesDhamo2016} and is thus omitted here.
\begin{theorem}
    \label{thm:convergence-L2}
    Let $\{\bar u_h\}$ be the sequence of discrete solutions to \eqref{eq:discrete-P}, defined in \cref{thm:1st-OS-discrete}. Then $\norm{\bar u_h - \bar u}_{L^2(\Omega)} \to 0$ as $h \to 0^+$.
\end{theorem}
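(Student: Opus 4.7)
The plan is to combine a standard weak compactness argument with the strict local minimality of $\bar u$ and a recovery sequence built from the interpolant $\mathcal{I}_h \bar u$, then upgrade weak to strong convergence through the quadratic Tikhonov term.

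First I would extract a weakly convergent subsequence. Since $\bar u_h \in \mathcal{U}_{ad,h} \subset \mathcal{U}_{ad}$, the sequence is bounded in $L^\infty(\Omega)$ and hence in $L^2(\Omega)$. Passing to a subsequence (not relabelled), $\bar u_h \weakto \tilde u$ in $L^2(\Omega)$ for some $\tilde u$. Weak closedness of $\mathcal{U}_{ad}$ and weak lower semicontinuity of $\norm{\cdot - \bar u}_{L^2(\Omega)}$ give $\tilde u \in \mathcal{U}_{ad} \cap \overline B_{L^2(\Omega)}(\bar u, \bar\epsilon)$. Arguing exactly as in Claim 1 of the proof of \cref{thm:convergence-discrete-solutions} (using uniform $H^1_0$-bounds on $\bar y_h := S_h(\bar u_h)$, pointwise a.e.\ convergence, the growth of $a$, and dominated convergence on $a(\bar y_h)$), one obtains $\bar y_h \to S(\tilde u)$ strongly in $H^1_0(\Omega)$ and in $C(\overline\Omega)$; in particular $\int_\Omega L(\cdot,\bar y_h) \dx \to \int_\Omega L(\cdot, S(\tilde u))\dx$ by \cref{ass:cost_func}.

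Second, I would show $\tilde u = \bar u$ via a recovery sequence. Set $u_h := \mathcal{I}_h \bar u$; then $u_h \to \bar u$ in $L^2(\Omega)$ and $u_h \in \mathcal{U}_{ad,h}$, so for $h$ small enough $u_h \in \mathcal{U}_{ad,h} \cap \overline B_{L^2(\Omega)}(\bar u, \bar\epsilon)$ and $u_h \in \overline B_{L^2(\Omega)}(\bar u, \rho)$, so that $S_h(u_h)$ is well-defined by \cref{thm:uniqueness-state}. \cref{thm:error-state} together with $S(u_h) \to S(\bar u)$ in $H^1_0(\Omega)\cap C(\overline\Omega)$ yields $S_h(u_h) \to S(\bar u)$ in the same topology, so $j_h(u_h) \to j(\bar u)$. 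Optimality of $\bar u_h$ in \eqref{eq:discrete-P} and weak lower semicontinuity of the $L^2$ norm give
\begin{equation*}
    j(\tilde u) \le \liminf_{h\to 0} j_h(\bar u_h) \le \limsup_{h\to 0} j_h(\bar u_h) \le \lim_{h\to 0} j_h(u_h) = j(\bar u).
\end{equation*}
Since $\tilde u \in \mathcal{U}_{ad} \cap \overline B_{L^2(\Omega)}(\bar u, \bar\epsilon)$ and $\bar u$ is a strict local minimizer, this forces $\tilde u = \bar u$ and $j_h(\bar u_h)\to j(\bar u)$.

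Third, I would promote weak to strong convergence. The tracking term $\int_\Omega L(\cdot, \bar y_h)\dx$ converges to $\int_\Omega L(\cdot,\bar y)\dx$ by the strong convergence of $\bar y_h$ obtained above, so $j_h(\bar u_h)\to j(\bar u)$ forces $\tfrac{\nu}{2}\norm{\bar u_h}_{L^2(\Omega)}^2 \to \tfrac{\nu}{2}\norm{\bar u}_{L^2(\Omega)}^2$. Combined with weak convergence in the Hilbert space $L^2(\Omega)$, this gives strong convergence $\bar u_h\to \bar u$ in $L^2(\Omega)$. Since the limit is uniquely determined, a standard subsequence--subsequence argument upgrades the convergence to the full sequence.

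The only nontrivial step is the passage to the limit in the discrete state equation under the constraint $\bar u_h\in \overline B_{L^2(\Omega)}(\bar u,\bar\epsilon)$, but this is exactly the content of Claim~1 in the proof of \cref{thm:convergence-discrete-solutions} and requires no new ideas here; the strict local minimality and the recovery sequence do the rest.
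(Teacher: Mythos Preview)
Your argument is correct and follows the standard route; the paper itself omits the proof, referring to Thm.~4.2 in \cite{ReyesDhamo2016}, which proceeds along the same lines (weak compactness, identification of the limit via a recovery sequence and strict local minimality, then norm convergence from the Tikhonov term).

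One small imprecision: Claim~1 in the proof of \cref{thm:convergence-discrete-solutions} yields only $\bar y_h \rightharpoonup S(\tilde u)$ weakly in $H^1_0(\Omega)$ together with strong $L^p(\Omega)$-convergence, not the strong $H^1_0(\Omega)\cap C(\overline\Omega)$-convergence you claim. This does not harm your proof. In the localized problem \eqref{eq:discrete-P} you have $\bar u_h \in \overline B_{L^2(\Omega)}(\bar u,\bar\epsilon)$ with $\bar\epsilon<\rho$, so the uniform estimate \eqref{eq:error-state-l2} applies; together with the compact embedding $L^2(\Omega)\Subset W^{-1,p}(\Omega)$ and the continuity of $S$ from \cref{thm:control2state-oper}, this directly gives $\bar y_h = S_h(\bar u_h) \to S(\tilde u)$ strongly in $H^1_0(\Omega)\cap C(\overline\Omega)$, bypassing Claim~1 altogether. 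In any case, the weaker $L^p$-convergence already suffices for $\int_\Omega L(\cdot,\bar y_h)\dx \to \int_\Omega L(\cdot,S(\tilde u))\dx$, which is all you use downstream.
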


\begin{remark} \label{rem:projection-simplified}
    By \cref{thm:convergence-L2}, it holds that $\norm{\bar u_h - \bar u}_{L^2(\Omega)}  \leq \frac{{\bar \epsilon}}{2}$ for all $h \in (0, \tilde{h})$ and for some $\tilde{h}>0$.
    Now for any $h \in (0, \tilde{h})$ and $u_h \in \mathcal U_{ad,h}$, we have that $w_h := t(u_h - \bar u_h) + \bar u_h \in \overline B_{L^2(\Omega)}(\bar u_h, \frac{{\bar \epsilon}}{2})$ for $t>0$ small enough and hence that $w_h \in \mathcal{U}_{ad,h} \cap \overline B_{L^2(\Omega)}(\bar u, {\bar \epsilon})$. The variational inequality \eqref{eq:OS1st-normal-discrete} then implies that
    \begin{equation*}
        \int_\Omega ( \bar \varphi_h + \nu \bar u_h )(u_h - \bar u_h) \dx \geq 0 \quad \text{for all }  u_h \in \mathcal U_{ad,h}.
    \end{equation*}
\end{remark}

\begin{remark}
    \label{rem:coincidence-minimizer}
    In view of \cref{thm:convergence-L2}, there exists a constant $h_{\bar \epsilon} >0$ such that any solution $\bar u_h$ of \eqref{eq:discrete-P} belongs to the open ball $B_{L^2(\Omega)}(\bar u, \bar \epsilon)$. By \cref{thm:uniqueness-state}, $(S_h(\bar u_h), \bar u_h)$ is thus a local minimizer of \eqref{eq:discrete-P-nonreduced}. We have therefore shown that any strict local solution of \eqref{eq:P} can be approximated by local optimal controls of \eqref{eq:discrete-P-nonreduced}.
\end{remark}

In order to show convergence in $L^\infty(\Omega)$, we first need the following lemma.
\begin{lemma}
    \label{lem:continuity-discrete}
    Let $q >2$ be given and let $\bar h$ and $\bar \rho$ be defined in \cref{thm:adjoint-discrete}.
    If $\Sigma(\bar y) <\infty$, then for any $h \in (0, \bar h)$ and $u, v \in \mathcal{U}_{ad}$ such that $v \in B_{L^2(\Omega)}(\bar u, \bar\rho) \cap \mathcal{U}_{ad}$, there hold
    \begin{align}
        & \norm{y_u - y_h(v)}_{L^2(\Omega)} + h\norm{y_u - y_h(v)}_{H^1_0(\Omega)}  \leq C (h^2 + \norm{u-v}_{L^2(\Omega)} ), \label{eq:continuity-discrete-state} \\
        & \norm{\varphi_u - \varphi_v}_{H^2(\Omega)} \leq C \norm{u-v}_{L^2(\Omega)}, \label{eq:continuityH2-adjoint-discrete-noh}\\
        & \norm{\varphi_u - \varphi_h(v)}_{L^2(\Omega)} \leq C ( \epsilon_h^{v} + \norm{u-v}_{L^2(\Omega)} ), \label{eq:continuityL2-adjoint-discrete}\\
        & \norm{\varphi_u - \varphi_h(v)}_{H^1_0(\Omega)} \leq C (h  + \norm{u-v}_{L^2(\Omega)} ), \label{eq:continuityH1-adjoint-discrete}  \\
        & \norm{\varphi_u - \varphi_h(v)}_{L^\infty(\Omega)} \leq C (h^{1- \frac{2}{q}}  + \norm{u-v}_{L^2(\Omega)}), \label{eq:continuityLinfty-adjoint-discrete}
    \end{align}
    for some constant $C$ independent of $u, v$, and $h$.
    Here $y_u :=S(u)$ and $y_h(v) := S_h(v)$, while
    $\varphi_u$ is the unique solution to \eqref{eq:adjoint-state}  for $v := \frac{\partial L}{\partial y}(\cdot, y_u)$ and  $\varphi_h(v)$ is the unique solution to \eqref{eq:adjoint-state-discrete} for $y_h := S_h(v)$ and  $v :=\frac{\partial L}{\partial y}(\cdot, y_h(v))$.
\end{lemma}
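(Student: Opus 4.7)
The plan is to derive each of the five estimates from a triangle inequality with a carefully chosen intermediate function, and then appeal to the previously established comparison results \cref{thm:error-state,lem:adjoint-disrete-half,thm:adjoint-discrete} together with the regularity results \cref{thm:control2state-oper,thm:adjoint-equation}. Two tools I will invoke repeatedly are the Lipschitz bound
\[
    \norm{y_u - y_v}_{W^{1,\tilde{p}}_0(\Omega)} \leq C\norm{u-v}_{L^2(\Omega)}
    \qquad \text{for all } u,v \in \mathcal{U}_{ad},
\]
obtained by writing $y_u - y_v = \int_0^1 S'(v+t(u-v))(u-v)\,dt$ and invoking the uniform $W^{2,\tilde{p}}$-bound from \cref{thm:control2state-oper} together with the embedding $L^2(\Omega)\hookrightarrow W^{-1,\tilde{p}}(\Omega)$ valid in two dimensions; and the chain-rule cancellation trick already used in the proof of \cref{lem:adjoint-disrete-half}, which collapses expressions of the form $\dive[(a(y_1)-a(y_2))\nabla\varphi] - Z_{y_1,y_2}\cdot\nabla\varphi$ to $(a(y_1)-a(y_2))\Delta\varphi$. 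I expect the main obstacle to be \eqref{eq:continuityLinfty-adjoint-discrete}: only $L^2$- and $H^1_0$-estimates are available from \cref{thm:adjoint-discrete}, and these must be raised to an $L^\infty$-estimate without paying more than a factor $h^{-2/q}$.

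For \eqref{eq:continuity-discrete-state} I would set $y_v:=S(v)$ and split $y_u - y_h(v) = (y_u - y_v) + (y_v - y_h(v))$: the first difference is controlled in both the $L^2$- and the $H^1_0$-norm by $C\norm{u-v}_{L^2(\Omega)}$ via the Lipschitz bound above and the embedding $W^{1,\tilde{p}}_0(\Omega)\hookrightarrow H^1_0(\Omega)$, while the second is $O(h^2)$ in $L^2(\Omega)$ and $O(h)$ in $H^1_0(\Omega)$ by \cref{thm:error-state}. For \eqref{eq:continuityH2-adjoint-discrete-noh} I subtract the adjoint equations defining $\varphi_u$ and $\varphi_v$; setting $\psi := \varphi_u - \varphi_v$ yields
\begin{equation*}
    -\dive[(b+a(y_u))\nabla\psi] + \1_{\{y_u\neq\bar t\}}a'(y_u)\nabla y_u\cdot\nabla\psi
    = \tfrac{\partial L}{\partial y}(\cdot,y_u)-\tfrac{\partial L}{\partial y}(\cdot,y_v) + \dive[(a(y_u)-a(y_v))\nabla\varphi_v] - Z_{y_u,y_v}\cdot\nabla\varphi_v.
\end{equation*}
The chain rule (Gilbarg--Trudinger, Thm.~7.8) collapses the last two terms to $(a(y_u)-a(y_v))\Delta\varphi_v$, and then \cref{thm:adjoint-equation,ass:cost_func}, the Lipschitz continuity of $a$ and $\tfrac{\partial L}{\partial y}(\cdot,\cdot)$ on bounded sets, and the embedding $W^{1,\tilde{p}}_0(\Omega)\hookrightarrow L^\infty(\Omega)$ give $\norm{\psi}_{H^2(\Omega)}\leq C\norm{u-v}_{L^2(\Omega)}$.

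For \eqref{eq:continuityL2-adjoint-discrete}--\eqref{eq:continuityH1-adjoint-discrete} I would introduce the intermediate $\psi_h\in V_h$ defined as the solution of \eqref{eq:adjoint-state-discrete} with state $y_h(v)$ and right-hand side $\tfrac{\partial L}{\partial y}(\cdot,y_v)$ (the same state as $\varphi_h(v)$, the same right-hand side as $\varphi_v$), and decompose
\[
    \varphi_u-\varphi_h(v) = (\varphi_u-\varphi_v) + (\varphi_v-\psi_h) + (\psi_h-\varphi_h(v)).
\]
The first summand is controlled in both norms by \eqref{eq:continuityH2-adjoint-discrete-noh} together with $H^2(\Omega)\hookrightarrow H^1_0(\Omega)\cap L^2(\Omega)$; the second is bounded in $L^2(\Omega)$ by $C\epsilon_h^v$ and in $H^1_0(\Omega)$ by $Ch$ through direct application of \cref{thm:adjoint-discrete} with $u:=v$ (the right-hand side $\tfrac{\partial L}{\partial y}(\cdot,y_v)$ is uniformly bounded in $L^2(\Omega)$ by \cref{ass:cost_func}); and the third, in which $\psi_h-\varphi_h(v)\in V_h$ satisfies the same discrete bilinear equation with right-hand side $\tfrac{\partial L}{\partial y}(\cdot,y_v)-\tfrac{\partial L}{\partial y}(\cdot,y_h(v))$, is handled by the uniform invertibility of $D_{h,y_h(v)}$ on $V_h$ from \cref{lem:Dh-isomorphism} combined with \eqref{eq:continuity-discrete-state}, giving $\norm{\psi_h-\varphi_h(v)}_{H^1_0(\Omega)}\leq Ch^2$. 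Since $h^2\leq h^{1+\bar q}\leq \epsilon_h^v$ for $h<1$, these three bounds combine to the claimed estimates.

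For the hardest estimate \eqref{eq:continuityLinfty-adjoint-discrete} I use the same triangle decomposition but must measure all three legs in $L^\infty(\Omega)$. The first leg is handled by the two-dimensional embedding $H^2(\Omega)\hookrightarrow L^\infty(\Omega)$ applied to \eqref{eq:continuityH2-adjoint-discrete-noh}. For the second leg $\varphi_v-\psi_h$, I would triangulate with $\Pi_h\varphi_v$: standard interpolation together with the $W^{2,q}$-regularity of $\varphi_v$ from \cref{thm:adjoint-equation} (for $q$ small enough) yields $\norm{\varphi_v-\Pi_h\varphi_v}_{L^\infty(\Omega)}\leq Ch^{2-2/q}\leq Ch^{1-2/q}$; the inverse estimate $\norm{w_h}_{L^\infty(\Omega)}\leq Ch^{-2/q}\norm{w_h}_{L^q(\Omega)}$ combined with the Sobolev embedding $H^1_0(\Omega)\hookrightarrow L^q(\Omega)$ (valid in two dimensions for any finite $q$) and the $H^1_0$-bound $\norm{\varphi_v-\psi_h}_{H^1_0(\Omega)}\leq Ch$ controls $\norm{\Pi_h\varphi_v-\psi_h}_{L^\infty(\Omega)}$ by $Ch^{1-2/q}$. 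Finally, applying the inverse estimate $\norm{v_h}_{L^\infty(\Omega)}\leq Ch^{-1}\norm{v_h}_{L^2(\Omega)}$ to $\psi_h-\varphi_h(v)\in V_h$ together with $\norm{\psi_h-\varphi_h(v)}_{L^2(\Omega)}\leq Ch^2$ from the preceding paragraph yields $Ch\leq Ch^{1-2/q}$. Summing the three contributions completes \eqref{eq:continuityLinfty-adjoint-discrete}.
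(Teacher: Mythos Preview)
Your overall strategy is sound and close to the paper's, but the decomposition for \eqref{eq:continuityL2-adjoint-discrete}--\eqref{eq:continuityLinfty-adjoint-discrete} differs. The paper inserts a single \emph{continuous} intermediate $\varphi_{v,h}$, the solution of \eqref{eq:adjoint-state} with state $y_v$ and right-hand side $\tfrac{\partial L}{\partial y}(\cdot, y_h(v))$; then $\varphi_u - \varphi_{v,h}$ is handled in $H^2$ by the same chain-rule cancellation you use for \eqref{eq:continuityH2-adjoint-discrete-noh} (the mismatch $y_u$ versus $y_h(v)$ in the right-hand side contributes the harmless extra $h^2$), while $\varphi_{v,h} - \varphi_h(v)$ is precisely the situation of \cref{thm:adjoint-discrete}. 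You instead insert the \emph{discrete} intermediate $\psi_h$ together with the full continuous adjoint $\varphi_v$, a three-term split that is equally valid and arguably more transparent because it reuses \eqref{eq:continuityH2-adjoint-discrete-noh} verbatim rather than re-proving a variant.

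There is, however, one gap: for the leg $\psi_h - \varphi_h(v)$ you appeal to ``uniform invertibility of $D_{h,y_h(v)}$ from \cref{lem:Dh-isomorphism}'', but that lemma only shows $D_{h,y_h}$ is an isomorphism for each fixed $h$ --- no $h$-independent bound on the inverse is established there, and the argument by contradiction in its proof would not yield one without further work. The uniform discrete stability you need does follow, but from \cref{thm:adjoint-discrete}: compare $\psi_h - \varphi_h(v)$ with the continuous adjoint (state $y_v$) for the right-hand side $g:=\tfrac{\partial L}{\partial y}(\cdot,y_v) - \tfrac{\partial L}{\partial y}(\cdot,y_h(v))$, then use \cref{thm:adjoint-equation} to bound the latter by $C\norm{g}_{L^2(\Omega)}\leq Ch^2$. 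A second, smaller point: in \eqref{eq:continuityLinfty-adjoint-discrete} you invoke $W^{2,q}$-regularity of $\varphi_v$ ``for $q$ small enough'', but the lemma is stated for arbitrary $q>2$, and \cref{thm:adjoint-equation} guarantees $W^{2,q}$ only for $q<\min(p_*,\bar p)$. Since you only need the interpolation term to be $O(h^{1-2/q})$, it suffices to use $H^2$-regularity and the two-dimensional estimate $\norm{\varphi_v - \Pi_h\varphi_v}_{L^\infty(\Omega)} \leq Ch\norm{\varphi_v}_{H^2(\Omega)}$, which is what the paper does.
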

\begin{proof}
    First, a standard argument yields \eqref{eq:continuity-discrete-state}.
    For the other estimates, let $\varphi_{v,h}$ be the solution to \eqref{eq:adjoint-state} for $v:=\frac{\partial L}{\partial y}(\cdot, y_h(v))$ and $y_u$ replaced by $y_v := S(v)$.
    We need to show that
    \begin{equation} \label{eq:continuity-auxi2}
        \norm{\varphi_{v,h} - \varphi_u}_{H^2(\Omega)} \leq C ( \norm{{u-v}}_{L^2(\Omega)} + h^2 )
    \end{equation}
    for some constant $C>0$ independent of $u$, $v$, and $h$. To this end, we subtract the equations for $\varphi_u$ and $\varphi_{v,h}$ to obtain that $\varphi_{v,h} - \varphi_u \in H^1_0(\Omega)$ and
    \begin{equation}
        \label{eq:continuity-auxi2-diffeq}
        -\dive [(b + a(y_v) )\nabla (\varphi_{v,h} - \varphi_u) ] + \1_{\{ y_v \neq \bar t \} } a'(y_v) \nabla y_{v} \cdot \nabla (\varphi_{v,h} - \varphi_u) =  g_{u,v,h}
    \end{equation}
    with
    \begin{equation*}
        g_{u,v,h} :=- \dive [( a(y_u) - a(y_v) ) \nabla \varphi_u ] + Z_{y_u,y_v}\cdot \nabla \varphi_u + \frac{\partial L}{\partial y}(\cdot, y_h(v)) - \frac{\partial L}{\partial y}(\cdot, y_u).
    \end{equation*}
    \Cref{thm:adjoint-equation} and \cref{ass:cost_func} imply that $\varphi_u\in H^2(\Omega)$. From this, the product formula, the chain rule \cite{Gilbarg_Trudinger}, and the finiteness of the set $\{\bar t\}$, we deduce that
    \begin{equation*}
        \dive [( a(y_u) - a(y_v) ) \nabla \varphi_u ] = Z_{y_u,y_v}  \cdot \nabla \varphi_u + ( a(y_u) - a(y_v) ) \Delta \varphi_u.
    \end{equation*}
    This shows that
    \begin{equation*}
        g_{u,v,h} = ( a(y_v) - a(y_u) ) \Delta \varphi_u + \frac{\partial L}{\partial y}(\cdot, y_h(v)) - \frac{\partial L}{\partial y}(\cdot, y_u) \in L^2(\Omega).
    \end{equation*}
    The standard stability estimate for the solution $\varphi_{v,h}-\varphi_u$ to \eqref{eq:continuity-auxi2-diffeq} thus gives
    \begin{equation*}
        \norm{\varphi_{v,h} - \varphi_u}_{H^2(\Omega)} \leq C \norm{g_{u,v,h}}_{L^2(\Omega)} \leq C [\norm{y_u - y_v}_{L^\infty(\Omega)} + \norm{y_{h}(v) - y_u}_{L^2(\Omega)}  ]
    \end{equation*}
    for some constant $C>0$ not depending on $u$, $v$, and $h$, where we have employed the boundedness of $\{S(w) \mid w \in \mathcal{U}_{ad} \}$ in $C(\overline\Omega)$, the fact that $\norm{\Delta \varphi_u}_{L^2(\Omega)} \leq C$ due to \cref{thm:adjoint-equation},  and \cref{ass:PC1-func,ass:cost_func} to derive the last estimate. From this, \eqref{eq:continuity-discrete-state}, and the fact that $\norm{y_u - y_v}_{L^\infty(\Omega)} \leq C \norm{u-v}_{L^2(\Omega)}$, we obtain \eqref{eq:continuity-auxi2}.
    The estimate \eqref{eq:continuityH2-adjoint-discrete-noh} is shown by a similar argument.

    We now prove \eqref{eq:continuityL2-adjoint-discrete}--\eqref{eq:continuityLinfty-adjoint-discrete}. According to the triangle inequality, \eqref{eq:continuity-auxi2}, \cref{thm:adjoint-discrete}, and the boundedness in $L^2(\Omega)$ of $\{ \frac{\partial L}{\partial y}(\cdot, y_h(v)) \mid v \in B_{L^2(\Omega)}(\bar u, \bar \rho) \cap \mathcal{U}_{ad}, h \leq h_2 \}$, we obtain \eqref{eq:continuityL2-adjoint-discrete} and \eqref{eq:continuityH1-adjoint-discrete}. Finally, for \eqref{eq:continuityLinfty-adjoint-discrete}, we first see from the continuous embedding $H^1_0(\Omega) \hookrightarrow L^q(\Omega)$, the interpolation error and inverse estimates \cite{BrennerScott2008} for $N=2$ that
    \begin{equation*}
        \begin{aligned}
            \norm{\varphi_{v,h} - \varphi_h(v)}_{L^\infty(\Omega)} &\leq \norm{\varphi_{v,h} -\Pi_h \varphi_{v,h}}_{L^\infty(\Omega)} + \norm{\Pi_h \varphi_{v,h} - \varphi_h(v)}_{L^\infty(\Omega)} \\
            & \leq C h^{1} \norm{\varphi_{v,h}}_{H^2(\Omega)} + C h^{-\frac{2}{q}}  \norm{\Pi_h \varphi_{v,h} - \varphi_h(v)}_{L^q(\Omega)}\\
            & \leq C h^{1} \norm{\varphi_{v,h}}_{H^2(\Omega)} + C h^{-\frac{2}{q}}  \norm{\Pi_h \varphi_{v,h} - \varphi_h(v)}_{H^1_0(\Omega)}\\
            & \leq C \left[ h^{1} \norm{\varphi_{v,h}}_{H^2(\Omega)} +  h^{-\frac{2}{q}}  \norm{\Pi_h \varphi_{v,h} - \varphi_{v,h }}_{H^1_0(\Omega)} +  h^{- \frac{2}{q}}  \norm{\varphi_{v,h} - \varphi_h(v)}_{H^1_0(\Omega)} \right],
        \end{aligned}
    \end{equation*}
    which together with \cref{thm:adjoint-discrete} and the interpolation error estimate from Theorem 4.4.20 in \cite{BrennerScott2008} yields
    \begin{equation*}
        \norm{\varphi_{v,h} - \varphi_h(v)}_{L^\infty(\Omega)}  \leq C \left[ h^{1 - \frac{2}{q}} \norm{\varphi_{v,h}}_{H^2(\Omega)}  +  h^{1 - \frac{2}{q}}  \norm{\frac{\partial L}{\partial y}(\cdot, y_h(v))}_{L^2(\Omega)} \right].
    \end{equation*}
    Combining this with the uniform boundedness of $\varphi_{v,h}$ in $H^2(\Omega)$ and of $\frac{\partial L}{\partial y}(\cdot, y_h(v))$ in $L^2(\Omega)$ for all  $v \in B_{L^2(\Omega)}(\bar u, \bar \rho) \cap \mathcal{U}_{ad}$ and $h \leq h_2$, we conclude that $\norm{\varphi_{v,h} - \varphi_h(v)}_{L^\infty(\Omega)}  \leq C h^{1 - \frac{2}{q}}$.
    From this, \eqref{eq:continuity-auxi2}, and the embedding $H^2(\Omega) \hookrightarrow L^\infty(\Omega)$, the triangle inequality thus leads to \eqref{eq:continuityLinfty-adjoint-discrete}.
\end{proof}

From \cref{thm:convergence-L2}, \cref{rem:coincidence-minimizer}, and the estimate \eqref{eq:continuityLinfty-adjoint-discrete} in \cref{lem:continuity-discrete}, we obtain the desired convergence result in $L^\infty(\Omega)$. Its proof is similar to that of Theorem 5.3 in \cite{CasasTroltzsch2011} with some modifications and it is thus omitted.
\begin{theorem}
    \label{thm:convergence-Linfty}
    Let $\{(\bar y_h,\bar u_h)\}$ be the sequence of discrete solutions to \eqref{eq:discrete-P-nonreduced} converging strongly to $(\bar y, \bar u)$ in $H^1_0(\Omega) \times L^2(\Omega)$.
    If $\Sigma(\bar y) < \infty$, then $\norm{\bar u_h - \bar u}_{L^\infty(\Omega)} \to 0$ as $h \to 0^+$.
\end{theorem}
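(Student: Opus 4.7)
The plan is to combine the uniform convergence of adjoint states provided by \cref{lem:continuity-discrete} with the pointwise projection formulas induced by the first-order optimality conditions. Since $\Sigma(\bar y)<\infty$, the constants $\bar h,\bar\rho$ of \cref{thm:adjoint-discrete} are available, and by \cref{thm:convergence-L2} together with \cref{rem:coincidence-minimizer} we may assume $\bar u_h\in\overline B_{L^2(\Omega)}(\bar u,\bar\rho)\cap\mathcal U_{ad}$ (so that the optimality system \eqref{eq:1st-OS-discrete} and its simplification in \cref{rem:projection-simplified} hold for $\bar u_h$) for all sufficiently small $h$. Fixing any $q>2$ and applying estimate \eqref{eq:continuityLinfty-adjoint-discrete} with $u:=\bar u$ and $v:=\bar u_h$ gives
\[
    \norm{\bar\varphi-\bar\varphi_h}_{L^\infty(\Omega)}
    \leq C\bigl(h^{1-2/q} + \norm{\bar u-\bar u_h}_{L^2(\Omega)}\bigr) \longrightarrow 0,
\]
which is the key uniform adjoint bound, encapsulating the nonsmoothness of $a$ via the hypothesis $\Sigma(\bar y)<\infty$.

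Next I would translate this adjoint bound into $L^\infty$-convergence of the controls in each of the three discretization settings. The continuous variational inequality \eqref{eq:normal_OS} yields the classical formula $\bar u(x)=\Proj_{[\alpha,\beta]}(-\bar\varphi(x)/\nu)$ a.e.\ in $\Omega$. In the variational case (i), \cref{rem:projection-simplified} provides the identical discrete formula $\bar u_h(x)=\Proj_{[\alpha,\beta]}(-\bar\varphi_h(x)/\nu)$, and non-expansiveness of the projection immediately yields $\norm{\bar u-\bar u_h}_{L^\infty(\Omega)}\leq\nu^{-1}\norm{\bar\varphi-\bar\varphi_h}_{L^\infty(\Omega)}\to 0$. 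In case (ii), testing the discrete variational inequality with piecewise constant variations forces $\bar u_h|_T=\Proj_{[\alpha,\beta]}\bigl(-(\nu|T|)^{-1}\int_T\bar\varphi_h\dx\bigr)$ on each $T\in\mathcal T_h$; since $\bar\varphi\in C(\overline\Omega)$ by \cref{thm:1st-OC}, its elementwise oscillation vanishes uniformly as $h\to 0$, so the elementwise averages of $\bar\varphi_h$ converge to $\bar\varphi$ uniformly, and the projection formula transfers this uniform convergence to $\bar u_h$.

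The delicate case is (iii), since $\bar u_h\in\mathcal U_h^1$ does not obey a pointwise projection identity. I would follow the standard nodal technique of Theorem~5.3 in \cite{CasasTroltzsch2011}: testing the discrete variational inequality with hat-function variations at the interior mesh vertices $x_j$ forces the nodal value $\bar u_h(x_j)$ to agree with $\Proj_{[\alpha,\beta]}(-\bar\varphi_h(x_j)/\nu)$ up to a lumped-mass correction that is $O(h)$ times the Lipschitz seminorm of $\bar u$ on $\overline\Omega$, the latter being available from \cref{thm:1st-OC}. Uniform convergence at the nodes, combined with the Lipschitz regularity of $\bar u$ and the piecewise linearity of $\bar u_h$, then yields uniform convergence on all of $\overline\Omega$. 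The main obstacle is precisely this nodal analysis in case (iii), but---crucially---it is a purely finite-element argument at the level of the discrete first-order condition, so once the adjoint bound above has been secured no further difficulty arising from the nonsmoothness of $a$ enters, and the proof proceeds essentially as in the smooth reference.
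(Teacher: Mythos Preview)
Your proposal is correct and follows essentially the same route as the paper, which explicitly derives the result from \cref{thm:convergence-L2}, \cref{rem:coincidence-minimizer}, and the $L^\infty$-adjoint estimate \eqref{eq:continuityLinfty-adjoint-discrete}, then defers the remaining finite-element argument to Theorem~5.3 in \cite{CasasTroltzsch2011}. Your case-by-case treatment of the three discretizations via projection formulas, with the nodal analysis for $\mathcal U_h^1$ handed off to the same reference, is exactly this strategy.
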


\subsection{Error estimates for discrete minimizers} \label{sec:error-estimate}

We finally turn to error estimates for discrete local minimizers $\bar u_h$ under the second-order sufficient optimality condition \eqref{eq:2nd-OC-suff-explicit}. We need the following technical lemma.
\begin{lemma}
    \label{lem:limit-key}
    Let $\bar u \in \mathcal{U}_{ad}$ and $\bar y:= S(\bar u)$.
    Assume that
    $\{ \bar y = \bar t\}$ decomposes into finitely many connected components and that on each such connected component $\Cu$, either \eqref{eq:main-hypothesis-nonvanishing-gradient} or \eqref{eq:main-hypothesis-y-vanish-structure} is fulfilled.
    Let $\{v_n\} \subset  L^2(\Omega)$, $v \in L^2(\Omega)$,
    $\varphi \in C^1(\overline\Omega) \cap W^{2,1}(\Omega)$,
    and $\{s_n\} \in c_0^+$ be arbitrary such that $\bar u + s_n v_n \in \mathcal{U}_{ad}$, $\norm{v_n}_{L^2(\Omega)} = 1$ and $v_n \rightharpoonup v$ in $L^2(\Omega)$.
    Setting $u_n:= \bar u + s_n v_n$, $y_n :=S(u_n)$ for $n \geq 1$ and $w:= S'(\bar u)v$, then the following assertions hold:
    \begin{enumerate}[label=(\alph*)]
        \item \label{item:Z-func-limit} If $w_n \to  w$ in $C(\overline\Omega)$, then
            \begin{multline*}
                \limsup \limits_{n \to \infty} \frac{1}{s_n}  \int_\Omega Z_{y_n,\bar y}  \cdot \nabla  \varphi w_{n}  \dx = -    [a'_{0}(\bar t) - a'_{1}(\bar t)]\int_{\{\bar y=\bar t\}} \1_{\{ |\nabla \bar y| > 0 \}}  w^2  \frac{\nabla \bar y \cdot \nabla   \varphi}{|\nabla \bar y|} \dH^{1}(x)  \\
                + \int_\Omega [ \1_{\{ \bar y \neq \bar t \}} a''(\bar y)w^2 \nabla \bar y + a'(\bar y; w) \nabla w ] \cdot \nabla  \varphi \dx.
            \end{multline*}
        \item \label{item:key-limit-es} If in addition \eqref{eq:adjoint_OS} is fulfilled
            for $\bar \varphi := \varphi$,
            then
            \begin{equation*}
                \liminf\limits_{n \to \infty} \frac{j'(u_n)v_n - j'(\bar u)v_n}{s_n} = 2 Q(\bar u,\bar y,  \varphi; v)+\nu \left( 1- \norm{v}_{L^2(\Omega)}^2\right).
            \end{equation*}
    \end{enumerate}
\end{lemma}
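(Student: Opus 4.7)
The plan is to first prove (a), which encapsulates the jump behaviour of the nondifferentiable coefficient, and then deduce (b) by plugging (a) into a curvature-style expansion of $j'$ driven by its adjoint representation. As a preliminary I would establish the strong convergence $(y_n - \bar y)/s_n \to w$ in $W^{1,p}_0(\Omega) \hookrightarrow C(\overline\Omega)$ for some $p \in (2, p_*)$, so in particular $y_n \to \bar y$ in $C(\overline\Omega)$ and eventually $\norm{y_n - \bar y}_{C(\overline\Omega)} < \delta$. This follows from the mean-value identity $(y_n - \bar y)/s_n = \int_0^1 S'(\bar u + ts_n v_n) v_n \, dt$, the $C^1$-regularity of $S$ from \cref{thm:control2state-oper}, continuity of $S'$, the compactness of $S'(\bar u) \colon L^2(\Omega) \to W^{1,p}_0(\Omega)$ (via $L^2(\Omega) \Subset W^{-1,p}(\Omega)$), and $v_n \weakto v$. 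I would then use \cref{lem:Z-decomposition} to split $Z_{y_n,\bar y} = Z^{(1)}_{y_n,\bar y} + Z^{(2)}_{y_n,\bar y} + Z^{(3)}_{y_n,\bar y} + Z^{(4)}_{y_n,\bar y}$ and analyse each contribution to $s_n^{-1}\int Z_{y_n,\bar y} \cdot \nabla \varphi \, w_n \dx$ separately. For $Z^{(1)}$, a Taylor expansion of $a_i'$ together with dominated convergence yields $\int \1_{\{\bar y \neq \bar t\}}[a''(\bar y) w^2 \nabla \bar y + a'(\bar y) w \nabla w] \cdot \nabla \varphi \dx$. For $Z^{(2)}$, supported on $\{\bar y = \bar t, y_n \neq \bar t\}$, I use $\nabla \bar y = 0$ a.e.\ on $\{\bar y = \bar t\}$ so that $\nabla y_n / s_n \to \nabla w$ there, while $\sign(y_n - \bar t)$ eventually matches $\sign(w)$ giving $a'(y_n) w_n \to a'(\bar y; w)$ a.e.; this produces $\int_{\{\bar y = \bar t\}} a'(\bar y; w) \nabla w \cdot \nabla \varphi \dx$, which combines with $Z^{(1)}$ to form the full $\int a'(\bar y; w) \nabla w \cdot \nabla \varphi \dx$. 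For $Z^{(4)}$, the bound \eqref{eq:Z-esti} together with $\1_{\Omega^{2}_{y_n,\bar y}} + \1_{\Omega^{3}_{y_n,\bar y}} \to 0$ a.e.\ yields contribution zero.

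The critical piece is $Z^{(3)}_{y_n,\bar y} = [a_0'(\bar t) - a_1'(\bar t)] [\1_{\Omega^{2}_{y_n,\bar y}} - \1_{\Omega^{3}_{y_n,\bar y}}] \nabla \bar y$. Writing $w_n = (y_n - \bar y)/s_n + o(1)_{C(\overline\Omega)}$ and using the identity $y_n - \bar y = (\bar t - \bar y) - (\bar t - y_n)$ on the support, I would rewrite
\begin{multline*}
\frac{1}{s_n}\int Z^{(3)}_{y_n,\bar y} \cdot \nabla \varphi\, w_n \dx
= -\frac{[a_0'(\bar t) - a_1'(\bar t)]}{s_n^2}\int (\bar t - y_n)[\1_{\Omega^{2}_{y_n,\bar y}} - \1_{\Omega^{3}_{y_n,\bar y}}]\nabla\bar y \cdot \nabla\varphi \dx\\
+ \frac{[a_0'(\bar t) - a_1'(\bar t)]}{s_n^2}\int (\bar t - \bar y)[\1_{\Omega^{2}_{y_n,\bar y}} - \1_{\Omega^{3}_{y_n,\bar y}}]\nabla\bar y \cdot \nabla\varphi \dx + o(1),
\end{multline*}
where the $o(1)$-remainder is controlled by $\meas_{\R^2}(\Omega^{2}_{y_n,\bar y} \cup \Omega^{3}_{y_n,\bar y}) \leq \meas_{\R^2}(\{|\bar y - \bar t| < C s_n\}) = O(s_n)$, a bound coming from $\Sigma(\bar y) < \infty$ in \cref{prop:E-func-finite}. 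The first right-hand integral equals $-\tilde Q(\bar u, \bar y, \varphi; \{s_n\}, v)$ in the limit and is given by \eqref{eq:explicit-formular}; the second equals minus the first up to $o(1)$ by \eqref{eq:Pn-auxi}. Summing yields exactly $-[a_0'(\bar t) - a_1'(\bar t)] \int_{\{\bar y=\bar t\}} w^2 \frac{\nabla \bar y \cdot \nabla \varphi}{|\nabla \bar y|}\dH^1(x)$, completing (a); since all limits exist, $\limsup$ is in fact $\lim$.

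For (b), I would use the adjoint representation $j'(u)v = \int \partial_y L(\cdot, S(u)) S'(u) v \dx + \nu\int u v \dx$ (equivalent to \eqref{eq:obje-deri}) and $\norm{v_n}_{L^2(\Omega)} = 1$ to split
\[
\frac{j'(u_n)v_n - j'(\bar u)v_n}{s_n} = \int \frac{\partial_y L(\cdot,y_n) - \partial_y L(\cdot,\bar y)}{s_n} z_n \dx + \int \partial_y L(\cdot,\bar y) \frac{z_n - \bar z_n}{s_n}\dx + \nu,
\]
with $z_n := S'(u_n) v_n$ and $\bar z_n := S'(\bar u) v_n$, both converging to $w$ in $W^{1,p}_0(\Omega) \hookrightarrow C(\overline\Omega)$ by continuity of $S'$ combined with the compactness $S'(\bar u)\colon L^2(\Omega) \to W^{1,p}_0(\Omega)$. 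A Taylor expansion of $L$ sends the first integral to $\int \partial_y^2 L(\cdot, \bar y) w^2 \dx$. For the second, testing $\bar\varphi$ (solving \eqref{eq:adjoint_OS}) against the identity $A(\bar u)(z_n - \bar z_n) = (A(\bar u) - A(u_n)) z_n$ — with $A(u)$ the linearized state operator from \eqref{eq:deri} — and integrating by parts leaves
\[
\int \partial_y L(\cdot,\bar y) \frac{z_n - \bar z_n}{s_n} \dx = -\int \frac{a(y_n) - a(\bar y)}{s_n} \nabla z_n \cdot \nabla \bar\varphi \dx - \frac{1}{s_n}\int Z_{y_n, \bar y} \cdot \nabla \bar\varphi \, z_n \dx.
\]
The first integral tends to $-\int a'(\bar y; w) \nabla w \cdot \nabla \bar\varphi \dx$ by directional differentiability of $a$ and dominated convergence; the second is precisely the quantity treated by part (a) with $w_n := z_n$. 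Assembling all contributions and comparing with the explicit formula for $2Q(\bar u, \bar y, \bar\varphi; v)$, where $2Q_2 = [a_0'(\bar t) - a_1'(\bar t)] \int_{\{\bar y = \bar t\}} w^2 \frac{\nabla \bar y \cdot \nabla \bar\varphi}{|\nabla \bar y|}\dH^1$ by \cref{prop:explicit-formular-Q2}, yields $2Q(\bar u, \bar y, \bar\varphi; v) + \nu(1 - \norm{v}^2_{L^2(\Omega)})$, as claimed. The main technical obstacle is the $Z^{(3)}$-analysis in (a): it must simultaneously invoke the explicit jump formula \eqref{eq:explicit-formular} and the cancellation \eqref{eq:Pn-auxi}, and the whole argument hinges on the $O(s_n)$ measure estimate of the transition region, which is precisely where the structural hypothesis \eqref{eq:main-hypothesis-nonvanishing-gradient}/\eqref{eq:main-hypothesis-y-vanish-structure} enters.
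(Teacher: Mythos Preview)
Your proposal is correct and follows essentially the same route as the paper: the $Z^{(1)}$--$Z^{(4)}$ splitting from \cref{lem:Z-decomposition}, the same handling of $Z^{(3)}$ via \eqref{eq:Pn-auxi} and \eqref{eq:explicit-formular}, and the same adjoint expansion for (b). Two minor slips to fix: in the $Z^{(3)}$ step, \eqref{eq:Pn-auxi} says the second integral equals the \emph{first} (not minus the first) up to $o(1)$, so their sum is $-2\tilde Q$, which by \eqref{eq:explicit-formular} is exactly the claimed surface term; and the $o(1)$ remainder is controlled not through a raw Lebesgue-measure bound but through $\int_\Omega \1_{\{0<|\bar y-\bar t|\leq Cs_n\}}|\nabla\bar y|\,dx = O(s_n)$ (note $|\nabla\bar y|$ is already inside $Z^{(3)}$), which is precisely what $\Sigma(\bar y)<\infty$ asserts.
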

\begin{proof}
    We first observe from \cref{thm:control2state-oper} and the embedding $L^2(\Omega) \hookrightarrow W^{-1,\tilde{p}}(\Omega)$  that
    \begin{equation}\label{eq:S-deri-keylem}
        \frac{y_n - \bar y}{s_n} \to w = S'(\bar u)v \, \text{in } W^{1,\tilde{p}}_0(\Omega)\hookrightarrow C(\overline\Omega).
    \end{equation}
    Moreover, by \cref{thm:control2state-oper} and the compact embedding $W^{2,p}(\Omega) \Subset C^1(\overline{\Omega})$ for some $p>2$, we deduce from the boundedness in $L^\infty(\Omega)$ of $\mathcal{U}_{ad}$ that
    \begin{equation}
        \label{eq:C1-convergence}
        y_n \to \bar y \quad \text{strongly in} \quad C^1(\overline\Omega).
    \end{equation}
    \emph{Ad \ref{item:Z-func-limit}:}
    We first deduce for $n\in \N$ large enough that
    \begin{equation}
        \label{eq:distance}
        \tau_n := \norm{y_n - \bar y}_{C(\overline\Omega)} \leq C s_n < \delta,
    \end{equation}
    where $\delta$ is the constant defined in \eqref{eq:Omega-123-sets}.
    Moreover, there exists a constant $M>0$ such that $\norm{y_n}_{C(\overline\Omega)}, \norm{\bar y}_{C(\overline\Omega)} \leq M$ for all $n \geq 1$.
    According to \cref{lem:Z-decomposition}, we have
    \begin{equation}
        \label{eq:Z-decom-error}
        Z_{y_n,\bar y} = Z_{y_n,\bar y}^{(1)} +Z_{y_n,\bar y}^{(2)}+Z_{y_n,\bar y}^{(3)}+Z_{y_n,\bar y}^{(4)}
    \end{equation}
    with $Z_{y_n,\bar y}^{(k)}$, $k = 1,2,3,4$, defined in \cref{lem:Z-decomposition}.
    For $Z_{y_n,\bar y}^{(1)}$, using \eqref{eq:distance} yields
    \begin{align*}
        Z_{y_n,\bar y}^{(1)} &= \1_{\{ \bar y \in (-\infty, \bar t), y_n \in (-\infty, \bar t) \}} [ a_0'(y_n) \nabla y_n - a_0'(\bar y) \nabla \bar y ] + \1_{\{ \bar y \in (\bar t, \infty), y_n \in (\bar t, \infty) \}} [ a_1'(y_n) \nabla y_n - a_1'(\bar y) \nabla \bar y ],\\
        & =[ \1_{\{\bar y \in (\infty, \bar t) \}} - \1_{ \{ \bar y \in (-\infty,\bar t), y_n \in  [\bar t, \bar t+ \delta)   \} } ]
        [ (a_{0}'(y_n) - a_0'(\bar y) ) \nabla y_n + a_0'(\bar y) \nabla (y_n - \bar y)   ] \\
        & \qquad + [ \1_{\{\bar y \in (\bar t, \infty) \}} - \1_{ \{ \bar y \in (\bar t, \infty), y_n \in (\bar t- \delta, \bar t]   \} } ]
        [ (a_{1}'(y_n) - a_1'(\bar y) ) \nabla y_n + a_1'(\bar y) \nabla (y_n - \bar y)   ].
    \end{align*}
    Since
    \[
        \1_{ \{ \bar y \in (-\infty,\bar t), y_n \in  [\bar t, \bar t+ \delta)   \} }, \1_{ \{ \bar y \in (\bar t, \infty), y_n \in (\bar t- \delta, \bar t]   \} } \leq \1_{\{ 0< |\bar y - \bar t| \leq \tau_n\} } \to 0
    \]
    a.e. in $\Omega$, we have from \eqref{eq:S-deri-keylem}, \eqref{eq:C1-convergence}, and the Lebesgue dominated convergence theorem that
    \begin{equation*}
        \begin{aligned}
            \frac{1}{s_n}\int_\Omega Z_{y_n,\bar y}^{(1)} \cdot \nabla \varphi w_n \dx  &\to  \int_\Omega  \1_{\bar y \in (-\infty, \bar t)} [ a_0''(\bar y)w \nabla \bar y + a_0'(\bar y) \nabla w  ] \cdot \nabla  \varphi w \dx  \\
            \MoveEqLeft[-1]+\int_\Omega  \1_{\bar y \in (\bar t, \infty)} [ a_1''(\bar y)w \nabla \bar y+ a_1'(\bar y) \nabla w  ] \cdot \nabla  \varphi w \dx \\
            &= \int_\Omega \1_{\{ \bar y \neq \bar t \}} [ a''(\bar y) w^2 \nabla \bar y + a'(\bar y)w \nabla w ]  \cdot \nabla  \varphi \dx.
        \end{aligned}
    \end{equation*}
    For $Z_{y_n,\bar y}^{(2)}$, we see from \eqref{eq:distance} and the fact $\nabla \bar y = 0$ a.e. on $\{\bar y = \bar t \}$ that
    \begin{equation*}
        Z_{y_n,\bar y}^{(2)} = \1_{\{ \bar y = \bar t, y_n \in (\bar t-\delta,\bar t) \}} a_{0}'(y_n) \nabla (y_n - \bar y) + \1_{\{ \bar y = \bar t, y_n \in (\bar t, \bar t + \delta) \}} a_{1}'(y_n) \nabla (y_n - \bar y).
    \end{equation*}
    Setting $\hat w_n := \frac{y_n - \bar y}{s_n}$ and exploiting \eqref{eq:S-deri-keylem} yields
    $w_n - \hat w_n \to w - w = 0$ in $W^{1,\tilde{p}}_0(\Omega)\hookrightarrow C(\overline\Omega)$. From this, the limit $y_n \to \bar y$ in $C(\overline\Omega)$, and the continuity of $a_{0}'$ and $a_1'$, the dominated convergence theorem implies that
    \begin{multline*}
        \lim \limits_{n \to \infty} \frac{1}{s_n} \int_\Omega Z_{y_n,\bar y}^{(2)} \cdot \nabla \varphi w_n \dx
        =   \lim \limits_{n \to \infty}  \int_\Omega [  \1_{\{ \bar y = \bar t, y_n \in (\bar t-\delta, \bar t) \}} a_{0}'(\bar t) \hat w_n    + \1_{\{ \bar y = \bar t, y_n \in (\bar t, \bar t + \delta) \}} a_{1}'(\bar t) \hat w_n  ] \nabla \hat w_n   \cdot \nabla \varphi  \dx.
    \end{multline*}
    As a result of \eqref{eq:directional-der-a1}  and the fact that  $\hat w_n < 0$ on $\{ \bar y = \bar t, y_n \in (\bar t-\delta, \bar t) \}$, there holds
    \begin{equation*}
        \1_{\{ \bar y =\bar t, y_n \in (\bar t-\delta, \bar t) \}} a_{0}'(\bar t) \hat w_n =\1_{\{ \bar y = \bar t, y_n \in (\bar t-\delta, \bar t) \}}  a' (\bar t; \hat w_n ).
    \end{equation*}
    Similarly, one has
    \begin{equation*}
        \1_{\{ \bar y =\bar t, y_n \in (\bar t, \bar t + \delta) \}} a_{1}'(\bar t) \hat w_n  = \1_{\{ \bar y = \bar t, y_n \in (\bar t, \bar t + \delta) \}} a' (\bar t; \hat w_n).
    \end{equation*}
    We thus have
    \begin{equation*}
        \begin{aligned}
            \lim \limits_{n \to \infty} \frac{1}{s_n} \int_\Omega Z_{y_n,\bar y}^{(2)} \cdot \nabla \varphi w_n \dx & =   \lim \limits_{n \to \infty} \int_\Omega  \1_{\{ \bar y = \bar t, y_n \in (\bar t-\delta, \bar t) \cup (\bar t, \bar t + \delta) \}}  a'(\bar t; \hat w_n) \nabla \hat w_n  \cdot \nabla \varphi  \dx\\
            & = \lim \limits_{n \to \infty}  \int_\Omega  \1_{\{ \bar y = \bar t \}}  a' (\bar t; \hat w_n) \nabla \hat w_n  \cdot \nabla \varphi  \dx,
        \end{aligned}
    \end{equation*}
    where we have used \eqref{eq:distance} and the fact that $\nabla y_n = \nabla \bar y = 0$ and so $\nabla \hat w_n =0$ a.e. on $\{ y_n = \bar y = \bar t \}$ to obtain the last identity. We thus conclude from the continuity of $a'(\bar t; \cdot)$ due to Proposition 2.49 in \cite{BonnansShapiro2000}, \eqref{eq:S-deri-keylem}, and the dominated convergence theorem that
    \begin{equation} \label{eq:Z2-limit}
        \lim \limits_{n \to \infty} \frac{1}{s_n} \int_\Omega Z_{y_n,\bar y}^{(2)} \cdot \nabla \varphi w_n \dx  = \int_\Omega \1_{\{\bar y =\bar t \}} a'(\bar y; w) \nabla w \cdot \nabla  \varphi \dx.
    \end{equation}
    For $Z_{y_n,\bar y}^{(4)}$, we have from \eqref{eq:Z-esti} that
    \begin{equation*}
        | Z_{y_n,\bar y}^{(4)} | \leq  C_M \left[ |y_n - \bar y|| \nabla \bar y| + |\nabla(y_n - \bar y)| \right]  \left(  \1_{\Omega_{y_n,\bar y}^{2}} + \1_{\Omega_{y_n,\bar y}^{3}} \right) \quad \text{a.e. in } \Omega.
    \end{equation*}
    This, together with the fact that $\1_{\Omega_{y_n,\bar y}^{2}} + \1_{\Omega_{y_n,\bar y}^{3}} \to 0$ a.e. in $\Omega$ as well as \eqref{eq:S-deri-keylem}, yields
    \begin{equation} \label{eq:Z4-limit}
        \lim \limits_{n \to \infty} \frac{1}{s_n} \int_\Omega Z_{y_n,\bar y}^{(4)} \cdot \nabla \varphi w_n \dx  = 0.
    \end{equation}
    It remains to estimate $Z_{y_n,\bar y}^{(3)}$. To this end, we first deduce from
    \eqref{eq:Omega-23-esti} and the coarea formula for Lipschitz mappings (see, e.g. \cite{Evans1992}, Thm.~2, p.~117) or \cite{AlbertiBianchiniCrippa2013}, Sec.~2.7) that
    \begin{equation*}
        \begin{aligned}
            s_n^{-1}\left| \int_\Omega \1_{\Omega^{2}_{y_n, \bar y}} \nabla \bar y \cdot \nabla  \varphi (w_n - \hat w_n)dx \right| & \leq s_n^{-1}\norm{w_n - \hat w_n}_{L^\infty(\Omega)} \norm{\nabla \varphi }_{L^\infty(\Omega)}  \int_\Omega \1_{\{ 0 <  \bar y - \bar t \leq \tau_n  \}} | \nabla \bar y| dx \\
            & = s_n^{-1}\norm{w_n - \hat w_n}_{L^\infty(\Omega)} \norm{\nabla \varphi }_{L^\infty(\Omega)} \int_{\bar t}^{\bar t+\tau_n} \int_{\{\bar y =t_i \}}d\Ha^1(x)dt  \\
            & \leq C s_n^{-1}\tau_n \norm{w_n - \hat w_n}_{L^\infty(\Omega)} \norm{\nabla \varphi }_{L^\infty(\Omega)} \to 0,
        \end{aligned}
    \end{equation*}
    where we have used \eqref{eq:distance} and the fact that $w_n - \hat w_n \to 0$ in $C(\overline\Omega)$ to obtain the last limit. Similarly, $s_n^{-1}| \int_\Omega \1_{\Omega^{3}_{y_n, \bar y}} \nabla \bar y \cdot \nabla  \varphi (w_n - \hat w_n)dx | \to 0$.  From these limits and the definition of $Z_{y_n,\bar y}^{(3)}$, we deduce that
    \[
        s_n^{-1} \int_\Omega | Z_{y_n,\bar y}^{(3)} \cdot \nabla \varphi ( w_n - \hat w_n) | \dx  \to 0
    \]
    and thus
    \begin{equation} \label{eq:Z3-limit-auxi1}
        \lim \limits_{n \to \infty}  \frac{1}{s_n} \int_\Omega Z_{y_n,\bar y}^{(3)} \cdot \nabla \varphi w_n   \dx = \lim \limits_{n \to \infty}  \frac{1}{s_n} \int_\Omega Z_{y_n,\bar y}^{(3)} \cdot \nabla \varphi  \hat w_n  \dx
        = \lim \limits_{n \to \infty}  \frac{1}{s_n^2} \int_\Omega Z_{y_n,\bar y}^{(3)} \cdot \nabla \varphi  (y_n - \bar y)  \dx
    \end{equation}
    provided that one of these three limits exists.
    For $\zeta_i(\bar u,\bar y;s_n,v_n)$, $i=0,1$, defined in \eqref{eq:zeta-func}, one has
    \begin{equation*}
        \begin{aligned}
            P_n &:= Z_{y_n,\bar y}^{(3)} (y_n - \bar y) + 2 \sum_{ i=0}^1 \zeta_i(\bar u,\bar y;s_n,v_n) \nabla \bar y\\
            & = [a_0'(\bar t) -a_1'(\bar t)][ \1_{\Omega_{y_n,\bar  y}^{2}}    -  \1_{\Omega_{y_n,\bar y}^{3}} ]  \nabla \bar y (y_n - \bar y) +2 [a_0'(\bar t) -a_1'(\bar t)](\bar t - y_n)[ \1_{\Omega_{y_n,\bar  y}^{2}}    -  \1_{\Omega_{y_n,\bar y}^{3}} ]\nabla \bar y \\
            & = [a_0'(\bar t) -a_1'(\bar t)]  (2 \bar t - \bar y - y_n) [\1_{\Omega_{y_n,\bar y}^{2}} -\1_{\Omega_{y_n,\bar y}^{3}}  ] \nabla \bar y.
        \end{aligned}
    \end{equation*}
    By \eqref{eq:Pn-auxi} and \eqref{eq:C1-convergence}, we deduce that
    \[
        \lim \limits_{n \to \infty}  \frac{1}{s_n^2} \int_\Omega P_n \cdot \nabla \varphi    \dx =0.
    \]
    Combining this with \eqref{eq:Z3-limit-auxi1} and \eqref{eq:sigma-tilde}, we can conclude that
    \begin{equation*}
        \begin{aligned}
            \limsup \limits_{n \to \infty} \frac{1}{s_n} \int_\Omega Z_{y_n,\bar y}^{(3)} \cdot \nabla \varphi w_n \dx
            &= \limsup \limits_{n \to \infty} \left[ \frac{1}{s_n^2} \int_\Omega P_n \cdot \nabla \varphi  \dx - \frac{2}{s_n^2} \int_\Omega \sum_{ i = 0}^1 \zeta_i(\bar u,\bar y;s_n,v_n) \nabla \bar y  \cdot \nabla \varphi  \dx \right]\\
            & = - 2 \tilde{Q}(\bar u, \bar y, \varphi;\{s_n\},v).
        \end{aligned}
    \end{equation*}
    Together with \eqref{eq:Z-decom-error}--\eqref{eq:Z4-limit}, we obtain assertion \ref{item:Z-func-limit} from \eqref{eq:explicit-formular}.

    \emph{Ad \ref{item:key-limit-es}:} Defining the functional $G: L^\infty(\Omega) \to \R$ via
    $G(y) := \int_\Omega L(x, y(x)) \dx$
    and employing \cref{ass:cost_func}, we deduce that $G$ is of class $C^2$ and that its derivatives are given by
    \begin{equation*}
        G'(y)y_1 := \int_\Omega \frac{\partial L}{\partial y}(x,y(x))y_1(x) \dx \quad \text{and} \quad  G''(y)y_1y_2 := \int_\Omega \frac{\partial^2 L}{\partial y^2}(x,y(x))y_1(x)y_2(x) \dx
    \end{equation*}
    for all $y, y_1, y_2 \in L^\infty(\Omega)$. We see from the chain rule that for any $u, v \in L^2(\Omega)$,
    \begin{equation*}
        j'(u)v = G'(S(u))S'(u)v + \nu \int_\Omega uv \dx.
    \end{equation*}
    This, together with a Taylor expansion and the fact that $\norm{v_n}_{L^2(\Omega)} = 1$, yields
    \begin{multline}
        \label{eq:key-auxi1}
        \frac{1}{s_n} [j'(u_n)v_n - j'(\bar u)v_n ]  = \frac{1}{s_n} [ G'(y_n)S'(u_n)v_n - G'(\bar y)S'(\bar u)v_n ] + \nu  \int_\Omega v_n^2 \dx\\
        \begin{aligned}[t]
            & = \frac{1}{s_n} [ G'(y_n) - G'(\bar y) ]S'(\bar u)v_n + \frac{1}{s_n} G'(y_n)[ S'(u_n)v_n - S'(\bar u)v_n ] + \nu \\
            & = \frac{1}{s_n} \int_0^1 G''(\bar y + s(y_n - \bar y))(y_n - \bar y)S'(\bar u)v_n \, ds  + \frac{1}{s_n} G'(\bar y)[ S'(u_n)v_n - S'(\bar u)v_n ]   \\
            & \qquad + \frac{1}{s_n} [G'(y_n) - G'(\bar y) ] [ S'(u_n)v_n - S'(\bar u)v_n ] + \nu.
        \end{aligned}
    \end{multline}
    Obviously, the third term on the right-hand side of \eqref{eq:key-auxi1} tends to $0$ since $v_n \to v$ in $W^{-1,\tilde{p}}(\Omega)$, $S'$ is continuous, and $G$ is of class $C^2$. Moreover, it follows from \cref{ass:cost_func}, \eqref{eq:S-deri-keylem}, and the dominated convergence theorem that the first term on the right-hand side of \eqref{eq:key-auxi1} tends to $G''(\bar y)(S'(\bar u)v)^2$.
    It remains to estimate the limes inferior of the second term on the right-hand side of \eqref{eq:key-auxi1}.
    Subtracting the equations for $z_n^{(1)}:= S'(u_n)v_n$ and $z_n^{(2)}:= S'(\bar u)v_n$, we find that $z_n := z_n^{(1)} - z_n^{(2)}\in H^1_0(\Omega)$ satisfies
    \begin{equation} \label{eq:subtraction}
        -\dive [(b + a(\bar y)) \nabla z_n  + \1_{\{\bar y \neq \bar t \}} a'(\bar y) \nabla \bar  y z_n ] =
        \dive [  (a(y_n) - a(\bar y)) \nabla z_{n}^{(1)} +  Z_{y_n,\bar y} z_{n}^{(1)}] =:
        g_{n}.
    \end{equation}
    We then have $z_n = S'(\bar u)g_n$, which together with \eqref{eq:adjoint_OS} yields
    \begin{equation} \label{eq:second-term}
        G'(\bar y)[z_n^{(1)} - z_n^{(2)} ]  = \langle G'(\bar y), z_n \rangle_{H^{-1}(\Omega), H^1_0(\Omega)} = \langle  g_n, \varphi \rangle_{H^{-1}(\Omega), H^1_0(\Omega)} = - (B_n + C_n)
    \end{equation}
    for $B_n := \int_\Omega (a(y_n) - a(\bar y)) \nabla z_{n}^{(1)} \cdot \nabla  \varphi \dx$ and $C_n :=  \int_\Omega Z_{y_n, \bar y}  \cdot \nabla  \varphi z_{n}^{(1)} \dx$.
    As a result of \cref{thm:control2state-oper} and the fact that $v_n \to v$ in $W^{-1,\tilde{p}}(\Omega)$, there holds $z_n^{(1)} \to S'(\bar u)v$ in $W^{1,\tilde{p}}_0(\Omega)$. Besides, from \eqref{eq:S-deri-keylem} and Lemma 3.5 in \cite{ClasonNhuRosch}, we have
    \begin{equation*}
        \frac{a(y_n(x)) - a(\bar y(x))}{s_n} \to a'(\bar y(x); (S'(\bar u)v)(x)) \quad \text{for all } x \in \overline\Omega.
    \end{equation*}
    The dominated convergence theorem thus implies that
    \begin{equation*}
        \frac{1}{s_n} B_n \to \int_\Omega a'(\bar y; S'(\bar u)v) \nabla (S'(\bar u)v) \cdot \nabla  \varphi \dx.
    \end{equation*}
    This, along with \eqref{eq:second-term} and assertion \ref{item:Z-func-limit}, ensures that
    \begin{multline*}
        \liminf \limits_{n \to \infty} \frac{1}{s_n} G'(\bar y)[ S'(u_n)v_n - S'(\bar u)v_n ] =     [a'_{0}(\bar t) - a'_{1}(\bar t)]\int_{\{\bar y=\bar t\}}  (S'(\bar u)v)^2   \frac{\nabla \bar y \cdot \nabla  \varphi}{|\nabla \bar y|} \dH^{1}(x)\\
        \begin{aligned}
            &- 2 \int_\Omega a'(\bar y; S'(\bar u)v)\nabla (S'(\bar u )v) \cdot \nabla \varphi \dx
            - \int_\Omega  \1_{\{ \bar y \neq \bar t \}} a''(\bar y)(S'(\bar u)v)^2 \nabla \bar y\cdot \nabla \varphi \dx.
        \end{aligned}
    \end{multline*}
    Using these limits, \eqref{eq:key-auxi1}, and the identity for $Q$ in \eqref{eq:2nd-OC-suff-explicit} (see also Theorem 3.19 in \cite{ClasonNhuRosch_levelset}), we arrive at \ref{item:key-limit-es}.
\end{proof}

The following theorem is one of main results of the paper, which extends Theorem 2.14 in \cite{CasasTroltzsch2012} (see, also, Lemma 5.2 in \cite{CasasMateosRosch2021}) to the case where the cost functional $j$ is of class $C^1$ but not necessarily $C^2$.
\begin{theorem} \label{thm:error-esti-general}
    Let $\{(\bar y_h,\bar u_h)\}$ be the sequence of discrete solutions to \eqref{eq:discrete-P-nonreduced} converging strongly to $(\bar y, \bar u)$ in $H^1_0(\Omega) \times L^2(\Omega)$.
    Assume that
    $\{ \bar y = \bar t\}$ decomposes into finitely many connected components and that on each such connected component $\Cu$, either \eqref{eq:main-hypothesis-nonvanishing-gradient} or \eqref{eq:main-hypothesis-y-vanish-structure} is fulfilled.
    Assume further that that the second-order sufficient condition \eqref{eq:2nd-OC-suff-explicit} is fulfilled.
    Then there exist constants $C >0$ and $\hat h \in (0, \min\{ \bar h, h_* \})$ such that
    \begin{equation}
        \label{eq:key-error-esti}
        \norm{\bar u_h - \bar u}_{L^2(\Omega)}^2 \leq C [ (\epsilon_h^{\bar u_h})^2+ \norm{\bar u- u_h}_{L^2(\Omega)}^2 + j'(\bar u)(u_h - \bar u) ]
    \end{equation}
    for all $h \in (0, \hat h)$ and $u_h \in \mathcal{U}_{ad,h} \cap \overline B_{L^2(\Omega)}(\bar u, \hat \epsilon)$  with $ \hat \epsilon := \min\{ \bar \epsilon, \bar \rho \}$. Here $\epsilon_h^{u}$ is defined as in \eqref{eq:epsilon-h}.
\end{theorem}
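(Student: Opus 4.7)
The plan is to argue by contradiction, combining the explicit second-order sufficient condition \eqref{eq:2nd-OC-suff-explicit} with the convergence result \cref{thm:convergence-L2} and the adjoint error estimate of \cref{thm:adjoint-discrete}. The central technical tool will be part~(b) of \cref{lem:limit-key}, which supplies a lower bound of the form $2Q + \nu(1 - \norm{v}^2)$ for a certain difference quotient of $j'$.

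I would start by assuming the estimate fails: there are sequences $h_k \to 0^+$ and $u_k \in \mathcal{U}_{ad,h_k} \cap \overline B_{L^2(\Omega)}(\bar u, \hat \epsilon)$ such that
\[
  \norm{\bar u_{h_k} - \bar u}_{L^2(\Omega)}^2 > k \bigl[ (\epsilon_{h_k}^{\bar u_{h_k}})^2 + \norm{\bar u - u_k}_{L^2(\Omega)}^2 + j'(\bar u)(u_k - \bar u) \bigr].
\]
\cref{thm:convergence-L2} gives $s_k := \norm{\bar u_{h_k} - \bar u}_{L^2(\Omega)} \to 0$, so setting $v_k := (\bar u_{h_k} - \bar u)/s_k$ and extracting a subsequence yields $v_k \rightharpoonup v$ in $L^2(\Omega)$ with $\norm{v}_{L^2(\Omega)} \leq 1$. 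The contradiction hypothesis implies $(\epsilon_{h_k}^{\bar u_{h_k}})^2/s_k^2 \to 0$, $\norm{u_k - \bar u}_{L^2(\Omega)}/s_k \to 0$, and $j'(\bar u)(u_k - \bar u)/s_k^2 \to 0$. Next I would verify that $v \in \mathcal{C}(\mathcal{U}_{ad}; \bar u)$: the sign conditions on $\{\bar u = \alpha\}$ and $\{\bar u = \beta\}$ are inherited from $\bar u_{h_k} \in \mathcal{U}_{ad}$ by weak passage to the limit, while the vanishing of $v$ on $\{\bar \varphi + \nu \bar u \neq 0\}$ is obtained by sandwiching $\int (\bar \varphi + \nu \bar u) v$ between $\geq 0$ (from \eqref{eq:normal_OS} applied to $u := \bar u_{h_k}$) and $\leq 0$ (from the discrete inequality of \cref{rem:projection-simplified} tested at $u_h := u_k$, combined with $\bar \varphi_{h_k} \to \bar \varphi$ in $L^2$ from \cref{thm:adjoint-discrete} and the decay of the remainder terms above), followed by the standard pointwise sign analysis against the first-order condition.

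With $u_n := \bar u_{h_k} = \bar u + s_k v_k$ and $\varphi := \bar \varphi$ all the hypotheses of part~(b) of \cref{lem:limit-key} are satisfied, yielding
\[
  \liminf_{k \to \infty} \frac{j'(\bar u_{h_k}) v_k - j'(\bar u) v_k}{s_k} \;\geq\; 2 Q(\bar u, \bar y, \bar \varphi; v) + \nu \bigl(1 - \norm{v}_{L^2(\Omega)}^2\bigr).
\]
This lower bound is strictly positive in every case: if $v \neq 0$ the sufficient condition \eqref{eq:2nd-OC-suff-explicit} gives $Q > 0$ and $\norm{v} \leq 1$ makes the parenthesis nonnegative; if $v = 0$ then $Q(\bar u, \bar y, \bar \varphi; 0) = 0$ by construction and the bound reduces to $\nu > 0$. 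To close the contradiction it then suffices to prove
\[
  \limsup_{k \to \infty} \frac{1}{s_k^2}\bigl[j'(\bar u_{h_k}) - j'(\bar u)\bigr](\bar u_{h_k} - \bar u) \;\leq\; 0.
\]
I would split $\bar u_{h_k} - \bar u = (\bar u_{h_k} - u_k) + (u_k - \bar u)$ and estimate the four resulting pieces using: (i) the discrete variational inequality from \cref{rem:projection-simplified}, which gives $j_h'(\bar u_{h_k})(\bar u_{h_k} - u_k) \leq 0$; (ii) the adjoint error bound of \cref{thm:adjoint-discrete} together with \cref{thm:diff-cost-discrete}, which yields $|(j' - j_h')(\bar u_{h_k}) w| \leq C \epsilon_{h_k}^{\bar u_{h_k}} \norm{w}_{L^2(\Omega)}$; (iii) the $L^2$-Lipschitz estimate $|[j'(\bar u_{h_k}) - j'(\bar u)] w| \leq C s_k \norm{w}_{L^2(\Omega)}$ that follows from \eqref{eq:continuityH2-adjoint-discrete-noh}; and (iv) the continuous first-order condition $j'(\bar u)(\bar u_{h_k} - \bar u) \geq 0$ from \eqref{eq:normal_OS}. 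The three vanishing ratios supplied by the contradiction hypothesis are precisely those needed to absorb the resulting cross terms.

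The main obstacle will be this final bookkeeping step: the split must be engineered so that every surviving error ratio matches exactly one of $(\epsilon_{h_k}^{\bar u_{h_k}})^2/s_k^2$, $\norm{u_k - \bar u}_{L^2(\Omega)}^2/s_k^2$, or $j'(\bar u)(u_k - \bar u)/s_k^2$, while the sign-constrained remainders retain the correct sign. A subtle point, absent from the $C^2$ case of \cite{CasasTroltzsch2012}, is that $j'$ is only Lipschitz and not Fréchet-differentiable from $L^2(\Omega)$ into its dual, which is why step (iii) must be based on the adjoint-stability bound \eqref{eq:continuityH2-adjoint-discrete-noh} rather than on a second derivative of $j$; relatedly, one must check that the adjoint error rate $\epsilon_{h_k}^{\bar u_{h_k}}$ is small enough when paired with the factor $s_k$ coming from the width of $\bar u_{h_k} - u_k$, rather than with $s_k^2$ as in the smooth setting.
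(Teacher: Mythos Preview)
Your proposal is correct and follows essentially the same route as the paper's proof: a contradiction argument in which the normalized difference quotient $v_k$ is shown to lie in the critical cone, \cref{lem:limit-key}\,(b) supplies the lower bound $2Q + \nu(1-\norm{v}^2)$, and an upper bound $\leq 0$ is obtained by combining the discrete and continuous variational inequalities with the adjoint error estimate of \cref{thm:adjoint-discrete} and the Lipschitz bound \eqref{eq:continuityH2-adjoint-discrete-noh}. The only organizational difference is that the paper first establishes the deterministic inequality \eqref{eq:key-error-auxi1} (your ingredients (i)--(iv) assembled into a single estimate valid for all $h$) and then divides by $s_n^2$, whereas you propose to run the splitting directly inside the $\limsup$; both lead to the same cross terms and the same three vanishing ratios.
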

\begin{proof}
    We first observe from \cref{prop:E-func-finite} that $\Sigma(\bar y) < \infty$.
    For simplicity of notation, we set $\epsilon_{h} := \epsilon_h^{\bar u_h}$.
    We first show that
    \begin{multline}
        \label{eq:key-error-auxi1}
        [ j'(\bar u_h)  - j'(\bar u)](\bar u_h - \bar u) \leq j'(\bar u)(u_h - \bar u)  \\
        +  C[ \epsilon_h \norm{\bar u_h - \bar u}_{L^2(\Omega)} + \epsilon_h \norm{u_h - \bar u}_{L^2(\Omega)} + \norm{u_h - \bar u}_{L^2(\Omega)} \norm{\bar u_h - \bar u}_{L^2(\Omega)}]
    \end{multline}
    for some constant $C>0$,
    for all $u_h \in \mathcal{U}_{ad,h} \cap \overline B_{L^2(\Omega)}(\bar u, {\hat \epsilon})$ and $h \in (0, \min\{ \bar h, h_{*} \})$. To this end, let us take any $h \in (0, \min\{ \bar h, h_{*} \})$, $u \in \mathcal{U}_{ad} \cap \overline B_{L^2(\Omega)}(\bar u, {\hat \epsilon})$, and $u_h \in \mathcal{U}_{ad,h} \cap \overline B_{L^2(\Omega)}(\bar u, {\hat \epsilon})$.
    We deduce from \eqref{eq:obje-deri}, \cref{thm:diff-cost-discrete}, \cref{lem:continuity-discrete}, and the Cauchy--Schwarz inequality that
    \begin{equation} \label{eq:key-error-auxi2}
        \left| [ j'_h(u)  - j'(u)](u_h - \bar u) \right|  =  \left| \int_\Omega (\varphi_h(u) - \varphi_u)(u_h - \bar u) \dx \right| \leq C \epsilon_h^{u} \norm{u_h - \bar u}_{L^2(\Omega)}.
    \end{equation}
    Moreover, we deduce from $j_h'(\bar u_h)(\bar u_h - u_h) \leq 0$ and $j'(\bar u)(\bar u - \bar u_h)  \leq 0$ that
    \begin{multline} \label{eq:key-error-auxi3}
        [ j'(\bar u_h)  - j'(\bar u)](\bar u_h - \bar u) \\
        \begin{aligned}[t]
            & = [j_h'(\bar u_h)  - j'(\bar u_h) ] ( \bar u - \bar u_h) + [j_h'(\bar u_h)  - j'(\bar u) ] ( u_h -\bar u)   + j_h'(\bar u_h)(\bar u_h - u_h) + j'(\bar u)(u_h - \bar u_h) \\
            &  \leq [j_h'(\bar u_h)  - j'(\bar u_h) ] ( \bar u - \bar u_h) + [j_h'(\bar u_h)  - j'(\bar u) ] ( u_h -\bar u)   + j'(\bar u)[(u_h - \bar u) + (\bar u - \bar u_h)] \\
            & \leq [j_h'(\bar u_h)  - j'(\bar u_h) ] ( \bar u - \bar u_h) + [(j_h'(\bar u_h) - j'(\bar u_h)) +( j'(\bar u_h)  - j'(\bar u)) ] ( u_h -\bar u)   + j'(\bar u)(u_h - \bar u).
        \end{aligned}
    \end{multline}
    Applying \eqref{eq:key-error-auxi2} yields that
    \begin{equation} \label{eq:key-error-auxi4}
        \left\{
            \begin{aligned}
                & | [ j'_h(\bar u_h)  - j'(\bar u_h)](\bar u - \bar u_h) |  \leq C \epsilon_h \norm{\bar u - \bar u_h}_{L^2(\Omega)},\\
                & | [ j'_h(\bar u_h)  - j'(\bar u_h)](u_h - \bar u) |  \leq C \epsilon_h  \norm{\bar u - u_h}_{L^2(\Omega)}.
            \end{aligned}
        \right.
    \end{equation}
    Using \eqref{eq:obje-deri}, \eqref{eq:continuityH2-adjoint-discrete-noh}, $H^2(\Omega) \hookrightarrow L^2(\Omega)$, and the Cauchy--Schwarz inequality yields
    \begin{equation} \label{eq:key-error-esti2}
        |[ j'(\bar u_h)  - j'(\bar u) ] ( u_h -\bar u)  |  =\left| \int_\Omega [ (\varphi_{\bar u_h} - \bar \varphi) + \nu(\bar u_h - \bar u) ](u_h - \bar u)\dx \right|
        \leq C  \norm{\bar u_h - \bar u}_{L^2(\Omega)}\norm{u_h - \bar u}_{L^2(\Omega)}.
    \end{equation}
    From this, \eqref{eq:key-error-auxi3}, and \eqref{eq:key-error-auxi4}, we derive \eqref{eq:key-error-auxi1}.

    We now prove the conclusion of the theorem by contradiction. To that purpose, we suppose that there exist $h_n \to 0^+$ and $u_{h_n} \in \mathcal{U}_{ad, h_n} \cap \overline B_{L^2(\Omega)}(\bar u, {\hat \epsilon})$ such that
    \begin{equation*}
        \norm{\bar u_{h_n} - \bar u}_{L^2(\Omega)}^2 > n  \left[ ( \epsilon_{h_n} )^2 + \norm{\bar u- u_{h_n}}_{L^2(\Omega)}^2 + j'(\bar u)(u_{h_n} - \bar u) \right] \quad \text{for all } n \geq 1
    \end{equation*}
    or, equivalently, with $s_n := \norm{\bar u_{h_n} - \bar u}_{L^2(\Omega)}$ that
    \begin{equation}
        \label{eq:key-error-contradiction}
        \frac{1}{n} > \frac{(\epsilon_{h_n})^2}{s_n^2} + \frac{\norm{\bar u- u_{h_n}}_{L^2(\Omega)}^2}{s_n^2}  + \frac{j'(\bar u)(u_{h_n} - \bar u)}{s_n^2} \quad  \text{for all } n \geq 1.
    \end{equation}
    By setting $v_n := \frac{\bar u_{h_n} - \bar u}{s_n}$,
    and by extracting a subsequence if necessary,
    we have
    \begin{equation*}
        s_n \to 0^+, \quad \norm{v_n}_{L^2(\Omega)} = 1, \quad \bar u_{h_n} = \bar u + s_n v_{n}, \quad v_n \rightharpoonup v \ \text{in } L^2(\Omega) \quad \text{for some } v \in L^2(\Omega).
    \end{equation*}
    We first show that $v$ is an element of the critical cone $\mathcal{C}({\mathcal{U}_{ad};\bar u})$ defined in \eqref{eq:critical-cone}.
    To this end, we first deduce that $v \geq 0$ a.e. on $\{\bar u =\alpha \}$ and $v \leq 0$ a.e. on $\{\bar u = \beta \}$. Moreover, since $j'(\bar u)v_n \geq 0$, there holds $j'(\bar u)v \geq 0$. On the other hand, from \eqref{eq:key-error-auxi4} and \eqref{eq:key-error-esti2} for $u_h := \bar u_{h_n}$, we obtain that
    \begin{equation*}
        \limsup \limits_{n \to \infty} \left\{  [ j'(\bar u_{h_n}) - j'_{h_n}(\bar u_{h_n}) ]v_{n} + [ j'(\bar u) - j'(\bar u_{h_n})]v_{n} \right\}  \leq \lim\limits_{n \to \infty} C (\epsilon_{h_n} + s_n) = 0,
    \end{equation*}
    which yields
    \begin{equation*}
        \begin{aligned}
            j'(\bar u)v & = \lim\limits_{n \to \infty} j'(\bar u)v_n  = \lim\limits_{n \to \infty} [ j'_{h_n}(\bar u_{h_n}) v_{n} + ( j'(\bar u_{h_n}) - j'_{h_n}(\bar u_{h_n} ))v_n + (j'(\bar u) - j'(\bar u_{h_n} ))v_n ] \\
            & \leq \lim\limits_{n \to \infty} j'_{h_n}(\bar u_{h_n}) v_{n} = \lim\limits_{n \to \infty} \frac{1}{s_n} [ j'_{h_n}(\bar u_{h_n}) (u_{h_n} - \bar u) + j'_{h_n}(\bar u_{h_n}) (\bar u_{h_n} - u_{h_n})].
        \end{aligned}
    \end{equation*}
    From this and the fact that $j'_{h_n}(\bar u_{h_n}) (\bar u_{h_n} - u_{h_n}) \leq 0$, we obtain
    \begin{equation*}
        j'(\bar u)v   \leq \lim\limits_{n \to \infty} \frac{1}{s_n} j'_{h_n}(\bar u_{h_n}) (u_{h_n} - \bar u) \leq \lim\limits_{n \to \infty} \norm{\varphi_{h_n}(\bar u_{h_n}) + \nu \bar u_{h_n}}_{L^2(\Omega)} \frac{\norm{u_{h_n} - \bar u}_{L^2(\Omega)}}{s_n} \to  0,
    \end{equation*}
    where we have used \cref{thm:diff-cost-discrete} and the Cauchy--Schwarz inequality to derive the last estimate and the boundedness of $\{\norm{\varphi_{h_n}(\bar u_{h_n}) + \nu \bar u_{h_n}}_{L^2(\Omega)} \}$ (due to \cref{lem:continuity-discrete}) as well as \eqref{eq:key-error-contradiction} to pass to the limit. There therefore holds that $j'(\bar u)v = 0$. This and Lemma 4.11 in \cite{BayenBonnansSilva2014} lead to $v(x) = 0$ whenever $\bar\varphi(x) + \nu \bar u(x) \neq 0$. We thus have $v \in \mathcal{C}({\mathcal{U}_{ad};\bar u})$.

    We now derive a contradiction and thus complete the proof. To this end, we divide \eqref{eq:key-error-auxi1} (with $h:=h_n$) by $s_n^2$ to obtain
    \begin{equation*}
        \frac{1}{s_n}[ j'(\bar u_{h_n})  - j'(\bar u)]v_n \leq \frac{ j'(\bar u)(u_{h_n} - \bar u)}{s_n^2} +  C\left( \frac{\epsilon_{h_n}}{s_n} +  \frac{\epsilon_{h_n}}{s_n} \frac{ \norm{u_{h_n} - \bar u}_{L^2(\Omega)} }{s_n}+ \frac{ \norm{u_{h_n} - \bar u}_{L^2(\Omega)} }{s_n} \right).
    \end{equation*}
    Taking the limes inferior as $n \to \infty$, employing \eqref{eq:key-error-contradiction}, and using \cref{lem:limit-key}\,(ii), we conclude that
    \begin{equation} \label{eq:contradition}
        2Q(\bar u, \bar y, \bar \varphi; v) + \nu \left(1 - \norm{v}_{L^2(\Omega)}^2\right) \leq 0.
    \end{equation}
    Combining this with \eqref{eq:2nd-OC-suff-explicit}  and the fact that $\norm{v}_{L^2(\Omega)} \leq \liminf \limits_{n \to \infty} \norm{v_n}_{L^2(\Omega)} =1$, we have
    $v = 0$. Inserting this into \eqref{eq:contradition} leads to $0<\nu \leq 0$, which is the desired contradiction.
\end{proof}

\begin{theorem}[variational discretization]
    \label{thm:error-estimate-L2}
    Assume that $\mathcal{U}_h = L^\infty(\Omega)$.
    Let $\{(\bar y_h,\bar u_h)\}$ be the sequence of discrete solutions to \eqref{eq:discrete-P-nonreduced} converging strongly to $(\bar y, \bar u)$ in $H^1_0(\Omega) \times L^2(\Omega)$.
    Under all assumptions of \cref{thm:error-esti-general}, there exists a constant $C >0$ such that for any  $h \in (0, \hat h)$,
    \begin{equation}
        \label{eq:error-estimate-L2}
        \norm{\bar u_{h} - \bar u}_{L^2(\Omega)} \leq C \left( h^{1 + \gamma} + \sigma_0^\frac{1}{2}  \norm{\Sigma_{\kappa_h}(\bar y)}_{L^1(\Omega)}^\frac{1}{2}  h^{\frac{3}{2}}  \right)
        \leq C h^{1+ \gamma}
    \end{equation}
    with $\kappa_h := C_{\infty} h + \norm{S(\bar u_h) - \bar y}_{L^\infty(\Omega)}$,  and $\sigma_0$,
    $\gamma \in (0, \frac{1}{2})$,
    and $\Sigma_{\kappa_h}$ defined in \eqref{eq:sigma-i},
    \eqref{eq:gamma-exponent},
    and \eqref{eq:KE-func}, respectively.
\end{theorem}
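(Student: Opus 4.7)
The plan is to specialise \cref{thm:error-esti-general} to the variational setting, in which $\mathcal{U}_{ad,h} = \mathcal{U}_{ad}$ because no discrete constraint is imposed on the controls. I would begin by inserting the test function $u_h := \bar u$ into \eqref{eq:key-error-esti}; this choice is admissible and makes both $\norm{\bar u - u_h}_{L^2(\Omega)}$ and $j'(\bar u)(u_h - \bar u)$ vanish. The estimate therefore collapses to $\mu^2 \leq C (\epsilon_h^{\bar u_h})^2$ with $\mu := \norm{\bar u_h - \bar u}_{L^2(\Omega)}$, so the only real task is to sharply bound $\epsilon_h^{\bar u_h}$.

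Setting $r_h := \norm{S_h(\bar u_h) - S(\bar u_h)}_{L^\infty(\Omega)}$, the $L^\infty$-state error bound \eqref{eq:state-error-infty} yields $r_h \leq C_\infty h$. Since $V(y, \cdot)$ is monotone in its second argument, $V(S(\bar u_h), r_h) \leq V(S(\bar u_h), C_\infty h)$, and \cref{prop:K-E-esti}\,(i) applied with $y := S(\bar u_h)$ and $\hat y := \bar y$ further bounds this by $V(\bar y, \kappa_h) + \sigma_0 \sum_{m=1}^2 \lvert \partial_{x_m}(S(\bar u_h) - \bar y) \rvert$. Taking the $L^2$-norm, \cref{prop:K-E-esti}\,(ii) controls $\norm{V(\bar y, \kappa_h)}_{L^2(\Omega)}$ by $C \sigma_0^{1/2} \kappa_h^{1/2} \norm{\Sigma_{\kappa_h}(\bar y)}_{L^1(\Omega)}^{1/2}$, while the local Lipschitz continuity of $S \colon L^2(\Omega) \to H^1_0(\Omega)$ inherited from \cref{thm:control2state-oper} dominates the gradient-difference term by $C \mu$. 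Collecting these bounds gives
\begin{equation*}
    \epsilon_h^{\bar u_h} \leq h^{1+\bar q} + C \sigma_0^{1/2} h\, \kappa_h^{1/2} \norm{\Sigma_{\kappa_h}(\bar y)}_{L^1(\Omega)}^{1/2} + C \sigma_0 h \mu.
\end{equation*}

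Squaring and substituting into $\mu^2 \leq C(\epsilon_h^{\bar u_h})^2$ produces a self-referential inequality in $\mu$. For $h$ small the $Ch^2\sigma_0^2\mu^2$ piece is absorbed on the left. What remains is to unwind the dependence of $\kappa_h$ on $\mu$: the same Lipschitz continuity gives $\kappa_h \leq C_\infty h + C\mu$, splitting the middle contribution into a pure $h^3\sigma_0\norm{\Sigma_{\kappa_h}(\bar y)}_{L^1(\Omega)}$ term and a $h^2\sigma_0\mu\norm{\Sigma_{\kappa_h}(\bar y)}_{L^1(\Omega)}$ cross term. Young's inequality handles the latter at the cost of a harmless $h^4\sigma_0^2\norm{\Sigma_{\kappa_h}(\bar y)}_{L^1(\Omega)}^2$ remainder. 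Since \cref{prop:E-func-finite} and the argument from Step~3 in the proof of \cref{thm:adjoint-discrete} guarantee that $\norm{\Sigma_{\kappa_h}(\bar y)}_{L^1(\Omega)} \leq \Sigma(\bar y) + 1$ once $\kappa_h$ is sufficiently small, and since $\bar q < 1$ implies $h^4 \leq h^{2(1+\bar q)}$, this residual is swallowed by the leading $h^{2(1+\bar q)}$ term. Taking square roots yields \eqref{eq:error-estimate-L2}. The main obstacle is precisely this circular dependence $\kappa_h \sim h + \mu$ while $\mu$ is itself being estimated through $\kappa_h$; closing the loop cleanly hinges on the uniform boundedness of $\sigma_0\norm{\Sigma_{\kappa_h}(\bar y)}_{L^1(\Omega)}$ granted by the structural hypotheses \eqref{eq:main-hypothesis-nonvanishing-gradient}--\eqref{eq:main-hypothesis-y-vanish-structure} on the critical level set $\{\bar y = \bar t\}$, and on a careful Young's absorption so that the $\mu^2$ factors on the right never overwhelm the left-hand side.
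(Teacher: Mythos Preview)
Your proposal is correct and follows essentially the same route as the paper: specialise \cref{thm:error-esti-general} with $u_h=\bar u$, bound $\epsilon_h^{\bar u_h}$ via \cref{prop:K-E-esti} and the Lipschitz continuity of $S$, and then close the self-referential inequality in $\mu$. You actually spell out the absorption step and the role of the uniform bound $\norm{\Sigma_{\kappa_h}(\bar y)}_{L^1(\Omega)}\le \Sigma(\bar y)+1$ more carefully than the paper does; there the final passage from the quadratic inequality in $\mu$ to \eqref{eq:error-estimate-L2} is dismissed as ``a simple computation'', but it tacitly relies on exactly the Young-type absorption and the boundedness of $\norm{\Sigma_{\kappa_h}(\bar y)}_{L^1(\Omega)}$ that you make explicit.
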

\begin{proof}
    Choosing $u_h := \bar u$ in \eqref{eq:key-error-esti} yields
    \begin{equation}
        \label{eq:L2-error-VD}
        \norm{\bar u_{h} - \bar u}_{L^2(\Omega)} \leq C \epsilon_h^{\bar u_h} \quad \text{for all }h \in (0, \hat h).
    \end{equation}
    Setting $r_h := \norm{S_h(\bar u_h) - S(\bar u_h)}_{L^\infty(\Omega)}$ and using \eqref{eq:state-error-infty} yield $r_h \leq C_\infty h$.
    Exploiting \eqref{eq:epsilon-h}, the Cauchy--Schwarz inequality, \cref{prop:K-E-esti}, the estimate \eqref{eq:nonsmooth-bary}, and the monotonic growth of $V(\bar y, \cdot)$, there holds
    \begin{equation*}
        \begin{aligned}
            (\epsilon_h^{\bar u_h} )^2 &\leq 3h^{2 + 2\gamma} + 3h^2 \norm{V(S(\bar u_h), r_h)}_{L^2(\Omega)}^2 + 3h^2 \norm{Z_{S(\bar u_h), \bar y}}_{L^2(\Omega)}^2  \\
            & \leq 3h^{2 + 2\gamma} + Ch^2  \left[ \norm{V(\bar y, r_h + \norm{S(\bar u_h) - \bar y}_{L^\infty(\Omega)}) }_{L^2(\Omega)}^2 + \norm{S(\bar u_h) - \bar y}_{H^1_0(\Omega)}^2 + \norm{\bar u_h - \bar u}_{L^2(\Omega)}^2\right] \\
            & \leq 3h^{2 + 2\gamma} + Ch^2  \left[ \norm{V(\bar y, C_\infty h+ \norm{S(\bar u_h) - \bar y}_{L^\infty(\Omega)}) }_{L^2(\Omega)}^2 + \norm{S(\bar u_h) - \bar y}_{H^1_0(\Omega)}^2  + \norm{\bar u_h - \bar u}_{L^2(\Omega)}^2 \right] \\
            & \leq C\left[h^{2 + 2\gamma} + h^2 \sigma_0 \norm{\Sigma_{\kappa_h}(\bar y)}_{L^1(\Omega)} (C_\infty h + \norm{S(\bar u_h) - \bar y}_{L^\infty(\Omega)}) + h^2 \norm{S(\bar u_h) - \bar y}_{H^1_0(\Omega)}^2 + h^2 \norm{\bar u_h - \bar u}_{L^2(\Omega)}^2 \right]\\
            & \leq C\left[h^{2 + 2\gamma} + h^2 \sigma_0 \norm{\Sigma_{\kappa_h}(\bar y)}_{L^1(\Omega)} (h +\norm{\bar u_{h} - \bar u}_{L^2(\Omega)}) + h^2 \norm{\bar u_{h} - \bar u}_{L^2(\Omega)}^2 \right],
        \end{aligned}
    \end{equation*}
    where we have used \cref{thm:control2state-oper} to derive the last inequality. From this and \eqref{eq:L2-error-VD}, a simple computation gives \eqref{eq:error-estimate-L2}.
\end{proof}

\begin{remark}
    \label{rem:order-convergence}
    If the constant $p_*$ in \cref{thm:control2state-oper} is large enough, then, for arbitrary small $\epsilon >0$, we can take $\gamma := \frac{1}{2} - \epsilon$ in \eqref{eq:gamma-exponent}  by choosing $p_0$ close to $2$ enough.
    Therefore, the order of convergence in \cref{thm:error-estimate-L2} becomes $O(h^{\frac{3}{2} - \epsilon})$. This order is less than the one for the smooth situation investigated in \cite{CasasTroltzsch2012}, there the authors showed that the order of convergence associated with the variational discretization is $O(h^2)$. This fact can be attributed to the nondifferentiability of the function $a$ in the state equation as we will see later in the numerical example section.
\end{remark}

Similarly, we obtain from \cref{thm:error-esti-general} error estimates for piecewise constant and continuous piecewise linear controls.
\begin{theorem}[piecewise constant discretization and continuous piecewise linear discretization]
    \label{thm:error-estimate-L2-piecewise-constant}
    Assume that
    $\mathcal{U}_h=\mathcal{U}_h^i$, $i=0,1$.
    Let $\{(\bar y_h,\bar u_h)\}$ be the sequence of discrete solutions to \eqref{eq:discrete-P-nonreduced} converging strongly to $(\bar y, \bar u)$ in $H^1_0(\Omega) \times L^2(\Omega)$.
    Under all assumptions of \cref{thm:error-esti-general}, there exist constants $C >0$ and $\hat h_* \in (0,\hat h)$ such that
    \begin{equation}
        \label{eq:error-estimate-L2-piecewise-constant}
        \norm{\bar u_{h} - \bar u}_{L^2(\Omega)} \leq C h \quad \text{for all } h \in (0, \hat h_*).
    \end{equation}
\end{theorem}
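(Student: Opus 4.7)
The plan is to apply \cref{thm:error-esti-general} with the test control $u_h := \mathcal{I}_h \bar u$ and to show that each of the three summands on the right-hand side of \eqref{eq:key-error-esti} is of order $O(h^2)$, which then yields $\norm{\bar u_h - \bar u}_{L^2(\Omega)} \leq C h$. By \cref{thm:1st-OC} we have $\bar u \in C^{0,1}(\overline\Omega) \subset H^1(\Omega)$, so $\mathcal{I}_h \bar u \to \bar u$ in $L^2(\Omega)$ and $u_h \in \mathcal{U}_{ad,h} \cap \overline B_{L^2(\Omega)}(\bar u, \hat \epsilon)$ for all $h$ small enough. Standard interpolation error estimates for the $L^2$-projection onto $\mathcal{U}_h^0$ and for the Carstensen quasi-interpolant into $\mathcal{U}_h^1$ both give $\norm{\bar u - \mathcal{I}_h \bar u}_{L^2(\Omega)} \leq C h \norm{\nabla \bar u}_{L^2(\Omega)}$, so that $\norm{\bar u - u_h}_{L^2(\Omega)}^2 = O(h^2)$.

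For the term $(\epsilon_h^{\bar u_h})^2$ I would repeat the computation in Step 3 of the proof of \cref{thm:error-estimate-L2}: combining $\Sigma(\bar y) < \infty$ with \cref{prop:K-E-esti} and the continuous dependence estimate $\norm{S(\bar u_h) - \bar y}_{H^1_0(\Omega)} + \norm{S(\bar u_h) - \bar y}_{L^\infty(\Omega)} \leq C \norm{\bar u_h - \bar u}_{L^2(\Omega)}$ from \cref{thm:control2state-oper} yields
\[
(\epsilon_h^{\bar u_h})^2 \leq C h^{2+2\bar q} + C h^3 \sigma_0 \norm{\Sigma_{\kappa_h}(\bar y)}_{L^1(\Omega)} + C h^2 \sigma_0 \norm{\Sigma_{\kappa_h}(\bar y)}_{L^1(\Omega)} \norm{\bar u_h - \bar u}_{L^2(\Omega)} + C h^2 \norm{\bar u_h - \bar u}_{L^2(\Omega)}^2.
\]
Since $\norm{\Sigma_{\kappa_h}(\bar y)}_{L^1(\Omega)}$ is uniformly bounded, the first two summands are $O(h^2)$, while the last two can be absorbed into $\norm{\bar u_h - \bar u}_{L^2(\Omega)}^2$ on the left-hand side of \eqref{eq:key-error-esti} via Young's inequality once $h$ is small enough.

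The main obstacle is bounding the cross term $j'(\bar u)(u_h - \bar u) = \int_\Omega (\bar \varphi + \nu \bar u)(\mathcal{I}_h \bar u - \bar u) \dx$ by $O(h^2)$ rather than by the trivial $O(h)$ from Cauchy--Schwarz. In the piecewise constant case this is immediate from the defining $L^2$-orthogonality of $\mathcal{I}_h$: with $f := \bar\varphi + \nu\bar u \in H^1(\Omega)$ and $\bar f_T := |T|^{-1} \int_T f \dx$, the orthogonality $\int_T (\mathcal{I}_h \bar u - \bar u) \dx = 0$ gives
\[
j'(\bar u)(\mathcal{I}_h \bar u - \bar u) = \sum_{T \in \mathcal{T}_h} \int_T (f - \bar f_T)(\mathcal{I}_h \bar u - \bar u)\dx,
\]
and the elementwise Poincar\'e estimates $\norm{f - \bar f_T}_{L^2(T)} \leq C h_T \norm{\nabla f}_{L^2(T)}$ and $\norm{\mathcal{I}_h \bar u - \bar u}_{L^2(T)} \leq C h_T \norm{\nabla \bar u}_{L^2(T)}$ together with Cauchy--Schwarz yield the required $O(h^2)$ bound. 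In the continuous piecewise linear case the Carstensen interpolant lacks this elementwise orthogonality, and I would instead exploit the projection formula $\bar u(x) = \Proj_{[\alpha,\beta]}(-\bar\varphi(x)/\nu)$ implied by \eqref{eq:normal_OS}: on the inactive set $\{\alpha < \bar u < \beta\}$ one has $f \equiv 0$, while on each connected component of the active set $\bar u$ is constant and thus is reproduced exactly by $\mathcal{I}_h$ on triangles whose nodal patches lie in the interior of that component. Only the ``transition'' triangles meeting $\partial\{\bar u = \alpha\} \cup \partial\{\bar u = \beta\}$ contribute; since $\bar\varphi \in C^1(\overline\Omega)$, these level sets of $\bar\varphi$ are (generically) $C^1$ curves, so the transition region has area $O(h)$, on which $|\mathcal{I}_h \bar u - \bar u| = O(h)$ and $|f|$ is uniformly bounded, yielding the desired $O(h^2)$.

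Combining the three bounds in \eqref{eq:key-error-esti} and absorbing the terms proportional to $\norm{\bar u_h - \bar u}_{L^2(\Omega)}^2$ then gives $\norm{\bar u_h - \bar u}_{L^2(\Omega)}^2 \leq C h^2$, which is \eqref{eq:error-estimate-L2-piecewise-constant}. The delicate point is really the piecewise linear cross-term estimate, which may require a mild additional regularity assumption on the active set (for example, a non-degeneracy condition $\nabla \bar\varphi \neq 0$ on the level sets $\{\bar\varphi = -\nu\alpha\}$ and $\{\bar\varphi = -\nu\beta\}$) in order to make the transition-layer area bound rigorous.
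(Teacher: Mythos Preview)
Your treatment of $(\epsilon_h^{\bar u_h})^2$ and of the piecewise constant case is correct and essentially matches the paper, which simply records $\epsilon_h^{\bar u_h}\le C_2 h$ and cites the standard elementwise orthogonality construction (Lemma~4.17 in Casas--Mateos--Raymond) for $u_h$ when $\mathcal{U}_h=\mathcal{U}_h^0$; your argument spells out exactly that idea.

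The gap is in the piecewise linear case, and you have identified it yourself: your bound on the cross term relies on the transition region having area $O(h)$, which in turn needs the free boundary $\partial\{\bar u=\alpha\}\cup\partial\{\bar u=\beta\}$ to be a rectifiable curve. The theorem does not assume this, and it does not follow from $\bar\varphi\in C^1(\overline\Omega)$ alone (the level sets $\{\bar\varphi=-\nu\alpha\}$ could be fat without a non-degeneracy condition). The paper avoids the issue by citing the construction in the proof of Theorem~5.4 in Casas--Mateos--R\"osch; the key observation behind that construction, which your argument is missing, is pointwise rather than measure-theoretic: setting $d:=\bar\varphi+\nu\bar u$, one has $d\in C^{0,1}(\overline\Omega)$ and $d=0$ on $\{\alpha<\bar u<\beta\}$. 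If $(\mathcal I_h\bar u-\bar u)(x)\neq 0$ for some $x\in T$, then $\bar u$ is not constant on the patch $\bigcup_{x_i\in T}\omega_i$ (otherwise $\mathcal I_h\bar u\equiv\bar u$ on $T$), hence by continuity $\bar u$ takes a value in $(\alpha,\beta)$ at some point $y$ of that patch, so $d(y)=0$ and $|x-y|\le Ch$. Lipschitz continuity of $d$ then gives $|d(x)|\le Ch$ \emph{wherever} $\mathcal I_h\bar u-\bar u\neq 0$, and combining this with $\norm{\mathcal I_h\bar u-\bar u}_{L^1(\Omega)}\le Ch$ yields
\[
    \bigl|j'(\bar u)(\mathcal I_h\bar u-\bar u)\bigr|
    \le Ch\,\norm{\mathcal I_h\bar u-\bar u}_{L^1(\Omega)}
    \le Ch^2
\]
with no assumption on the active-set geometry. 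Replacing your transition-layer argument by this pointwise bound closes the gap and recovers exactly the estimate the paper invokes.
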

\begin{proof}
    According to \cref{thm:1st-OC}, $\bar\varphi$ and $\bar u$ are Lipschitz continuous on $\overline\Omega$. Hence constants $C,C_1 >0$ and $\hat h_* \in (0,\hat h)$ exist such that for any $h \in (0, \hat h_*)$, there exists a $u_h \in \mathcal{U}_{ad, h}$ satisfying $\norm{\bar u - u_h}_{L^\infty(\Omega)} \leq C_1 h$ and
    $j'(\bar u)(\bar u - u_h) = 0$ for the case $\mathcal{U}_h=\mathcal{U}_h^0$; see, e.g. Lemma 4.17 in \cite{CasasMateosRaymond2007}, as well as
    $|j'(\bar u)(\bar u - u_h)| \leq Ch^2$ for the case $\mathcal{U}_h=\mathcal{U}_h^1$;  see the proof of Theorem 5.4 in \cite{CasasMateosRosch2021}.
    Combining this with \eqref{eq:key-error-esti} and the fact that $\epsilon_{h}^{\bar u_h} \leq C_2h$ for all $h \in (0, \hat h)$ yields \eqref{eq:error-estimate-L2-piecewise-constant}.
\end{proof}

\subsection{Numerical example} \label{sec:numerical-confirmation}
We conclude this section with a preliminary numerical example for the variational discretization of the optimal control problem. Specifically, we consider the problem
\begin{equation}
    \label{eq:P-exam}
    \left\{
        \begin{aligned}
            \min_{u\in L^\infty(\Omega)} &j(u) := \frac{1}{2} \int_\Omega (y_u(x) - y_d(x))^2 \dx + \frac{\nu}{2} \int_\Omega u(x)^2 \dx \\
            \text{s.t.} \quad & -\dive [(b(x) + \max\{ y_u-m,0\}) \nabla y_u] = u \quad \text{in } \Omega, \quad y_u = 0 \, \text{on } \partial\Omega, \\
            & 0 \leq  u(x) \leq 4\pi^2  \quad \text{a.e. } x \in \Omega,
        \end{aligned}
    \right.
\end{equation}
where $\Omega := (0,1)^2 \subset \R^2$, $m \in (0,1]$, $\nu := 1$,
$b(x) := b(x_1,x_2) = 2 - \max\{\sin \pi x_1 \sin \pi x_2-m,0\}$, and
\[
    y_d(x) := (1 + 16 \nu \pi^4) \sin \pi x_1 \sin \pi x_2 + 4 \nu \pi^4 \1_{\{\sin \pi x_1 \sin \pi x_2 >m \}} (\cos^2 \pi x_1 \sin^2\pi x_2 + \sin^2 \pi x_1 \cos^2 \pi x_2 ).
\]
This problem fits the general setting with  $\alpha = 0$, $\beta = 4\pi^2$,  $a(t) = \max\{t-m, 0\}$; i.e., $a_0(t) :=0$, $a_1(t) := t-m$, and $\bar t := m$.
Setting $\bar u := 4\pi^2 \bar y$ with $\bar y:= \sin \pi x_1 \sin \pi x_2$ and $\bar \varphi := - \nu \bar u$, it is straightforward to verify that $(\bar u, \bar y, \bar\varphi)$ satisfies the first-order optimality condition \eqref{eq:1st-OS} associated with \eqref{eq:P-exam}.
We shall now show that there exists an $\epsilon >0$ such that if $0 \leq 1-m < \epsilon$ then all assumptions of \cref{thm:2nd-OS-suf}, and thus of \cref{thm:error-estimate-L2}, are fulfilled.
First, for $\bar t = m=1$ we have
\begin{equation*}
    \{ \bar y = \bar t \} = \{ \bar y = 1 \} = \{ (0.5, 0.5) \} \quad \text{and} \quad \{ \bar y < \bar t \} = \Omega \backslash \{ (0.5, 0.5) \}.
\end{equation*}
Moreover, $\nabla \bar y$ obviously vanishes on $\{\bar y = 1\}$, and \cref{lem:structural} in the Appendix shows that the structural assumption in  \eqref{eq:main-hypothesis-y-vanish-structure} holds.
For $ \bar t = m \in (0,1)$, a simple computation shows that
\[
    |\nabla \bar y(x)| > 0 \quad \text{for all} \quad x = (x_1,x_2) \in \{ \bar y = \bar t \},
\]
which validates \eqref{eq:main-hypothesis-nonvanishing-gradient}.
It remains to show the existence of a number $\epsilon>0$ such that the second-order sufficient optimality condition \eqref{eq:2nd-OC-suff-explicit} is fulfilled, provided that $0 \leq 1 -m < \epsilon$.
To this end, by virtue of \eqref{eq:directional-der-a1}, there holds for a.e. $(x_1,x_2) \in \Omega$ that $a'(\bar y(x_1,x_2); s) = \1_{\{ \bar y(x_1,x_2) > \bar t \}}s$ for all $s \in \R$.  From this and the fact that $\1_{ \{\bar y \neq \bar t \}} a'' \equiv 0$, we have
\begin{multline*}
    Q_s(\bar u, \bar y, \bar\varphi;v,v) + Q_1(\bar u, \bar y, \bar\varphi;v,v) \\
    = \frac{1}{2} \norm{S'(\bar u)v}_{L^2(\Omega)}^2 + \frac{\nu}{2} \norm{v}_{L^2(\Omega)}^2 - \int_\Omega \1_{\{ \bar y > \bar t \}} S'(\bar u)v \nabla S'(\bar u)v \cdot \nabla \bar\varphi \dx  \quad \text{for all } v \in L^2(\Omega).
\end{multline*}
Since $\bar\varphi = - \nu \bar u = - 4\nu \pi^2 \bar y$, there hold
\begin{multline*}
    Q_s(\bar u, \bar y, \bar\varphi;v,v) + Q_1(\bar u, \bar y, \bar\varphi;v,v)
    = \frac{1}{2} \norm{S'(\bar u)v}_{L^2(\Omega)}^2 + \frac{\nu}{2} \norm{v}_{L^2(\Omega)}^2 + 4 \nu \pi^2 \int_\Omega \1_{\{ \bar y > \bar t \}} S'(\bar u)v \nabla S'(\bar u)v \cdot \nabla \bar y \dx
\end{multline*}
and
\begin{multline*}
    Q_2(\bar u, \bar y, \bar \varphi; v) =  \frac{1}{2}[a_0'(\bar t) - a_1'(\bar t)] \int_{\{\bar y=\bar t\}} \1_{\{ |\nabla \bar y| > 0 \}} (S'(\bar u)v)^2 \frac{\nabla \bar y \cdot \nabla \bar \varphi}{|\nabla \bar y|} \dH^1{(x)} \\
    = 2\nu \pi^2 \int_{\{\bar y=\bar t\}}  (S'(\bar u)v)^2 |\nabla \bar y| \dH^1{(x)} \geq 0
\end{multline*}
for all $v \in L^2(\Omega)$. Consequently, we have
\begin{equation}
    \label{eq:curvature-exam}
    Q(\bar u, \bar y, \bar\varphi;v) \geq \frac{1}{2} \norm{S'(\bar u)v}_{L^2(\Omega)}^2 + \frac{\nu}{2} \norm{v}_{L^2(\Omega)}^2 + 4 \nu \pi^2 \int_\Omega \1_{\{ \bar y > \bar t \}} S'(\bar u)v \nabla S'(\bar u)v \cdot \nabla \bar y \dx
\end{equation}
for all $v \in L^2(\Omega)$. We now estimate the last term in the right-hand side of \eqref{eq:curvature-exam}. For that purpose, we observe that
\begin{align*}
    \1_{\{ \bar y > \bar t \}}|\nabla \bar y|^2 &=  \pi^2 \1_{\{\sin \pi x_1 \sin \pi x_2 > m \}} (\cos^2 \pi x_1 \sin^2\pi x_2 + \sin^2 \pi x_1 \cos^2 \pi x_2 )\\
    & = \pi^2 \1_{\{\sin \pi x_1 \sin \pi x_2 > m \}} (\sin^2 \pi x_1 + \sin^2\pi x_2 -2 \sin^2 \pi x_1 \sin^2 \pi x_2 ) \\
    & \leq  \pi^2(2-2m^2)
\end{align*}
and thus
\[
    \1_{\{ \bar y > \bar t \}}|\nabla \bar y| \leq \pi \sqrt{2(1-m^2)} \leq 2 \pi \sqrt{1-m}
\]
in $\Omega$, where we have exploited that $1 < 1+m \leq 2$.
From this and the Cauchy--Schwarz inequality, there holds
\begin{equation*}
    \begin{aligned}
        \left| \int_\Omega \1_{\{ \bar y > \bar t \}} S'(\bar u)v \nabla S'(\bar u)v \cdot \nabla \bar y \dx \right| & \leq \norm{\1_{\{ \bar y > \bar t \}}\nabla \bar y}_{L^\infty(\Omega)} \norm{S'(\bar u)v}_{L^2(\Omega)}\norm{\nabla S'(\bar u)v}_{L^2(\Omega)} \\
        & \leq 2\pi C^2 \sqrt{1-m}\norm{v}_{L^2(\Omega)}^2
    \end{aligned}
\end{equation*}
for all $v \in L^2(\Omega)$ and for some constant $C$ independent of $m$ and $v$. Here we have used the fact that $\norm{S'(\bar u)v}_{H^1_0(\Omega)} \leq C\norm{v}_{L^2(\Omega)}$ due to \cref{thm:control2state-oper}. Combining this with \eqref{eq:curvature-exam} yields
\begin{equation*}
    \begin{aligned}
        Q(\bar u, \bar y, \bar\varphi;v) & \geq \frac{1}{2} \norm{S'(\bar u)v}_{L^2(\Omega)}^2 + \frac{\nu}{2} \norm{v}_{L^2(\Omega)}^2 - 8C^2 \nu \pi^3  \sqrt{1-m}\norm{v}_{L^2(\Omega)}^2 \\
        & \geq \frac{1}{2} \norm{S'(\bar u)v}_{L^2(\Omega)}^2 + \frac{\nu}{4} \norm{v}_{L^2(\Omega)}^2 >0
    \end{aligned}
\end{equation*}
for all $v \in L^2(\Omega)$, $v \neq 0$, provided that $0 \leq 1 - m < \epsilon$ with positive constant $\epsilon$ satisfying
\[
    \frac{1}{4} - 8C^2  \pi^3  \sqrt{\epsilon} = 0.
\]
We have therefore verified that all hypotheses of \cref{thm:2nd-OS-suf} and of \cref{thm:error-estimate-L2} are fulfilled. Finally, \cref{lem:Sigma-compu} in the Appendix shows that $\Sigma(\bar y) = 0$ if $\bar t = m =1$ and  $\Sigma(\bar y) > 0$ if $\bar t =m \in (0,1)$.

We now consider the discrete approximation $\bar u_h$ of $\bar u$. Thanks to \cref{rem:projection-simplified,rem:coincidence-minimizer}, we have $\bar u_h = \Proj_{[\alpha,\beta]}(-\frac{1}{\nu} \bar \varphi_h)$ with $\Proj_{[\alpha,\beta]}$ denoting the pointwise a.e. projection mapping onto the interval $[\alpha,\beta]$. From this, \eqref{eq:state-discrete}, and \eqref{eq:OS1st-adjoint-discrete}, $\bar y_h$ and $\bar\varphi_h$ satisfy
\begin{equation}
    \label{eq:OS-discrete-exam}
    \left\{
        \begin{aligned}
            &\int_\Omega ( b(x) + \max\{\bar y_h -m, 0 \} ) \nabla \bar y_h \cdot \nabla w_h \dx = \int_\Omega \Proj_{[\alpha,\beta]}(-\frac{1}{\nu} \bar \varphi_h) w_h \dx,\\
            &\int_\Omega ( b(x) + \max\{\bar y_h -m, 0 \} ) \nabla \bar \varphi_h  \cdot \nabla v_h + \1_{ \{\bar y_h >m\} } \nabla \bar y_h \cdot \nabla \bar \varphi_h v_h \dx = \int_\Omega (\bar y_h - y_d) v_h \dx
        \end{aligned}
    \right.
\end{equation}
for all $w_h,v_h \in V_h$. Since the nonlinearities of \eqref{eq:OS-discrete-exam} are semi-smooth, it is reasonable to solve this system by a semi-smooth Newton (SSN) method; see, e.g., \cite{ItoKunisch2008,Ulbrich2011} as well as \cite{ClasonNhuRosch}. Setting $y_0 := 128 x_1x_2(1-x_1)(1-x_2)$, we notice that the sets $\{ y_0 > \bar t\}$ and $\{ y_0 < \bar t \}$ have positive measures for all $m \in (1-\epsilon,1]$.
The starting point for the discrete SSN method solved \eqref{eq:OS-discrete-exam} is then taken as $(y_{h,0}, \varphi_{h, 0}) := (\Proj_{V_h}(y_0), 0)$ for different mesh sizes $h$, where  $\Proj_{V_h}$ stands for the $L^2$ projection mapping onto $V_h$. The integrals over elements are approximated with a quadrature scheme. This introduces a variational crime which however does not reduce the expected approximation order for piecewise linear functions. In all our tests, the SSN method converged in four or five or six iterations.

We report the resulting discretization errors $\|\bar u_h - \bar u\|_{X}$ for $X= L^2(\Omega)$ and for $h\in \{\frac{\sqrt{2}}{2^n} \mid 4 \leq n \leq 10\}$ as well as the experimental order of convergence
\begin{equation*}
    EOC_{L^p}(n) := \frac{\log(\norm{\bar u - \bar u_{h_{n+1}}}_{L^p(\Omega)}) - \log(\norm{\bar u - \bar u_{h_{n}}}_{L^p(\Omega)})}{\log(h_{n+1}) - \log(h_n)}
\end{equation*}
in \cref{tab:convergence} for both cases $m=1$ and $m=0.95$.
\begin{table}[t]
    \centering
    \begin{tabular}{%
            S[table-format=1.2e-1,scientific-notation=true,round-mode=places,round-precision=2]
            S[table-format=1.2e-1,scientific-notation=true,round-mode=places,round-precision=2]
            S[table-format=1.2e-1,scientific-notation=true,round-mode=places,round-precision=4]
            S[table-format=1.2e-1,scientific-notation=true,round-mode=places,round-precision=2]
            S[table-format=1.2e-1,scientific-notation=true,round-mode=places,round-precision=4]
        }
        \toprule
        & \multicolumn{2}{c}{$m=1$} & \multicolumn{2}{c}{$m=0.95$} \\
        \cmidrule(lr){2-3} \cmidrule(lr){4-5}
        {$h_n$}  & {$\|\bar u - \bar u_{h_n}\|_{L^2(\Omega)}$} &  {$EOC_{L^2}(n)$} &   {$\|\bar u - \bar u_{h_n}\|_{L^2(\Omega)}$} &  {$EOC_{L^2}(n)$} \\
        \midrule
        0.08838834764831845 & 0.18868399994780682 & 1.9842206675459904 & 0.16611404078219316 & 2.30149884431476 \\
        0.04419417382415922 & 0.047689759843375215 & 1.9960412352638899 & 0.03369660566403936 & 1.499478604870065 \\
        0.02209708691207961 & 0.011955200141487647 & 1.9990094098708553 & 0.01191785556126703 & 1.703223336241136 \\
        0.011048543456039806 & 0.002990852924165744 & 1.9997522957092797 & 0.0036599639201972353 & 2.2944527855422274\\
        0.005524271728019903 & 0.0007478416210831529 & 1.9999380703342444 & 0.000746066761816203& 1.5712691960881326\\
        0.0027621358640099515 & 0.0001869684309751765 & 1.9999845171567776 & 0.0002510605828845355 & 2.277237268364241\\
        0.0013810679320049757 & 4.674260937760375e-05& & 5.179189573581347e-05 & \\
        \bottomrule
    \end{tabular}
    \caption{discretization errors and experimental orders of convergence (EOC) in $L^2$ for the optimal control $\bar u$ in dependence of $h_n$}
    \label{tab:convergence}
\end{table}
For the situation $m=1$, the results indicate an EOC of $2$,
which indicates that we are observing a superconvergence property; compare \cite{MeyerRosch:2004}.
The EOC for this case is consistent with the guaranteed rate of $O(h^2)$ shown for the smooth problem in \cite{CasasTroltzsch2012}. An suitable explanation for this could be that in the case $m=1$, we have the following identity
\[
    a(\bar y(x_1,x_2)) = \max\{ \bar y(x_1,x_2) - 1, 0 \} = 0 \quad \text{for all } (x_1,x_2) \in \Omega,
\]
and thus the coefficient $a(y)$ of the state equation in \eqref{eq:P-exam} is in fact smooth at the optimal state.
When $m = 0.95$ (and the coefficient is nonsmooth), the values of EOC are not stable, however their minimum value is approximately equal to $1.5$, which fits the theoretical study shown in \cref{rem:order-convergence}.

\section{Conclusions}
We have studied the numerical approximation of an optimal control problem governed by a quasilinear elliptic equation with nonsmooth coefficient in the divergence part. The convergence of a sequence of  minimizers of some discrete control problems to a global minimizer of the original problem is shown.
A priori error estimates for three types of discretizations  (variational, piecewise constant, and continuous piecewise linear discretizations) are derived under an explicit second-order sufficient condition for the continuous optimal control problem and a structural assumption on the optimal state.
The estimate for variational discretization corrobates the proven rate, although the observed rate is higher, which motivates follow-up work on rate optimality or superconvergence properties for optimal control of nonsmooth quasilinear equations.

\section*{Acknowledgments}
This work was supported by the DFG under the grants CL 487/2-1 and RO 2462/6-1, both within the priority programme SPP 1962 ``Nonsmooth and Complementarity-based Distributed Parameter Systems: Simulation and Hierarchical Optimization''.
Part of this work was completed during a visit of the second author to the Vietnam Institute for Advanced Study in Mathematics (VIASM).
The second author would like to thank the VIASM for their financial support and hospitality.

\appendix

\section{Verification of a structural assumption and computation of the jump functional} \label{sec:structural-jump-func}
\begin{lemma} \label{lem:structural}
    Let $\Omega := \{x = (x_1, x_2) \in \R^2 \mid 0 < x_1, x_2 < 1 \}$ and let $y(x_1,x_2) := \sin(\pi x_1)\sin(\pi x_2)$. Then there exists a constant $c_s>0$ such that
    \begin{equation}
        \label{eq:structural-verification}
        \meas_{\R^2}\left(\{ | y - 1 | < r \} \right) \leq c_s r
    \end{equation}
    for all $r >0$ small enough.
\end{lemma}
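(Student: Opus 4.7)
The plan is to exploit the fact that $\bar y = \sin(\pi x_1)\sin(\pi x_2)$ attains its global maximum $1$ on $\overline\Omega$ at the single point $p_* := (1/2, 1/2)$, and that this maximum is \emph{nondegenerate}, so the sublevel sets $\{1 - y < r\}$ shrink like Euclidean disks of radius $\sqrt{r}$ and hence have area $O(r)$.

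First I would observe that since $y \leq 1$ on $\overline\Omega$, one has $\{|y - 1| < r\} = \{1 - y < r\}$. Then I would introduce the local variables $u := x_1 - 1/2$, $v := x_2 - 1/2$ and rewrite
\[
    1 - y = 1 - \cos(\pi u)\cos(\pi v) = 2\sin^2(\tfrac{\pi u}{2}) + 2\cos(\pi u)\sin^2(\tfrac{\pi v}{2}).
\]
Restricting to, say, the square $|u|,|v| \leq 1/4$, both terms are nonnegative (since $\cos(\pi u) \geq \cos(\pi/4) > 0$) and, using $\sin(\pi t/2) \geq t$ for $|t| \leq 1/2$, this gives a quadratic lower bound
\[
    1 - y(x_1,x_2) \geq c_1\bigl(u^2 + v^2\bigr) = c_1 |x - p_*|^2
\]
for some constant $c_1 > 0$.

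Next, since $p_*$ is the unique maximizer and $\overline\Omega \setminus \{|x - p_*|_\infty < 1/4\}$ is compact, the quantity
\[
    m := \min\bigl\{ 1 - y(x) : x \in \overline\Omega,\ |x - p_*|_\infty \geq 1/4 \bigr\}
\]
is strictly positive. Hence for any $r \in (0, m)$, the set $\{1 - y < r\}$ is contained in $\{|x - p_*|_\infty < 1/4\}$, and by the quadratic lower bound it is further contained in the Euclidean disk $\{|x - p_*| < \sqrt{r/c_1}\}$. Taking the two-dimensional Lebesgue measure yields
\[
    \meas_{\R^2}\bigl(\{|y - 1| < r\}\bigr) \leq \pi \cdot \frac{r}{c_1},
\]
which establishes \eqref{eq:structural-verification} with $c_s := \pi/c_1$ for all $r \in (0, m)$.

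There is no real obstacle here: the argument is entirely elementary Taylor analysis at a nondegenerate maximum together with a compactness argument away from it; the only care needed is to choose the neighborhood of $p_*$ small enough that the expansion $1 - y \geq c_1|x - p_*|^2$ is valid with an explicit constant.
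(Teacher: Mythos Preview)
Your proof is correct and takes a genuinely different route from the paper's. The paper proceeds by direct computation: it describes the set $\{|y-1|<r\}$ explicitly via $\arcsin$, bounds its measure by $\bigl[1-\tfrac{2}{\pi}\arcsin(1-r)\bigr]^2$, and then uses L'Hospital's rule to show that $\bigl(1-\tfrac{2}{\pi}\arcsin(1-r)\bigr)/\sqrt{r}\to \tfrac{2\sqrt{2}}{\pi}$, hence the measure is $O(r)$. Your argument instead exploits the nondegeneracy of the maximum at $p_*=(1/2,1/2)$ directly, deriving a quadratic lower bound $1-y\geq c_1|x-p_*|^2$ near $p_*$ and handling the complement by compactness. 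This is more conceptual and immediately generalizes to any $C^2$ function with an isolated nondegenerate interior maximum (it is essentially a Morse-lemma estimate), whereas the paper's computation is tied to the specific form $\sin(\pi x_1)\sin(\pi x_2)$ but delivers an explicit asymptotic constant. One cosmetic point: the inequality $\sin(\pi t/2)\geq t$ as stated fails for $t<0$; what you actually need (and use) is $\sin^2(\pi t/2)\geq t^2$ for $|t|\leq 1/2$, which follows from the stated inequality for $t\geq 0$ by evenness.
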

\begin{proof}
    Take any sufficiently small $r >0$ satisfying $(1- r, 1) \subset (0,1)$. A simple computation yields
    \begin{equation*}
        \left\{  | y - 1 | < r \right\}  = \left\{ \tfrac{1}{\pi}\arcsin \tfrac{1-r}{\sin \pi x_2} < x_1 <1 - \tfrac{1}{\pi}\arcsin \tfrac{1-r}{\sin \pi x_2} , \tfrac{1}{\pi}\arcsin (1-r) < x_2 <  1- \tfrac{1}{\pi}\arcsin (1- r) \right\}.
    \end{equation*}
    We thus obtain
    \begin{equation} \label{eq:structural-esti}
        \meas_{\R^2}\left(\{ | y - \bar t | < r \} \right)  = \int_{\tfrac{1}{\pi}\arcsin (1-r)}^{1- \tfrac{1}{\pi}\arcsin (1- r)}\left( 1 - \tfrac{2}{\pi}\arcsin \tfrac{1-r}{\sin \pi x_2} \right) dx_2  \leq \left[1 - 2\tfrac{1}{\pi}\arcsin (1- r)\right]^2,
    \end{equation}
    where we have used the fact that
    \[
        0 \leq 1 - \tfrac{2}{\pi}\arcsin \tfrac{1-r}{\sin \pi x_2} \leq 1 - 2\tfrac{1}{\pi}\arcsin (1- r) \quad \text{for all } 0 \leq x_2 \leq 1, r \in (0,1).
    \]
    L'Hospital's rule then shows that
    \begin{equation*}
        \lim\limits_{r \to 0^+} \frac{1 - 2\tfrac{1}{\pi}\arcsin (1- r)}{\sqrt{r}} = \frac{4}{\pi}  \lim\limits_{r \to 0^+}\sqrt{\frac{r}{1- (1-r)^2}} = \frac{2\sqrt{2}}{\pi},
    \end{equation*}
    which, along with \eqref{eq:structural-esti}, yields \eqref{eq:structural-verification}.
\end{proof}

\begin{lemma} \label{lem:Sigma-compu}
    Let $\Omega := \{x = (x_1, x_2) \in \R^2 \mid 0 < x_1, x_2 < 1 \}$ and let $y(x_1,x_2) := \sin(\pi x_1)\sin(\pi x_2)$. Then
    \begin{equation*}
        \Sigma(y) =
        \left\{
            \begin{aligned}
                & 8 \sigma_0\left( 1- \frac{2}{\pi}\arcsin \bar t \right) && \text{if} \quad \bar t \in (0,1),\\
                & 0 && \text{if} \quad \bar t = 1,\\
                & 4 \sigma_0 && \text{if} \quad \bar t = 0.
            \end{aligned}
        \right.
    \end{equation*}
\end{lemma}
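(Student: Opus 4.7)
\section*{Proof plan for Lemma~\ref{lem:Sigma-compu}}

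The plan is to compute the limit in the definition \eqref{eq:E-functional} of $\Sigma$ by reducing each of the two integrals
\[
    I_m(r) := \int_\Omega \mathbb{1}_{\{0 < |y - \bar t | \leq r \}} |\partial_{x_m} y| \, dx, \qquad m=1,2,
\]
to a slice integral via Fubini. By the symmetry $y(x_1,x_2) = y(x_2,x_1)$ we have $I_1(r) = I_2(r)$, so it suffices to compute $I_1(r)$ and then set $\Sigma(y) = 2\sigma_0 \limsup_{r\to 0^+} I_1(r)/r$. For each fixed $x_2 \in (0,1)$ we set $\phi_{x_2}(x_1) := \sin(\pi x_1)\sin(\pi x_2)$. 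The function $\phi_{x_2}$ is strictly increasing on $(0,1/2)$ from $0$ to $\sin(\pi x_2)$ and strictly decreasing on $(1/2,1)$ from $\sin(\pi x_2)$ to $0$, and $|\phi_{x_2}'(x_1)| = |\partial_{x_1} y(x_1,x_2)|$. Hence by the one-dimensional change of variables on each monotone half,
\[
    \int_0^1 \mathbb{1}_{\{0 < |\phi_{x_2}(x_1) - \bar t | \leq r\}} |\phi_{x_2}'(x_1)| \, dx_1 = 2 \cdot \mathrm{meas}_\R\bigl( \{ s \in (0,\sin(\pi x_2)) : 0 < |s - \bar t| \leq r \} \bigr),
\]
which reduces the inner integral to an elementary length computation.

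\textbf{Case $\bar t \in (0,1)$.} For $r$ so small that $\bar t - r > 0$ and $\bar t + r < 1$, the above length equals $2r$ whenever $\sin(\pi x_2) \geq \bar t + r$ (contribution $4r$ per slice) and is bounded by $2r$ in the transition band $\{\bar t - r < \sin(\pi x_2) < \bar t + r\}$, whose measure is $O(r)$ by the Lipschitz continuity of $\sin$; for $\sin(\pi x_2) \leq \bar t - r$ the slice contributes $0$. Integrating in $x_2$,
\[
    I_1(r) = 4r \bigl(1 - \tfrac{2}{\pi}\arcsin(\bar t + r)\bigr) + O(r^2),
\]
so $I_1(r)/r \to 4(1 - \tfrac{2}{\pi}\arcsin \bar t)$ and doubling yields the stated value.

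\textbf{Case $\bar t = 1$.} Because $y < 1$ on $\overline{\Omega} \setminus \{(1/2,1/2)\}$ with $\nabla y(1/2,1/2) = 0$ and Hessian $-\pi^2 I$, the Taylor expansion $1 - y(x) = \tfrac{\pi^2}{2}|x - (1/2,1/2)|^2 + O(|x-(1/2,1/2)|^4)$ implies that $\{0 < |y - 1| \leq r\}$ is contained in a disk of radius $C\sqrt{r}$ around $(1/2,1/2)$, and on this disk $|\partial_{x_m} y| \leq C\sqrt{r}$. Thus $I_m(r) \leq C r^{3/2}$ and $I_m(r)/r \to 0$.

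\textbf{Case $\bar t = 0$.} Here $y \geq 0$, so $\{0 < |y| \leq r\} = \{0 < y \leq r\}$. For a slice with $\sin(\pi x_2) > r$, the set $\{s \in (0, \sin(\pi x_2)) : 0 < s \leq r\}$ has length $r$, giving an inner contribution of $2r$. For $\sin(\pi x_2) \leq r$, the entire interval $(0,1)$ contributes, with value $2\sin(\pi x_2) \leq 2r$; the set of such $x_2$ has measure $\tfrac{2}{\pi}\arcsin r = O(r)$, so this band adds at most $O(r^2)$. Summing,
\[
    I_1(r) = 2r\bigl(1 - \tfrac{2}{\pi}\arcsin r\bigr) + O(r^2) = 2r + O(r^2),
\]
which gives $I_1(r)/r \to 2$ and the stated value $4\sigma_0$.

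The only delicate step is bookkeeping of the boundary/transition regions in Cases 1 and 3, which I would do explicitly with the monotonicity of $\arcsin$ and Lipschitz continuity of $\sin$; the main conceptual point is the quadratic vanishing of $y-1$ at $(1/2,1/2)$ in Case 2, which is what forces $\Sigma(y) = 0$ there even though $\{y = 1\}$ is nonempty.
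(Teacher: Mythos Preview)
Your argument is correct, but the route differs from the paper's. The paper proceeds by explicitly parameterising the sublevel sets: for each case it writes out $\{|y-\bar t|<r,\ \partial_{x_1}y>0\}$ (and the three companion sets) as explicit regions bounded by curves $x_1=\tfrac{1}{\pi}\arcsin\!\big(\tfrac{\bar t\pm r}{\sin\pi x_2}\big)$, integrates $|\partial_{x_1}y|$ over these regions in closed form to obtain exact expressions in $\cos(\arcsin(\cdot))$ and $\arcsin(\cdot)$, and then applies L'Hospital's rule to take $r\to0^+$. You instead exploit the symmetry $I_1=I_2$, reduce the inner integral via a one-dimensional change of variables on each monotone half of $\phi_{x_2}$ to a length in the range, and handle the transition bands by a direct $O(r^2)$ bound; in the degenerate case $\bar t=1$ you replace the paper's explicit computation by a Taylor-expansion estimate at the interior maximum giving $I_m(r)=O(r^{3/2})$. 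Your approach is shorter and more conceptual, and it extends readily to other functions with the same monotonicity and nondegenerate-maximum structure; the paper's approach, while more computational, yields exact finite-$r$ formulas for the integrals before passing to the limit.
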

\begin{proof}
    We distinguish the following cases:
    \begin{enumerate}[label =(\roman*)]
        \item For any $\bar t \in (0,1)$ and any sufficiently small $r >0$ satisfying $(\bar t - r, \bar t + r) \subset (0,1)$, a simple computation shows that
            \begin{multline*}
                \left\{  | y - \bar t | < r, \partial_{x_1} y > 0 \right\}  \\
                \begin{aligned}
                    & = \left\{ \tfrac{1}{\pi}\arcsin \tfrac{\bar t-r}{\sin \pi x_2} < x_1 < \tfrac{1}{\pi}\arcsin \tfrac{\bar t+r}{\sin \pi x_2} , \tfrac{1}{\pi}\arcsin (\bar t+r) < x_2 <  1- \tfrac{1}{\pi}\arcsin (\bar t+r) \right\}\\
                    \MoveEqLeft[-1]  \cup \left\{ \tfrac{1}{\pi}\arcsin \tfrac{\bar t-r}{\sin \pi x_2} < x_1 < \tfrac{1}{2} , \tfrac{1}{\pi}\arcsin (\bar t-r) < x_2 <  \tfrac{1}{\pi}\arcsin (\bar t+r) \right\}\\
                    \MoveEqLeft[-1]  \cup \left\{ \tfrac{1}{\pi}\arcsin \tfrac{\bar t-r}{\sin \pi x_2} < x_1 < \tfrac{1}{2} , 1- \tfrac{1}{\pi}\arcsin (\bar t+r) < x_2 < 1 -  \tfrac{1}{\pi}\arcsin (\bar t-r) \right\}.
                \end{aligned}
            \end{multline*}
            We thus obtain
            \begin{equation*}
                \begin{aligned}
                    \int_\Omega \1_{\left\{ | y - \bar t | < r, \partial_{x_1} y > 0 \right\}} |\partial_{x_1} y|\dx &=  - \frac{2}{\pi} \left[\cos(\arcsin(\bar t+r)) - \cos(\arcsin(\bar t-r)) \right]   \\
                    \MoveEqLeft[-1] - \frac{2}{\pi}(\bar t-r) \left[  \arcsin(\bar t+r) - \arcsin(\bar t-r)\right] + 2r \left(1 - \frac{2}{\pi} \arcsin(\bar t+r) \right).
                \end{aligned}
            \end{equation*}
            Applying L'Hospital's rule yields
            \begin{equation*}
                \lim\limits_{r \to 0^+} \frac{1}{r} \int_\Omega \1_{\left\{ | y - \bar t | < r, \partial_{x_1} y > 0 \right\}}|\partial_{x_1} y|  \dx = 2 \left(1 - \frac{2}{\pi} \arcsin \bar t \right).
            \end{equation*}
            Similarly, there hold
            \begin{align*}
                & \lim\limits_{r \to 0^+} \frac{1}{r} \int_\Omega \1_{\left\{ | y - \bar t| < r, \partial_{x_1} y < 0 \right\}} |\partial_{x_1} y|\dx = 2 \left(1 - \frac{2}{\pi} \arcsin \bar t \right), \\
                & \lim\limits_{r \to 0^+} \frac{1}{r}\int_\Omega \1_{\left\{ | y - \bar t | < r, \partial_{x_2} y > 0 \right\}} |\partial_{x_2} y|\dx = 2 \left(1 - \frac{2}{\pi} \arcsin \bar t \right),\\
                & \lim\limits_{r \to 0^+} \frac{1}{r}\int_\Omega \1_{\left\{| y - \bar t | < r, \partial_{x_2} y < 0 \right\}} |\partial_{x_2} y|\dx = 2 \left(1 - \frac{2}{\pi} \arcsin \bar t \right).
            \end{align*}
            By adding these four limits, we obtain
            \begin{equation*}
                \lim\limits_{r \to 0^+} \frac{1}{r} \int_\Omega \1_{\left\{ | y - \bar t | < r \right\}} \left[|\partial_{x_1} y| +|\partial_{x_2} y| \right]\dx = 8 \left(1 - \frac{2}{\pi} \arcsin \bar t \right).
            \end{equation*}

        \item For $\bar t =0$ and for any $r\in (0,1)$ sufficiently small, we see from a straightforward calculation that
            \begin{equation*}
                \begin{aligned}
                    \left\{  | y - 0 | < r, \partial_{x_1} y > 0 \right\}
                    & = \left\{ 0 < x_1 < \tfrac{1}{\pi}\arcsin \tfrac{r}{\sin \pi x_2} , \tfrac{1}{\pi}\arcsin r < x_2 <  1- \tfrac{1}{\pi}\arcsin r \right\}\\
                    \MoveEqLeft[-1]  \cup \left\{ 0 < x_1 < \tfrac{1}{2} , 0 < x_2 <  \tfrac{1}{\pi}\arcsin r \right\}  \cup \left\{ 0 < x_1 < \tfrac{1}{2} , 1- \tfrac{1}{\pi}\arcsin r < x_2 < 1  \right\}.
                \end{aligned}
            \end{equation*}
            Consequently, it holds that
            \begin{equation*}
                \int_\Omega \1_{\left\{ | y - 0 | < r, \partial_{x_1} y > 0 \right\}} |\partial_{x_1} y|\dx  =  - \frac{2}{\pi} \left[\cos(\arcsin r) - 1 \right]  + r \left(1 - \frac{2}{\pi} \arcsin r \right).
            \end{equation*}
            L'Hospital's rule then shows that
            \begin{equation*}
                \lim\limits_{r \to 0^+} \frac{1}{r} \int_\Omega \1_{\left\{ | y - 0 | < r, \partial_{x_1} y > 0 \right\}}|\partial_{x_1} y|  \dx = 1.
            \end{equation*}
            Similarly, we can conclude that
            \begin{equation*}
                \lim\limits_{r \to 0^+} \frac{1}{r} \int_\Omega \1_{\left\{ | y - 0 | < r \right\}} \left[|\partial_{x_1} y| +|\partial_{x_2} y| \right]\dx = 4.
            \end{equation*}

        \item For $\bar t =1$ and for any $r>0$ such that $r \in (0,1)$, we have
            \begin{equation*}
                \left\{  | y - 1 | < r, \partial_{x_1} y > 0 \right\}
                = \left\{ \tfrac{1}{\pi}\arcsin \tfrac{1-r}{\sin \pi x_2} < x_1 < \tfrac{1}{2}, \tfrac{1}{\pi}\arcsin (1-r) < x_2 <  1- \tfrac{1}{\pi}\arcsin (1-r) \right\}.
            \end{equation*}
            There therefore holds that
            \begin{equation*}
                \int_\Omega \1_{\left\{ | y - 1 | < r, \partial_{x_1} y > 0 \right\}} |\partial_{x_1} y|\dx  =  \frac{2}{\pi} \left[\cos(\arcsin (1-r)) + (1-r)\arcsin(1-r) \right]  +  r -1.
            \end{equation*}
            Again, L'Hospital's rule shows that
            \begin{equation*}
                \lim\limits_{r \to 0^+} \frac{1}{r} \int_\Omega \1_{\left\{ | y - 1 | < r, \partial_{x_1} y > 0 \right\}}|\partial_{x_1} y|  \dx = 0.
            \end{equation*}
            Similarly, we can deduce that
            \begin{equation*}
                \lim\limits_{r \to 0^+} \frac{1}{r} \int_\Omega \1_{\left\{ | y - 1 | < r \right\}} \left[|\partial_{x_1} y| +|\partial_{x_2} y| \right]\dx = 0.
                \qedhere
            \end{equation*}
    \end{enumerate}
\end{proof}

\section{Regularity of solutions to the adjusted linearized state equation} \label{sec:linearized-regularity}
Let $p_0, p_1$, and $\gamma$ be given as in \eqref{eq:gamma-exponent}. In order to show the $W^{1+\gamma,2}$-regularity of solutions to the adjusted linearized state equation \eqref{eq:adjusted-deri}, we need the following result for the function
\begin{equation*} \label{eq:fu-func}
    \mathbb{f}_{\bar u} := \1_{\{ \bar y \neq \bar t \}} a'(\bar y) \nabla \bar y,
\end{equation*}
where $\bar y := S(\bar u)$ is the optimal state corresponding to the control $\bar u$.

\begin{proposition}
    \label{prop:fu-regularity-origin}
    Assume that $\bar u \in L^\infty(\Omega)$ satisfies $\Sigma(\bar y) < \infty$. Then there holds
    \begin{equation*}
        \label{eq:fu-regularity-baru}
        \mathbb{f}_{\bar u} \in (W^{\gamma,p_0}(\Omega))^2 = (W^{\gamma,p_0}_0(\Omega))^2.
    \end{equation*}
\end{proposition}
\begin{proof}
    Since $\gamma < \frac{1}{p_0}$, there holds that
    \[
        W^{\gamma,p_0}(\Omega) = W^{\gamma,p_0}_0(\Omega);
    \]
    see, e.g. Corollary 1.4.4.5 in \cite{Grisvard1985}.
    Thanks to \cref{thm:control2state-oper}, one has $\bar y \in W^{2, p_1}(\Omega) \hookrightarrow C^1(\overline\Omega)$ and thus $\bar y \in C^1(\overline\Omega)$. Therefore, we have $\mathbb{f}_{\bar u} \in L^\infty(\Omega)^2 \hookrightarrow L^{p_0}(\Omega)^2$.
    From the definition of the Sobolev spaces of fractional order; see e.g. Definition 6.8.2 in \cite{Kufner}, it then suffices to show that
    \begin{equation*}
        \iint_{\Omega \times \Omega} \frac{|\mathbb{f}_{\bar u}(x) - \mathbb{f}_{\bar u}(\hat x)|^{p_0} }{|x - \hat x|^{2 + p_0 \gamma}} dxd\hat x < \infty,
    \end{equation*}
    or equivalently,
    \begin{equation}
        \label{eq:fu-exponent-baru}
        \iint_{\Omega \times \Omega \cap \{|x - \hat x | < \delta/{L} \}} \frac{|\mathbb{f}_{\bar u}(x) - \mathbb{f}_{\bar u}(\hat x)|^{p_0} }{|x - \hat x|^{2 + p_0 \gamma}} dxd\hat x < \infty.
    \end{equation}
    Here $\delta$ is the constant in \eqref{eq:Omega-123-sets} and $L$ denotes the Lipschitz constant of $\bar y$.
    For any $x, \hat x \in \Omega$ satisfying $|x - \hat x| < \frac{\delta}{L}$, there holds that
    \begin{equation}
        \label{eq:Lipschitz-bary}
        |\bar y(x) - \bar y(\hat x)| \leq L|x-\hat x| < \delta.
    \end{equation}
    Setting now
    \begin{equation}
        \label{eq:Z-baru}
        Z(x,\hat x) := \mathbb{f}_{\bar u}(x) - \mathbb{f}_{\bar u}(\hat x) = \1_{\{ \bar y \neq \bar t \}}(x) a'(\bar y(x)) \nabla \bar y(x) - \1_{\{ \bar y \neq \bar t \}}(\hat x) a'(\bar y(\hat x)) \nabla \bar y(\hat x)
    \end{equation}
    and using the argument in the proof of \cref{lem:Z-decomposition} for the situation $y(x) := \bar y(x)$ and $ \hat y(x) := \bar y(\hat x)$, we can decompose $Z(x, \hat x)$ as follows
    \begin{equation}
        \label{eq:Z-baru-decom}
        Z(x,\hat x) = Z^{(1)}(x,\hat x) + Z^{(2)}(x,\hat x) + Z^{(3)}(x,\hat x) + Z^{(4)}(x,\hat x),
    \end{equation}
    where
    \begin{align}
        |Z^{(1)}(x,\hat x)| + |Z^{(2)}(x,\hat x)| + |Z^{(4)}(x,\hat x)|&\leq C [|\bar y(x) - \bar y(\hat x)| |\nabla \bar y(\hat x) | + | \nabla (\bar y(x) - \bar y(\hat x))| ], \label{eq:Z124-baru} \\
        Z^{(3)}(x,\hat x) &= [a_0'(\bar t) - a_1'(\bar t)][ \chi_2(x,\hat x) - \chi_3(x,\hat x) ] \nabla \bar y(\hat x), \label{eq:Z3-baru}
    \end{align}
    and
    \begin{align*}
        \chi_2(x,\hat x) & = \begin{cases}
            1 & \text{if} \quad \bar y(\hat x) \in (\bar t, \bar t + \delta), \bar y(x) \in (\bar t- \delta, \bar t],\\
            0 & \text{otherwise},
        \end{cases} \\
        \chi_3(x,\hat x) & = \begin{cases}
            1 & \text{if} \quad \bar y(\hat x) \in (\bar t-\delta, \bar t ), \bar y(x) \in [\bar t,  \bar t + \delta), \\
            0 & \text{otherwise}.
        \end{cases}
    \end{align*}
    Here
    we have compared the function $Z(x,\hat x)$ with $Z_{y, \hat y}$ defined in \eqref{eq:Z-func}; compared the estimates for $Z^{(i)}(x,\hat x)$, $i =1,2,4$, with the ones in \eqref{eq:Z-esti}; compared the decomposition \eqref{eq:Z-baru-decom} with \eqref{eq:Z-decomposition}; and compared the functions $\chi_2, \chi_3$ with $\1_{\Omega^2_{y, \hat y}}$, $\1_{\Omega^3_{y, \hat y}}$, respectively, for the sets $\Omega^2_{y, \hat y}, \Omega^3_{y, \hat y}$ given in \eqref{eq:Omega-123-sets}.
    Applying \eqref{eq:Lipschitz-bary} then yields
    \begin{align*}
        \iint_{\Omega \times \Omega \cap \{|x - \hat x | < \delta/{L} \}} \frac{|\bar y(x) - \bar y(\hat x)|^{p_0} }{|x - \hat x|^{2 + p_0 \gamma}} dxd\hat x & \leq \iint_{\Omega \times \Omega \cap \{|x - \hat x | < \delta/{L} \}} L^{p_0} |x - \hat x|^{-2 + p_0(1-\gamma)} dxd\hat x < \infty,
    \end{align*}
    due to the fact that $2 - p_0(1 -\gamma) < 2 = N$; see e.g. page 331 in \cite{Kufner}.
    Moreover, from the Sobolev embedding $W^{1,p_1}(\Omega) \hookrightarrow C^{0, 1- \frac{2}{p_1}}(\overline\Omega)$, there holds
    $\nabla \bar y \in C^{0, 1- \frac{2}{p_1}}(\overline\Omega)^2$. From this, we have
    \begin{align*}
        \iint_{\Omega \times \Omega \cap \{|x - \hat x | < \delta/{L} \}} \frac{|\nabla \bar y(x) -\nabla \bar y(\hat x)|^{p_0} }{|x - \hat x|^{2 + p_0 \gamma}} dxd\hat x & \leq \iint_{\Omega \times \Omega \cap \{|x - \hat x | < \delta/{L} \}} C^{p_0} |x - \hat x|^{-2 + p_0(1 - \frac{2}{p_1}-\gamma)} dxd\hat x < \infty,
    \end{align*}
    in the view of \eqref{eq:gamma-exponent}. By combining the last two integral estimates with \eqref{eq:Z124-baru} and using the fact that $\nabla \bar y \in C(\overline\Omega)^2$, there holds
    \begin{equation}
        \label{eq:Z124-baru-exponent}
        \iint_{\Omega \times \Omega \cap \{|x - \hat x | < \delta/{L} \}} \frac{|Z^{(i)}(x, \hat x)|^{p_0} }{|x - \hat x|^{2 + p_0 \gamma}} dxd\hat x < \infty \quad \text{for } i =1,2,4.
    \end{equation}

    We now estimate $Z^{(3)}(x, \hat x)$ defined in \eqref{eq:Z3-baru}. To this end, we deduce from the definition of $\chi_2(x, \hat x)$ and $\chi_3(x,\hat x)$ that
    \begin{align*}
        |\chi_2(x, \hat x) - \chi_3(x, \hat x)| & \leq
        \begin{cases}
            1 & \text{if} \quad0 < | \bar y(\hat x) - \bar t | \leq |\bar y(\hat x) - \bar y(x) |,\\
            0 & \text{otherwise}
        \end{cases}
        \\
        & \leq \begin{cases}
            1 & \text{if} \quad 0 < | \bar y(\hat x) - \bar t | \leq L|x - \hat x|,\\
            0 & \text{otherwise}.
        \end{cases}
    \end{align*}
    Combining the last estimate with \eqref{eq:Z3-baru} yields
    \begin{align*}
        |Z^{(3)}(x, \hat x)|^{p_0} &\leq C \sigma_0^{p_0} \1_{\{ 0 < |\bar y(\cdot) - \bar t| < L|\cdot - x| \}}(\hat x) |\nabla \bar y (\hat x)|^{p_0} \\
        & \leq C(p_0) \sigma_0^{p_0} \1_{\{ 0 < |\bar y(\cdot) - \bar t| < L|\cdot - x| \}}(\hat x) |\nabla \bar y (\hat x)|,
    \end{align*}
    thanks to the boundedness of $|\nabla \bar y|$ in $C(\overline{\Omega})$.
    On the other hand, we conclude from the definition of $\Sigma(\bar y)$ in \eqref{eq:E-functional} that there exists a constant $r_0 >0$ such that
    \[
        \frac{\sigma_0}{r}\int_{\Omega} \1_{\{ 0 < |\bar y(\cdot) - \bar t| < r \}}(\hat x) |\nabla \bar y (\hat x)| d\hat x  \leq \Sigma(\bar y) + 1 \quad \text{for all } 0 < r < r_0.
    \]
    We then have
    \begin{multline*}
        \iint_{\Omega \times \Omega \cap \{|x - \hat x | < \delta/{L} \}} \frac{|Z^{(3)}(x, \hat x)|^{p_0} }{|x - \hat x|^{2 + p_0 \gamma}} dxd\hat x \\
        \begin{aligned}
            & = \iint_{\Omega \times \Omega \cap \{r_0/L \leq |x - \hat x | < \delta/{L} \}} \frac{|Z^{(3)}(x, \hat x)|^{p_0} }{|x - \hat x|^{2 + p_0 \gamma}} dxd\hat x + \iint_{\Omega \times \Omega \cap \{|x - \hat x | < r_0/{L} \}} \frac{|Z^{(3)}(x, \hat x)|^{p_0} }{|x - \hat x|^{2 + p_0 \gamma}} dxd\hat x \\
            & \leq C \iint_{\Omega \times \Omega \cap \{r_0/L \leq |x - \hat x | < \delta/{L} \}} \frac{1 }{|x - \hat x|^{2 + p_0 \gamma}} dxd\hat x \\
            & \qquad  +  C \sigma_0^{p_0} \iint_{\Omega \times \Omega \cap \{|x - \hat x | < r_0/{L} \}} \frac{1}{|x - \hat x|^{2 + p_0 \gamma}}\1_{\{ 0 < |\bar y(\cdot) - \bar t| < L|\cdot - x| \}}(\hat x) |\nabla \bar y (\hat x)| d\hat x dx \\
            & \leq C(r_0,\delta) + C \sigma_0^{p_0} \int_{\{\xi \in \R^2: |\xi| < \frac{r_0}{L}\} } \frac{1}{|\xi|^{2 + p_0 \gamma}} d\xi  \int_{\Omega} \1_{\{ 0 < |\bar y(\cdot) - \bar t| < L|\xi| \}}(\hat x) |\nabla \bar y (\hat x)| d\hat x \\
            & = C(r_0,\delta) + CL \sigma_0^{p_0-1} \int_{\{\xi \in \R^2: |\xi| < \frac{r_0}{L}\} } \frac{d\xi}{|\xi|^{1 + p_0 \gamma}}  \frac{\sigma_0}{L|\xi|}   \int_{\Omega} \1_{\{ 0 < |\bar y(\cdot) - \bar t| < L|\xi| \}}(\hat x) |\nabla \bar y (\hat x)| d\hat x \\
            & \leq C(r_0,\delta) + CL \sigma_0^{p_0-1} \frac{2\pi}{1- p_0\gamma} \left(\frac{r_0}{L}\right)^{1- p_0 \gamma} (\Sigma(\bar y) + 1)\\
            & < \infty,
        \end{aligned}
    \end{multline*}
    due to \eqref{eq:gamma-exponent}. From this, \eqref{eq:Z124-baru-exponent}, \eqref{eq:Z-baru-decom}, and \eqref{eq:Z-baru}, we have \eqref{eq:fu-exponent-baru}.
\end{proof}

\begin{lemma}
    \label{lem:Sobolev-product}
    Assume that $\bar u \in L^\infty(\Omega)$ satisfies $\Sigma(\bar y) < \infty$.
    Then for any $z \in H^1(\Omega)$, there holds
    \[
        \dive (\mathbb{f}_{\bar u} z) \in W^{-1 + \gamma,2}(\Omega).
    \]
    Moreover,
    \begin{equation}
        \label{eq:Sobolev-product-esti}
        \norm{\dive( \mathbb{f}_{\bar u} z)}_{W^{-1 + \gamma,2}(\Omega)} \leq C \norm{\mathbb{f}_{\bar u} }_{ W^{\gamma,p_0}(\Omega)}  \norm{z}_{H^1(\Omega)}
    \end{equation}
    for some constant $C$ independent of $z$.
\end{lemma}
\begin{proof}
    By \cref{prop:fu-regularity-origin}, one has $\mathbb{f}_{\bar u} \in (W^{\gamma,p_0}(\Omega))^2$. We now apply the multiplication theorem for Sobolev spaces; see, e.g. Theorem 1.4.4.2 in \cite{Grisvard1985} and Theorem 7.4 in \cite{BehzadanHolst2021} for $s_1 = \gamma$, $s_2 =1$, $s=\gamma$,  $p_1 = p_0 >2$, $p_2 =2$, $p=2$, and $n=2$ to obtain
    \[
        \mathbb{f}_{\bar u} z \in (W^{\gamma,2}(\Omega))^2.
    \]
    From this and the continuity of the divergence operator  from $(W^{\gamma,2}(\Omega))^2$ to $W^{-1+\gamma,2}(\Omega)$ for $\gamma \neq \frac{1}{2}$; see, e.g. Theorem 1.4.4.6 in \cite{Grisvard1985}, we have
    $\dive (\mathbb{f}_{\bar u} z) \in W^{-1 + \gamma,2}(\Omega)$. Finally, we also have \eqref{eq:Sobolev-product-esti} from Theorem 7.4 in \cite{BehzadanHolst2021} and Theorem 1.4.4.6 in \cite{Grisvard1985}.
\end{proof}

\begin{proposition}
    \label{prop:adjusted-deri-regularity}
    The following assertions hold:
    \begin{enumerate}[label=(\roman*)]
        \item \label{item:zuv-W1p-esti} Let $p >2$ be fixed but arbitrary.
            Then, for any $u \in L^2(\Omega)$ and $v \in W^{-1,p}(\Omega)$, \eqref{eq:adjusted-deri} admits a unique solution $\tilde{z}_{u,v} \in W^{1,p}_0(\Omega)$.
            Furthermore, there exists a constant $\rho_p >0$ such that
            \begin{equation}
                \label{eq:zuv-W1p-esti}
                \norm{\tilde z_{u,v}}_{W^{1,p}_0(\Omega)} \leq C \norm{v}_{W^{-1,p}(\Omega)} \quad \text{for all } u \in \overline{B}_{L^2(\Omega)}(\bar u, \rho_p)
            \end{equation}
            and for some constant $C$ independent of $u$ and $v$.
        \item \label{item:zuv-Wgamma-esti}
            Assume that $\bar u \in L^\infty(\Omega)$ satisfies $\Sigma(\bar y) < \infty$.
            Then, for any $u \in \mathcal{U}_{ad}$ and any $ v \in W^{-1+\gamma,2}(\Omega)$, the equation \eqref{eq:adjusted-deri} admits a unique solution $\tilde z_{u,v}$ in $W^{1+\gamma,2}(\Omega)$.
            Moreover, there exists a constant $\hat{\rho}$ such that
            \begin{equation} \label{eq:aprior-adjusted-deri}
                \norm{\tilde z_{u,v}}_{W^{1+\gamma,2}(\Omega)} \leq C (1+ \norm{ \mathbb{f}_{\bar u} }_{W^{\gamma,p_0}(\Omega)})\norm{v}_{W^{-1+\gamma,2}(\Omega)}
            \end{equation}
            for all $u \in \overline{B}_{L^2(\Omega)}(\bar u, \hat\rho) \cap \mathcal{U}_{ad}$ and
            for some constant $C$ independent of $u$ and $v$.
    \end{enumerate}
\end{proposition}
\begin{proof}
    \noindent\textit{Ad \ref{item:zuv-W1p-esti}:}
    For any $u \in L^2(\Omega) \hookrightarrow W^{-1,p}(\Omega)$, by \cref{thm:control2state-oper}, we have $y_u = S(u) \in W^{1,p}_0(\Omega)$. From this, and Remark 2.9 in \cite{CasasTroltzsch2009}, we deduce the existence and uniqueness of $\tilde z_{u,v}$ in $H^1_0(\Omega)$ to \eqref{eq:adjusted-deri}.

    We now rewrite \eqref{eq:adjusted-deri} as follows
    \begin{equation*}
        \left\{
            \begin{aligned}
                -\dive [(b + a(y_u)) \nabla \tilde z_{u,v} + \1_{\{ y_u \neq \bar t \}}a'(y_u) \tilde z_{u,v} \nabla y_u ]  &= v - \dive( Z_{y_u, \bar y}\tilde  z_{u,v})&& \text{in } \Omega,\\
                \tilde z_{u,v} &=0 && \text{on } \partial\Omega,
            \end{aligned}
        \right.
    \end{equation*}
    where $Z_{y_u, \bar y}$ is defined as in \eqref{eq:Z-func}.
    In other words, one has
    \begin{equation}
        \label{eq:zuv-deri-form}
        \tilde z_{u,v} = S'(u)(v - \dive( Z_{y_u, \bar y}\tilde  z_{u,v})),
    \end{equation}
    due to \cref{thm:control2state-oper}.
    By \cref{lem:Z-decomposition}, we have that $Z_{y_u, \bar y} \in L^p(\Omega)^2$. Since $\tilde{z}_{u,v} \in H^1_0(\Omega) \hookrightarrow L^{q}(\Omega)$ with $q > \frac{2p}{p-2}$, we then deduce that $Z_{y_u, \bar y}\tilde  z_{u,v} \in L^{r}(\Omega)^2$ with $r = \frac{pq}{p+q} \in (2,p)$.
    There therefore holds
    \[
        v - \dive( Z_{y_u, \bar y}\tilde  z_{u,v}) \in W^{-1,r}(\Omega).
    \]
    Applying \cref{thm:control2state-oper} yields $\tilde{z}_{u,v} \in W^{1,r}_0(\Omega) \hookrightarrow C(\overline\Omega)$. We then have
    $Z_{y_u, \bar y}\tilde  z_{u,v} \in L^{p}(\Omega)^2$ and thus
    \[
        v - \dive( Z_{y_u, \bar y}\tilde  z_{u,v}) \in W^{-1,p}(\Omega).
    \]
    This, together with \cref{thm:control2state-oper}, gives $\tilde{z}_{u,v} \in W^{1,p}_0(\Omega)$.

    We now prove \eqref{eq:zuv-W1p-esti}. To this end, by \cref{thm:control2state-oper} and the compact embedding $L^2(\Omega) \Subset W^{-1,p}(\Omega)$, we first have
    \begin{equation*}
        \sup\{ \norm{S'(u)}_{\Linop(W^{-1,p}(\Omega), W^{1,p}_0(\Omega) )} \mid u \in \overline B_{L^2(\Omega)}(\bar u, 1) \}  \leq C_1,
    \end{equation*}
    which gives for all $u \in \overline B_{L^2(\Omega)}(\bar u, 1)$ that
    \begin{multline}
        \label{eq:zuv-W1p0}
        \norm{\tilde z_{u,v}}_{W^{1,p}_0(\Omega)}  \leq C_{1} \norm{ v - \dive( Z_{y_u, \bar y} \tilde z_{u,v})}_{W^{-1,p}(\Omega)}  \leq C_{1} [\norm{ v }_{W^{-1,p}(\Omega)} + \norm{  Z_{y_u, \bar y} \tilde z_{u,v}}_{L^{p}(\Omega)} ] \\
        \begin{aligned}[t]
            & \leq C_{1} [\norm{ v }_{W^{-1,p}(\Omega)} + \norm{\tilde z_{u,v} }_{L^{2p/(p-2)}(\Omega)} \norm{Z_{y_u, \bar y} }_{L^2(\Omega)} ] \\
            & \leq C_{1} [ \norm{ v }_{W^{-1,p}(\Omega)} + \norm{\tilde z_{u,v} }_{W^{1,p}_0(\Omega)}\norm{Z_{y_u, \bar y} }_{L^2(\Omega)} ],
        \end{aligned}
    \end{multline}
    where we have just used the H\"{o}lder inequality and the embeddings $W^{1,p}_0(\Omega) \hookrightarrow H^1_0(\Omega) \hookrightarrow L^{2p/{p-2}}(\Omega)$.
    From \cref{lem:Z-decomposition} and the fact that $\norm{y_u - \bar y}_{W^{1,p}_0(\Omega)} \to 0$ as $\norm{u - \bar u}_{W^{-1,p}(\Omega)} \leq C \norm{u - \bar u}_{L^2(\Omega)} \to 0$ (see \cref{thm:control2state-oper}), we have
    \[
        \norm{Z_{y_u, \bar y} }_{L^2(\Omega)} \to 0 \quad \text{as} \quad \norm{u - \bar u}_{L^2(\Omega)} \to 0.
    \]
    Then, there exists a  constant $\rho_p \in (0,1]$ such that
    \[
        \norm{Z_{y_u, \bar y} }_{L^2(\Omega)} \leq \frac{1}{2C_1} \quad \text{for all } u \in \overline{B}_{L^2(\Omega)}(\bar u, \rho_p).
    \]
    Combining this with \eqref{eq:zuv-W1p0} yields
    \begin{equation*}
        \norm{\tilde z_{u,v}}_{W^{1,p}_0(\Omega)}  \leq 2C_{1} \norm{ v }_{W^{-1,p}(\Omega)} \quad \text{for all } u \in \overline{B}_{L^2(\Omega)}(\bar u, \rho_p).
    \end{equation*}

    \noindent\textit{Ad \ref{item:zuv-Wgamma-esti}:}
    We first show the $W^{1+\gamma,2}$-regularity of $\tilde z_{u,v}$. To this end, we now rewrite \eqref{eq:adjusted-deri} as follows
    \begin{equation*}
        \left\{
            \begin{aligned}
                -\dive [(b + a(y_u)) \nabla \tilde z_{u,v}  ]  &= v + \dive [ \mathbb{f}_{\bar u}  \tilde z_{u,v}]&& \text{in } \Omega,\\
                \tilde z_{u,v} &=0 && \text{on } \partial\Omega
            \end{aligned}
        \right.
    \end{equation*}
    or, equivalently,
    \begin{equation} \label{eq:adjusted-deri-fu-Delta}
        \left\{
            \begin{aligned}
                -\Delta \tilde z_{u,v}  &=\frac{1}{b + a(y_u)}  (\nabla b + \1_{\{y_u \neq \bar t \}} a'(y_u) \nabla y_u) \cdot \nabla \tilde z_{u,v} + \frac{1}{b + a(y_u)}  [v+\dive ( \mathbb{f}_{\bar u}  \tilde z_{u,v}) ]&& \text{in } \Omega,\\
                \tilde z_{u,v} &=0 && \text{on } \partial\Omega.
            \end{aligned}
        \right.
    \end{equation}
    From \cref{prop:fu-regularity-origin} and \cref{lem:Sobolev-product}, there holds $\dive [ \mathbb{f}_{\bar u}  \tilde z_{u,v}] \in W^{-1 +\gamma,2}(\Omega)$ and thus
    \begin{equation}
        \label{eq:v-fu-Wgamma}
        v + \dive [ \mathbb{f}_{\bar u}  \tilde z_{u,v}]\in W^{-1 +\gamma,2}(\Omega).
    \end{equation}
    Since $\mathcal{U}_{ad}$ is bounded in $L^\infty(\Omega)$ and thus in $L^{p_0}(\Omega)$, we deduce from \cref{thm:control2state-oper} that
    $y_u, \bar y \in W^{2,p_0}(\Omega) \hookrightarrow C^1(\overline{\Omega})$ and then that
    \begin{equation*}
        \label{eq:W2p-estimate}
        \norm{y_u}_{W^{2,p}(\Omega)} + \norm{y_u}_{C^1(\overline{\Omega})} \leq M \quad \text{for all } u \in \mathcal{U}_{ad}
    \end{equation*}
    for some constant $M>0$.
    Here $p_0$ is the constant in \eqref{eq:gamma-exponent}.
    Thanks to \cref{ass:b_func} and \cref{ass:PC1-func}, one has $b + a(y_u), (b + a(y_u))^{-1} \in W^{1,\infty}(\Omega)$ and
    \begin{equation}
        \label{eq:yu-W1infty-esti}
        \norm{b+a(y_u)}_{W^{1,\infty}(\Omega)} + \norm{(b+a(y_u))^{-1}}_{W^{1,\infty}(\Omega)}\leq C_M \quad \text{for all } u \in \mathcal{U}_{ad}.
    \end{equation}
    Combining this with \eqref{eq:v-fu-Wgamma} yields
    \[
        \frac{1}{b+a(y_u)} [v + \dive [ \mathbb{f}_{\bar u}  \tilde z_{u,v}]] \in W^{-1 +\gamma,2}(\Omega),
    \]
    where we have employed the fact that the multiplication bilinear mapping $W^{1, \infty}(\Omega) \times W^{-1 +\gamma,2}(\Omega) \to W^{-1 +\gamma,2}(\Omega)$ is continuous; see, e.g. \cite{BehzadanHolst2021}.
    The second term in the right hand side of \eqref{eq:adjusted-deri-fu-Delta} then belongs to $W^{-1 +\gamma,2}(\Omega)$. Moreover, the first one also belongs to $W^{-1 +\gamma,2}(\Omega)$, since it is in $L^2(\Omega) \hookrightarrow W^{-1 +\gamma,2}(\Omega)$.
    We therefore apply the $W^{1+\gamma,2}$-regularity of solutions to \eqref{eq:adjusted-deri-fu-Delta} to have that
    $\tilde z_{u,v} \in W^{1+\gamma,2}(\Omega)$; see, e.g. Theorem 3 in \cite{Savare1998} and Theorem 4.1 in  \cite{Clain1997}.

    It remains to show the existence of a constant $\hat{\rho} >0$ satisfying \eqref{eq:aprior-adjusted-deri}. To this end, we first see from \eqref{eq:gamma-exponent} that $1 >\frac{2}{p_0}\geq 1 - \gamma$ and $1 < \frac{p_0}{p_0-1} < 2$. From this and the Sobolev embedding theorem; see e.g. Theorem 3.8 in \cite{BehzadanHolst2021}, we deduce that
    $W^{1, \frac{p_0}{p_0-1}}_0(\Omega) \hookrightarrow W^{ \frac{2}{p_0},2}_0(\Omega) \hookrightarrow W^{1- \gamma,2}_0(\Omega)$ and thus
    \begin{equation*}
        \label{eq:embedding-gamma-p0}
        W^{-1+ \gamma,2}(\Omega) \hookrightarrow W^{-1,p_0}(\Omega).
    \end{equation*}
    We now apply assertion \ref{item:zuv-W1p-esti} for $p := p_0$ to derive that  $\tilde{z}_{u,v} \in W^{1,p_0}_0(\Omega)$
    and that
    \begin{equation}
        \label{eq:zuv-Wgamma-esti-p0}
        \norm{\tilde z_{u,v}}_{W^{1,p_0}_0(\Omega)} \leq C \norm{v}_{W^{-1,p_0}(\Omega)} \leq C \norm{v}_{  W^{-1+ \gamma,2}(\Omega) } \quad \text{for all } u \in \overline{B}_{L^2(\Omega)}(\bar u, \rho_{p_0}) \cap \mathcal{U}_{ad}.
    \end{equation}
    Moreover,
    by applying Theorem 3 in \cite{Savare1998} (see, also Theorem 4.1 in  \cite{Clain1997}) to \eqref{eq:adjusted-deri-fu-Delta}, and using estimates \eqref{eq:Sobolev-product-esti} and \eqref{eq:yu-W1infty-esti}, as well as the embedding $W^{1,p_0}_0(\Omega) \hookrightarrow H^1_0(\Omega)$,
    there is a constant $C= C(M)$ such that
    \begin{equation*}
        \label{eq:apriori-zuv}
        \begin{aligned}
            \norm{\tilde z_{u,v}}_{W^{1+\gamma,2}(\Omega)} &\leq C(M) [\norm{v}_{W^{-1+\gamma,2}(\Omega)} + (1+ \norm{ \mathbb{f}_{\bar u} }_{W^{\gamma,p_0}(\Omega)}) \norm{\tilde z_{u,v}}_{H^1_0(\Omega)}]  \\
            &\leq C(M) [\norm{v}_{W^{-1+\gamma,2}(\Omega)} + (1+ \norm{ \mathbb{f}_{\bar u} }_{W^{\gamma,p_0}(\Omega)}) \norm{\tilde z_{u,v}}_{W^{1,p_0}_0(\Omega)}   ]
            \quad \text{for all } u \in \mathcal{U}_{ad}.
        \end{aligned}
    \end{equation*}
    Setting now $\hat{\rho} :=  \rho_{p_0}$ and combining the last inequality with \eqref{eq:zuv-Wgamma-esti-p0} yields \eqref{eq:aprior-adjusted-deri}.
\end{proof}

\bibliographystyle{jnsao}
\bibliography{QuasilinearFEM_II}

\end{document}